\documentclass[11pt,preqno,a4paper]{amsart}

\usepackage{enumitem}
\usepackage{amssymb,amsmath,mathtools}
\usepackage{hyperref}
\usepackage[nobysame]{amsrefs}
\usepackage{xcolor}

\usepackage{microtype}
\usepackage{bbm}
\usepackage{a4wide}
\newcommand*{\mailto}[1]{\href{mailto:#1}{\nolinkurl{#1}}}

 \allowdisplaybreaks

\newtheorem{theorem}{Theorem}[section]
\newtheorem{assumption}{Assumption}[section]
\newtheorem{lemma}[theorem]{Lemma}
\newtheorem{proposition}[theorem]{Proposition}
\newtheorem{corollary}[theorem]{Corollary}
\newtheorem{remark}[theorem]{Remark}
\newtheorem{definition}[theorem]{Definition}

\newenvironment{theoremprime}[1]{%
  \renewcommand{\thetheorem}{#1'}
  \addtocounter{theorem}{-1}
  \begin{theorem}}
{\end{theorem}}

\newcommand{\N}{{\mathbb N}}

\DeclareMathOperator{\supp}{supp}

\newcommand{\be}{\begin{equation}}
\newcommand{\ee}{\end{equation}}

\newcommand{\al}{\alpha}

\newcommand{\eq}{\begin{equation}}
\newcommand{\eeq}{\end{equation}}

\numberwithin{equation}{section}

\makeatletter
\newcommand{\dlmf}[1]{%
	\cite[%
	\def\nextitem{\def\nextitem{, }}%
	\@for \el\coloneqq#1\do{\nextitem\href{http://dlmf.nist.gov/\el}{(\el)}}%
	]{dlmf}%
}
\makeatother

\begin{document}

\allowdisplaybreaks

\title[Local decay estimates]{Local decay estimates for the bi-Laplacian Nonautonomous  Schr\"{o}dinger  equation}

\author{Jiayan Wu, Ting Zhang and Ruze Zhou}
\address{School of mathematics, South China University of Technology
\\
Guangzhou 510641, China
}
\address{School of mathematics, Zhejiang university
\\
Hangzhou 310058, China
}
\address{School of mathematics, Zhejiang university
\\
Hangzhou 310058, China
}
\email{wujiayan@scut.edu.cn;  zhangting79@zju.edu.cn; 12435042@zju.edu.cn}

\date{\today}
\subjclass[2010]{35Q41, 35B40}

\keywords{Local decay estimate; bi-Laplacian Schr\"{o}dinger operator; quasi-periodic potential; Strichartz estimates}

\begin{abstract}
In this paper, we establish local decay estimates for the bi-Laplacian  Schr\"{o}dinger equation with time-dependent (in particular, quasi-periodic) potentials in spatial dimension $n\ge14$.  Moreover, under stronger spectral regularity hypotheses, the same result can be extended to dimension $n\ge9$.
Our approach, based on asymptotic completeness and the existence of the channel wave operator, departs from standard resolvent-based methods.   In addition, global-in-time Strichartz estimates are derived from the local decay estimates.
\end{abstract}

\maketitle

\section{Introduction}
\subsection{Background}

In the study of linear and nonlinear dispersive equations,   local decay estimates provide a quantitative measure of the rate at which the $L^p$-mass or energy of a solution leaks out of any compact spatial region as time evolves.  Concretely, if $\psi(t) $ is the solution   for the Schr\"odinger equation with some decay potentials, a typical local decay estimate takes the form:
$$
\int_0^\infty \bigl\|\langle x\rangle^{-\eta}\,\psi(t)\bigr\|_{L^2_x}^2\,dt
\;\le C\,\|\psi_0\|_{L^2_x}^2,
\quad \eta>0,
$$
where $\langle x\rangle=(1+|x|^2)^{1/2}$.  Here $\langle x\rangle^{-\eta}$ localizes the measurement to a neighborhood of the origin (or any compact spatial region), and the bound shows that the weighted $L^2$-norm decays in time. Physically, this reflects the fact that dispersive waves cannot remain trapped in a bounded region under repulsive or oscillatory potentials  but must radiate their mass toward spatial infinity.  Mathematically,  local decay estimates play a central role in the scattering theory of dispersive equations. Specifically, they provide a foundation for establishing Strichartz estimates, which are essential tools for analyzing the behavior of solutions to dispersive equations. Furthermore, the local decay property established here directly yields the existence and completeness of the M\"{o}ller wave operators. These estimates have important applications in both linear and nonlinear time-dependent resonance theory \cites{SW1998,SW2005}.

Recently,  Liu and Soffer \cite{MR4721470} developed a general framework for proving asymptotic completeness in both linear and nonlinear dispersive equations. However, their approach requires radial symmetry for nonlinear setting. Subsequently, Soffer and Wu \cites{SW20224,SW20221,SW20223} refined this approach by constructing the free channel wave operator in an adapted way.  Building on the large time asymptotics of the solution \cite{SW20221}, Soffer and Wu \cite{sw20225} derived local decay estimates for the   Schr\"odinger equation with time-dependent quasi‐periodic potentials.

Motivated by \cite{SW20221}, Soffer and the authors \cite{SWWZ} further established asymptotic completeness  for the bi-Laplacian (bi-harmonic) Schr\"odinger operator $(-\Delta)^2+V(t, x)$ by constructing the corresponding channel wave operator.  The bi-Laplacian Schr\"odinger equation was introduced by Karpman \cite{Karpman1} and Karpman–Shagalov \cite{MR1396248} to incorporate small fourth-order dispersive effects in the propagation of intense laser beams in bulk media with Kerr nonlinearity.

It is nature to ask whether  local decay estimates hold for the bi-Laplacian Schr\"odinger equation with time-dependent  potentials. In this paper, building on the asymptotic completeness framework of \cite{SWWZ}, we establish local decay estimates  for the bi-Laplacian Schr\"odinger equation with a time‐dependent quasi‐periodic potential
$V(x,t)$. The time-independent case was treated by resolvent methods in \cite{FSY2018}; however, those techniques do not apply in the quasi-periodic setting. Overcoming this obstruction is the main novelty of our work.

There is also a wealth of results on the Schr\"{o}dinger operator with time-dependent potentials; see, for example, Rodnianski and Schlag \cite{RS2004} and the more recent works \cites{MR2831875,MR2484934,MR4907959}.


\subsection{Preliminary and main results}
Let $H_0=(-\Delta)^2$. In this paper, we consider the  bi-Laplacian Schr\"odinger equation with a time-dependent potential $V(x,t)$
\begin{equation}\label{fourth-order}
		\left\{
		\begin{array}{ll}
			i\partial_t\psi(x,t)=(H_0+V(x,t))\psi(t),  \quad(x,t)\in \mathbb{R}^n\times\mathbb{R}^+, n\geq 9,\\
			 \psi(x,0)=\psi_0\in L^2_x(\mathbb{R}^n),
		\end{array}
		\right.
\end{equation}
where $V(x,t)$  satisfies
\begin{assumption}\label{asp:1} $V(x,t)$ satisfies
   $\langle x\rangle^\sigma V(x,t)\in L^{\infty}_{x,t}(\mathbb{R}^{n+1})$ for some $\sigma >\max\{12+\frac{n}{2},20\}.$
\end{assumption}

\begin{assumption}\label{asp:2} $V(x,t)$ is quasi-periodic in $t$, more precisely,
\begin{align}
V(x,t)=V_0(x)+\sum_{j=1}^{N}V_j(x,t),\quad N\in \mathbb{N}^+,
\end{align}
where each $V_j(x,t)$ is periodic in $t$ with minimum periods $T_j>0$, where $T_j$ are irrational to each other, that is, $T_{j_1}/T_{j_2}$ are non-integers for all $  j_1\neq j_2$.
\end{assumption}


We define the evolution operators $U(t,s),t,s\in \mathbb{R}$ generated by $H=H_0+V(x,t)$ are unitary on $L_x^2$. The projection on the space of  all scattering states is given by
\begin{align}\label{DefPc}
P_c(t)\coloneqq s\text{-}\lim\limits_{v\to+\infty} U(t,t+v)e^{-iv H_0}F_{\leq1}\left(\frac{|x|}{v^{\alpha}} \right)e^{i
v H_0}U(t+v ,t),\quad \text{ on }L^2_x(\mathbb{R}^n)
\end{align}
for $\alpha\in (0, \frac{1}{2}-\frac{2}{n}), n\geq 5$. This is constructed in  \cite{SWWZ}   provided that $V(x,t)\in L^\infty_tL^2_x(\mathbb{R}^{n+1}), n\geq 5$.  It is worth noting that the right hand side of \eqref{DefPc}  is independent of $\alpha$ (See \eqref{WaveO-def} below and (1.34) of \cite{SWWZ}) and  $P_c(t) $ defined in \eqref{DefPc} satisfies $P_c(t)=\Omega_{+}(t)\Omega^*_{+}(t)$, where
\begin{align}
   \Omega_{+}(t)\coloneqq s\text{-}\lim\limits_{v\to\infty}U(t,t+v)e^{-ivH_0} \quad \mbox{on }\ L^2_x(\mathbb{R}^{n})
\end{align} and $\Omega^*_{+}(t)$ is the conjugate of $\Omega_{+}(t)$. 

First,  the main part of this paper is to establish the following local decay estimate for the scattering part of the solution to \eqref{fourth-order}.

\begin{theorem}\label{Thm1.1}
    If Assumptions \ref{asp:1} and \ref{asp:2}  hold, then, for all $\eta>\frac{5}{2}$,
\begin{align}\label{dis: eq}
    \int_0^{+\infty}\|\langle x\rangle^{-\eta}U(t,t_0)P_c(t_0)\psi_0\|^2_{L_x^2(\mathbb{R}^n)} dt\lesssim_{\eta}\|\psi_0\|^2_{L_x^2(\mathbb{R}^n)}
\end{align}
holds for $n\geq 14$.
\end{theorem}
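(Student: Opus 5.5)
The plan is to transfer the free local decay estimate to the perturbed flow through the channel wave operator, in the spirit of Soffer--Wu for second-order quasi-periodic problems. Since $P_c(t_0)=\Omega_+(t_0)\Omega_+^*(t_0)$ and $U(t,t_0)\Omega_+(t_0)=\Omega_+(t)e^{-i(t-t_0)H_0}$ (intertwining), we set $\phi_0=\Omega_+^*(t_0)\psi_0$, so that $\|\phi_0\|_{L^2}\le\|\psi_0\|_{L^2}$ and
\[
U(t,t_0)P_c(t_0)\psi_0=\Omega_+(t)e^{-i(t-t_0)H_0}\phi_0 .
\]
The problem thus reduces to bounding $\int_0^\infty\|\langle x\rangle^{-\eta}\Omega_+(t)e^{-i(t-t_0)H_0}\phi_0\|_{L^2}^2\,dt$. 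We write $\Omega_+(t)=I+(\Omega_+(t)-I)$ and use Duhamel:
\[
\Omega_+(t)-I=i\int_0^\infty U(t,t+s)V(t+s)e^{-isH_0}\,ds .
\]

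\textbf{Step 1: the free term.} For the identity part we use the free local decay estimate for $e^{-itH_0}$. By Kato smoothing, $\langle x\rangle^{-\eta}$ is $H_0$-smooth away from zero frequency. Near zero frequency the dispersive bound $\|e^{-itH_0}\|_{L^1\to L^\infty}\lesssim |t|^{-n/4}$ combined with weights gives $\|\langle x\rangle^{-\eta}e^{-itH_0}\langle x\rangle^{-\eta}\|_{L^2\to L^2}\lesssim\langle t\rangle^{-n/4}$. The operator $T=\langle x\rangle^{-\eta}e^{-itH_0}$ maps $L^2\to L^2_tL^2_x$ if and only if $TT^*$ is bounded. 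Since $TT^*$ is a time convolution with kernel $\langle x\rangle^{-\eta}e^{-i(t-s)H_0}\langle x\rangle^{-\eta}$, which is integrable in $t-s$ for $n>4$, Young's inequality gives the bound. The threshold $\eta>5/2$ will come from the weighted resolvent/smoothing bound for $(-\Delta)^2$ at low frequency, which is uniform in the needed range.

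\textbf{Step 2: the interaction term.} Substituting gives
\[
\langle x\rangle^{-\eta}\int_0^\infty U(t,t+s)V(t+s)e^{-i(t+s-t_0)H_0}\phi_0\,ds .
\]
Factor $V=\langle x\rangle^{-\sigma/2}\cdot\langle x\rangle^{\sigma/2}V$ and put $\langle x\rangle^{-\sigma/2}$ next to the free flow. This gives $\|\langle x\rangle^{\sigma/2}V\|_\infty\|\langle x\rangle^{-\sigma/2}e^{-i\tau H_0}\phi_0\|$, and the free piece is square-integrable in $\tau$ by Step 1. The remaining factor $\langle x\rangle^{-\eta}U(t,t+s)$ is only unitary, and the integral over $s$ does not converge without decay. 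So we iterate Duhamel once more, or use the asymptotic decomposition from \cite{SWWZ}, where the scattering states are shown to be asymptotically free with a rate. Concretely, we use the large-time estimate from \cite{SWWZ}:
\[
\|\langle x\rangle^{-\sigma'}(\Omega_+(t)-I)e^{-i\tau H_0}\langle x\rangle^{-\sigma'}\|\lesssim\langle\tau\rangle^{-n/4+\epsilon},
\]
uniformly in $t$. Uniformity in $t$ comes from quasi-periodicity: the family $\{V(\cdot,t)\}$ lies in a compact set (a torus hull). This gives a kernel in $t$ with integrable decay, and a $TT^*$/Schur argument then closes the bound. The interpolation between $\langle x\rangle^{-\eta}$ and the strong weight $\langle x\rangle^{-\sigma}$ uses the splitting $F_{\le1}(|x|/t^{\alpha})$ versus its complement, as in the definition \eqref{DefPc}. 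On the outgoing region $|x|\gtrsim t^\alpha$ the weight $\langle x\rangle^{-\eta}$ already gives a factor $t^{-\alpha\eta}$, square-integrable when $\alpha\eta>1/2$. Propagation estimates for the region $|x|\lesssim t^\alpha$ handle the rest.

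\textbf{Main obstacle.} The hard step is Step 2: obtaining decay of $\langle x\rangle^{-\eta}U(t,t+s)V(t+s)$ uniformly in $t$ without resolvent methods. The dimension restriction $n\ge14$ enters here. Summing the Duhamel series requires $\int\langle s\rangle^{-n/4}\cdot s^{(\text{loss})}\,ds<\infty$, where the loss comes from commutators with $F(|x|/t^\alpha)$ and from the constraint $\alpha<\frac12-\frac2n$. Balancing these against the condition $\alpha\eta>1/2$ should produce $n\ge14$. First we would try a Cook-type bootstrap: assume an a priori bound on $\int\|\langle x\rangle^{-\eta}U P_c\psi\|^2\,dt$ over finite time windows, then close it using the smallness of the tail $\int_T^\infty\langle s\rangle^{-n/4+\cdots}\,ds$ together with the compactness of the quasi-periodic hull.
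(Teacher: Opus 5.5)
Your reduction $U(t,t_0)P_c(t_0)\psi_0=\Omega_+(t)e^{-i(t-t_0)H_0}\phi_0$ is correct. Step 1 is fine if you simply cite the free estimate \eqref{localdecay}, but your justification is off. For $\eta<\frac n2$ the weighted bound $\|\langle x\rangle^{-\eta}e^{-itH_0}\langle x\rangle^{-\eta}\|_{2\to2}$ decays only like $\langle t\rangle^{-\eta/2}$, not $\langle t\rangle^{-n/4}$, because $\langle x\rangle^{-\eta}$ does not map $L^2$ into $L^1$; this is still integrable for $\eta>2$.

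The genuine gap is Step 2, which carries the entire content of the theorem. Once you expand $\Omega_+(t)-1$ by Duhamel, the $s$-integral can only converge if $\langle x\rangle^{-\eta}U(t,t+s)\langle x\rangle^{-\sigma/2}$ decays in $s$. But the backward perturbed propagator contains $U(t,t+s)P_b(t+s)$, which does not decay at all. The estimate you import, $\|\langle x\rangle^{-\sigma'}(\Omega_+(t)-1)e^{-i\tau H_0}\langle x\rangle^{-\sigma'}\|\lesssim\langle\tau\rangle^{-n/4+\epsilon}$, is not in \cite{SWWZ}. It also has the wrong shape, since $\phi_0$ is an arbitrary $L^2$ function and carries no weight.

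If you try to repair this with $\Omega_+-1=P_c(\Omega_+-1)-P_b$, two pieces remain. The first is $-P_b(t)e^{-itH_0}\phi_0$, which needs a uniform bound on $P_b(t)\langle x\rangle^{\delta}$ with $\delta>2$; you never address this. The second is
\[
i\int_t^\infty U(t,\tau)P_c(\tau)V(\tau)e^{-i\tau H_0}\phi_0\,d\tau .
\]
Bounding this in $L^2_t$ with the weight $\langle x\rangle^{-\eta}$ is an inhomogeneous (Kato-smoothing type) local decay estimate for the perturbed flow. That estimate is of the same nature as the theorem itself, and it cannot be deduced from a homogeneous bound by Christ--Kiselev, which fails at the $L^2_t\to L^2_t$ endpoint. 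So your bootstrap is circular. Compactness of the quasi-periodic hull gives uniformity of qualitative limits, not quantitative decay of this kind. Your account of where $n\ge14$ and $\eta>\frac52$ come from is also unsupported.

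The missing idea is the device the paper uses to avoid ever needing decay of $U$: work with $\Omega_\pm^*$ and the incoming/outgoing cutoffs instead of $\Omega_+(t)$. Split $\psi(t)=P^+\psi(t)+P^-\psi(t)$ and apply the intertwining relation $\Omega_\pm^*(t)U(t,0)=e^{-itH_0}\Omega_\pm^*(0)$. This gives
\[
\psi(t)=P^+e^{-itH_0}\Omega_+^*(0)\psi_0+P^-e^{-itH_0}\Omega_-^*(0)\psi_0+C(t)\psi(t),\qquad C(t)=i\sum_\pm\int_0^\infty P^\pm e^{\pm isH_0}V(t\pm s)U(t\pm s,t)\,ds .
\]
Here all decay in $s$ comes from the free flow adjacent to $P^\pm$ (Lemma \ref{allkindsestimates}), and $U$ is used only as a unitary. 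Hence $C(t)$ is a bounded compact operator.

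The remaining steps are as follows.
\begin{itemize}
\item Split $C=C_M+C_r+CP_b$ and use $P_b(t)\psi(t)=0$ to drop the last term.
\item Handle $C_M$ by Lemma \ref{FMest} together with intertwining.
\item Invert $1-C_r(t)$ using $\sup_t\|C_r(t)\|<\frac12$. This is where quasi-periodicity genuinely enters: the bound follows from compactness of the torus, strong continuity of $\tilde P_b(\mathbf{s})$, and $\dim P_b<\infty$.
\item Push $\langle x\rangle^{-\eta}$ through $(1-C_r)^{-1}$ with the momentum weight-reduction Neumann series, combined with local smoothing and the absorption bound $\sup_t\|P_b(t)\langle x\rangle^{5/2}\|\lesssim1$ of Proposition \ref{Prop4.12}.
\end{itemize}
Proposition \ref{Prop4.12} holds only for $\delta\le\min\{\frac n2-4,4\}$, and that constraint is what produces the dimensional restriction.
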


\begin{remark}
  For  bi-Laplacian Schr\"odinger operator,   Mizutani and  Yao \cite{MY202410} proved that the free flow admits $L^2$ local decay estimate:
 \begin{align}\label{localdecay}
     \int_0^\infty \|\langle x\rangle^{-\delta}e^{-itH_0}\psi_0\|_{L_x^2}^2dt\lesssim\|\psi_0\|_{L_x^2}^2,\ \text{for all }\delta>2,\ n>4
 \end{align}
and $L^2$ local smoothing estimate:
\begin{align}\label{localsmooth}
    \int_0^\infty \|\langle x\rangle^{-\frac{1}{2}-\epsilon}|p|^{\frac{3}{2}}e^{-itH_0}\psi_0\|_{L_x^2}^2dt\lesssim\|\psi_0\|_{L_x^2}^2,\ \text{for any } \epsilon>0,\ n>4.
\end{align}
Thus the conclusion of Theorem \ref{Thm1.1} is not sharp. Analogously to the free flow estimate \eqref{localdecay}, we guess we can optimize the exponent in \eqref{dis: eq} from $\frac{5}{2}$ to 2. Whether this improvement is justified and its potential implications for our hypotheses remain formally unverified. We leave a rigorous verification of this improvement to future work.

For classical  Schr\"odinger operator, the $L^2$ local smoothing estimate was first established
by Kato and Yajima \cite{MR1061120} by using uniform resolvent estimates, based on the smooth perturbation method. Later, it was reproved by Ben–Artzi and Klainerman \cite{MR1226935} by using the spectral
measure integral.

\end{remark}

Next, we introduce the extra assumptions needed to extend Theorem \ref{Thm1.1} to more general cases.  These focus on the complementary projection $P_b(t)=I - P_c(t)$. The existence of $P_c(t)$ implies the existence of $P_b(t)$.
Concretely, we require that
\begin{assumption}\label{asp:3}
    For all $\epsilon \in (0,\frac{1}{4})$ and some $\delta >\frac{5}{2}$, $P_b(t)$ satisfies
    \begin{align}\label{con1}
        \sup_{t\in(0,\infty)} \left\| P_b(t)  \langle x \rangle^{\delta}\right\|_{2\to2}\lesssim_{\delta}1
    \end{align}
    and
    \begin{align}\label{con2}
        \sup_{t\in(0,\infty)} \left\| P_b(t) |p|^{-\frac{3}{2}}\langle x \rangle^{\frac{1}{2}+\epsilon}\right\|_{2\to2}\lesssim_{\epsilon}1.
    \end{align}
\end{assumption}
These conditions come up when estimating the non‐scattering part $P_b(t)\,e^{-itH_0}f$, $f\in L^2_x.$ Indeed, we can insert weights in two ways:$$
P_b(t)e^{-itH_0}f
= P_b(t)\,\langle x\rangle^{\delta}\,\langle x\rangle^{-\delta}e^{-itH_0}f,
$$
which ties into the standard local decay \eqref{localdecay}, or
$$
P_b(t)e^{-itH_0}f
= P_b(t)\,|p|^{- \frac{3}{2}}\langle x\rangle^{ \frac{1}{2}+\epsilon}
\,\langle x\rangle^{- \frac{1}{2}-\epsilon}|p|^{ \frac{3}{2}}e^{-itH_0}f,
$$
which uses the local smoothing estimate \eqref{localsmooth}.

In fact, under Assumption \ref{asp:1}, we show in Proposition \ref{Prop4.12} that for all
$\delta\in[0,\min\{\tfrac n2-4,4\}]$ and $n\ge9$,
$$
\sup_{t\in\mathbb R}\bigl\|P_b(t)\,\langle x\rangle^{\delta}\bigr\|_{2\to2}
\;\lesssim_{\delta,n}\;1.
$$
The proof reveals that any non-scattering state must be localized at low frequencies, especially near zero energy.

\begin{theorem}\label{Thm1.3}
    If Assumptions \ref{asp:1}, \ref{asp:2} and \ref{asp:3} hold, then \eqref{dis: eq} is valid for $n\geq 9$.

\end{theorem}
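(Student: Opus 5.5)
Theorem \ref{Thm1.3} follows the same scheme as Theorem \ref{Thm1.1}. The only place dimension $n\ge 14$ entered there was the control of the non-scattering part $P_b(t)$. There only the weighted bound of Proposition \ref{Prop4.12}, with $\delta\le\min\{\frac n2-4,4\}$, was available. Requiring $\delta>\frac52$ via that route forces $n$ large, and the smoothing-type bound needed in addition is not available unconditionally. Assumption \ref{asp:3} replaces exactly these inputs, so the plan is to redo the argument with \eqref{con1}--\eqref{con2} in place of Proposition \ref{Prop4.12}.

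\textbf{Step 1: reduce to the free flow via the wave operator.} Since $P_c(t_0)=\Omega_+(t_0)\Omega_+^*(t_0)$ and $U(t,t_0)\Omega_+(t_0)=\Omega_+(t)e^{-i(t-t_0)H_0}$, set $f=\Omega_+^*(t_0)\psi_0$, so that $\|f\|_2\le\|\psi_0\|_2$. Then
\[
U(t,t_0)P_c(t_0)\psi_0=\Omega_+(t)e^{-i(t-t_0)H_0}f .
\]
Write $\Omega_+(t)=I+(\Omega_+(t)-I)$. The term $\langle x\rangle^{-\eta}e^{-i(t-t_0)H_0}f$ is handled by \eqref{localdecay}, since $\eta>\frac52>2$.

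\textbf{Step 2: Duhamel expansion of the remainder.} Expand
\[
(\Omega_+(t)-I)g=-i\int_0^\infty U(t,t+s)V(t+s)e^{-isH_0}g\,ds
\]
and split it, using $I=P_c+P_b$ at the appropriate time, into a scattering piece and a bound-state piece. For the scattering piece, use the channel decomposition and propagation estimates from \cite{SWWZ}. In particular, the phase-space cutoffs $F_{\le1}(|x|/v^\alpha)$ localize the interaction region, and the decay $\langle x\rangle^{-\sigma}$ of $V$ allows a Kato-smoothing argument (a $TT^*$ and Christ--Kiselev type bound) in which $\langle x\rangle^{-\eta}$ is moved across $V$. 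The quasi-periodicity in Assumption \ref{asp:2} enters here, as in Theorem \ref{Thm1.1}, to control time-averaged resonant terms. These estimates are dimension-independent for $n\ge 9$ given $\sigma$ as in Assumption \ref{asp:1}, so I would cite them verbatim from the proof of Theorem \ref{Thm1.1}.

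\textbf{Step 3: the bound-state piece.} Terms of the form $\langle x\rangle^{-\eta}P_b(t)e^{-isH_0}h$ are estimated in two ways. In the low-decay regime, factor $P_b(t)=P_b(t)\langle x\rangle^{\delta}\langle x\rangle^{-\delta}$ and apply \eqref{con1} together with \eqref{localdecay}. In the regime where derivatives must be absorbed, factor through $|p|^{-3/2}\langle x\rangle^{1/2+\epsilon}$ and apply \eqref{con2} together with \eqref{localsmooth}. Summing over the Duhamel structure, with Minkowski in $s$ and the decay of $V$, gives the square-integrable bound in $t$.

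\textbf{Step 4: the main obstacle.} The hardest step is controlling the piece where $P_b$ appears inside the Duhamel integral. Step 3 only gives boundedness of $P_b$ with weights, not decay in time, so the $s$-integral must converge from $V$ and free dispersion alone. I would first try pairing \eqref{con1} with the pointwise decay $\|\langle x\rangle^{-\sigma}e^{-isH_0}\langle x\rangle^{-\sigma}\|\lesssim\langle s\rangle^{-n/4}$, which is integrable for $n\ge 9$. The remaining $L^2_t$ integrability then comes from \eqref{localdecay} and \eqref{localsmooth}, applied after a duality (Kato smoothing) step.
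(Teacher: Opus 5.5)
There is a genuine gap, and it is in Step 2, not Step 4. Your splitting is essentially tautological. Since $P_b(t)\Omega_+(t)=0$, the bound-state piece is just $P_b(t)(\Omega_+(t)-I)e^{-i(t-t_0)H_0}f=-P_b(t)e^{-i(t-t_0)H_0}f$. That piece is disposed of in one line by \eqref{con1} and \eqref{localdecay}, so what you call the main obstacle is trivial. The scattering piece, by contrast, is
\[
P_c(t)(\Omega_+(t)-I)e^{-i(t-t_0)H_0}f=U(t,t_0)P_c(t_0)\psi_0-P_c(t)e^{-i(t-t_0)H_0}f,
\]
which contains the very quantity you want to estimate. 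Any Kato-smoothing/$TT^*$/Christ--Kiselev treatment of $\langle x\rangle^{-\eta}\int_t^\infty U(t,\tau)P_c(\tau)V(\tau)e^{-i\tau H_0}f\,d\tau$ needs the map $G\mapsto\langle x\rangle^{-\eta}\int_t^\infty U(t,\tau)P_c(\tau)\langle x\rangle^{-\eta}G(\tau)\,d\tau$ to be bounded on $L^2_tL^2_x$. That is essentially the local decay of the perturbed flow you are trying to prove, and since $V$ is not small, no bootstrap closes it. The estimates you propose to cite ``verbatim from the proof of Theorem \ref{Thm1.1}'' do not exist there. Quasi-periodicity is also not used to control time-averaged resonant terms.

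The paper breaks the circularity with a different decomposition.
\begin{enumerate}
\item Write $\psi(t)=P^+\psi(t)+P^-\psi(t)$ and use $P^\pm\psi(t)=P^\pm e^{-itH_0}\Omega_\pm^*\psi_0+P^\pm(1-\Omega_\pm^*(t))\psi(t)$. This gives $\psi(t)=\psi_{f,1}(t)+\psi_{f,2}(t)+C(t)\psi(t)$ with $C(t)$ compact (Lemma \ref{comp1}).
\item Split $C(t)=C_M(t)+C_r(t)+C(t)P_b(t)$. The last term annihilates $\psi(t)$. The term $C_M(t)\psi(t)=C(t)\Omega_+(t)F_M(x,p)e^{-itH_0}\Omega_+^*\psi_0$ is locally decaying because of the phase-space localization $F_M(x,p)$.
\item The key input is the uniform smallness $\sup_t\|C_r(t)\|<\tfrac12$ (Proposition \ref{smallforC_r}). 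This is where quasi-periodicity enters, through compactness of the torus, strong continuity of $\tilde P_b(\mathbf s)$ (Proposition \ref{pbcon}) and $\dim P_b<\infty$ (Lemma \ref{pbfin}).
\item One then inverts $1-C_r(t)$. The weight $\langle x\rangle^{-\eta}$ is passed through $(1-C_r(t))^{-1}$ by a finite Neumann series, using the weight-reduction estimate of Proposition \ref{Prop4.5} together with \eqref{localsmooth}.
\end{enumerate}
Assumption \ref{asp:3} enters exactly when bounding the $C(t)P_b(t)$ component of $C_r(t)$ with weights. It supplies $\|P_b(t)\langle x\rangle^{\alpha}\|$ for some $\alpha>\frac52$ in \eqref{C_r3} and $\|P_b(t)|p|^{-3/2}\langle x\rangle^{1/2+\epsilon}\|$ in \eqref{5.17}. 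It replaces Proposition \ref{Prop4.12}, which only allows $\alpha\le\frac n2-4$ and therefore forced $n\ge 14$. You are right that this is the only dimension-sensitive point, but you need the $(1-C_r(t))^{-1}$ mechanism to have anything to plug Assumption \ref{asp:3} into.
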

\begin{remark}
    The improvement in Theorem \ref{Thm1.3} (lowering the dimension threshold from $n \geq 14$ to $n \geq 9$) is a consequence of the additional structural information provided by Assumption \ref{asp:3}, which compensates for the weaker dispersive decay in lower dimension cases by leveraging localized bounds on the bound state projection.
\end{remark}

\subsection{Applications}
The Strichartz estimates for the $H_0=(-\Delta)^2$ and $H=H_0+V(x)$ have been prove in \cite{BENARTZI200087} and \cite{FSY2018} respectively. As an application, we obtain the Strichartz estimates for $H=H_0+V(x,t)$ with quasi-periodic potential by above theorems. The Strichartz estimates state that
\begin{align}\label{Strich1}
    \|U(t,0)P_c(0)f\|_{L^q_t L^r_x }\leq C \|f\|_{L^2_x},
\end{align}
for $2\leq q\leq \infty,$ $\frac{n}{r}+\frac{4}{q}=\frac{n}{2}$. In particularly, $r=\frac{2n}{n-4}$ when $q=2.$
\begin{theorem}\label{Thm1.5}
    If $V(x,t)$ satisfies Assumptions \ref{asp:1} and \ref{asp:2}, then Strichartz estimates \eqref{Strich1} hold for all $n\geq 14.$
\end{theorem}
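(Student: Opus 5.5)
We derive Theorem \ref{Thm1.5} from the local decay estimate of Theorem \ref{Thm1.1} by a Duhamel argument in the spirit of Rodnianski--Schlag, using the free Strichartz estimates of \cite{BENARTZI200087} and their dual inhomogeneous form. We take $t_0=0$ and set $\psi(t)=U(t,0)P_c(0)f$. Since $P_c(t)=\Omega_+(t)\Omega_+^*(t)$ is intertwined by the flow, $U(t,0)P_c(0)=P_c(t)U(t,0)$, so $\psi(t)$ remains in the scattering subspace for all $t\ge0$. Duhamel's formula with respect to $H_0$ gives
\begin{align*}
\psi(t)=e^{-itH_0}P_c(0)f-i\int_0^t e^{-i(t-s)H_0}V(s)\psi(s)\,ds .
\end{align*}
The first term is controlled by the free estimate: $\|e^{-itH_0}P_c(0)f\|_{L^q_tL^r_x}\lesssim\|P_c(0)f\|_{L^2}\le\|f\|_{L^2}$ for every admissible pair $(q,r)$ with $\frac nr+\frac4q=\frac n2$.

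For the Duhamel term we factor the potential as $V(s)=\langle x\rangle^{-\eta}\cdot\langle x\rangle^{\eta}V(s)$ with $\eta>\frac52$ chosen so that $\langle x\rangle^{2\eta}V\in L^\infty_{x,t}$; this is possible since $\sigma>20$ by Assumption \ref{asp:1}. We write $B(s)=\langle x\rangle^{\eta}V(s)\langle x\rangle^{\eta}$, which is bounded on $L^2$ uniformly in $s$. Then
\begin{align*}
\int_0^t e^{-i(t-s)H_0}V(s)\psi(s)\,ds=\int_0^t e^{-i(t-s)H_0}\langle x\rangle^{-\eta}\,G(s)\,ds,\qquad G(s)=B(s)\langle x\rangle^{-\eta}\psi(s).
\end{align*}
Theorem \ref{Thm1.1} gives $\|G\|_{L^2_sL^2_x}\lesssim\|\langle x\rangle^{-\eta}\psi\|_{L^2_tL^2_x}\lesssim\|f\|_{L^2}$. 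It therefore suffices to prove the inhomogeneous estimate
\begin{align*}
\Bigl\|\int_0^t e^{-i(t-s)H_0}\langle x\rangle^{-\eta}G(s)\,ds\Bigr\|_{L^q_tL^r_x}\lesssim\|G\|_{L^2_tL^2_x}.
\end{align*}

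We first drop the restriction $s<t$. The dual of the free local decay estimate \eqref{localdecay}, valid since $\eta>2$, gives $\bigl\|\int_0^\infty e^{isH_0}\langle x\rangle^{-\eta}G(s)\,ds\bigr\|_{L^2}\lesssim\|G\|_{L^2_tL^2_x}$. Applying the homogeneous free Strichartz estimate to this $L^2$ function bounds the untruncated operator $G\mapsto e^{-itH_0}\int_0^\infty e^{isH_0}\langle x\rangle^{-\eta}G(s)\,ds$ from $L^2_tL^2_x$ to $L^q_tL^r_x$. To restore the retarded truncation $s<t$ we use the Christ--Kiselev lemma, which applies whenever $q>2$.

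The main obstacle is the endpoint $q=2$, $r=\frac{2n}{n-4}$, where Christ--Kiselev fails. There we plan to follow the Rodnianski--Schlag endpoint argument. The retarded operator is bounded $L^2_tL^2_x\to L^2_tL^{r}_x$ by the Keel--Tao endpoint inhomogeneous estimate for $e^{-itH_0}$, pairing against $\langle x\rangle^{-\eta}$. This requires $\langle x\rangle^{-\eta}$ to map $L^2$ into the dual endpoint space $L^{r'}_x$, with $r'=\frac{2n}{n+4}$, which holds by H\"older's inequality since $\langle x\rangle^{-\eta}\in L^{n/2}$ once $\eta>4$. If $\eta$ cannot be taken that large, we instead interpolate the inhomogeneous Keel--Tao estimate for $H_0$ with the $L^2$-smoothing estimate. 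Combining the three bounds gives \eqref{Strich1} for all admissible pairs, in every dimension $n\ge14$ in which Theorem \ref{Thm1.1} is available.
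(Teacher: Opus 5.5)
Your proposal is correct and is essentially the paper's argument: Duhamel with respect to $e^{-itH_0}$, the free endpoint inhomogeneous estimate from $L^2_tL^{\frac{2n}{n+4}}_x$ into $L^2_tL^{\frac{2n}{n-4}}_x$, H\"older to pair a weight with $V$, and the local decay of Theorem~\ref{Thm1.1}. The paper writes this as $\|\langle x\rangle^{3}V\|_{L^\infty_tL^{n/2}_x}\|\langle x\rangle^{-3}U(t,0)P_c(0)f\|_{L^2_{t,x}}$, proves only the endpoint, and obtains the other pairs by interpolating with the trivial $L^\infty_tL^2_x$ bound. Your Christ--Kiselev step and the hedge on $\eta$ are unnecessary. $\langle x\rangle^{-\eta}\in L^{n/2}(\mathbb{R}^n)$ already for $\eta>2$ (not $\eta>4$), so any $\eta\in(\frac52,\frac{\sigma}{2}]$ closes the endpoint directly. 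The Keel--Tao inhomogeneous estimate with the endpoint dual pair on the right then covers every admissible $(q,r)$ at once.
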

\begin{corollary}\label{cor}
    If Assumptions \ref{asp:1}, \ref{asp:2} and \ref{asp:3} hold, then the Strichartz estimates \eqref{Strich1} hold for all $n\geq 9.$
\end{corollary}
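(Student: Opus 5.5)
The statement to prove is Corollary \ref{cor}: Strichartz estimates \eqref{Strich1} for $n\ge 9$ under Assumptions \ref{asp:1}--\ref{asp:3}. The plan is to repeat the argument for Theorem \ref{Thm1.5}, with Theorem \ref{Thm1.3} in place of Theorem \ref{Thm1.1}. The only new input is the local decay estimate \eqref{dis: eq} in dimensions $9\le n<14$, which Theorem \ref{Thm1.3} supplies.

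First write the solution by Duhamel around the free flow:
\[
U(t,0)P_c(0)f = e^{-itH_0}P_c(0)f - i\int_0^t e^{-i(t-s)H_0}V(s)U(s,0)P_c(0)f\,ds .
\]
The free term satisfies \eqref{Strich1} by the free Strichartz estimates of \cite{BENARTZI200087}, since $\|P_c(0)f\|_{L^2}\le\|f\|_{L^2}$.

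For the inhomogeneous term, factor the potential as
\[
V(x,s)=\langle x\rangle^{\eta}V(x,s)\,\langle x\rangle^{-\eta}, \qquad \eta>\tfrac52 .
\]
By Assumption \ref{asp:1}, $\langle x\rangle^{\eta}V\in L^\infty_{x,t}$, and it decays like $\langle x\rangle^{\eta-\sigma}$. I would then use the dual homogeneous estimate together with the Christ--Kiselev lemma, or more directly the Kato-type smoothing bound
\[
\Bigl\|\int_0^t e^{-i(t-s)H_0}\langle x\rangle^{-\eta'}F(s)\,ds\Bigr\|_{L^q_tL^r_x}\lesssim \|F\|_{L^2_{t,x}}, \qquad \eta'>2 .
\]
This bound follows from the free local decay/smoothing estimate \eqref{localdecay} of \cite{MY202410}, by a $TT^*$ argument combined with Christ--Kiselev, which is valid for $q>2$. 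Apply it with
\[
F(s)=\langle x\rangle^{\eta'}V(s)\,U(s,0)P_c(0)f,
\]
and bound
\[
\|F\|_{L^2_{t,x}}\lesssim \|\langle x\rangle^{\eta'+\eta}V\|_{L^\infty}\,\|\langle x\rangle^{-\eta}U(\cdot,0)P_c(0)f\|_{L^2_{t,x}} .
\]
Since $\sigma>20$ comfortably exceeds $\eta+\eta'$, Theorem \ref{Thm1.3} with $t_0=0$ controls this by $\|f\|_{L^2}$ for every $n\ge 9$.

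The main obstacle is the endpoint $q=2$, $r=\frac{2n}{n-4}$, where Christ--Kiselev fails. There I would follow the Rodnianski--Schlag approach \cite{RS2004}. Its key step is to express the double endpoint estimate through the retarded inhomogeneous Strichartz estimate for the free flow with weighted $L^2_{t,x}$ input, as in Keel--Tao. That estimate holds because the free weighted estimate $\|\langle x\rangle^{-\eta'}\int_{s<t}e^{-i(t-s)H_0}G\,ds\|$ is handled by the endpoint free estimate paired with \eqref{localsmooth}. Since Theorem \ref{Thm1.5} has already been derived from Theorem \ref{Thm1.1} in exactly this way, and that derivation used $n$ only through the local decay estimate, the corollary follows verbatim with $n\ge 9$.
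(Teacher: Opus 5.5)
Your overall reduction matches the paper's: the corollary is the proof of Theorem \ref{Thm1.5} rerun with Theorem \ref{Thm1.3} in place of Theorem \ref{Thm1.1}. The dimension enters only through the local decay estimate \eqref{dis: eq}, plus $n>4$ for the free estimates.

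Where you diverge is the treatment of the Duhamel term, and there you make the problem harder than it is. The paper handles the endpoint $(2,\frac{2n}{n-4})$ directly. By H\"older,
\[
\|V(t)U(t,0)P_c(0)f\|_{L^{\frac{2n}{n+4}}_x}\le \|\langle x\rangle^{3}V\|_{L^\infty_tL^{\frac n2}_x}\,\|\langle x\rangle^{-3}U(t,0)P_c(0)f\|_{L^2_x}.
\]
It then applies the retarded double-endpoint inhomogeneous estimate for $e^{-itH_0}$ (Keel--Tao), which holds because the dispersive rate is $n/4>1$ and already contains the restriction $s<t$. Finally it uses \eqref{dis: eq} with $\eta=3$. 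The remaining admissible pairs then follow from the trivial $L^\infty_tL^2_x$ bound by interpolation.

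The same H\"older observation proves your weighted bound at every admissible pair at once, endpoint included. Indeed, $\langle x\rangle^{-\eta'}\in L^{n/2}_x$ exactly when $\eta'>2$, so $\langle x\rangle^{-\eta'}F\in L^2_tL^{\frac{2n}{n+4}}_x$ whenever $F\in L^2_{t,x}$. Consequently:
\begin{itemize}
\item The endpoint is not the main obstacle.
\item Neither the $TT^*$/Christ--Kiselev step for $q>2$ nor a Rodnianski--Schlag-type endpoint argument is needed.
\item Your sketched justification for the endpoint (the free estimate ``paired with \eqref{localsmooth}'', with the weight placed on the output) is imprecise. The local smoothing estimate plays no role; the H\"older step above is what makes the endpoint work.
\end{itemize}

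Your $q>2$ argument is valid, just more machinery than necessary. It uses the dual of \eqref{localdecay} plus free Strichartz for the untruncated operator, then Christ--Kiselev.

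Finally, Theorem \ref{Thm1.5} is not derived in the paper via Christ--Kiselev or a Rodnianski--Schlag endpoint argument, but via the direct H\"older route above. Your closing claim that it was derived ``in exactly this way'' should therefore be corrected, though your conclusion that the corollary follows with $n\ge 9$ is right.
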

\begin{remark}
  Theorem \ref{Thm1.5} and Corollary \ref{cor} establish the Strichartz estimates for the quasi-periodic potential. As for the Strichartz estimates with the time-independent potential constitutes a comparatively simpler problem, for which numerous results are already established in \cites{JSS1990,RS2004,MY202410}.
\end{remark}

\subsection{Notations}

Throughout this paper, let $L^p_{x,\sigma}$ denote the weighted $L^p_x(\mathbb{R}^n)$ space by the definition $$L^p_{x,\sigma}(\mathbb{R}^n)=\{f,\langle x\rangle^{\sigma}f\in L^p_x(\mathbb{R}^n) \}$$with the norm$$\|f\|_{L^p_{x,\sigma} (\mathbb{R}^n)}=\|\langle x\rangle^{\sigma}f\|_{L^p_x(\mathbb{R}^n)}.$$ And we define the space

$$\mathcal{H}_{F}=\{f(x,\mathbf{s}): f(x,\mathbf{s})\in L^2_xL^2_{\mathbf{s}}(\mathbb{R}^n\times \mathbb{T}_1\times \cdots\times \mathbb{T}_N )\}$$
with norm
$$\|f(x,\mathbf{s})\|^2_{\mathcal{H}_{F}}=\int_{\mathbb{T}_1\times \cdots\times \mathbb{T}_N}\int_{\mathbb{R}^n}|f(x,\mathbf{s})|^2 dxd\mathbf{s}.$$

We use $C$ to denote a generic constant, which may vary from line to line. We write $A \lesssim B$ or $A \gtrsim B$ to indicate that $A \leq C B$ or $C A \geq B$, respectively, for some constant $C > 0$ which is independent of relevant parameters. If the implicit constant $C_a > 0$ depends on a parameter $a$, we write as $A \lesssim_a B$ or $A \gtrsim_a B$. For notational simplicity, we shall write
$$
\quad\|T\|_{2\to2}
$$
in place of
$$
\quad\|T\|_{L^2_x(\mathbb{R}^n)\to L^2_x(\mathbb{R}^n)}
$$
for any bounded operator $T$ from $L^2$ to $L^2$.
 The Fourier transform and its inverse are defined as follows:
$$
\widehat{f}(\xi) \coloneqq \frac{1}{(2\pi)^{n/2}} \int e^{-ix \cdot \xi} f(x)\ dx
$$
and
$$
f(x) \coloneqq \frac{1}{(2\pi)^{n/2}} \int e^{ix \cdot \xi} \ \widehat{f}(\xi) \ d\xi
$$
for $f \in L^2_x(\mathbb{R}^n)$. Let $p=-i\nabla_x$,  then we define the differential operator $F(p)$ by
    $$F(p) f(x)=\frac{1}{(2\pi)^{n/2}} \int e^{ix \cdot \xi} \ F(\xi)\widehat{f}(\xi) \ d\xi.$$

For $m>0$, let $F_{\geq m}(k)=F(\frac{k}{m})$ denote a smooth  characteristic functions  satisfying
\begin{align}
F(\lambda)=\begin{cases}1,& \text{when }\lambda \geq 1,\\
0, & \text{when }\lambda <\frac{1}{2},\end{cases}
\end{align}
and   $F_{<m}(k)=1-F_{\geq m}(k)$ denote the complement of $F_{\geq m}(k).$ Similarly, we can also define the characteristics functions $F_{>m}(k)$ and $F_{\leq m}(k)$.

\subsection{Strategies}
We take $t_0 = 0$ and write $P_c \equiv P_c(0)$ for simplicity. The case when $t_0\neq 0$ can be treated similarly. Inspired by  \cite{sw20225}, we use the compactness argument, the notion of incoming/outgoing waves (see Definition \ref{def2.1}) and the identity $P_c(t)=\Omega_{+}(t)\Omega^*_{+}(t)$(AC) to prove our theorems. For the reader's convenience, we briefly outline the main argument below.
 We give the M\"oller wave operators as introduced in \cite{SWWZ}: for $\alpha\in(0,\frac12-\frac 2n)$, $n\geq 5$,
\begin{align}\label{WaveO-def}
    \Omega_{ \pm}^*(t)\coloneqq s\text{-}\lim_{v\rightarrow\pm\infty}F_{\leq 1}\left(\frac{|x|}{v^\alpha} \right)e^{ivH_0}U(t+v,t),\quad \mbox{on }\ L^2_x(\mathbb{R}^{n}).
\end{align}
Here, we use notation $\Omega_{ \pm}^*(t)$ since $\Omega_{ \pm}^*(t)$ are independent of $\alpha$; see (1.34) of \cite{SWWZ}. The M\"oller wave operators $\Omega_{ \pm}^*(t)$ satisfy the intertwining property (see Lemma \ref{LemmaA.1} for proof):
   \begin{align}
    \Omega^*_{\pm}(t)U(t,0)=e^{-itH_0}\Omega_{\pm}^*(0),
\end{align} in the weak topology of $L^2_x(\mathbb{R}^n)$.

For the simplicity, let's consider $\psi_0=P_c\psi_0$.

\textbf{Step 1.} By   the  incoming/outgoing decomposition, we split   $\psi(t)=U(t,0)P_c\psi_0$  as:
\begin{align}\label{decomforsolu}
 \psi(t)&=P^+\psi(t)+P^-\psi(t)\nonumber\\
&\coloneqq \psi_{f,1}(t)+\psi_{f,2}(t)+C(t)\psi(t)
,\end{align}
where
$$\psi_{f,1}(t)=P^+e^{-itH_0}\Omega^*_{+}(0)\psi_0, \ \psi_{f,2}(t)=P^-e^{-itH_0}\Omega_{ -}^*(0)\psi_0$$
and
$$C(t)=P^+(1-\Omega^*_{+}(t))+P^-(1-\Omega^*_{-}(t)).$$

We claim that $C(t)$ is compact on  $L^2_x$, see Lemma \ref{comp1}. By estimates $\|P^{\pm}\|_{2\to2}\leq1$ and $\|\Omega_{\pm}^*(t)\|_{2\to2}\leq1$, $C(t)$ satisfies \begin{align}
    \sup_{t\in(0,\infty)}\|C(t)\|_{2\to2}\leq4.
\end{align}

Next, we use the equality $P_c(t)=\Omega_{+}(t)\Omega^*_{+}(t)$ to decompose $C(t) $ further. We define the space-time cutoff as
    \begin{equation}
      F_{M}(x,p)\coloneqq F_{\leq M}(|x|)F_{\geq \frac{1}{M}}(|p|).\label{FM}
    \end{equation}
Combined with $P_b(t)\coloneqq1-P_c(t),$ the operator $C(t)$ admits the composition
$$C(t)=C_M(t)+C_r(t)+C(t)P_b(t),$$
where components are given by
\begin{align}
    C_M(t)\coloneqq C(t)\Omega_{+}(t)F_{M}(x,p) \Omega^*_{+}(t)
\end{align}
and
\begin{align}
    C_r(t)\coloneqq C(t)\Omega_{+}(t)(1-F_{M}(x,p)) \Omega_{+}^*(t).
\end{align}
Since $P_b(t)U(t,0)P_c=P_b(t)P_c(t)U(t,0)=0,$ \eqref{decomforsolu} is equivalent to
\begin{align}\label{formu-for-U}
     \psi(t)=\psi_{f,1}(t)+\psi_{f,2}(t)+C_M(t)\psi(t)+C_r(t)\psi(t).
\end{align}

\textbf{Step 2.} We  prove  that there exists a constant $M_0>1$ such that whenever $M\geq M_0$,
\begin{equation}\label{condon-r}
    \begin{aligned}
    \sup\limits_{t\in (0,\infty)}\|C_r(t)\|_{2\to2}<\frac{1}{2},
\end{aligned}
\end{equation}
see Proposition \ref{smallforC_r} for the detailed proof, which relies on the key result in Proposition \ref{pbcon} that $$s\text{-}\lim\limits_{\mathbf{u}\to \mathbf{s}}\tilde{P_b}(\mathbf{u})=\tilde{P_b}(\mathbf{s}),$$where $\tilde{P_b}(\mathbf{u})$ is defined in \eqref{3.11}.

\textbf{Step 3.} Thanks to \textbf{Step 2,} $(1-C_r(t))^{-1}$ exists. 
Then moving $C_r(t)\psi(t)$ to the left-hand side of \eqref{formu-for-U} and then applying $(1-C_r(t))^{-1}$ to both sides consequently yields
\begin{align}\label{decomp}
    \psi(t)=(1-C_r(t))^{-1}\Big(\psi_{f,1}(t)+\psi_{f,2}(t)\Big)+(1-C_r(t))^{-1}C_M(t)\psi(t).
\end{align}
The absorption properties of $P_b(t)$ allows us to establish that
\begin{equation}\label{eq:111}
    \begin{aligned}
        \int_0^{\infty} \|\langle x\rangle^{-\eta}(1-C_r(t))^{-1}C_M(t)\psi(t)\|^2_{L_x^2 }dt
        \lesssim&_M\|\psi_0\|_{L_x^2}^2
    \end{aligned}
\end{equation}
and for all $\eta>5/2$,
\begin{align}\label{eq:222}
   \int_0^\infty \|\langle x\rangle^{-\eta}(1-C_r(t))^{-1}(\psi_{f,1}(t)+\psi_{f,2}(t))\|^2_{L_x^2}\lesssim\|\psi_0\|_{L_x^2}^2,\quad n\geq14,
\end{align}
see  Proposition \ref{Prop4.16}. Combining \eqref{eq:111} and \eqref{eq:222}, with \eqref{decomp}, gives Theorem \ref{Thm1.1}.

\subsection{Difficulties in the problems and recent advances in dispersion Theory}


For free bi-Laplacian Schr\"odinger equation, Ben-Artzi, Koch and Saut \cite{MR1226935} prove  the $L^1\to L^\infty$ estimate for $e^{-it\Delta^2}$.  For time-independent  bi-Laplacian Schr\"odinger operator $H=H_0+V(x)$,  Feng, Soffer and Yao \cite{FSY2018} derived the asymptotic expansion of the resolvent $R_V(z)=(H-z)^{-1}$ near the zero threshold under the assumption that zero is a regular point of $H$ for $n\geq 5$ and $n=3$. They further proved that in this regular case, the Kato–Jensen time decay estimate of $e^{-itH}$ is $(1+|t|)^{-n/4}$ for $n\geq 5$, while the $L^1\to L^\infty$ decay rate is $O(|t|^{-1/2})$ for $n=3$ (not optimal).
Erdo\u{g}an–Green–Toprak \cite{EGT2021} (for $n=3$) and Green–Toprak \cite{GT2019} (for $n=4$) derived the asymptotic expansion of $R_V(z)$ near zero in the presence of resonance or eigenvalue, established $L^1-L^\infty$ estimates for each type of zero resonance and proved that the time decay rate is $|t|^{-n/4}$ for $n=3,4$ when zero is regular, while zero-energy resonances alter this decay rate.  Subsequently, Li, Soffer and Yao \cite{LSY2023} confirmed the decay estimates in the 2-dimensional case, while Soffer, Wu and Yao \cite{SWY2022} addressed them in the 1-dimensional case.
These dispersive estimates, in turn, naturally yield local decay bounds through standard weighted energy arguments. However, although the methods employed in these works are rather general, they remain limited when applied to potentials
$V(x,t)$ that are spatially localized and time-periodic. To the best of our knowledge, there are no available results on dispersive estimates, local decay estimates, or Strichartz estimates for the bi-Laplacian Schr\"odinger equation with time-dependent potentials.

Recently, Soffer, Wu, Wu and Zhang \cite{SWWZ} studied the existence of wave operators and the asymptotic completeness of solutions for the bi-Laplacian Schr\"odinger equation with time-dependent potentials. Motivated by \cite{sw20225}, our goal is to extend the local decay  theory of the Schr\"odinger equation to the bi-Laplacian Schr\"odinger case, which is far from trivial. When the potential $V(x,t)$ is time-periodic, we employ Floquet theory to transform the problem into one with a time-independent potential (see Appendix \ref{appendixB}). In this work, we adopt the approach introduced in \cite{sw20225} to establish local decay estimates for solutions of the bi-Laplacian Schr\"odinger equation with quasi-periodic time-dependent potentials.

Time-dependent Hamiltonians $H(t)=H_0+V(x,t)$ lie beyond the reach of classical spectral theory: knowing the instantaneous spectrum of $H(t)$ at each fixed $t$ does not by itself determine the long-time behavior of the corresponding Schr\"odinger evolution.
The proof of local decay estimates is not straightforward and requires overcoming three technical difficulties inherent to the bi-Laplacian operator. These challenges also constitute the principal innovations of this paper. In what follows, we explain each in turn.

First, for the bi-Laplacian Schr\"odinger operator, the validity of \begin{align*}
    \|P^{\pm}F_{\geq 1}(|p|)F_{\leq 2}(|p|)e^{\pm it(-\Delta)^2} \langle x\rangle^{-\delta}\|_{L_x^2}\lesssim_{n,\delta}\frac{1}{\langle t\rangle^{\delta}}
\end{align*} had remained an open problem. In this paper, we provide the first rigorous proof of this result via a refined phase-space analysis and microlocal decomposition. Specifically, we decompose the propagator into a near-field and a far-field based on spatial scale. The desired estimates are ultimately estimated by the non-stationary phase method and the explicit evolution property; see Appendix \ref{Lemma2.4} for details.
Second, the classical route to obtain $$s\text{-}\lim\limits_{\mathbf{u}\to \mathbf{s}}\tilde{P_b}(\mathbf{u})=\tilde{P_b}(\mathbf{s})\quad\mbox{and}\quad \dim\tilde{P_b}(\mathbf{s})<\infty $$ for Schr\"odinger operator relies on $\text{Ran}(\Omega_+(t))=\text{Ran}(\Omega_-(t))$, see \cite{J}. For the bi-Laplacian Schr\"odinger operator, $\text{Ran}(\Omega_+(t))=\text{Ran}(\Omega_-(t))$ may not hold. Our work introduces a new method that circumvents this obstacle; see Proposition \ref{pbcon}.
Finally, a further key technical innovation of our work is to establish a higher-order absorption property of the operator $P_b(t)$ associated with the bi-Laplacian Schr\"odinger operator, which—unlike in the Schr\"odinger case—permits absorption of terms up to order $\langle x\rangle^\delta$, for all $\delta \in[0,\min\{\frac{n}{2}-4,4\}]$ when $n\geq 9$.

For the Schr\"{o}dinger operator, asymptotic completeness enables the decomposition of the state space into bound states and scattering states. Bound states correspond to eigenvalues and eigenfunctions, with negative eigenvalues posing minimal difficulty: \cite{SB1981} established that for potentials decaying sufficiently rapidly at infinity, there exist finitely many negative eigenvalues with smooth and rapidly decaying associated eigenfunctions. Crucially, the existence of a zero eigenvalue may give rise to singular behavior. In \cite{NP2025}, Hayashi and Naumkin gave the existence of the singular solutions for the nonlinear bi-Laplacian Schr\"{o}dinger equation. So the existence of the zero eigenvalues even the positive eigenvalues must be taken into account. In \cites{EGT2021,GT2019}, Erdo\u{g}an, Green and Toprak considered the zero eigenvalue and zero energy resonance condition in 3D and 4D. While Beceanu \cite{BM2016} obtained the dispersive estimate for the Schr\"{o}dinger equation under the assumption that the zero energy exists.

For the bound state corresponding to the Schr\"{o}dinger operator $-\Delta+V+m^2$ with additional mass term, there are numerous results. As demonstrated in \cites{SW1990,SW1998,SW1999} by Soffer and Weinstein, such bound states become unstable under nonlinear perturbations, forming metastable states. Regarding the soliton solutions of the nonlinear Schr\"{o}dinger equation in dimensions $d\geq 3$, extensive work addresses asymptotic completeness and long-time dynamics under varying eigenvalue constraints, see
\cites{sol1,sol2,sol3,sol4,sol5,sol6,sol7,sol8,sol9,sol10,sol11}.

Beyond local decay estimates for bi-Laplacian Schr\"odinger operators and results on bound states/solitons of Schr\"{o}dinger-type operators, significant progress has recently been achieved in the analysis of bi-Laplacian Schr\"odinger equations. Accelerated dispersion decay was established in weighted spaces by \cite{GG2022}, while time decay estimates for the bi-Laplacian Schr\"odinger operator in $\mathbb{R}^3$ were obtained in \cite{LWWY2025}. Concurrently, Galtbayar and Yajima \cite{GY2024} demonstrated $L^p$-boundedness of wave operators for the bi-Schr\"{o}dinger operator on $\mathbb{R}^4$.

\section{Phase Space Decompositions}

\subsection{Incoming/Outgoing Waves}
In this subsection, we establish some preliminary tools for decomposing the evolution operator. The proof of Theorem \ref{Thm1.1}  requires the notions of forward/backward propagation waves and new free channel wave operators, which can be found in \cite{sw20226} and \cite{SW20221}, respectively.
First, we present the definition of the incoming/outgoing wave, which inspired by Mourre \cite{Mr19792}.

Let $S^{n-1}$ denote the unit sphere in $\mathbb{R}^n$. We define a class of functions on the $S^{n-1}$, $\{F^{\hat{h}}(\xi)\}_{\hat{h}\in I}$, as a smooth partition of unity with an index set$$I=\{\hat{h}_1,\cdots,\hat{h}_N\}\subseteq S^{n-1}$$for some $N\in \mathbb{N}^+$, satisfying that there exists $c>0$ such that for every $\hat{h}_i\in I$ and $\xi\in S^{n-1}$,\begin{align}\label{cutF1}
    F^{\hat{h}_i}(\xi)=\begin{cases}
        1\quad \text{when}|\xi-\hat{h}_i|<c, \\
        0\quad \text{when}|\xi-\hat{h}_i|>2c.
    \end{cases}
\end{align}
Given $\hat{h}\in I$, for $\xi\in S^{n-1}$, we define $\tilde{F}:S^{n-1}\to \mathbb{R}$, as another smooth cut-off function satisfying \begin{align}\label{cutF2}
    \tilde{F}^{\hat{h}}(\xi)=\begin{cases}
        1\quad \text{when}|\xi-\hat{h}|<4c, \\
        0\quad \text{when}|\xi-\hat{h}|>8c.
    \end{cases}
\end{align}
For $h\in \mathbb{R}^n-\{0\}$, we define $\hat{h}=h/|h|$ and $\hat{h}=0$ when $h=0$. We also assume that $c>0$, define in \eqref{cutF1} and \eqref{cutF2}, is properly chosen such that for all $x,q\in \mathbb{R}^n-\{0\}$, \begin{align}\label{cutest1}
    F^{\hat{h}}(\hat{x})\tilde{F}^{\hat{h}}(\hat{q})|x+q|\geq F^{\hat{h}}(\hat{x})\tilde{F}^{\hat{h}}(\hat{q})\frac{1}{10}(|x|+|q|),
\end{align}and \begin{align}\label{cutest2}
    F^{\hat{h}}(\hat{x})(1-\tilde{F}^{\hat{h}}(\hat{q}))|x-q|\geq F^{\hat{h}}(\hat{x})(1-\tilde{F}^{\hat{h}}(\hat{q}))\frac{1}{10^6}(|x|+|q|).
\end{align}

Now let we define the incoming/outgoing wave:
\begin{definition}[Cutoff on the forward/backward propagation set]\label{def2.1}
    The smooth cutoffs onto the forward and backward propagation sets, in term of $(r,v)\in \mathbb{R}^{n+n}$, are defined as follows:\begin{align}
        P^+(r,v)=\sum_{b=1}^NF^{\hat{h}_b}(\hat{r})\tilde{F}^{\hat{h}_b}(\hat{v}),
    \end{align}and \begin{align}
        P^-(r,v)=1-P^+(r,v),
    \end{align}respectively.
\end{definition}We define the incoming/outgoing waves $P^\pm$ as \begin{align}
    P^\pm\equiv P^\pm(x,2p).
\end{align}

\begin{lemma}\label{BoundP-Lem}
    For all $-\frac{n}{2}<a<\frac{n}{2},$ $\epsilon\in (0,\frac{1}{4})$ and for all $f\in H^a(\mathbb{R}^n)$, the following inequalities hold:\begin{align}\label{boundp}
        \||p|^aP^\pm f\|_{L_x^2}\lesssim\||p|^af\|_{L_x^2}
    \end{align}and \begin{align}\label{boundx}
        \||x|^aP^\pm f\|_{L_x^2}\lesssim\||x|^af\|_{L_x^2}.
    \end{align}
\end{lemma}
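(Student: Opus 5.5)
Since $P^-=1-P^+$ and the identity is trivially bounded on both weighted spaces, it suffices to prove \eqref{boundp} and \eqref{boundx} for $P^+$. Moreover $P^+$ is a finite sum, so it suffices to treat a single term
$$T=F^{\hat h}(\hat x)\,\tilde F^{\hat h}(\widehat{2p}) = F^{\hat h}(\hat x)\,\tilde F^{\hat h}(\hat p).$$
Here the factor $F^{\hat h}(\hat x)$ acts by multiplication, and $\tilde F^{\hat h}(\hat p)$ is the Fourier multiplier with symbol $m(\xi)=\tilde F^{\hat h}(\xi/|\xi|)$. The factor $2$ plays no role because the symbol is homogeneous of degree $0$. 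The plan is to observe that both factors are homogeneous of degree zero and smooth away from the origin, so each is either a bounded function or a Calderón--Zygmund operator. The range $-\frac n2<a<\frac n2$ is then exactly the range in which the power weight $|x|^{2a}$ belongs to the Muckenhoupt class $A_2$.

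\textbf{Step 1 (the multiplier is a CZ operator).} Write $m(\xi)=c_0+m_1(\xi)$, where $c_0$ is the average of $\tilde F^{\hat h}$ over $S^{n-1}$. Then $m_1$ is smooth on the sphere, homogeneous of degree $0$, and has mean zero. By the classical theory of homogeneous multipliers, $m_1(p)$ is the principal value convolution with $\Omega(y)/|y|^n$, where $\Omega$ is smooth on $S^{n-1}$ and has mean zero. Hence $\tilde F^{\hat h}(\hat p)=c_0+$ a Calderón--Zygmund singular integral with smooth kernel. I expect this step to be the only point requiring care: one has to check that the smoothness of the cutoff \eqref{cutF2} gives $\Omega\in C^\infty(S^{n-1})$. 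This follows by expanding $m_1$ in spherical harmonics, using the rapid decay of the coefficients, and applying the Bochner/Hecke formula for each harmonic.

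\textbf{Step 2 (estimate \eqref{boundx}).} Since $F^{\hat h}(\hat x)$ commutes with $|x|^a$ and is bounded by $1$, we have
$$\||x|^aTf\|_{L^2}\le \||x|^a\tilde F^{\hat h}(\hat p)f\|_{L^2}.$$
The weight $w(x)=|x|^{2a}$ is in $A_2(\mathbb R^n)$ precisely when $-n<2a<n$. The Coifman--Fefferman weighted bound for Calderón--Zygmund operators then gives $\|\tilde F^{\hat h}(\hat p)f\|_{L^2(w)}\lesssim\|f\|_{L^2(w)}$, which is \eqref{boundx}.

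\textbf{Step 3 (estimate \eqref{boundp}).} Now $\tilde F^{\hat h}(\hat p)$ commutes with $|p|^a$, so
$$\||p|^aTf\|_{L^2}\le \||p|^aF^{\hat h}(\hat x)f\|_{L^2}.$$
Passing to the Fourier side via Plancherel, $|p|^a$ becomes multiplication by $|\xi|^a$. The multiplication by $F^{\hat h}(\hat x)$ becomes the Fourier multiplier $F^{\hat h}(-\widehat{\nabla_\xi}/i)$, i.e.\ the operator with symbol $F^{\hat h}(\hat y)$ in the dual variable. By Step 1 applied to $F^{\hat h}$, which is smooth on the sphere by \eqref{cutF1}, this is again a constant plus a Calderón--Zygmund operator. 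The same $A_2$ argument with weight $|\xi|^{2a}$ gives
$$\||\xi|^a\,\widehat{F^{\hat h}(\hat x)f}\|_{L^2}\lesssim \||\xi|^a\hat f\|_{L^2},$$
which is \eqref{boundp}.

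The hypothesis $f\in H^a$ and the constraint $\epsilon\in(0,\frac14)$ are only needed to ensure the norms make sense. The endpoint restriction $|a|<\frac n2$ is sharp for this method, since it is precisely the $A_2$ condition. The implicit constants depend only on $n$, $a$, and the finitely many cutoffs in Definition \ref{def2.1}.
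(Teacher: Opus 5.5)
Your route is the same as the paper's: reduce to a single term $F^{\hat h}(\hat x)\tilde F^{\hat h}(\hat p)$, recognize the degree-zero homogeneous multiplier as a Calder\'on--Zygmund operator, and use that $|x|^{2a}\in A_2$ exactly when $|a|<\frac n2$. You are more careful than the paper in two places. First, you split off the spherical mean $c_0$; the paper writes $\tilde F^{\hat h_b}(\hat p)f=\int K(x-y)f(y)\,dy$ with a mean-zero standard kernel, which silently drops the $c_0\,\delta$ part of $\mathcal F^{-1}(\tilde F^{\hat h_b})$. Second, you actually carry out the $|p|^a$ estimate on the Fourier side, whereas the paper only says ``similarly''.

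There is one incorrect step in your Step 3. The inequality $\||p|^aTf\|_{L^2}\le\||p|^aF^{\hat h}(\hat x)f\|_{L^2}$ is false as written. In $T=F^{\hat h}(\hat x)\tilde F^{\hat h}(\hat p)$ the multiplier $\tilde F^{\hat h}(\hat p)$ acts first, so it cannot be commuted past $F^{\hat h}(\hat x)$ to reach $|p|^a$. Already for $a=0$, take $f$ supported in a cone where $F^{\hat h}(\hat x)=0$: the right side vanishes, but the left side generally does not. The fix uses only what you proved. Apply your Fourier-side bound $\||p|^aF^{\hat h}(\hat x)g\|_{L^2}\lesssim\||p|^ag\|_{L^2}$, which holds for arbitrary $g$, to $g=\tilde F^{\hat h}(\hat p)f$. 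Then use $\||p|^a\tilde F^{\hat h}(\hat p)f\|_{L^2}=\|\tilde F^{\hat h}(\hat p)|p|^af\|_{L^2}\le\||p|^af\|_{L^2}$. With this reordering, and the analogous correct ordering you already used in Step 2, the proof is complete.
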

\begin{proof}
    We only need to prove that $$\||x|^aP^+f\|_{L_x^2}\lesssim\||x|^af\|_{L^2}.$$Since$$\||x|^aP^-f\|_{L_x^2}\leq \||x|^aP^+f\|_{L_x^2}+\||x|^af\|_{L_x^2}\lesssim\||x|^af\|_{L_x^2},$$the estimate \eqref{boundx} follows once the above inequality is established.

    We use the knowledge related to singular integrals to estimate $\||x|^aP^+f\|_{L^2}$. We can obtain\begin{equation}
        \begin{aligned}
        &\||x|^aP^+f\|_{L_x^2}\leq\sum_{b=1}^N\||x|^aF^{\hat{h}_b}(\hat{x})\tilde{F}^{\hat{h}_b}(\hat{p})f\|_{L_x^2} =\sum_{b=1}^N\|F^{\hat{h}_b}(\hat{x})|x|^a\tilde{F}^{\hat{h}_b}(\hat{p})f\|_{L_x^2} \\
        &\leq \sum_{b=1}^N\||x|^a\tilde{F}^{\hat{h}_b}(\hat{p})f\|_{L_x^2}
    \end{aligned}
    \end{equation}and \begin{equation}
        \begin{aligned}
            \||x|^a\tilde{F}^{\hat{h}_b}(\hat{p})f\|_{L_x^2}=\left(\int |x|^{2a}|\tilde{F}^{\hat{h}_b}(\hat{p})f|^2dx\right)^{\frac{1}{2}}=\|\tilde{F}^{\hat{h}_b}(\hat{p})f\|_{L_x^2(|x|^{2a})}.
        \end{aligned}
    \end{equation}

    Due to \cite[Definition 5.11 and Theorem 7.11]{JDF}, if $T_b=\tilde{F}^{\hat{h}_b}(\hat{p})$ is a Calder\'on-Zygmund operator and $w=|x|^{2a}\in A_2$ holds, we have finished the proof that $\|\tilde{F}^{\hat{h}_b}(\hat{p})f\|_{L_x^2(|x|^{2a})}\lesssim\|f\|_{L_x^2(|x|^{2a})}=\||x|^af\|_{L_x^2}$.

    Following from the definition of the differential operator, we have \begin{equation}
        \begin{aligned}
            \tilde{F}^{\hat{h}_b}(\hat{p})f=\mathcal{F}^{-1}(\tilde{F}^{\hat{h}_b}(\hat{\xi})\hat{f}(\xi))=\int K(x-y)f(y)dy
        \end{aligned}
    \end{equation}where $K(s)=\mathcal{F}^{-1}(\tilde{F}^{\hat{h}_b})(s)$ is the integral kernel. It is easy to verify that $K$ is a standard kernel and has cancellation $\int_{|x|=1}K(x)d\sigma=0$ and for $-\frac{n}{2}<a<\frac{n}{2}$, $|x|^{2a}\in A_2$ satisfies. Thus, $T_b$ is a Calder\'on-Zygmund operator. As for \eqref{boundp}, we only need to repeat the argument similarly.
\end{proof}


\begin{corollary}\label{boundP2}
 For all $\epsilon\in (0,\frac{1}{4})$, $n\geq 4$ and for all $\langle x\rangle^{-1/2-\epsilon}|p|^{\frac32}f\in L^2(\mathbb{R}^n),$ we have
    \begin{align}\label{boundpx}
        \|\langle x\rangle^{-1/2-\epsilon}|p|^{\frac32}P^{\pm}f\|_{L_x^2}\lesssim\|\langle x\rangle^{-1/2-\epsilon}|p|^{\frac32}f\|_{L_x^2}.
    \end{align}
\end{corollary}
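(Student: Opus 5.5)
We use the explicit form of $P^{+}$ employed in the proof of Lemma \ref{BoundP-Lem}:
\[
P^{+}=\sum_{b=1}^N F^{\hat h_b}(\hat x)\tilde F^{\hat h_b}(\hat p),
\]
since $\widehat{2p}=\hat p$. Because $P^-=1-P^+$, it suffices to treat a single summand $F(\hat x)\tilde F(\hat p)$. Put $w(x)=\langle x\rangle^{-1-2\epsilon}$ and $g=|p|^{\frac32}f$, so the right-hand side of \eqref{boundpx} is $\|g\|_{L^2(w)}$. The multiplier $\tilde F(\hat p)$ commutes with $|p|^{\frac32}$, but $F(\hat x)$ does not. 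We therefore write
\[
|p|^{\frac32}F(\hat x)\tilde F(\hat p)f
=F(\hat x)\tilde F(\hat p)g+\bigl[|p|^{\frac32},F(\hat x)\bigr]|p|^{-\frac32}\,\tilde F(\hat p)g .
\]

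\emph{Main term.} $F(\hat x)$ is a bounded multiplication operator and commutes with $\langle x\rangle^{-\frac12-\epsilon}$. By the argument of Lemma \ref{BoundP-Lem}, $\tilde F(\hat p)$ is a Calder\'on--Zygmund operator. The weight $w$ is in $A_2$: it is comparable to $|x|^{-1-2\epsilon}$ for $|x|\ge1$ and to $1$ for $|x|\le 1$, and $-1-2\epsilon\in(-n,n)$ for $n\ge4$. Hence the weighted theory of \cite[Theorem 7.11]{JDF} gives
\[
\|\langle x\rangle^{-\frac12-\epsilon}F(\hat x)\tilde F(\hat p)g\|_{L^2}\lesssim\|g\|_{L^2(w)} .
\]

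\emph{Commutator term.} This is the main obstacle. The operator
\[
T\coloneqq\bigl[|p|^{\frac32},F(\hat x)\bigr]|p|^{-\frac32}=|p|^{\frac32}F(\hat x)|p|^{-\frac32}-F(\hat x)
\]
is invariant under dilations, since $F(\hat x)$ is homogeneous of degree $0$. Its Schwartz kernel $K(x,y)$ is therefore homogeneous of degree $-n$. Once $T$ is shown to be a Calder\'on--Zygmund operator, the $A_2$ property of $w$ gives $\|\langle x\rangle^{-\frac12-\epsilon}T\tilde F(\hat p)g\|_{L^2}\lesssim\|g\|_{L^2(w)}$, and the corollary follows. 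The steps are:
\begin{enumerate}[label=(\roman*)]
\item $T$ is bounded on $L^2$. Pass to polar coordinates and the Mellin transform in $|x|$. There $|p|^{\pm\frac32}$ act as operators that are diagonal in the Mellin variable, acting on spherical harmonics with explicit Gamma-function symbols, while $F(\hat x)$ acts only on the sphere. A Stein--Weiss type bound then gives boundedness of $|p|^{\frac32}F(\hat x)|p|^{-\frac32}$ on $L^2$.
\item Standard kernel bounds. Write $|p|^{-\frac32}$ as convolution with $c|x-z|^{-n+\frac32}$. Write $|p|^{\frac32}$ as the hypersingular integral $c\int (u(x)-u(z))|x-z|^{-n-\frac32}\,dz$. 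Then, for $x\neq y$,
\[
K(x,y)=c\int \frac{F(\hat x)-F(\hat z)}{|x-z|^{n+\frac32}}\,|z-y|^{-n+\frac32}\,dz .
\]
Using $|F(\hat x)-F(\hat z)|\lesssim\min\{1,|x-z|/|x|\}$, estimate $|K(x,y)|\lesssim|x-y|^{-n}$ and $|\nabla K(x,y)|\lesssim|x-y|^{-n-1}$ away from the origin.
\end{enumerate}

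The delicate region in step (ii) is $|x|\ll|x-y|$, where $F(\hat x)$ is not smooth at $x=0$. There I would first try the cruder bound
\[
|K(x,y)|\lesssim |x|^{-\frac32}|x-y|^{-n+\frac32}.
\]
I would then treat that region by a direct Schur test with weights $|x|^{-\frac12-\epsilon}$ and $|y|^{\frac12+\epsilon}$, rather than through full Calder\'on--Zygmund theory. This uses $\frac12+\epsilon<\frac n2$, which holds for $n\ge 4$. This Schur-test step is where the restrictions $\epsilon<\frac14$ and $n\ge4$ are used.
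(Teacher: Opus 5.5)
Your splitting $|p|^{\frac32}F(\hat x)\tilde F(\hat p)f=F(\hat x)\tilde F(\hat p)g+T\tilde F(\hat p)g$ is the paper's, and so is your treatment of the main term. The routes differ only on the commutator: the paper replaces $[|p|^{\frac32},F(\hat x)]$ by $|x|^{-1}|p|^{\frac12}$ and applies the weighted Hardy inequality, while you use Calder\'on--Zygmund theory plus a Schur test near $x=0$.

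\textbf{The gap is the exponent count in that Schur test.} In the region $\{|x|<c|y|\}$ (where $|x-y|\approx|y|$) your kernel bound is $|x|^{-\frac32}|y|^{-n+\frac32}$. After conjugation by $\langle x\rangle^{-\frac12-\epsilon}$ and $\langle y\rangle^{\frac12+\epsilon}$ it becomes, for $|x|,|y|\gg1$, the positive kernel $|x|^{-2-\epsilon}|y|^{-n+2+\epsilon}\chi_{\{|x|<c|y|\}}$, which is homogeneous of degree $-n$. Such a kernel is bounded on $L^2(\mathbb{R}^n)$ if and only if $\int_{|y|>1}|y|^{-n+2+\epsilon-\frac n2}\,dy<\infty$, i.e. $2+\epsilon<\frac n2$. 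The condition is not $\frac12+\epsilon<\frac n2$: you dropped the $\frac32$ carried by the kernel. So your argument closes only for $n>4+2\epsilon$, i.e. $n\ge5$, and breaks down at $n=4$ for every $\epsilon>0$.

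This cannot be repaired at $n=4$, because the singularity is genuine and the statement itself fails there. For smooth $u$ one has $[|p|^{\frac32},F(\hat x)]u=u(0)\,|p|^{\frac32}F(\hat x)+O(|x|^{-\frac12})$ near $0$. Moreover $|p|^{\frac32}F(\hat x)=|x|^{-\frac32}G(\hat x)$ with $G\not\equiv0$ when $F$ is nonconstant. Hence for generic Schwartz $f$, $|p|^{\frac32}P^+f=|x|^{-\frac32}\Gamma(\hat x)+O(|x|^{-\frac12})$ near the origin, with $\Gamma=\sum_b(\tilde F^{\hat h_b}(\hat p)f)(0)\,G_b\not\equiv0$. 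Since $P^{\pm}$ commutes with dilations, set $f_\lambda=f(\cdot/\lambda)$. Then $\|\langle x\rangle^{-\frac12-\epsilon}|p|^{\frac32}f_\lambda\|_{L^2}^2\approx\lambda^{n-4-2\epsilon}$. On the other hand $\|\langle x\rangle^{-\frac12-\epsilon}|p|^{\frac32}P^+f_\lambda\|_{L^2}^2\gtrsim\lambda^{n-3}\int_0^1\langle\lambda r\rangle^{-1-2\epsilon}r^{n-4}\,dr$, which is $\approx1$ when $n=4$. The ratio therefore grows like $\lambda^{2\epsilon}$.

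The paper's proof reaches $n\ge4$ only because the heuristic $[|p|^{\frac32},F(\hat x)]\approx|x|^{-1}|p|^{\frac12}$ discards exactly this $u(0)|x|^{-\frac32}$ term. The correct range is $n\ge5$; this is harmless for the paper, which only uses the corollary for $n\ge9$.

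For $n\ge5$ your route works and is more careful than the paper's, provided you fix two further points:
\begin{enumerate}[label=(\alph*)]
\item Step (i) as sketched does not give $L^2$-boundedness of $T$. $|x|^{-\frac32}|p|^{-\frac32}$ is bounded, but its inverse $|p|^{\frac32}|x|^{\frac32}$ is not, and $F(\hat x)$ mixes spherical harmonics, so no Stein--Weiss bound applies directly. Use instead that multiplication by $F(\hat x)$ is bounded on $\dot H^{\frac32}(\mathbb{R}^n)$ for $\frac32<\frac n2$.
\item Split off the region $|x|\ll|y|$ with a smooth homogeneous cutoff $\chi(|x|/|y|)$ rather than a sharp one, so that the remaining kernel keeps Calder\'on--Zygmund regularity.
\end{enumerate}
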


\begin{proof}
    Similarly as before, we only need to prove \begin{align}
        \|\langle x\rangle^{-1/2-\epsilon}|p|^{\frac32}F^{\hat{h}_b}(\hat{x})\tilde{F}^{\hat{h}_b}(\hat{p})f\|_{L_x^2}\lesssim\|\langle x\rangle^{-1/2-\epsilon}|p|^{\frac32}f\|_{L_x^2}.
    \end{align}
    By the commutator smoothing effect, we have \begin{equation}
        \begin{aligned}
            &\|\langle x\rangle^{-1/2-\epsilon}|p|^{\frac32}F^{\hat{h}_b}(\hat{x})\tilde{F}^{\hat{h}_b}(\hat{p})f\|_{L_x^2}\\
            \leq &\|\langle x\rangle^{-1/2-\epsilon}F^{\hat{h}_b}(\hat{x})|p|^{\frac32}\tilde{F}^{\hat{h}_b}(\hat{p})f\|_{L_x^2}+\|\langle x\rangle^{-1/2-\epsilon}[|p|^{\frac32},F^{\hat{h}_b}(\hat{x})]\tilde{F}^{\hat{h}_b}(\hat{p})f\|_{L_x^2} \\
            \lesssim&\|\langle x\rangle^{-1/2-\epsilon}|p|^{\frac32}f\|_{L_x^2}+\|\langle x\rangle^{-1/2-\epsilon}|x|^{-1}|p|^{\frac12}\tilde{F}^{\hat{h}_b}(\hat{p})f\|_{L_x^2} \\
            \lesssim&\|\langle x\rangle^{-1/2-\epsilon}|p|^{\frac32}f\|_{L_x^2},
        \end{aligned}
    \end{equation}where the last line employs the weighted Hardy inequality: for $\al>1-\frac{n}{2}$, \begin{align}
        \||x|^{\al-1}f\|_{L^2_x}\lesssim\||x|^\al|\nabla f|\|_{L^2_x}.
    \end{align}
\end{proof}

\begin{lemma}\label{mourre}
    The incoming/outgoing projections $P^{\pm}$ and the free flows $e^{\pm it(-\Delta)^2}$ satisfy the estimate
\begin{align}\label{fund_Estimate}
    \|P^{\pm}F_{\geq 1}(|p|)F_{\leq 2}(|p|)e^{\pm it(-\Delta)^2} \langle x\rangle^{-\delta}\|_{2\to 2}\lesssim_{n,\delta}\frac{1}{\langle t\rangle^{\delta}}
\end{align}
for $\delta\geq 0$, $t\geq0$ and $n\in \N^+$.
\end{lemma}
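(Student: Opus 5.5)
The plan is to estimate the operator directly through its integral kernel, splitting the input variable $y$ into a near field and a far field relative to the time $t$. First, the sign convention. For $t\ge0$, the flow $e^{it(-\Delta)^2}$ transports a phase-space point $(y,\xi)$ to $(y-4t|\xi|^2\xi,\xi)$. A state concentrated near the origin is therefore carried into the region where $\hat x\approx-\hat\xi$, and this region is annihilated by $P^+$. So the pairing to prove is $P^+$ with $e^{+it(-\Delta)^2}$, and symmetrically $P^-$ with $e^{-it(-\Delta)^2}$. I treat the first; the second follows by writing $P^-=1-P^+$ and applying the same analysis with the roles of the cutoffs \eqref{cutF1}, \eqref{cutF2} interchanged, using \eqref{cutest2} in place of \eqref{cutest1}.

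For $0\le t\le1$ the bound is trivial, since $\|P^+\|_{2\to2}\le1$. For $t\ge1$, fix a small constant $c'>0$ and write
\[
\langle x\rangle^{-\delta}=\langle x\rangle^{-\delta}F_{\geq c't}(|x|)+\langle x\rangle^{-\delta}F_{<c't}(|x|).
\]
The far-field piece has operator norm at most $\|\langle x\rangle^{-\delta}F_{\geq c't}(|x|)\|_{\infty}\lesssim\langle t\rangle^{-\delta}$, because $P^+$, the frequency cutoffs and the propagator are all bounded on $L^2$.

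For the near-field piece, use $P^+=\sum_b F^{\hat h_b}(\hat x)\tilde F^{\hat h_b}(\hat p)$; note that the direction of $2p$ is the same as that of $p$. Each summand has kernel
\[
K_b(x,y)=F^{\hat h_b}(\hat x)\int e^{i[(x-y)\cdot\xi+t|\xi|^4]}\,a_b(\xi)\,d\xi\;\langle y\rangle^{-\delta}F_{<c't}(|y|),
\]
with amplitude $a_b(\xi)=\tilde F^{\hat h_b}(\hat\xi)F_{\ge1}(|\xi|)F_{\le2}(|\xi|)$. This amplitude is smooth and supported in a fixed annulus, so the discontinuity of $\hat\xi$ at $0$ plays no role. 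The phase has gradient $x-y+4t|\xi|^2\xi$. On the support, $\hat x$ lies within $2c$ of $\hat h_b$ and $\hat\xi$ within $8c$ of $\hat h_b$, so \eqref{cutest1} gives
\[
|x+4t|\xi|^2\xi|\ge\tfrac1{10}\bigl(|x|+4t|\xi|^3\bigr)\ge\tfrac1{10}\bigl(|x|+t/2\bigr).
\]
Now choose $c'<1/40$. Since $|y|\le c't$, this gives $|\nabla_\xi\phi|\gtrsim |x|+t$.

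Next, integrate by parts $N$ times in $\xi$ using the operator $L=\frac{\nabla_\xi\phi\cdot\nabla_\xi}{i|\nabla_\xi\phi|^2}$. The $\xi$-derivatives of $\nabla_\xi\phi$ are $O(t)$, while each application of ${}^tL$ gains a factor $|\nabla_\xi\phi|^{-1}\lesssim(|x|+t)^{-1}$. Each step therefore gains $(|x|+t)^{-1}$ and costs at most $O(t)\cdot(|x|+t)^{-1}\lesssim1$, so the net result is
\[
|K_b(x,y)|\lesssim_N(1+|x|+t)^{-N}\mathbf 1_{|y|\le c't}.
\]
A Hilbert--Schmidt bound then gives
\[
\|K_b\|_{HS}^2\lesssim t^{n}\int(1+|x|+t)^{-2N}dx\lesssim t^{2n-2N}.
\]
Taking $N>n+\delta$ yields the bound $t^{-\delta}$, and summing over the finitely many $b$ finishes the argument.

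I expect the main obstacle to be the non-stationary phase step: checking carefully that the geometric condition \eqref{cutest1} really gives a uniform lower bound on $|\nabla_\xi\phi|$ on the support of the kernel. This is where the relative sizes of $c'$, the angular constant $c$, and $|\xi|\ge1/2$ have to be fixed consistently. A second point to verify is that the repeated integration by parts does not lose powers of $t$ through derivatives of the quartic phase. The argument above shows that the gain $(|x|+t)^{-1}$ at each step dominates the $O(t)$ growth of the derivatives of $\nabla_\xi\phi$.
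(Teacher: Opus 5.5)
Your proposal is correct and is essentially the paper's argument. You split the input at $|y|\sim t$, bound the far part by $\langle t\rangle^{-\delta}$ from the weight alone, and on the near part expand $P^{+}=\sum_b F^{\hat h_b}(\hat x)\tilde F^{\hat h_b}(\hat p)$ and integrate by parts in $\xi$ using \eqref{cutest1} (resp.\ \eqref{cutest2} for $P^-$) to get a kernel bound $(|x|+t)^{-N}$. You are more careful than the paper on the near-field radius: the paper cuts at $|y|\le t/10$, and together with the constant $\frac1{10}$ in \eqref{cutest1} and $|\xi|\ge\frac12$ this does not by itself give $|\nabla_\xi\Phi_t|\gtrsim|x|+t$, whereas your choice $c'<\frac1{40}$ does (for $P^-$ you should take $c'<\frac12\cdot 10^{-6}$ to match \eqref{cutest2}).
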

\begin{proof}
See Appendix \ref{Lemma2.4} for the proof.
\end{proof}

 By applying \eqref{fund_Estimate}, we get the following lemma, which will be essential to prove Theorem \ref{Thm1.3}.
\begin{lemma}\label{allkindsestimates}
    The  operators $P^{\pm} e^{\pm it(-\Delta)^2}$, $t>0$, satisfy the following estimates:
\begin{itemize}
    \item [1.]\textbf{High Energy Estimate} \\For $n\in \N^+$, $\delta>0,$  $c>0,$ $t>0$,
    \begin{align}\label{HEE}
        \|P^{\pm}F_{\geq c}(|p|)e^{\pm it(-\Delta)^2} \langle x\rangle^{-\delta}\|_{2\rightarrow2}\lesssim_{c,n,l,\delta}\frac{1}{\langle t\rangle^{\delta}}.\end{align}
    \item[2.] \textbf{Pointwise Smoothing Estimate}\\ For $n\in \N^+$, $\delta>0,$ $l\in [0,3\delta),$ $M\geq 1$ and $t\geq 0$ with $tM^4\geq 1,$
    \begin{align}\label{PSE}
         \|P^{\pm}F_{\geq M}(|p|)e^{\pm it(-\Delta)^2)} |p|^l\langle x\rangle^{-\delta}\|_{2\rightarrow2}\lesssim_{n,3\delta -l }\frac{1}{ M^{3\delta -l}t^{\delta}}.
    \end{align}
    \item[3.] \textbf{Time Smoothing Estimate}\\ For $n\in \mathbb{N}^+$, $\delta >3$, $l=1,2,3,$ we have
    \begin{align}\label{TSE}
        \int_0^{1}t^l\|P^{\pm}F_{\geq 1}(|p|)e^{\pm it(-\Delta)^2}|p|^{3l}\langle x\rangle^{-\delta}\|_{2\rightarrow2}dt\lesssim_{n,l,\delta}1.
    \end{align}
    \item [4.]\textbf{Near Threshold Estimate}\\
    For $n\in \N^+$, $\delta>0$ and   $\epsilon\in (0,1/4)$, $\alpha\in[0,\frac{n}{2}),$
    \begin{align}\label{NTE}
         \|\frac{|p|^{\alpha}}{\langle p\rangle^{\alpha}}P^{\pm} e^{\pm it(-\Delta)^2} \langle x\rangle^{-\delta}\|_{2\rightarrow2}\lesssim_{ n,\alpha,\epsilon,\delta}  \frac{1}{\langle t\rangle^{\min\{\frac{\delta }{4}+\epsilon, \ (1/4-\epsilon)(\alpha+\min\{\frac{n}{2},\delta\})\} } }.
    \end{align}
    \item[5.] \textbf{Pointwise Local Smoothing Estimate} \\ For $\epsilon\in (0,1/4)$, $\delta\in [0,4]$ and $n\geq 9,$ $l=1,\cdots,12$ and $t\geq 1$,
    \begin{align}\label{PLSE}
         \|\frac{1}{\langle x\rangle^{4-\delta}}P^{\pm} e^{\pm it(-\Delta)^2} p_j^{l}\langle x\rangle^{-(\frac{n}{2}+4+l-\delta))}\|_{2\rightarrow2}\lesssim_{ n, \epsilon }  \frac{1}{\langle t\rangle^{ \frac{n}{8}+\frac{4+l-\delta}{4}-\epsilon }},
    \end{align}where $p_j$ denotes the j-th component of $p$.
\end{itemize}
\end{lemma}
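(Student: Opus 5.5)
The plan is to derive all five estimates from the fundamental estimate \eqref{fund_Estimate} of Lemma \ref{mourre}, using two tools: Littlewood--Paley decomposition in $|p|$ and scaling. The operators $P^\pm$ are homogeneous of degree zero in $(x,p)$, since they depend only on $\hat x$ and $\hat p$. The free flow scales as $e^{it\lambda^4(-\Delta)^2}$ under the dilation $D_\lambda f(x)=\lambda^{n/2}f(\lambda x)$. Hence for a dyadic piece $F_{\geq \lambda}F_{\leq 2\lambda}(|p|)$ we can conjugate by $D_\lambda$. This reduces the piece to the unit annulus at time $\lambda^4 t$, with weight $\langle x/\lambda\rangle^{-\delta}$. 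Since $\langle x/\lambda\rangle^{-\delta}\lesssim \lambda^{\delta}\langle x\rangle^{-\delta}$ for $\lambda\ge1$, \eqref{fund_Estimate} gives the dyadic bound
\[
\|P^\pm F_{\sim\lambda}(|p|)e^{\pm it(-\Delta)^2}\langle x\rangle^{-\delta}\|_{2\to2}\lesssim \lambda^{\delta}\langle\lambda^4 t\rangle^{-\delta}.
\]
For $\lambda\le 1$ the weight comparison $\langle x/\lambda\rangle^{-\delta}\lesssim\langle x\rangle^{-\delta}$ holds trivially, and the bound is $\langle\lambda^4t\rangle^{-\delta}$, which is useless as $\lambda\to0$. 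The near-threshold case therefore needs a different argument.

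\textbf{Items 1--3.} For $\lambda\ge M$ with $tM^4\ge1$, the dyadic bound is $\lambda^{\delta-4\delta}t^{-\delta}=\lambda^{-3\delta}t^{-\delta}$. Inserting $|p|^l\sim\lambda^l$ gives $\lambda^{l-3\delta}t^{-\delta}$, and summing over $\lambda=2^kM$ converges when $l<3\delta$. This is \eqref{PSE}. Commuting $|p|^l$ past $\langle x\rangle^{-\delta}$ is avoided by placing the multiplier next to the frequency cutoff; the adjoint form of the scaling handles this. For \eqref{HEE}, write $F_{\geq c}$ as a sum of dyadic pieces. When $t\ge1$ (with $c$ fixed) the sum is as in \eqref{PSE} with $l=0$. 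When $t\le1$ we need a bound uniform in $t$: $\|P^\pm\|\le1$ handles the operator norm directly, so $\langle t\rangle^{-\delta}\sim1$ is trivial. For \eqref{TSE}, split the $t$-integral into dyadic ranges at each $\lambda$. On $t\le\lambda^{-4}$ we use the trivial bound $\lambda^{3l}$, contributing $\int_0^{\lambda^{-4}}t^l\lambda^{3l}\,dt=\lambda^{-l-4}$. On $t\ge\lambda^{-4}$ the bound is $\lambda^{3l-3\delta}t^{-\delta}$, contributing $\lambda^{3l-3\delta}\lambda^{-4(l+1-\delta)}=\lambda^{\delta-l-4}$ (the integral converges at $t=\infty$ because $\delta>l+1$ is not needed, since we only integrate to $1$). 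Here some care is needed: with $\delta>3$ and the truncation at $t=1$, the net power is summable in $\lambda$ provided we use \eqref{fund_Estimate} with an exponent $\delta'\in(l+1,\dots)$ where needed. I will choose $\delta'\le\delta$ appropriately for each $l$ so that the dyadic sum converges.

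\textbf{Items 4--5.} These are the main obstacle, since low frequencies carry no gain from scaling. For \eqref{NTE} I would combine two bounds at frequency $\lambda\le1$. The first is the dyadic bound $\langle\lambda^4t\rangle^{-\delta}$. The second is the trivial bound $\lambda^{\alpha}\|F_{\sim\lambda}(|p|)\langle x\rangle^{-\delta}\|\lesssim\lambda^{\alpha+\min\{n/2,\delta\}}$. This second bound follows from Sobolev/Hardy: the frequency-localized operator applied to a function in $L^2_\delta$ has $L^2$ norm controlled by $\lambda^{\min\{n/2,\delta\}}$, via $\|\hat g\|_{L^\infty}$ or Hölder in $\xi$ on a ball of radius $\lambda$. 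Splitting at $\lambda=t^{-1/4+\epsilon}$ gives the two exponents in the minimum; the $\epsilon$ losses absorb the logarithmic dyadic sums. For \eqref{PLSE} I would argue similarly but with the weighted factor $\langle x\rangle^{-(4-\delta)}$ on the left. I would commute it past $P^\pm$ using \eqref{boundx} of Lemma \ref{BoundP-Lem} (the $A_2$ weight condition requires $4-\delta<n/2$, which holds for $n\ge9$). The $p_j^l$ factor gains $\lambda^l$ at low frequency. The output decay $t^{-n/8-(4+l-\delta)/4+\epsilon}$ then comes from the same low/high balancing, using the dispersive bound $t^{-n/4}$ interpolated with the weights. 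The delicate point, which I expect to be hardest, is justifying the gains from the weight $\langle x\rangle^{-(n/2+4+l-\delta)}$ at frequency $\lambda$ up to order $n/2$ via Taylor expansion of $\hat g$. I would handle this through the large available weight and the vanishing of $\xi_j^l$.
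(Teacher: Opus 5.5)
Your treatment of \eqref{HEE}, \eqref{PSE} and \eqref{TSE} follows the paper's route: dyadic decomposition in $|p|$, dilation, and Lemma~\ref{mourre}. For \eqref{TSE} your split at $t=\lambda^{-4}$ works in place of the paper's split at $t=2^{-5j/2}$, provided you take $\delta'\in(l,\min\{\delta,l+1\})$, so that the long-time part is $\sum_\lambda\lambda^{3l-3\delta'}<\infty$. The choice $\delta'>l+1$ that you suggest is unavailable when, e.g., $l=3$ and $\delta\in(3,4]$.

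The genuine gap is in \eqref{NTE}, and identically in \eqref{PLSE}, in the intermediate range $t^{-1/4+\epsilon}\le\lambda\le1$. There your only input is $\langle\lambda^4t\rangle^{-\delta}$. Its dyadic sum is dominated by $\lambda=M:=t^{-1/4+\epsilon}$ and equals $(M^4t)^{-\delta}=t^{-4\epsilon\delta}$, far weaker than $t^{-\delta/4-\epsilon}$. No choice of splitting point rescues this. Balancing $\langle\lambda^4t\rangle^{-\delta}$ against $\lambda^{\alpha+\gamma}$, with $\gamma=\min\{n/2,\delta\}$, gives at best $t^{-\delta(\alpha+\gamma)/(4\delta+\alpha+\gamma)}$, and for $\alpha=0$ this exponent is always below $\gamma/4$. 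For $\alpha=0$, $\delta=5$, $n=9$ it is $t^{-45/49}$, which is not integrable. By contrast, \eqref{NTE} gives $\langle t\rangle^{-(9/8-9\epsilon/2)}$, and Lemma~\ref{comp1} needs exactly that integrability.

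The paper handles $F_{>M}(|p|)$ by invoking \eqref{PSE} with $M=\langle t\rangle^{-1/4+\epsilon}$, i.e.\ the bound $M^{-3\delta}t^{-\delta}$, which gives $\langle t\rangle^{-(1/4+3\epsilon)\delta}$. This is precisely the dyadic bound $\lambda^{-3\delta}t^{-\delta}$ for $\lambda<1$ that you rightly say black-box scaling does not give. The paper's own proof of \eqref{PSE} uses $\|\langle x\rangle^{\delta}\langle x/M\rangle^{-\delta}\|_{L^\infty}\le M^{\delta}$, which holds only for $M\ge1$, so that step needs the same repair.

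The missing idea is to redo the near/far splitting from the proof of Lemma~\ref{mourre} (Appendix~\ref{Lemma2.4}) at frequency $\lambda$. Cut the weight at $|x|\sim\lambda^3t$, the distance covered at group velocity $4|\xi|^3$, instead of comparing $\langle x/\lambda\rangle^{-\delta}$ with $\langle x\rangle^{-\delta}$ pointwise.
\begin{itemize}
\item The far region contributes $\lesssim(\lambda^3t)^{-\delta}=\lambda^{-3\delta}t^{-\delta}$, recovering the factor $\lambda^{\delta}$ that your pointwise comparison throws away.
\item The near region is $O_N((\lambda^4t)^{-N})$ by non-stationary phase, which sums to $O(t^{-4\epsilon N})$ over $\lambda\ge M$.
\end{itemize}
In the low part, $|p|^\alpha\langle p\rangle^{-\alpha}$ sits to the left of $P^\pm$, which does not preserve frequency localization. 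The paper moves it through $P^\pm$ using \eqref{boundp}.

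In \eqref{PLSE} the same high-frequency fix is needed, with weight exponent $\frac n2+4+l-\delta$. The low-frequency exponent $\frac14(\frac n2+4+l-\delta)$ is assembled from three gains:
\begin{itemize}
\item $\lambda^{4-\delta}$ from $\|\langle x\rangle^{-(4-\delta)}|p|^{-(4-\delta)}\|\lesssim1$, after commuting the left weight past $P^\pm$. Your plan does this commutation but never extracts the gain, and without it the low part falls short by about $\frac{4-\delta}{4}$.
\item $\lambda^{l}$ from $p_j^l$.
\item $\lambda^{n/2-\epsilon}$ from $\||p|^{-(n/2-\epsilon)}\langle x\rangle^{-(n/2+4+l-\delta)}\|\lesssim1$ (Lemma~\ref{HLSvar}).
\end{itemize}
No Taylor expansion of $\hat g$ and no $t^{-n/4}$ dispersive bound is involved. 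The $-\epsilon$ in the exponent is just the Hardy--Littlewood--Sobolev endpoint loss.
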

\begin{proof}
    See Appendix \ref{Lemma2.5} for the proof.
\end{proof}

\begin{lemma}\label{FMest}For all $\delta>0,\  M\geq 1$, we have
    \begin{align}
    \|F_M(x,p)e^{itH_0} \langle x\rangle^{-\delta}\|_{2\to2}\lesssim_{M}\frac{1}{\langle t\rangle^\delta}.
\end{align}
\end{lemma}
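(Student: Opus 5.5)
The plan is a standard non-stationary phase bound on the kernel of $F_M(x,p)e^{itH_0}\langle x\rangle^{-\delta}$, exploiting that the frequency cutoff $F_{\geq 1/M}(|p|)$ keeps us away from the zero frequency, where the group velocity $\nabla_\xi|\xi|^4=4|\xi|^2\xi$ vanishes. For $|t|\le 1$ the bound is trivial because every factor has norm at most one. So we assume $t\ge1$; the case $t\le -1$ is symmetric.

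\textbf{Step 1: splitting in the input variable.} Write $\langle x\rangle^{-\delta}=\langle x\rangle^{-\delta}F_{\le c t}(|x|)+\langle x\rangle^{-\delta}F_{> ct}(|x|)$ with a small constant $c=c(M)$ chosen below. The far piece is immediate, since $\|\langle x\rangle^{-\delta}F_{>ct}(|x|)\|_{2\to2}\lesssim (ct)^{-\delta}$ and the remaining operators are contractions. It therefore suffices to bound $F_{\le M}(|x|)F_{\ge 1/M}(|p|)e^{itH_0}F_{\le ct}(|y|)$ by $O_M(t^{-N})$ for every $N$.

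\textbf{Step 2: non-stationary phase.} Also insert $F_{\le 2M'}(|p|)$ and treat $F_{>M'}(|p|)$ separately, with $M'$ large; the high-frequency piece is discussed in the last paragraph. The low-frequency piece has kernel
\[
K(x,y)=F_{\le M}(|x|)F_{\le ct}(|y|)\int e^{i(x-y)\cdot\xi+it|\xi|^4}a(\xi)\,d\xi ,
\]
where $a$ is smooth and supported in $\tfrac1{2M}\le|\xi|\le 2M'$. The gradient of the phase is $x-y+4t|\xi|^2\xi$. Its size satisfies
\[
|x-y+4t|\xi|^2\xi|\ge 4t(2M)^{-3}-M-ct\gtrsim_M t
\]
once $c<(2M)^{-3}$ and $t\gtrsim M^4$. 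Integrating by parts $N$ times with $L=\frac{\nabla_\xi\phi\cdot\nabla_\xi}{i|\nabla_\xi\phi|^2}$ gives $|K|\lesssim_{M,M',N}t^{-N}$. The derivatives of $|\xi|^4$ are bounded on the support of $a$, and each derivative landing on $1/|\nabla\phi|^2$ costs at most $O(t/t^2)$. The kernel lives on $|x|\le M$, $|y|\le ct$, so Schur's test gives
\[
\|K\|_{2\to2}\lesssim_M t^{n}\,t^{-N}.
\]
Choosing $N$ large yields $t^{-\delta}$. The intermediate times $1\le t\lesssim M^4$ are absorbed into the constant.

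\textbf{Step 3: high frequencies, the main obstacle.} For $|\xi|\ge M'$ the symbol is unbounded, so the previous argument does not apply directly. I would decompose dyadically, $|\xi|\sim 2^k$, and rescale. Here the phase gradient is $\gtrsim t2^{3k}$, and each integration by parts gains a factor $(t2^{3k}\cdot 2^{-k})^{-1}$, accounting for the derivative of the cutoff. The kernel on a dyadic piece is then bounded by
\[
2^{kn}(t2^{2k})^{-N}\cdot(\text{derivatives of }|\xi|^4\text{ scaled: }t2^{4k}\cdot2^{-2k}\text{ per order}).
\]
Tracking these carefully gives $2^{kn}(t2^{2k})^{-N}$ times powers that are summable over $k$; this is the same computation as in Lemma \ref{allkindsestimates}(2). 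Alternatively, I would simply invoke \eqref{PSE}-type reasoning: Schur's bound on $|x|\le M,|y|\le ct$ gives $\lesssim t^n 2^{kn}(t2^{2k})^{-N}$, which is summable in $k$ and $O(t^{-\delta})$. Summing the dyadic pieces and combining with Steps 1 and 2 proves the lemma.
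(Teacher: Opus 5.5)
Your proof is correct, but it takes a different route from the paper. Both arguments first discard the input region $|y|\gtrsim t$, which produces the $\langle t\rangle^{-\delta}$ term. The paper uses radius $t/C$ with $C=M^3/2$; you use $ct$ with $c<(2M)^{-3}$.

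For the remaining piece $T=F_M(x,p)e^{itH_0}F_{<t/C}(|x|)$, the paper argues as follows:
\begin{itemize}
\item it forms $TT^*$ and conjugates the position cutoff by the free flow, $e^{itH_0}F_{<t/C}(|x|)e^{-itH_0}=F_{<t/C}(|x+4t|p|^2p|)$;
\item it notes that on the support of $F_M$ one has $|x+4t|p|^2p|\ge 4t|p|^3-|x|$;
\item it concludes that for $t\ge M^4/2$ the two cutoffs have disjoint supports, so $TT^*=0$.
\end{itemize}
This is short and makes the mechanism transparent: frequencies $|p|\gtrsim 1/M$ move at speed $4|p|^3\gtrsim M^{-3}$, so they cannot travel from $|y|\lesssim t/C$ into $|x|\le M$ in time $t$. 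However, it treats $x$ and $p$ as commuting phase-space variables. Disjointness of such ``supports'' does not make the operator product vanish; what is actually true is a bound $O_{M,N}(t^{-N})$.

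Your kernel estimate proves exactly that bound rigorously. You integrate by parts using $\nabla_\xi\phi=x-y+4t|\xi|^2\xi$ and apply Schur's test on $|x|\le M$, $|y|\le ct$. This is the same technique the paper itself uses in its non-stationary phase proof of Lemma \ref{mourre}. You also correctly use $|\xi|\ge (2M)^{-1}$ on the support of $F_{\ge 1/M}$, whereas the paper's support bound uses $|p|\ge 1/M$.

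Two small points about your Step 3, neither of which affects the conclusion.
\begin{itemize}
\item On the shell $|\xi|\sim 2^k$, each integration by parts gains $(t2^{4k})^{-1}$, not $(t2^{2k})^{-1}$. The factor $|\nabla\phi|^{-1}\sim (t2^{3k})^{-1}$ multiplies a cutoff derivative of size $2^{-k}$, and $\partial(\nabla\phi/|\nabla\phi|^2)\sim t2^{2k}/(t2^{3k})^2$. Your weaker bound still sums over $k$, so nothing breaks.
\item The separate high-frequency step is not really needed. The amplitude $F_{\ge 1/M}(|\xi|)$ is a zeroth-order symbol, and for $t\gtrsim_M 1$ the vector $\nabla\phi/|\nabla\phi|^2$ is $t^{-1}$ times a symbol of order $-3$ on $|\xi|\ge (2M)^{-1}$. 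Hence $(L^T)^N$ applied to the amplitude is $O_{M,N}\big(t^{-N}|\xi|^{-4N}\big)$, which is integrable once $4N>n$. The dyadic decomposition only serves to justify the oscillatory integral.
\end{itemize}
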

\begin{proof}We first split the operator into two parts:
 \begin{equation}\label{SPM}
    \begin{aligned}
        \|F_M(x,p)e^{itH_0} \langle x\rangle^{-\delta}\|_{2\to2}
        \leq \|F_{>t/C}(|x|)\langle x\rangle^{-\delta}\|_{2\to 2}+\|F_M(x,p)e^{itH_0} F_{<t/C}(|x|)\langle x\rangle^{-\delta}\|_{2\to 2}.
    \end{aligned}
\end{equation}The first term is supported in the region $|x|>t/C$, which immediately yields a bound of order $\langle  t\rangle^{-\delta}$. We now estimate the second term. Let $T=F_M(x,p)e^{itH_0}F_{<t/C}(|x|)$. Then we claim that for $C=\frac{M^3}{2}>\frac{M^3}{4},N>0$, $$\|T\|_{2\to 2}\lesssim_{M,N}\langle t\rangle^{-N}.$$For simplify, we estimate $\|TT^*\|_{2\to 2}=(\|T\|^2_{2\to 2})$ instead of estimate $\|T\|_{2\to 2}$ directly. Since $F_M(x,p)$ and $F_{<t/C}(|x|)$
are self-adjoint and  $e^{itH_0}$
 is unitary, we have
\begin{equation}
    \begin{aligned}
        TT^*=&F_M(x,p)e^{itH_0}F_{<t/C}(|x|)F_{<t/C}(|x|)e^{-itH_0}F_M(x,p).\\
    \end{aligned}
\end{equation}
Using the functional calculus property $e^{itH_0}f(|x|)e^{-itH_0}=f(|x(t)|)$ with $x(t)=e^{itH_0} xe^{-itH_0}=x+ 4t|p|^2p$, we obtain
\begin{equation}
    \begin{aligned}
TT^*=&F_M(x,p)F_{<t/C}(|x+ 4t|p|^2p|)F_M(x,p).
    \end{aligned}
\end{equation}
From \eqref{FM}, on the support of $F_M(x,p)$, we have \begin{equation}
    \begin{aligned}
     \frac{4t}{M^3}-M\leq4t|p|^3-|x|\leq |x+ 4t|p|^2p|\leq\frac{t}{C}=\frac{2t}{M^3}.
    \end{aligned}
\end{equation}Thus, as $t\geq \frac{M^4}{2}$, the supports of $F_M(x,p)$ and $F_{<t/C}(|x+ 4t|p|^2p|)$ are disjoint, implying $TT^*=0$ and therefore $\|T\|_{2\to 2}=0$.

For $t< \frac{M^4}{2}$, since $T$ is continuous,  for any fixed $M$ and $N$, there exists a constant $C_{M,N}$ such that $\|T\|_{2\to 2}\leq C_{M,N}\langle t\rangle^{-N}$.

Inserting the above bounds into \eqref{SPM}, we have\begin{equation}
    \begin{aligned}
        \|F_M(x,p)e^{itH_0} \langle x\rangle^{-\delta}\|_{2\to2}
        \lesssim \langle t\rangle^{-\delta} + \langle t\rangle^{-N}.
    \end{aligned}
\end{equation}
Choosing $N\geq \delta$ gives the desired estimate.
\end{proof}


Next, we establish the Hardy-Littlewood-Sobolev inequality, which will be utilized frequently throughout this paper.

\begin{lemma}\label{HLSvar}
    For all $0<\alpha<\frac{n}{2}$, we have \begin{equation}\label{HLS}
        \begin{aligned}
        &\||x|^{-\alpha}|p|^{-\alpha}f\|_{L^2_x}\lesssim_{\alpha,n}\|f\|_{L^2_x}, \\
        &\||p|^{-\alpha}|x|^{-\alpha}f\|_{L^2_x}\lesssim_{\alpha,n}\|f\|_{L^2_x}. \\
    \end{aligned}
    \end{equation}
\end{lemma}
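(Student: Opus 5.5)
The plan is to reduce both inequalities in \eqref{HLS} to one weighted estimate, which we then prove via the classical Hardy inequality of fractional order (Stein–Weiss/Pitt). The two inequalities are adjoint to each other. The operator $|p|^{-\alpha}|x|^{-\alpha}$ is the formal adjoint of $|x|^{-\alpha}|p|^{-\alpha}$, since both $|x|^{-\alpha}$ and $|p|^{-\alpha}$ are nonnegative self-adjoint multipliers. So once the first bound holds on the dense class $\mathcal{S}(\mathbb{R}^n)$, or on Schwartz functions whose Fourier transform vanishes near $0$, duality gives the second with the same constant. It therefore suffices to show
\begin{align*}
\||x|^{-\alpha}g\|_{L^2_x}\lesssim_{\alpha,n}\||p|^{\alpha}g\|_{L^2_x},\qquad g=|p|^{-\alpha}f,
\end{align*}
which is exactly the fractional Hardy (Rellich-type) inequality for $0<\alpha<n/2$.

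To prove the fractional Hardy inequality, we write $g=|p|^{-\alpha}f=c_{n,\alpha}\,|x|^{\alpha-n}*f$ by the Riesz potential representation; this is valid because $0<\alpha<n/2<n$. The desired bound then becomes the boundedness on $L^2$ of the positive integral operator with kernel
\begin{align*}
K(x,y)=|x|^{-\alpha}|x-y|^{\alpha-n}.
\end{align*}
This is the Stein–Weiss weighted HLS inequality with $p=q=2$, weight exponents $(\alpha,0)$, and $\lambda=n-\alpha$. Its admissibility conditions are $\alpha<n/2$ and $\alpha+0=n-\lambda$, and both hold.

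For a self-contained argument, apply Schur's test with test function $h(x)=|x|^{-\beta}$ for a suitable $\beta\in(0,n)$. The scaling identities
\begin{align*}
\int |x|^{-\alpha}|x-y|^{\alpha-n}|y|^{-\beta}\,dy=C_1|x|^{-\beta},\qquad
\int |x|^{-\alpha}|x-y|^{\alpha-n}|x|^{-\beta}\,dx=C_2|y|^{-\beta}
\end{align*}
hold with finite constants exactly when $\alpha<\beta<n-\alpha$ for the second and $0<\beta<n-\alpha$ for the first. Such a $\beta$ exists, for instance $\beta=n/2$, because $\alpha<n/2$. Schur's test then gives $L^2$ boundedness.

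The main obstacle is verifying the finiteness of $C_1$ and $C_2$, since the integrals must converge at $0$, at infinity, and at the diagonal. After homogeneity reduces each to $x$ or $y$ a unit vector, convergence is just a matter of checking these three regions. An alternative route is a Fourier-side argument, using Pitt's inequality or the ground-state representation, which also yields the sharp constant; the Schur route is simpler. Density and duality then finish both lines of \eqref{HLS}.
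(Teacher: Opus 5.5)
Your argument is correct, but it takes a different route from the paper's.

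\textbf{The paper's route.} The paper quotes Pitt's inequality in Beckner's form, $\int|\xi|^{-a}|\hat f|^2\,d\xi\lesssim\int|x|^{a}|f|^2\,dx$ for $0\le a<n$. It applies the inverse-Fourier version with $a=2\alpha$ to $\mathcal{F}^{-1}(|\xi|^{-\alpha}\hat f)$ and concludes by Plancherel. The second line is obtained by the dual argument. With $a=2\alpha$, Pitt's inequality and its inverse version are, after Plancherel, exactly the two lines of \eqref{HLS}. So the paper's proof reduces the lemma to a cited sharp-constant theorem.

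\textbf{Your route.} You prove the same inequality from scratch in physical space. The Riesz potential turns $|x|^{-\alpha}|p|^{-\alpha}$ into an operator with positive kernel $|x|^{-\alpha}|x-y|^{\alpha-n}$. Schur's test with the power weight $|x|^{-\beta}$, $\beta=n/2$, then gives $L^2$ boundedness. The transposed kernel satisfies the Schur conditions with the same weight, since the two conditions simply swap, so the second line of \eqref{HLS} follows without a separate duality step.

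\textbf{Comparison.} Your approach is elementary and self-contained, and it shows exactly where $\alpha<n/2$ enters: it is what makes the admissible range of $\beta$ nonempty. The cost is a non-sharp constant. The paper's approach is shorter but rests entirely on the external result.

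\textbf{A correction to your convergence bookkeeping.} You have the two ranges attached to the wrong integrals. The first integral, $\int|x-y|^{\alpha-n}|y|^{-\beta}\,dy$, converges if and only if $\alpha<\beta<n$; integrability at infinity forces $\beta>\alpha$, so it does not converge for all $0<\beta<n-\alpha$ as you claim. The second integral, $\int|x|^{-\alpha-\beta}|x-y|^{\alpha-n}\,dx$, converges if and only if $0<\beta<n-\alpha$. The intersection of the two ranges is still $(\alpha,n-\alpha)$, which contains $n/2$, so your conclusion is unaffected.
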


\begin{proof}
We only prove the first inequality; the second one can be handled analogously. We begin by recalling Pitt’s inequality \cite[Theorem 2]{Bec1995}
\begin{equation}\label{pitt}
        \int_{ \mathbb{R}^n}|\xi|^{-a}|\hat{f}(\xi)|^2d\xi\lesssim_{a,n}\int_{ \mathbb{R}^n}|x|^{a}|f(x)|^2dx,
    \end{equation}where $0\leq a<n$.
Since $|\xi|^{-a}$ being the radial function, Pitt's inequality also holds for the inverse Fourier  transform, namely
\begin{equation}\label{pitt1}
        \int_{ \mathbb{R}^n}|x|^{-a}|\check{f}(x)|^2dx\lesssim_{a,n}\int_{ \mathbb{R}^n}|\xi|^{a}|f(\xi)|^2d\xi.
    \end{equation}
    Therefore, \begin{equation}
        \begin{aligned}
            \||x|^{-\alpha}|p|^{-\alpha}f\|_{L^2_x}^2&=\int_{ \mathbb{R}^n}|x|^{-2\alpha}(|p|^{-\alpha}f(x))^2dx \\
            &=\int_{ \mathbb{R}^n}|x|^{-2\alpha}\mathcal{F}^{-1}(|\xi|^{-\alpha}\hat{f}(\xi))^2dx \\
            &\lesssim_{\al,n}\int_{ \mathbb{R}^n}|\xi|^{2\al}(|\xi|^{-\alpha}\hat{f}(\xi))^2d\xi \\
            &=\|\hat{f}(\xi)\|_{L^2_\xi}^2=\|f\|_{L^2_x}^2,
        \end{aligned}
    \end{equation}
where the parameter satisfies $0\leq 2\al<n$, i.e. $0<\al<\frac{n}{2}$. Hence, the estimate \eqref{HLS}$_1$ is proved. The proof of \eqref{HLS}$_2$ follows by applying the same argument in the dual space.
\end{proof}


\subsection{Quasi-Periodic Evolution Operators}
We give some properties related to the quasi-periodicity of the operators $\Omega_{ \pm}^*(t)$, $\Omega_{ \pm}(t)$,   $P_c(t)$, $P_b(t)$ and $C(t)$.
\begin{definition}
    We say a function $f(t)$ is quasi-periodic in $t$ if
    \begin{align}
        f(t)=\tilde{f}(s_1,\cdots,s_N),\quad \mbox{for some} \ \tilde{f}, 
    \end{align}
where $t\equiv s_j\,(mod\, T_j)$ for $s_j\in [0,T_j)$.
\end{definition}

\begin{lemma}\label{Lemma2.9}
If Assumptions \ref{asp:1} and \ref{asp:2} hold, then $\Omega_{ \pm}^*(t)$, $\Omega_{ \pm}(t)$,   $P_c(t)$, $P_b(t)$ and $C(t)$  are bounded operators on $L^2_x(\mathbb{R}^n)$ and  quasi-periodic in $t$  with the same type as $V$.
\end{lemma}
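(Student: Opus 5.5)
The plan is to reduce quasi-periodicity of every operator to a single structural identity for the propagator: a shift of the time origin by a common quasi-period acts as a change of the phase variables $\mathbf{s}$. Write $V(x,t)=\tilde V(x,s_1,\dots,s_N)$ with $s_j\equiv t \pmod{T_j}$, where $V_0$ is absorbed into every slot. For $\mathbf{s}\in\mathbb{T}_1\times\cdots\times\mathbb{T}_N$, let $U_{\mathbf{s}}(t,s)$ denote the evolution generated by
\[
H_0+\tilde V(x,\mathbf{s}+t\mathbf{1}).
\]
Existence and unitarity of $U_{\mathbf{s}}$ follow as for $U$, by Duhamel/Dyson series in $L^2_x$, since $V\in L^\infty_{x,t}$ by Assumption \ref{asp:1}. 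Uniqueness of solutions then gives the cocycle identity
\[
U(t+v,t)=U_{\mathbf{s}(t)}(v,0),
\]
where $\mathbf{s}(t)=(t \bmod T_1,\dots,t\bmod T_N)$. The right-hand side depends on $t$ only through $\mathbf{s}(t)$.

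\textbf{Boundedness.} This is immediate. Each operator in \eqref{WaveO-def} is a strong limit of operators of norm at most $1$, since $e^{ivH_0}$ and $U$ are unitary and $\|F_{\le1}\|_\infty\le1$. Hence $\|\Omega^*_\pm(t)\|_{2\to2}\le1$, and the same bound holds for the adjoints $\Omega_\pm(t)$. It follows that $\|P_c(t)\|_{2\to2}\le1$, that $\|P_b(t)\|_{2\to2}\le 2$, and that $\|C(t)\|_{2\to2}\le 4$, using $\|P^\pm\|_{2\to2}\le1$ from Lemma \ref{BoundP-Lem} with $a=0$. Existence of the strong limits is taken from \cite{SWWZ}, as recalled before \eqref{WaveO-def}; it requires only $V\in L^\infty_tL^2_x$, which Assumption \ref{asp:1} provides because $\sigma>n/2$.

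\textbf{Quasi-periodicity.} Substitute the cocycle identity into \eqref{WaveO-def}:
\[
\Omega^*_\pm(t)=s\text{-}\lim_{v\to\pm\infty}F_{\le1}\!\left(\tfrac{|x|}{v^\alpha}\right)e^{ivH_0}U_{\mathbf{s}(t)}(v,0).
\]
Define $\tilde\Omega^*_\pm(\mathbf{s})$ to be this limit with $\mathbf{s}(t)$ replaced by an arbitrary $\mathbf{s}$. The existence proof of \cite{SWWZ} applies verbatim to every $\mathbf{s}$, because it uses only uniform bounds on $\tilde V(\cdot,\mathbf{s}+\cdot)$, which are independent of $\mathbf{s}$. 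Then $\Omega^*_\pm(t)=\tilde\Omega^*_\pm(\mathbf{s}(t))$, and taking adjoints gives $\Omega_\pm(t)=\tilde\Omega_\pm(\mathbf{s}(t))$.

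The remaining operators follow algebraically. Since $P_c(t)=\Omega_+(t)\Omega_+^*(t)$, $P_b=1-P_c$, and $C(t)=P^+(1-\Omega^*_+(t))+P^-(1-\Omega^*_-(t))$, with $P^\pm$ independent of $t$, each is a fixed function of $\mathbf{s}(t)$. This is exactly quasi-periodicity with the same periods $T_j$ as $V$.

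\textbf{Main obstacle.} The step needing care is the cocycle identity and its uniformity in $\mathbf{s}$, especially for $v\to-\infty$ with $\Omega_-^*$. I would verify it by showing that both sides solve the same Cauchy problem in $v$ with the same datum at $v=0$, where uniqueness holds for bounded potentials. I would then check that the constants in the \cite{SWWZ} estimates, and hence the existence of the strong limit, depend only on $\sup_{\mathbf{s}}\|\langle x\rangle^\sigma\tilde V(\cdot,\mathbf{s})\|_{L^\infty}$.
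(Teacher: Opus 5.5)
Your proposal is correct and follows essentially the same route as the paper. The paper expands $U(t,t+s)e^{-isH_0}$ in a Dyson series and notes that each term depends on $t$ only through $V(\cdot,t+u)$, which is your cocycle identity $U(t+v,t)=U_{\mathbf{s}(t)}(v,0)$ obtained by uniqueness instead; for boundedness it cites \cite{SWWZ} for $P_c(t)$ rather than your norm-$\le 1$ strong-limit argument. Your explicit definition of $\tilde\Omega^*_\pm(\mathbf{s})$ for every $\mathbf{s}$ in the torus, via the family $U_{\mathbf{s}}$ as in Appendix~\ref{appendixB}, is the form actually used later in Proposition~\ref{pbcon}; the only quibble is that Lemma~\ref{BoundP-Lem} gives $\|P^\pm\|_{2\to2}\lesssim 1$ rather than $\le 1$, which is immaterial for boundedness.
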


\begin{proof}Recall that
   \begin{align}
    \Omega_{ \pm}^*(t)\coloneqq s\text{-}\lim_{v\rightarrow\pm\infty} F_{\leq1}(\frac{|x|}{v^\alpha} )e^{ivH_0}U(t+v,t),\quad \mbox{on }\ L^2_x(\mathbb{R}^{n}),
\end{align}
\begin{align}
    P_c(t)\coloneqq s\text{-}\lim_{v\rightarrow\infty}U(t,t+v)e^{-ivH_0}F_{\leq1}(\frac{|x|}{v^\alpha} )e^{ivH_0}U(t+v,t),
\end{align}
\begin{align}
    P_b(t)=1-P_c(t)
\end{align}
and
\begin{align}
   C(t)=P^+(1-\Omega^*_{+}(t))+P^-(1-\Omega^*_{-}(t)).
\end{align}
The boundedness of $ P_c(t)$ has been established in   \cite[Theorem 1.5]{SWWZ}. Since $ P_b(t)=1-P_c(t)$, its boundedness follows directly from that of $P_c(t)$. The boundedness of $C(t)$ on $L^2$
is then a consequence of the existence of	$\Omega_{ \pm}^*(t)$  and the estimate $\|P^\pm\|_{2\to2}\leq 1.$
Let $\Omega(t,s)=U(t,t+s)e^{-isH_0}$ and $\Omega^*(t,s)=e^{isH_0}U(t+s,t)$.
By Duhamel’s formula, we have
\begin{align}\label{OmeD}
U(t,t+s)e^{-isH_0}
= 1 + i\int_0^s U(t,t+u) V(x,t+u) e^{-iuH_0}du.
\end{align}
Iterating this formula infinitely many times yields
\begin{align}
    \Omega(t,s)=1+\sum_{j=1}^{\infty} I^{(j)}(t,s),
\end{align}
where
\begin{align}
     I^{(j)}(t,s)=(i)^j \int_0^sdu_1 \int_0^{u_1}du_2 \cdots \int_0^{u_{j-1}}du_j \mathcal{K}_t(V(t+u_j))\times\cdots\times\mathcal{K}_t(V(t+u_1))
\end{align}
with
\begin{align}
    \mathcal{K}_t(V(t+u ))=e^{iuH_0}V(t+u)e^{-iuH_0}.
\end{align}
We can decompose  $\mathcal{K}_t(V(t+u ))$ as
\begin{align}
    \mathcal{K}_t(V(t+u ))=e^{iuH_0}V_0(x)e^{-iuH_0}+\sum_{j=1}^{N}e^{iuH_0}V_j(x,t+u)e^{-iuH_0}.
\end{align}
Hence, it is quasi-periodic in $t$. Consequently, the operators $\Omega(t,s)$, $\Omega^*(t,s)$ and $P_c(t)$ are all quasi-periodic in $t$.
\end{proof}



\section{Preprocessing of the operator \texorpdfstring{$C(t)$}{(C(t))} }

\subsection{Compactness of \texorpdfstring{$C(t)$}{(C(t))} and its decomposition}
Recall that
$$C(t)=C^+(t)+C^-(t)=P^+(1-\Omega^*_{+}(t))+P^-(1-\Omega^*_{-}(t)).$$

By Duhamel expansion
\begin{align}
    \Omega^*_{+}(t)=1+(-i)\int_0^{\infty}dv e^{ivH_0}V(x,t+v)U(t+v,t),
\end{align}
we have
\begin{align}\label{A11}
    C^\pm(t)=i\int_0^{\infty}P^\pm e^{\pm isH_0}V(x,t\pm s)U(t\pm s,t)ds.
\end{align}
We first prove the compactness of $C(t)$ in this subsection.
\begin{lemma}\label{comp1}
    If $\langle x\rangle^5 V(x,t)\in L^\infty_{t,x}(\mathbb{R}^{n+1})$, then $C(t)$ is compact on $L^2_x(\mathbb{R}^n)$ for all $t\in \mathbb{R}.$
\end{lemma}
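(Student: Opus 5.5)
We prove compactness of $C^+(t)$; the case of $C^-(t)$ is symmetric. The plan is to write $C^+(t)$ as a norm-convergent integral of compact operators, using the representation \eqref{A11},
\begin{align*}
C^+(t)=i\int_0^\infty P^+e^{isH_0}V(x,t+s)U(t+s,t)\,ds .
\end{align*}
Compact operators form a norm-closed ideal. It therefore suffices to approximate $C^+(t)$ in operator norm by operators of the form $K_{\epsilon,R}$, each of which is a Bochner integral over $s$ of compact operators with integrable norm.

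\textbf{Step 1: truncation in frequency and in time.} Split $1=F_{<\epsilon}(|p|)+F_{\ge\epsilon}(|p|)F_{\le R}(|p|)+F_{>R}(|p|)$ and insert this to the left of $e^{isH_0}$.
\begin{itemize}
\item For the low-frequency piece, write $V=\langle x\rangle^{-5}\cdot\langle x\rangle^{5}V$. Then use the near-threshold estimate \eqref{NTE} with $\delta=5$ and $\alpha$ of size $O(1)$, together with $\|F_{<\epsilon}(|p|)\langle p\rangle^{\alpha}|p|^{-\alpha}\|\lesssim\epsilon^{\alpha}$ (this is legitimate since $F_{<\epsilon}(|p|)$ commutes with $e^{isH_0}$). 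This gives an integrand of norm $\lesssim \epsilon^{\alpha}\langle s\rangle^{-\gamma}$, with $\gamma>1$ once $n$ and $\alpha$ are large enough. Integrating in $s$, the low-frequency contribution is $O(\epsilon^{\alpha})$ in norm.
\item For the high-frequency piece, apply the high energy estimate \eqref{HEE} with $\delta=5$, which gives integrability in $s$. Smallness should come from the $t^l|p|^{3l}$ gains in \eqref{PSE} and \eqref{TSE}: for $s\ge 1/R^4$, \eqref{PSE} with $l=0$ yields $R^{-15}s^{-5}$, and the short-time range $s<R^{-4}$ contributes at most $R^{-4}$. So this piece is $o(1)$ as $R\to\infty$.
\item Finally, cut the time integral at $s\le S$. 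The middle-frequency integrand is bounded by $\langle s\rangle^{-5}$ by \eqref{fund_Estimate}/\eqref{HEE}, so the tail over $s>S$ is $O(S^{-4})$.
\end{itemize}

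\textbf{Step 2: compactness of the remaining truncated operator.} It remains to show that
\begin{align*}
\int_0^S P^+F_{\ge\epsilon}(|p|)F_{\le R}(|p|)e^{isH_0}V(x,t+s)U(t+s,t)\,ds
\end{align*}
is compact. For each $s$, factor the integrand as $P^+\,e^{isH_0}\,\bigl[F_{\ge\epsilon}(|p|)F_{\le R}(|p|)\langle x\rangle^{-5}\bigr]\,\langle x\rangle^{5}V\,U$. The bracketed operator is compact, by the Rellich-type fact that $f(p)g(x)$ is compact for bounded, decaying $f,g$. The remaining factors are bounded. The map $s\mapsto$ integrand is strongly measurable and uniformly bounded. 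Approximating the integral by Riemann sums in norm is delicate, because $U(t+s,t)$ is only strongly continuous in $s$.

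\textbf{Main obstacle.} The hardest step is this last measurability and compactness issue for the integral. To handle it, I would take adjoints and show that the adjoint maps weakly null sequences to norm-null sequences. If $f_k\rightharpoonup0$, then for each fixed $s$ the compact factor gives norm convergence $\to0$ of the integrand applied to $f_k$ (after taking adjoints, the compact factor acts first on $f_k$). The bounds are uniform on $[0,S]$, so dominated convergence yields $\|C^+_{\epsilon,R,S}(t)^*f_k\|\to0$. Hence the truncated operator is compact. Letting $\epsilon\to0$, $R\to\infty$ and $S\to\infty$ then shows that $C^+(t)$ is compact.
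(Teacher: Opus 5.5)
Your overall architecture works, but the low-frequency bullet of Step 1 fails as written.

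\textbf{The low-frequency bullet.} The multiplier $F_{<\epsilon}(|p|)\langle p\rangle^{\alpha}|p|^{-\alpha}$ equals $\langle p\rangle^{\alpha}|p|^{-\alpha}$ near $p=0$, so this operator is unbounded, not $O(\epsilon^{\alpha})$. The operator that is $O(\epsilon^{\alpha})$ is $F_{<\epsilon}(|p|)|p|^{\alpha}\langle p\rangle^{-\alpha}$, and \eqref{NTE} cannot be paired with it. Indeed, \eqref{NTE} bounds $\frac{|p|^\alpha}{\langle p\rangle^\alpha}P^\pm e^{\pm isH_0}\langle x\rangle^{-\delta}$, where the vanishing weight is already present and sits to the left of $P^\pm$. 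Your $F_{<\epsilon}(|p|)$ sits to the right of $P^+$ and does not commute with it, because of the $F^{\hat h}(\hat x)$ factors. Commuting with $e^{isH_0}$ is not the relevant issue.

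The fix is to drop the lower cutoff entirely, since $F_{\le R}(|p|)\langle x\rangle^{-5}$ is already compact. The remaining integrand is controlled through
$P^+F_{\le R}(|p|)e^{isH_0}\langle x\rangle^{-5}=P^+e^{isH_0}\langle x\rangle^{-5}-P^+F_{>R}(|p|)e^{isH_0}\langle x\rangle^{-5}$.
Its norm is integrable in $s$ by \eqref{NTE} with $\alpha=0$ (decay $\langle s\rangle^{-9/8+O(\epsilon)}$ for $n\ge 9$) together with \eqref{HEE}.

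\textbf{Two smaller slips.} In the high-frequency bullet, $\int_{R^{-4}}^\infty R^{-15}s^{-5}\,ds=R/4$, which is not small. You must use $\min\{1,R^{-15}s^{-5}\}$ and split at $s=R^{-3}$, which gives $O(R^{-3})$.

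In Step 2, your claim that in $T^*$ the compact factor acts first is inaccurate: there $(P^+)^*$ and $e^{-isH_0}$ act first. In fact no adjoint is needed. For each $s$, $\langle x\rangle^{5}V(\cdot,t+s)U(t+s,t)f_k\rightharpoonup0$, and the compact factor $F_{\le R}(|p|)\langle x\rangle^{-5}$ makes it norm-null. Dominated convergence then concludes. Since the bound above is integrable on $[0,\infty)$, this even works without the cut at $s\le S$.

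\textbf{Comparison with the paper.} With these repairs your proof is correct, and it takes a different route. The paper makes a single cut in time. It bounds the tail $\int_M^\infty$ by \eqref{NTE} with $\alpha=0$, the same estimate you need above. It then imports compactness of $\int_0^M P^\pm e^{\pm isH_0}V(x,t\pm s)U(t\pm s,t)\,ds$ from Lemma~4.1 of \cite{sw20225}, invoking Assumption~\ref{asp:2}.

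You instead prove the compactness directly:
\begin{itemize}
\item the high-frequency part is made small by the dispersive bound \eqref{PSE}, since frequency-$R$ outgoing waves escape in time of order $R^{-3}$;
\item every time slice of what remains contains a Rellich-compact factor;
\item the weak-to-norm criterion handles the $s$-integral without needing norm continuity of $U(t+s,t)$ in $s$.
\end{itemize}
Your argument is self-contained and does not use quasi-periodicity, at the cost of one extra cutoff parameter. The paper's argument is shorter but relies on the cited lemma.
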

\begin{proof}
    It suffices to show the compactness of $C^\pm (t)). $  By $\langle x\rangle^5 V(x,t)\in L^\infty_{t,x}(\mathbb{R}^{n+1})$   and Assumption \ref{asp:2}, similarly to \cite[Lemma 4.1]{sw20225}, we obtain the compactness of
\begin{equation}\label{finite}
    \begin{aligned}
        i\int_0^{M}P^\pm e^{\pm isH_0}V(x,t\pm s)U(t\pm s,t)ds.
    \end{aligned}
\end{equation}
Thus, together with estimate
\begin{equation}
    \begin{aligned}
       & \|\int_M^{\infty}P^\pm e^{\pm isH_0}V(x,t\pm s)U(t\pm s,t)ds\|_{2\to2}\\
        \lesssim&\int_M^{\infty}\|P^\pm e^{\pm isH_0}\langle x\rangle^{-5}\|_{2\to2}\|\langle x\rangle^{5} V(x,t\pm s)\|_{L^\infty_{t,x}}ds\\
        \lesssim& \int_M^{\infty}\frac{1}{\langle s\rangle^{\frac{9}{8}-\epsilon}}\|\langle x\rangle^{5} V(x,t\pm s)\|_{L^\infty_{t,x}}ds\to0,
    \end{aligned}
\end{equation}
as $M\to\infty$ where we use \eqref{NTE} as $\alpha=0$ for all $n\geq 9$, yields the compactness of $P^\pm(1-\Omega^*_{\pm}(t))$.
\end{proof}

\begin{corollary}\label{comp2}
    If $\langle x\rangle^5V(x,t)\in L^\infty_{t,x}$, then $P^-(1-e^{-itH_0}U(0,t))$ is compact on $L^2_x( \mathbb{R}^n)$ for all $t\in \mathbb{R}$.
\end{corollary}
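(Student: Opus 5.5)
The plan is to mimic the proof of Lemma \ref{comp1}, this time with the finite-time operator $e^{-itH_0}U(0,t)$ in place of the wave operator. I take $t>0$; the case $t<0$ is symmetric, and for $t=0$ the operator is zero. Since $U(0,t)$ is the backward evolution from time $t$ to time $0$, Duhamel's formula gives
\begin{align*}
1-e^{-itH_0}U(0,t) = -i\int_0^{t} e^{-isH_0}V(x,t-s)U(t-s,t)\,ds
\end{align*}
(as in \eqref{OmeD}, up to sign conventions). Composing with $P^-$ gives
\begin{align*}
P^-(1-e^{-itH_0}U(0,t))=-i\int_0^t P^-e^{-isH_0}V(x,t-s)U(t-s,t)\,ds .
\end{align*}
This has exactly the structure of \eqref{A11} for $C^-(t)$, except that the $s$-integral runs over the finite interval $[0,t]$ instead of $[0,\infty)$.

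The key step is to show that, for each fixed $s$, the integrand is compact on $L^2_x$ and depends norm-continuously on $s$. I factor it as
\begin{align*}
P^-e^{-isH_0}\langle x\rangle^{-5}\cdot\big(\langle x\rangle^{5}V(x,t-s)\big)\cdot U(t-s,t).
\end{align*}
The middle factor is bounded uniformly in $s$ by the hypothesis $\langle x\rangle^5V\in L^\infty_{t,x}$, and $U$ is unitary. For compactness I will write $\langle x\rangle^{-5}V$ as $\langle x\rangle^{-5/2}\cdot\langle x\rangle^{-5/2}\langle x\rangle^{5}V$ and split $P^-e^{-isH_0}$ with frequency cutoffs $F_{\le K}(|p|)+F_{>K}(|p|)$.
\begin{itemize}
\item On the low-frequency piece, $F_{\le K}(|p|)\langle x\rangle^{-5/2}$ is compact, as a product of decaying multiplication and Fourier multipliers.
\item The high-frequency piece is small in norm. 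For $s$ bounded away from $0$, this follows from \eqref{HEE}/\eqref{PSE}, which give a factor $K^{-(3\delta-l)}$. For $s$ near $0$ the integrand is merely bounded, so I cut out $s\in[0,\varepsilon]$, whose contribution to the integral has norm $O(\varepsilon)$.
\end{itemize}
This is the same argument cited from \cite[Lemma 4.1]{sw20225} for \eqref{finite}, and in fact I can simply invoke that compactness statement with $M=t$.

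A Bochner integral over a finite interval of compact-operator-valued functions that are uniformly bounded is compact. For this I need the integrand to be measurable in the norm topology, or else an approximation by Riemann sums in operator norm. Strong continuity alone does not directly give compactness, so I will approximate $V(x,t-s)$ by the cutoff $F_{\le R}(|x|)V$ (error $O(R^{-5})\cdot t$ in norm) and the propagator by its frequency truncation. After that, the kernel of the truncated operator is Hilbert--Schmidt, jointly over the $s$-integral. This gives compactness directly, without needing norm continuity in $s$.

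The main obstacle is this last measurability/continuity issue, namely the strong rather than norm dependence of $U(t-s,t)$ on $s$. The Hilbert--Schmidt approximation sidesteps it: for fixed $R,K$, the integral of $F_{\le K}(|p|)e^{-isH_0}F_{\le R}(|x|)V\,U(\cdot)$ is Hilbert--Schmidt, with HS norm at most $t\,\|F_{\le K}\|_{L^2}\|F_{\le R}V\|_{L^\infty}|B_R|^{1/2}$. The remainders tend to zero in operator norm as $R,K\to\infty$ and $\varepsilon\to0$. Unlike Lemma \ref{comp1}, no large-time decay is needed here, because the interval is finite.
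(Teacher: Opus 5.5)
For $t>0$ your argument is correct and is essentially the paper's proof with the details filled in. Both go through Duhamel on the finite interval $[0,t]$ (your formula with $V(x,t-s)U(t-s,t)$ is the right one) and then the finite-time compactness \eqref{finite} from \cite[Lemma 4.1]{sw20225}. You are also right that, $t$ being fixed, no large-time decay is needed.

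The genuine gap is the sentence ``the case $t<0$ is symmetric''.
With $t=-\tau$ and $s=-\sigma$, the operator becomes $i\int_0^\tau P^-e^{i\sigma H_0}V(x,\sigma-\tau)U(\sigma-\tau,-\tau)\,d\sigma$. So $P^-$ is now paired with the backward free flow.
Estimates \eqref{fund_Estimate}, \eqref{HEE} and \eqref{PSE} only cover the matched combinations $P^\pm e^{\pm isH_0}$. For the mismatched one, a packet at frequency $\xi$ with $|\xi|\sim K$ is carried to $x\approx-4\sigma|\xi|^2\xi$, which is incoming. Hence $\|P^-e^{i\sigma H_0}F_{>K}(|p|)\langle x\rangle^{-5}\|_{2\to2}$ does not tend to $0$, and the integrand is in general not compact for fixed $\sigma$.

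The statement is still true for $t<0$, but the high-frequency smallness must come from the time integration. Put $F_{>K}(|p|)$ to the left of the whole integral and use the dual of \eqref{localsmooth}; this gives
\[
\Bigl\|F_{>K}(|p|)\int_0^\tau e^{i\sigma H_0}V(x,\sigma-\tau)U(\sigma-\tau,-\tau)\,d\sigma\Bigr\|_{2\to2}\lesssim K^{-3/2}\tau^{1/2}\|\langle x\rangle^{5}V\|_{L^\infty_{t,x}} .
\]
Your Hilbert--Schmidt argument for the pieces $F_{\le K}(|p|)$ and $F_{\le R}(|x|)V$ then finishes the proof. The same device, using \eqref{localsmooth} for the conjugate flow, also handles $t>0$ without excising $[0,\varepsilon]$.

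The paper's one-line proof only invokes \eqref{finite}, so it is likewise restricted to the matched case. Only $t>0$ is needed in Lemma \ref{pbfin}.

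A smaller point: the Hilbert--Schmidt detour is not needed for $t>0$. Your own first step shows that the integrand $B(s)$ is compact for every $s\in(0,t]$. For $f_k\rightharpoonup 0$, dominated convergence then gives $\|\int_0^t B(s)f_k\,ds\|_{L^2_x}\le\int_0^t\|B(s)f_k\|_{L^2_x}\,ds\to0$. So strong measurability plus uniform boundedness of $B(s)$ already suffices, and your remark that strong continuity cannot give compactness is too pessimistic.
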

\begin{proof}
    By the Duhamel's formula, we have \begin{equation}
        \begin{aligned}
        P^-(1-e^{-itH_0}U(0,t))&=i\int_0^tP^-e^{-isH_0}V(-s)U(0,s)ds.
    \end{aligned}
    \end{equation}
    For a fixed finite t, the operator is obviously compact by \eqref{finite}. As for the sufficient large t, we can also prove the compactness of the operator similarly as Lemma \ref{comp1}.
\end{proof}

We now prove the decomposition property of $C(t)$. We note that $C(t)$ can be expressed as the sum of three operators
\begin{align}
    C(t)=C_M(t)+C_r(t)+C(t)P_b(t)
\end{align}deriving from the equality $P_c(t)=\Omega_+(t)\Omega_+^*(t)$ in \cite{SWWZ}, where the operators $C_M (t)$ and $C_r(t) $ are given by \begin{align}
    C_M(t)\coloneqq C(t)\Omega_{+}(t)F_{M}(x,p) \Omega^*_{+}(t)
\end{align}
and
\begin{align}
    C_r(t)\coloneqq C(t)\Omega_{+}(t)(1-F_{M}(x,p)) \Omega^*_{+}(t)
\end{align}
for $M>0$ and $F_M$ given in (\ref{FM}).
For the operator $C_M (t)$ acting on the function $U(t,0)f$, by Lemma \ref{LemmaA.1} (intertwining property), it also can be rewritten as
\begin{align}\label{C_M3}
    C_M(t)U(t,0)f=C(t)\Omega_{+}(t)F_{M}(x,p)e^{-itH_0} \Omega_{+}^*f.
\end{align}
However, for the operator $C_r (t)$, we give an analysis on its smallness in the next subsection.
\subsection{  A Smallness Analysis on \texorpdfstring{$C_r(t)$}{(C r(t))} }
\begin{proposition}\label{smallforC_r}
If Assumptions \eqref{asp:1} and \eqref{asp:2} are satisfied, then there exists $ M_0>0$ such that whenever $M\geq M_0$,
 \begin{align}\label{C_r0}   \sup_{t\in\mathbb{R}}\|C_r(t)\|_{2\to2}<\frac{1}{2}.
    \end{align}
\end{proposition}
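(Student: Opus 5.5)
Our plan is to combine compactness of $C(t)$ (Lemma \ref{comp1}), quasi-periodicity (Lemma \ref{Lemma2.9}), and strong convergence of $1-F_M(x,p)\to 0$ as $M\to\infty$. We write
\[
C_r(t)=C(t)\Omega_+(t)(1-F_M(x,p))\Omega_+^*(t),
\]
and note that $\|\Omega_+(t)\|_{2\to2}\le 1$ and $\|\Omega_+^*(t)\|_{2\to2}\le1$. Hence it suffices to show
\[
\sup_t\|C(t)\Omega_+(t)(1-F_M(x,p))\|_{2\to2}\to0 \quad\text{as } M\to\infty .
\]
Taking adjoints, this is $\sup_t\|(1-F_M(x,p))^*\Omega_+^*(t)C(t)^*\|_{2\to2}$. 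Since $C(t)$ is compact, so is $K(t):=\Omega_+^*(t)C(t)^*$. Moreover $1-F_M(x,p)=1-F_{\le M}(|x|)F_{\ge 1/M}(|p|)$ is uniformly bounded in $M$ and tends strongly to $0$. Indeed, $F_{\le M}(|x|)\to1$ and $F_{\ge 1/M}(|p|)\to 1$ strongly, the latter because $\{p=0\}$ has measure zero. A uniformly bounded sequence converging strongly to zero converges to zero in norm when composed with a fixed compact operator. This gives the estimate for each fixed $t$.

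To make the estimate uniform in $t$, we use quasi-periodicity. By Lemma \ref{Lemma2.9}, $C(t)=\tilde C(\mathbf{s})$ and $\Omega_+(t)=\tilde\Omega_+(\mathbf{s})$ with $\mathbf{s}\in\mathbb{T}_1\times\cdots\times\mathbb{T}_N$, a compact torus. We then aim to show that the family $\{\tilde K(\mathbf{s})\}_{\mathbf{s}}$ is collectively compact. The cleanest route is to prove that $\mathbf{s}\mapsto \tilde K(\mathbf{s})$ is norm-continuous, or at least that its range over the torus is a compact subset of the compact operators. If so, a finite $\varepsilon$-net $\tilde K(\mathbf{s}_1),\dots,\tilde K(\mathbf{s}_m)$ together with the pointwise norm convergence gives a uniform $M_0$ such that $\|(1-F_M)^*\tilde K(\mathbf{s})\|<1/2$ for all $\mathbf{s}$ and all $M\ge M_0$. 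The initial bound $\|C_r\|\le 4\|\ldots\|$ only costs constants.

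The main obstacle is this continuity in $\mathbf{s}$. For $C^\pm$, we plan to use the Duhamel representation \eqref{A11}. The tail $\int_L^\infty$ is small uniformly in $t$ by \eqref{NTE} with $\alpha=0$, since the rate $\langle s\rangle^{-9/8+\epsilon}$ is integrable. The finite part $\int_0^L P^\pm e^{\pm isH_0}V(t\pm s)U(t\pm s,t)\,ds$ depends on $\mathbf{s}$ through $V$ and the propagator, both expressed via the Dyson series in the proof of Lemma \ref{Lemma2.9}. Here continuity is only available strongly, unless $V$ is continuous in $\mathbf{s}$. We therefore expect to use the weaker fact that these operators are compact with $V$-weights on the right. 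Concretely, we bound
\[
\|(1-F_M)\,\Omega_+^*\,(\cdot)\,\langle x\rangle^{-5}\|
\]
and pass the weight factor $\langle x\rangle^{5}V\,U$ through as a bounded operator, uniformly in $t$. It then suffices to show that $\sup_t\|(1-F_M)^*\tilde\Omega_+^*(\mathbf{s})\,e^{\mp isH_0}P^\pm\,\langle x\rangle^{-5}\ldots\|$ is small.

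A remaining difficulty is the factor $\Omega_+^*(t)$ sitting between $(1-F_M)$ and the compact piece. We intend to handle it through the strong continuity $s\text{-}\lim_{\mathbf{u}\to\mathbf{s}}\tilde P_b(\mathbf{u})=\tilde P_b(\mathbf{s})$ from Proposition \ref{pbcon}, together with $\dim \tilde P_b<\infty$. The idea is to write $\Omega_+\Omega_+^*=1-P_b$ and absorb $P_b$ separately, using its finite rank and continuity. Strong continuity combined with compactness of the time-independent factors yields uniform norm control over the compact torus.
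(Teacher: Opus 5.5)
You have a genuine gap: the uniformity in $t$, which is the whole content of the proposition, is not proved.

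Your reduction is correct. Since $\|\Omega_+(t)\|_{2\to2},\|\Omega_+^*(t)\|_{2\to2}\le1$, one has $\|C_r(t)\|_{2\to2}\le\|(1-F_M(x,p))^*\,\Omega_+^*(t)C(t)^*\|_{2\to2}$. For each fixed $t$ the right side tends to $0$, because $\Omega_+^*(t)C(t)^*$ is compact and $(1-F_M)^*\to0$ strongly. That is the easy part, and for the uniform part your proposal stops at a plan that does not close.

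\textbf{The $\varepsilon$-net.} It needs $\{\tilde\Omega_+^*(\mathbf{s})\tilde C(\mathbf{s})^*\}_{\mathbf{s}}$ to be totally bounded in operator norm. You note yourself that this is not available.

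\textbf{The weighted fallback.} It fails as written, for two reasons. First, the operator order is wrong:
\[
\Omega_+^*(t)C^\pm(t)^*=-i\int_0^\infty \Omega_+^*(t)\,U(t,t\pm s)\,V(x,t\pm s)\,e^{\mp isH_0}(P^\pm)^*\,ds ,
\]
so the factor next to $\Omega_+^*(t)$ is $U(t,t\pm s)$, not $e^{\mp isH_0}P^\pm$. After intertwining, the left factor you would need to be small is $(1-F_M)^*e^{\pm isH_0}\Omega_+^*(t\pm s)\langle x\rangle^{-5}$. Second, for every fixed $s>0$ this factor does not become small in norm as $M\to\infty$. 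Take a localized packet at frequency $N$; on it $\Omega_+^*$ is almost the identity, and $e^{\pm isH_0}$ carries it to distance $\sim sN^3\gg M$, where $1-F_M$ acts as the identity. So bounding $\langle x\rangle^{5}VU$ as a merely bounded factor gives only an $O(1)$ estimate. The smallness comes from the $s$-integration, which this scheme discards.

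\textbf{The closing assertion.} "Strong continuity plus compactness yields uniform norm control" is not a valid principle. A strongly continuous family of compact operators on a compact parameter set need not be collectively compact. For example, take $K(s)=\langle\psi_s,\cdot\,\rangle\psi_s$ with $K(0)=0$, where the unit vectors $\psi_s$ slide through an orthonormal sequence and $\psi_s\rightharpoonup0$ as $s\to0$. Beyond that:
\begin{itemize}
\item you exhibit no fixed compact operator through which $C(t)$ factors;
\item you do not establish strong continuity of $\mathbf{s}\mapsto\tilde\Omega_+^*(\mathbf{s})$;
\item there is no product $\Omega_+\Omega_+^*$ left in $(1-F_M)^*\Omega_+^*(t)C(t)^*$ that could be rewritten as $1-P_b$.
\end{itemize}

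\textbf{What the paper does.} The missing idea is how to get uniformity over the torus. The paper puts all $\mathbf{s}$-dependence into the strong statement \eqref{3.48}: $\lim_{M\to\infty}\sup_{\mathbf{s}}\|(1-F_M)\tilde\Omega_+^*(\mathbf{s})f\|_{L^2_x}=0$ for each fixed $f$. The norm continuity you could not get becomes available once the cutoff is inserted: Lemma \ref{Lemma3.7} gives norm continuity of $F_M\tilde\Omega_+^*(\mathbf{s})$. The argument then runs as follows.
\begin{enumerate}
\item Corollary \ref{Omeu} gives $\|\tilde\Omega_+^*(\mathbf{u})\tilde P_c(\mathbf{s})f\|\to\|\tilde P_c(\mathbf{s})f\|$. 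This follows from the isometry of $\tilde\Omega_+^*(\mathbf{u})$ on $\mathrm{Ran}\,\tilde P_c(\mathbf{u})$ together with \eqref{p1}.
\item Expanding $\|(1-F_M)g\|^2$ with $g=\tilde\Omega_+^*(\mathbf{u})\tilde P_c(\mathbf{s})f$ shows that no mass of $g$ leaks into the range of $1-F_M$ as $\mathbf{u}\to\mathbf{s}$.
\item Proposition \ref{pbcon} handles $(\tilde P_c(\mathbf{u})-\tilde P_c(\mathbf{s}))f$, giving \eqref{3.531}. This is the step that uses $\dim\tilde P_b<\infty$ (Lemma \ref{pbfin}), and it is how the continuity of $\tilde P_b$ actually enters.
\item A finite subcover of the torus gives uniformity in $\mathbf{s}$.
\end{enumerate}

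\textbf{A remaining requirement on either route.} Your explicit reduction shows that turning this uniform strong convergence into the norm bound \eqref{C_r0} also needs $t$-uniform compactness. For instance, $\bigcup_t C(t)^*(\{\|g\|_{L^2_x}\le1\})$ should be precompact modulo the uniformly small tails of \eqref{A11}. The paper's "it suffices" uses this tacitly, so a complete argument along either route has to supply it.
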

The proof of Proposition \ref{smallforC_r} relies on Lemma \ref{Lemma2.9}. By Lemma \ref{Lemma2.9},  $P_b(t)$, $U(t+T,t)$, $\Omega_{ \pm}(t)$ and $\Omega_{ \pm}^*(t)$   are  quasi-periodic in $t$  with the same type as $V$. So we could use finitely many parameters $s_j\in  \mathbb{T}_j$, $j=1,\ldots,N$ to express these operators:
for $\mathbf{s}=(s_1,\cdots,s_N)\in\mathbb{T}_1\times\cdots\times\mathbb{T}_N $, we define
\begin{equation}\label{3.11}
    \begin{aligned}
        &\tilde{P}_b(\mathbf{s})\coloneqq P_b(t),\quad\tilde{U}_{\mathbf{s}}(T,0)\coloneqq U(t+T,t),\quad\tilde{\Omega}_{+}(\mathbf{s})\coloneqq\Omega_{+}(t),\\
        &\tilde{\Omega}^*_{+}(\mathbf{s})\coloneqq\Omega^*_{+}(t),
    \end{aligned}
\end{equation}
where $t\equiv s_j$ (mod $T_j$), $j=1,\ldots,N$.

\begin{proposition}\label{pbcon}
    For all $\mathbf{s}\in \mathbb{T}_1\times\cdots\times\mathbb{T}_N$,  we have
    \begin{align*}
        s\text{-}\lim_{\mathbf{u}\to \mathbf{s}} \tilde{P}_b(\mathbf{u})=\tilde{P}_b(\mathbf{s}),
        \ \textrm{ on }L^2_x(\mathbb{R}^n).
    \end{align*}
\end{proposition}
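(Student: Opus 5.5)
We argue through the parametrized projection $\tilde P_c(\mathbf{s})=\tilde\Omega_+(\mathbf{s})\tilde\Omega_+^*(\mathbf{s})$, since $\tilde P_b=1-\tilde P_c$. The first step is to extend the quasi-periodic family to the torus. For each $\mathbf{u}\in\mathbb{T}_1\times\cdots\times\mathbb{T}_N$ we introduce the potential $\tilde V_{\mathbf{u}}(x,\tau)=V_0(x)+\sum_j V_j(x,u_j+\tau)$ and its propagator $\tilde U_{\mathbf{u}}(\tau,0)$. We then define $\tilde\Omega_+^*(\mathbf{u})$ and $\tilde\Omega_+(\mathbf{u})$ by the same limits as in \eqref{WaveO-def}, with $\tilde V_{\mathbf{u}}$ in place of $V$. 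The construction of \cite{SWWZ} applies verbatim, because its hypotheses involve only $\sup_{\tau}\|\langle x\rangle^\sigma \tilde V_{\mathbf{u}}\|_{L^\infty_x}$, which is uniform in $\mathbf{u}$. This gives a family of operators of norm at most one, which agrees with \eqref{3.11} whenever $\mathbf{u}$ is the image of some $t$.

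Since $\sup_{\mathbf u}\|\tilde P_c(\mathbf u)\|_{2\to2}\le 1$, an $\epsilon/3$ argument lets us test strong continuity on a dense set. Fix $f$ in a dense class, for instance $f$ Schwartz with $\hat f$ supported away from $0$. It suffices to prove that $\mathbf u\mapsto\tilde\Omega_+^*(\mathbf u)f$ is norm continuous and that $\mathbf u\mapsto \tilde\Omega_+(\mathbf u)g$ is norm continuous uniformly for $g$ in compact sets. Strong continuity of the product then follows from the standard fact that uniformly bounded strongly continuous families compose strongly continuously.

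For $\tilde\Omega_+(\mathbf u)$ we use the Cook/Duhamel representation
\[
\tilde\Omega_+(\mathbf u)g=g+i\int_0^\infty \tilde U_{\mathbf u}(0,v)\tilde V_{\mathbf u}(v)e^{-ivH_0}g\,dv .
\]
The tail $\int_T^\infty$ is small uniformly in $\mathbf u$, by $\|\langle x\rangle^{-\sigma}e^{-ivH_0}g\|_{L^2}\lesssim\langle v\rangle^{-n/4}$ for nice $g$ and $n\ge 9$. On the finite interval $[0,T]$, the map $\mathbf u\mapsto \tilde U_{\mathbf u}(0,v)$ is strongly continuous uniformly in $v\in[0,T]$. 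This follows from Duhamel's formula together with $\tilde V_{\mathbf u}\to\tilde V_{\mathbf u'}$ in the appropriate sense. Here we need each $V_j$ to be continuous in time, at least in $L^2_{\rm loc}$ after weighting; otherwise we approximate $V_j$ by time-mollified potentials, with errors controlled uniformly.

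For $\tilde\Omega_+^*(\mathbf u)$ the analogous formula is
\[
\tilde\Omega^*_+(\mathbf u)f=f-i\int_0^\infty e^{ivH_0}\tilde V_{\mathbf u}(v)\tilde U_{\mathbf u}(v,0)f\,dv .
\]
Here the uniform tail bound is the main obstacle: the interacting flow $\tilde U_{\mathbf u}(v,0)f$ does not decay locally, since it may contain a bound component. To handle this we use the cutoff in \eqref{WaveO-def} rather than the bare Duhamel expansion. We write $\tilde\Omega_+^*(\mathbf u)f$ as a limit over $v$ of $F_{\le1}(|x|/v^\alpha)e^{ivH_0}\tilde U_{\mathbf u}(v,0)f$, and we show that this convergence is uniform in $\mathbf u$. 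The \cite{SWWZ} proof gives a rate controlled by the derivative $\partial_v$ of the cutoff-conjugated flow. That derivative is bounded by constants depending only on $\sup\|\langle x\rangle^\sigma V\|$ and on $f$, through propagation estimates of the form $\|F_{\le 1}(|x|/v^\alpha)e^{ivH_0}\langle x\rangle^{-\sigma}\|\lesssim v^{-1-\epsilon}$ on the relevant frequency range. Hence these bounds are uniform in $\mathbf u$.

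Uniform convergence reduces the problem to continuity at finite $v$, which again follows from the finite-time continuity of $\tilde U_{\mathbf u}$. A uniform limit of continuous maps is continuous, so $\tilde\Omega_+^*(\cdot)f$ is continuous. Combining this with the continuity of $\tilde\Omega_+(\cdot)$ gives the strong continuity of $\tilde P_c$, and hence that of $\tilde P_b$. This avoids any use of $\mathrm{Ran}\,\Omega_+=\mathrm{Ran}\,\Omega_-$. If the uniformity in the third step proves delicate, a fallback is to establish weak continuity and then upgrade it to strong continuity via $\|\tilde P_c(\mathbf u)f\|^2=\langle \tilde P_c(\mathbf u)f,f\rangle$, using that $\tilde P_c(\mathbf u)$ is an orthogonal projection.
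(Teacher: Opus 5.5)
Extending the family to the whole torus via $\tilde V_{\mathbf u}$ and proving strong continuity of $\tilde\Omega_+(\cdot)$ by Cook's method are both fine. No time regularity of the $V_j$ is needed: with $\mathbf s$ as base point in Duhamel's formula you only need $\int_0^T\|(\tilde V_{\mathbf u}(v)-\tilde V_{\mathbf s}(v))\tilde U_{\mathbf s}(v,0)f\|_{L^2_x}\,dv\to0$, which follows from continuity of translations in $L^2_{\mathrm{loc}}$.

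The gap is the step for $\tilde\Omega_+^*(\cdot)$, namely the claim that $F_{\le1}(|x|/v^\alpha)e^{ivH_0}\tilde U_{\mathbf u}(v,0)f$ converges uniformly in $\mathbf u$, at a rate depending only on $\sup\|\langle x\rangle^\sigma V\|$ and $f$.
\begin{itemize}
\item In the $v$-derivative, only the potential term is integrable in norm; that is all your free estimate controls.
\item Set $\psi_{\mathbf u}(v)=e^{ivH_0}\tilde U_{\mathbf u}(v,0)f$. The term $(\partial_vF_{\le1}(|x|/v^\alpha))\psi_{\mathbf u}(v)$ has norm of order $v^{-1}$. It is controlled only through a propagation estimate $\int_1^\infty\langle\partial_v(F_{\le1}(|x|/v^\alpha)^2)\psi_{\mathbf u},\psi_{\mathbf u}\rangle\,dv\lesssim\|f\|_{L^2_x}^2$.
\item The total of this integral is uniform in $\mathbf u$, but its tail over $[T,\infty)$ is not. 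Up to a uniformly small error, the tail equals $\langle\tilde P_c(\mathbf u)f,f\rangle-\|F_{\le1}(|x|/T^\alpha)\psi_{\mathbf u}(T)\|_{L^2_x}^2$.
\item So uniform smallness of the tails near $\mathbf s$ is equivalent to continuity of $\mathbf u\mapsto\langle\tilde P_c(\mathbf u)f,f\rangle$ at $\mathbf s$, which is the proposition itself.
\end{itemize}
No argument using only uniform bounds on the potential can supply this. Applied to $H_0+\lambda V$ with $\lambda$ in a compact interval, it would make $\lambda\mapsto P_b(\lambda)$ strongly continuous. That fails at a coupling where an eigenvalue is absorbed at the threshold $0$: for $n\ge9$ the threshold solution is an $L^2$ eigenfunction, so $P_b(\lambda)$ jumps there. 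The almost-monotonicity of $v\mapsto\|F_{\le1}(|x|/v^\alpha)\psi_{\mathbf u}(v)\|^2$ yields only lower semicontinuity of $\mathbf u\mapsto\langle\tilde P_c(\mathbf u)f,f\rangle$, i.e.\ $\tilde P_b(\mathbf u)\tilde P_c(\mathbf s)\to0$ strongly. Your fallback has the same hole: the weak-to-strong upgrade for projections is correct, but weak continuity is the entire content.

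What you are missing is control of the bound-state side: bound states at $\mathbf s$ must remain bound at nearby $\mathbf u$, and this cannot come from wave-operator convergence alone. The paper never attempts continuity of $\tilde\Omega_+^*(\cdot)$. It combines $(\tilde P_c(\mathbf u)-1)\tilde\Omega_+(\mathbf u)=0$ with Lemma \ref{Lemma3.7} to get \eqref{p1}. Your continuity of $\tilde\Omega_+(\cdot)$ already gives \eqref{p1}, since $\mathrm{Ran}\,\tilde P_c(\mathbf s)=\mathrm{Ran}\,\tilde\Omega_+(\mathbf s)$. For the reverse direction the paper uses the structural fact that $\tilde P_b(\mathbf u)$ has finite rank independent of $\mathbf u$ (Lemma \ref{pbfin}). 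This is exactly what separates the quasi-periodic torus from the coupling-constant family, and your proposal never uses it.

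Equal finite rank still has to be paired with a no-escape property. Take $\{e_k\}_{k\ge0}$ orthonormal: rank-one projections onto $e_k$, $k\ge1$, satisfy the analogue of \eqref{p1} relative to the projection onto $e_0$, yet do not converge to it. What is needed is uniform localization in $x$ and $p$ of the unit vectors in $\mathrm{Ran}\,\tilde P_b(\mathbf u)$, in the spirit of Proposition \ref{Prop4.12}. With that, a compactness argument on orthonormal bases of these ranges gives convergence, even in norm.
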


We first claim the following corollary which is the directly result of Proposition \ref{pbcon} and will be essential for proving Proposition \ref{smallforC_r}.

\begin{corollary}\label{Omeu}For all $\mathbf{s}\in \mathbb{T}_1\times\cdots\times\mathbb{T}_N$, we have
    \begin{align}
          \lim_{\mathbf{u}\to \mathbf{s}}\|\tilde{\Omega}^\ast_+(\mathbf{u})\tilde{P_c}(\mathbf{s})f\|_{L_x^2}=\|\tilde{P_c}(\mathbf{s})f\|_{L_x^2}.
    \end{align}
\end{corollary}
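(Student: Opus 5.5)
We reduce the statement to a norm identity for $\tilde P_c(\mathbf{u})$ and then invoke Proposition \ref{pbcon}. Write $g=\tilde P_c(\mathbf{s})f$. The basic facts we use are the following. $\tilde\Omega_+(\mathbf{u})$ is an isometry, being a strong limit of unitaries $U(t,t+v)e^{-ivH_0}$. $\tilde\Omega^*_+(\mathbf{u})$ is its adjoint. Hence
\[
\tilde P_c(\mathbf{u})=\tilde\Omega_+(\mathbf{u})\tilde\Omega_+^*(\mathbf{u})
\]
is the orthogonal projection onto $\operatorname{Ran}\tilde\Omega_+(\mathbf{u})$.

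\textbf{Step 1 (norm identity).} For every $h\in L^2_x$ and every $\mathbf{u}$,
\[
\|\tilde\Omega^*_+(\mathbf{u})h\|_{L^2_x}=\|\tilde\Omega_+(\mathbf{u})\tilde\Omega^*_+(\mathbf{u})h\|_{L^2_x}=\|\tilde P_c(\mathbf{u})h\|_{L^2_x}.
\]
This follows from the isometry property of $\tilde\Omega_+(\mathbf{u})$ together with $P_c(t)=\Omega_+(t)\Omega_+^*(t)$ from \cite{SWWZ}. Applying it with $h=g$ gives
\[
\|\tilde\Omega^*_+(\mathbf{u})g\|_{L^2_x}=\|\tilde P_c(\mathbf{u})g\|_{L^2_x}.
\]

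\textbf{Step 2 (continuity).} Since $\tilde P_c(\mathbf{u})=1-\tilde P_b(\mathbf{u})$, Proposition \ref{pbcon} yields $\tilde P_c(\mathbf{u})g\to\tilde P_c(\mathbf{s})g$ in $L^2_x$ as $\mathbf{u}\to\mathbf{s}$. Consequently
\[
\|\tilde P_c(\mathbf{u})g\|_{L^2_x}\to\|\tilde P_c(\mathbf{s})g\|_{L^2_x}=\|g\|_{L^2_x},
\]
where the last equality holds because $g$ lies in the range of the projection $\tilde P_c(\mathbf{s})$. Combining this with Step 1 gives the claim.

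\textbf{Main obstacle.} The only delicate point is justifying that $\tilde\Omega_+(\mathbf{u})$ is an isometry and that $\tilde P_c(\mathbf{u})$ is an orthogonal projection, not merely a bounded idempotent, for every $\mathbf{u}$. Here we rely on the existence of the strong limit defining $\Omega_+(t)$ and on the identity $P_c(t)=\Omega_+(t)\Omega_+^*(t)$, both quoted from \cite{SWWZ}. We also use Lemma \ref{Lemma2.9}, which ensures these objects are well-defined functions of $\mathbf{u}$ on the torus. If only weak continuity of $\tilde P_b$ were available, we would instead use $\|\tilde P_c(\mathbf{u})g\|^2=\langle \tilde P_c(\mathbf{u})g,g\rangle$, which holds for an orthogonal projection and so still gives convergence of the norms under weak convergence.
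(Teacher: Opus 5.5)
Your proof is correct and is essentially the paper's argument. The paper inserts $\tilde P_c(\mathbf u)$ via $\tilde\Omega^*_+(\mathbf u)\tilde P_c(\mathbf u)=\tilde\Omega^*_+(\mathbf u)$ and uses that $\tilde\Omega^*_+(\mathbf u)$ preserves norms on $\operatorname{Ran}\tilde P_c(\mathbf u)$, which amounts to your identity $\|\tilde\Omega^*_+(\mathbf u)h\|_{L^2_x}=\|\tilde P_c(\mathbf u)h\|_{L^2_x}$. It then concludes, as you do, with the convergence $\tilde P_c(\mathbf u)\tilde P_c(\mathbf s)f\to\tilde P_c(\mathbf s)f$ supplied by Proposition \ref{pbcon} (equivalently, claim \eqref{p1}).
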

\begin{proof}Using the definition of $\tilde{P_c}(\mathbf{s})$, we get
    \begin{equation}
    \begin{aligned}
        \|\tilde{\Omega}^\ast_+(\mathbf{s})\tilde{P_c}(\mathbf{s})f\|_{L_x^2}^2=&\langle\tilde{\Omega}^\ast_+(\mathbf{s})\tilde{P_c}(\mathbf{s})f,\tilde{\Omega}^\ast_+(\mathbf{s})\tilde{P_c}(\mathbf{s})f\rangle\\
        =&\langle\tilde{\Omega}_+(\mathbf{s})\tilde{\Omega}^\ast_+(\mathbf{s})\tilde{P_c}(\mathbf{s})f,\tilde{P_c}(\mathbf{s})f\rangle \\
        =&\|\tilde{P_c}(\mathbf{s})f\|_{L_x^2}^2.
    \end{aligned}
    \end{equation}Thus,\begin{equation}
        \begin{aligned}
         \lim_{\mathbf{u}\to \mathbf{s}}\|\tilde{\Omega}^\ast_+(\mathbf{u})\tilde{P_c}(\mathbf{s})f\|_{L_x^2}^2=& \lim_{\mathbf{u}\to \mathbf{s}}\|\tilde{\Omega}^\ast_+(\mathbf{u})\tilde{P_c}(\mathbf{u})\tilde{P_c}(\mathbf{s})f\|_{L_x^2}^2\\
        =& \lim_{\mathbf{u}\to \mathbf{s}}\|\tilde{P_c}(\mathbf{u})\tilde{P_c}(\mathbf{s})f\|_{L_x^2}^2\\
        =&\|\tilde{P_c}(\mathbf{s})f\|_{L_x^2}^2,
    \end{aligned}
    \end{equation}
    where we use the equality $\tilde{\Omega}^\ast_+(\mathbf{u})\tilde{P_c}(\mathbf{u})=\tilde{\Omega}^\ast_+(\mathbf{u})$ and Proposition \ref{pbcon}, see also the claim \eqref{p1}.
\end{proof}

 The proof of Proposition   \ref{pbcon} requires the following two lemmas.
 \begin{lemma}\label{pbfin}
   For all $\mathbf{s},\mathbf{u}\in \mathbb{T}_1\times\cdots\times\mathbb{T}_N$, we have
   \begin{align}
       \dim \tilde{P}_b(\mathbf{u})=\dim \tilde{P}_b(\mathbf{s})<\infty.
   \end{align}
 \end{lemma}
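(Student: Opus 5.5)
Two things need to be shown: $\tilde P_b(\mathbf{s})$ has finite rank for each fixed $\mathbf{s}$, and this rank does not depend on $\mathbf{s}$. We take finiteness first. Fix $t$ with $t\equiv s_j \pmod{T_j}$. By the identity $P_c(t)=\Omega_+(t)\Omega_+^*(t)$, $P_b(t)=1-\Omega_+(t)\Omega_+^*(t)$ is the orthogonal projection onto $\ker\Omega_+^*(t)$, so $\dim \tilde P_b(\mathbf{s})=\dim\ker \Omega_+^*(t)$.

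Take $f\in\operatorname{Ran}P_b(t)$. Then $\Omega_+^*(t)f=0$, so
\[
C^+(t)f=P^+(1-\Omega_+^*(t))f=P^+f .
\]
We want a similar statement for $P^-f$ that avoids using $\operatorname{Ran}\Omega_+=\operatorname{Ran}\Omega_-$, which may fail here. The tool is Corollary \ref{comp2}: $P^-(1-e^{-i\tau H_0}U(0,\tau))$ is compact uniformly enough in $\tau$, together with its time-translated version starting at $t$. We write
\[
P^- f = P^-\bigl(1-e^{-i\tau H_0}U(t,t+\tau)\bigr)f + P^- e^{-i\tau H_0}U(t,t+\tau)f .
\]
Since $f\in\ker\Omega_+^*(t)$, the state $U(t+\tau,t)f$ stays, up to $o(1)$, localized inside $|x|\lesssim\tau^{\alpha}$ for large $\tau$; this is the definition \eqref{DefPc}, since $P_c(t)f=0$. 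Using the $L^2$-estimates \eqref{NTE} for $P^-e^{-i\tau H_0}\langle x\rangle^{-\delta}$, the last term should be small. More precisely, we would choose a sequence $\tau_k\to\infty$ along which $U(t+\tau_k,t)$ returns close to $U$ at the same torus point; quasi-periodicity gives this by Kronecker density. This yields $f=K f+R f$ with $K$ compact and $\|R\|<1/2$ on $\operatorname{Ran}P_b(t)$. Hence $(1-R)^{-1}K$ restricted to $\operatorname{Ran}P_b(t)$ is compact and equals the identity there, which forces $\dim\operatorname{Ran}P_b(t)<\infty$.

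Independence of $\mathbf{s}$ follows from the intertwining property in Lemma \ref{LemmaA.1}: $\Omega_+^*(t)U(t,t')=e^{-i(t-t')H_0}\Omega_+^*(t')$. So $U(t,t')$ maps $\ker\Omega_+^*(t')$ unitarily onto $\ker\Omega_+^*(t)$, which gives $\dim P_b(t)=\dim P_b(t')$ for all real $t,t'$. We still need this for every $\mathbf{u}$ in the torus and not just the dense orbit $\{(t \bmod T_j)_j\}$. For that we interpret $\tilde P_b(\mathbf{u})$ through the torus-parametrized family: the Floquet-type construction of Appendix \ref{appendixB}, with $V(x,\mathbf{u}+\tau)$, makes each $\mathbf{u}$ itself an initial phase of a quasi-periodic evolution. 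Since the translates $\mathbf{u}+\tau$ over $\tau\in\mathbb{R}$ form the orbit of $\mathbf{u}$, the same intertwining argument applies along that orbit. Finally, all orbits have the same rank, because the finiteness argument above gives bounds uniform in $\mathbf{u}$ and the ranks are integers. To conclude equality across orbits we would combine weak lower semicontinuity of the rank with the uniform compact-operator identity.

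The main obstacle is the first step: bounding $P^-f$ for a bound state without access to $\Omega_-^*$ information. The key points there are the choice of the recurrence times $\tau_k$ and a uniform-in-$\tau$ version of Corollary \ref{comp2}.
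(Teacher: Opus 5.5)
Your finiteness step has a genuine gap. The $P^+$ piece is fine: $P^+f=C^+(t)f$ is compact for $f\in\ker\Omega_+^*(t)$, as in the paper. The $P^-$ piece, however, is not controlled.

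The first term of your splitting is compact (it is a time-shifted Corollary \ref{comp2}). The remainder $P^-e^{-i\tau H_0}U(t,t+\tau)f$, however, contains $U(t,t+\tau)f$, the time-$t$ value of the solution that equals $f$ at time $t+\tau$. Since $f$ is a bound state at time $t$, not at $t+\tau$, nothing is known about this vector. Your localization heuristic concerns a different vector, $U(t+\tau,t)f$. Moreover, \eqref{DefPc} with $P_c(t)f=0$ only gives $F_{\le 1}(|x|/\tau^\alpha)e^{i\tau H_0}U(t+\tau,t)f\to0$, which does not localize $U(t+\tau,t)f$.

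Using the past instead, $U(t-\tau,t)f\in\operatorname{Ran}P_b(t-\tau)$, gives the right structure. But then the vector inside $P^-e^{-i\tau H_0}(\cdot)$ moves with $\tau$. Above all, ``$\|R\|<1/2$ on $\operatorname{Ran}P_b(t)$'' means $\|P^-e^{-i\tau H_0}P_b(t-\tau)\|_{2\to2}<1/2$. That is decay uniform over the unit ball of a possibly infinite-dimensional space, i.e.\ a uniform localization of all bound states such as $\sup_s\|\langle x\rangle^{\delta}P_b(s)\|_{2\to2}<\infty$, which you do not have at this stage. The recurrence times do not supply it. Exploiting $(t+\tau_k)\bmod T_j\approx t\bmod T_j$ requires $P_b(t+\tau_k)\approx P_b(t)$, i.e.\ Proposition \ref{pbcon}, and the paper proves that proposition using this very lemma. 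Even granting that, uniformity over an infinite-dimensional range would still be missing.

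The paper avoids needing uniformity altogether. It fixes data $f\in\operatorname{Ran}P_b(0)$ and observes $U(t,0)f$ at large $t$. From $\Omega_+^*(t)U(t,0)f=e^{-itH_0}\Omega_+^*(0)P_b(0)f=0$ one gets $U(t,0)f=\tilde C(t)U(t,0)f+P^-e^{-itH_0}f$. Here $\tilde C(t)=P^+(1-\Omega_+^*(t))+P^-(1-e^{-itH_0}U(0,t))$ is compact, and the remainder is the free flow of fixed data, so it tends to $0$ for each $f$. The paper then:
\begin{enumerate}
\item approximates $\tilde C(t)$ by an operator of rank $\tilde N$ independent of $t$;
\item takes only $2\tilde N$ orthonormal bound states and chooses $t$ so that their remainders are small;
\item uses a dimension count to find a unit $f$ in their span with $U(t,0)f$ orthogonal to the $\tilde N$ finite-rank directions, so $\|U(t,0)f\|<1$, contradicting unitarity.
\end{enumerate}

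For the equality of dimensions, your real-time conjugation is the paper's argument, $P_b(t)=U(t,0)P_b(0)U(0,t)$. You are right that this only reaches torus points of the form $(t\bmod T_j)_j$, which the paper does not address. Your extension, however, is not an argument. Uniformly bounded integer ranks need not coincide. Any semicontinuity route needs strong continuity of $\mathbf u\mapsto\tilde P_b(\mathbf u)$, which is again Proposition \ref{pbcon} and therefore circular. It also needs orbits to accumulate on one another, which Assumption \ref{asp:2} (only non-integer ratios $T_{j_1}/T_{j_2}$) does not guarantee; for example, $T_1=2$, $T_2=3$ gives closed orbits.
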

\begin{proof}
Firstly, by the results of \cite{MB1980} and \cite{Dispersion2012}, the discrete spectrum of the general Schr\"{o}dinger operator is finite dimension.

We observe that
$\dim P_b(t)=\dim \tilde{P}_b(\mathbf{s})$, $t\equiv s_j$ (mod $T_j$), $j=1,\ldots,N$ and note that
   \begin{equation}
       \begin{aligned}
        P_b(t) &=1-\Omega_+(t)\Omega_+^*(t) \\
        &=U(t,0)(1-\Omega_+(0)\Omega_+^*(0))U(0,t)\\
        &=U(t,0) P_b(0) U(0,t)
    \end{aligned}
   \end{equation}
for all $t \in \mathbb{R}.$
Hence, it suffices to prove that
$$
\dim P_b(0) < \infty.
$$
Assume to the contrary that
  $$
\dim P_b(0) = \infty.
$$

Let $f\in \mathrm{Ran}(P_b(0))$ satisfying $\|f\|_{L^2}=1$. Consider $U(t,0) f$ and apply the incoming/outgoing decomposition:
\begin{equation}\label{App1}
    \begin{aligned}
        U(t,0)f
        &= P^+ U(t,0)f + P^- U(t,0)f  \\
        &= P^+ \Omega_+^*(t) U(t,0)f + P^+(1 - \Omega_+^*(t)) U(t,0)f  \\
        &\quad + P^- e^{-itH_0} f + P^-(U(t,0) - e^{-itH_0})f  \\
        &= P^+(1 - \Omega_+^*(t)) U(t,0)f + P^-(U(t,0) - e^{-itH_0}) f + P^- e^{-itH_0} f  \\
        &\coloneqq \tilde{C}(t) U(t,0) f + P^- e^{-itH_0} f,
    \end{aligned}
\end{equation}
    where
\begin{align}
    \tilde{C}(t)\coloneqq P^+(1-\Omega_+^*(t))+P^-(1-e^{-itH_0}U(0,t)).
\end{align}
In the third equality above, we have used that $\Omega_+^*(t)U(t,0)f= e^{-itH_0}\Omega_+^*(0)P_b(0)f$, with the identity $\Omega_+^*(0)P_b(0)=\Omega_+^*(0)(1-\Omega_+(0)\Omega_+^*(0))=0$.

Next, decompose $\tilde{C}(t) U(t,0) f$ by means of smooth cutoff functions:
\begin{align}
    \tilde{C}(t)U(t,0)f=&F_{\leq M}(|x|)F_{\leq M}(|p|)\tilde{C}(t)U(t,0)f\nonumber\\
    &+(1-F_{\leq M}(|x|)F_{\leq M}(|p|))\tilde{C}(t)U(t,0)f.
\end{align}
For the second term, using Lemma \ref{comp1} and Corollary \ref{comp2}, we know that the operator $\tilde{C}(t)$ is compact. Combined with the fact  $$\lim\limits_{M\rightarrow\infty}\|\left(1-F_{\leq M}(|x|)F_{\leq M}(|p|)\right)f\|_{L^2}=0,  \ \forall f\in L^2,$$we have $$s\text{-}\lim\limits_{M\rightarrow\infty}1-F_{\leq M}(|x|)F_{\leq M}(|p|)=0, \ \textrm{ on }\mathrm{Ran}(\tilde{C}(t)). $$
Hence, the uniform bound
\begin{align}\label{smallforsupp}
    \sup_{t}\|(1-F_{\leq M}(|x|)F_{\leq M}(|p|))\tilde{C}(t)U(t,0)f\|_{L^2_x(\mathbb{R}^n)}\leq\frac{1}{100}
\end{align}holds for all $M\geq M_0$, where $M_0$ is  fixed.
Obviously, $F_{\leq M}(|x|)F_{\leq M}(|p|)$ is a time-independent compact operator which can be approximated by a finite rank operator. Thus, for some $\tilde{N}$,
\begin{align}\label{app2}
     \tilde{C}(t)U(t,0)f=&\sum_{l=1}^{\tilde{N}}[(\phi_l(x),\cdot)_{L^2_x}\psi_{l}(x)]\tilde{C}(t)U(t,0)f+\tilde{C}_{r1}(t)U(t,0)f\nonumber\\
     &+(1-F_{\leq M}(|x|)F_{\leq M}(|p|))\tilde{C}(t)U(t,0)f
\end{align}
with \begin{align*}
    \sup_{t}\|C_{r1}(t)\|_{2\to2 }<\frac{1}{100}.
\end{align*}
Substituting \eqref{app2} into \eqref{App1} yields
\begin{align}
U(t,0)f=\sum_{l=1}^{\tilde{N}}[(\phi_l(x),\cdot)_{L^2_x}\psi_{l}(x)]\tilde{C}(t)U(t,0)f+\tilde{C}_r(t)U(t,0)f+P^-e^{-itH_0}f,
\end{align}
where the combined remainder is
    \begin{align}
    \tilde{C}_r(t)\coloneqq\tilde{C}_{r1}(t)+(1-F_{\leq M}(|x|)F_{\leq M}(|p|))\tilde{C}(t).
\end{align}
 For sufficiently large $M$, we obtain
\begin{align*}
    \sup_{t}\|\tilde{C}_{r}(t)\|_{2\to2}<\frac{1}{10}.
\end{align*}
Hence,
\begin{equation}
    \begin{aligned}
    U(t,0)f&=\sum_{l=1}^{\tilde{N}}C_{l}(t,f)(1-\tilde{C}_r(t))^{-1}\psi_l(x)+(1-\tilde{C}_r(t))^{-1}P^-e^{-itH_0}f,
\end{aligned}
\end{equation}
where $C_{l}(t,f)= (\phi_l(x),\tilde{C}(t)U(t,0)f)_{L^2_x}$.

Let $m=2\tilde{N}$. Then there exists an orthonormal set $\{f_1, \ldots, f_m\} \subset \mathrm{Ran}(P_b(0))$ with $\|f_j\|_{L^2_x} = 1$ and $f_j \perp f_l$ for $j \neq l$.
Following Shubin \cite{SMA2001}, the eigenfunctions of the bi-Laplacian Schr\"odinger operator exhibit polynomial decay of arbitrary order. Hence, for each  $j=1,\ldots, m$,
\begin{align}
    \|\langle x\rangle^{ \delta}f_j\|_{L^2_x(\mathbb{R}^n)}\lesssim 1,\quad \forall \delta>0.
\end{align}

By Lemma~\ref{allkindsestimates}, and in particular from \eqref{NTE}, we have
$\|P^-e^{-itH_0}f_j\|_{L^2_x}$ approaches zero, as t goes to infinity.

Choose   $T=T(m,f_1,\cdots,f_m)>0$ large enough, such that, for all  $t\geq T, $
\begin{align*}
    \sup_{1\leq j\leq m}\|P^-e^{-itH_0}f_j\|_{L^2_x(\mathbb{R}^n)}<\frac{1}{\tilde{N}+100},
\end{align*}
then
\begin{align*}
    \sup_{1\leq j\leq m}\|(1-\tilde{C}_r(t))^{-1}P^-e^{-itH_0}f_j\|_{L^2_x(\mathbb{R}^n)}<\frac{2}{\tilde{N}+100}, \ \mbox{for} \ t\geq T.
\end{align*}
Fix $t=2T>T$ and $\dim(span\{(1-\tilde{C}_r(t))^{-1}\psi_1(x),\cdots,(1-\tilde{C}_r(t))^{-1}\psi_{\tilde{N}}(x)\})\leq \tilde{N}.$
Then there exists a function $f\in span\{f_1,\cdots , f_m\}$ with $\|f\|_{L^2_x(\mathbb{R}^n)}=1$, such that $U(t,0)f\perp (1-\tilde{C}_r(t))^{-1}\psi_l(x)$ for all $l=1,\cdots,\tilde{N}.$
Let
\begin{align*}
    f(x)=\sum_{l=1}^{m}c_lf_l(x).
\end{align*}
Then we have
\begin{align*}
    (&U(t,0)f,U(t,0)f)_{L^2_x}=\left(U(t,0)f,\sum_{l=1}^{m}c_lU(t,0)f_l(x)\right)_{L^2_x}\nonumber\\
    &=\left(U(t,0)f,\sum_{l=1}^{m}c_l\sum_{k=1}^{\tilde{N}}C_{k}(t, f)(1-\tilde{C}_r(t))^{-1}\psi_k(x)+\sum_{l=1}^{m}c_l(1-\tilde{C}_r(t))^{-1}
    P^-e^{-itH_0}f_l\right)_{L^2_x}\nonumber\\
    &=\left(U(t,0)f,\sum_{l=1}^{m}c_l(1-\tilde{C}_r(t))^{-1}P^-e^{-itH_0}f_l\right)_{L^2_x}\nonumber\\
    &\leq \|f\|_{L^2_x}\times\sum_{l=1}^{m}|c_l| \times\frac{2}{\tilde{N}+100}\nonumber\\
    &\leq 1\times (\sum_{l=1}^{m}|c_l|^2)^{1/2}N^{1/2}\times \frac{2}{\tilde{N}+100}\nonumber\\
    &\leq\frac{2\sqrt{2}}{\sqrt{\tilde{N}+100}}<1,
\end{align*}
which contradicts the unitarity of $U(t,0)$, i.e. $\|U(t,0)f\|_{L^2_x} = 1$. Therefore, we conclude that $\dim P_b(0)<\infty.$\end{proof}

\begin{lemma}\label{Lemma3.7}
    If Assumptions \eqref{asp:1} and \eqref{asp:2} hold, then
    \begin{equation}
        \begin{aligned}
            \lim_{\mathbf{u}\to \mathbf{s}} \|(\tilde{\Omega}_{+}(\mathbf{s})-\tilde{\Omega}_{+}(\mathbf{u}))F_M(x,p)\|_{2\to2}=\lim_{u\to \mathbf{s}} \|F_M(x,p)(\tilde{\Omega}^*_{+}(\mathbf{s})-\tilde{\Omega}^*_{+}(\mathbf{u}))\|_{2\to2}=0.
        \end{aligned}
    \end{equation}
\end{lemma}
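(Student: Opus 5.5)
Since $\tilde{\Omega}_+(\mathbf{s})-\tilde{\Omega}_+(\mathbf{u})$ is the adjoint of $\tilde{\Omega}^*_+(\mathbf{s})-\tilde{\Omega}^*_+(\mathbf{u})$ and $F_M(x,p)$ is bounded with $F_M(x,p)^*=F_{\geq 1/M}(|p|)F_{\leq M}(|x|)$, the two limits are essentially the same statement. I will prove the second, with $F_M$ replaced by $F_{\leq M}(|x|)F_{\geq 1/M}(|p|)$ or its adjoint as convenient; the commutator $[F_{\leq M}(|x|),F_{\geq 1/M}(|p|)]$ is compact and harmless. The strategy is to get norm continuity from the Duhamel representation of $\Omega^*_+$, using the localization $F_M$ to supply integrable time decay. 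Fix $t,t'$ corresponding to $\mathbf{s},\mathbf{u}$. By the Duhamel expansion in Section 3,
\begin{align*}
\Omega_+(t)=1+i\int_0^\infty U(t,t+v)V(x,t+v)e^{-ivH_0}\,dv,
\end{align*}
so
\begin{align*}
\bigl(\Omega_+(t)-\Omega_+(t')\bigr)F_M=i\int_0^\infty\Bigl[U(t,t+v)V(t+v)-U(t',t'+v)V(t'+v)\Bigr]e^{-ivH_0}F_M\,dv .
\end{align*}

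\textbf{Tail.} Write $V(t+v)=V(t+v)\langle x\rangle^{\delta}\langle x\rangle^{-\delta}$ with $\delta\le\sigma$, and use Lemma \ref{FMest} in adjoint form: $\|\langle x\rangle^{-\delta}e^{-ivH_0}F_M\|_{2\to2}\lesssim_M\langle v\rangle^{-\delta}$. Since the unitary factors have norm one, the integrand is bounded in norm by $C_M\|\langle x\rangle^\sigma V\|_{L^\infty}\langle v\rangle^{-\delta}$, uniformly in $t,t'$. Taking $\delta>1$, the contribution from $v\geq L$ is less than $\varepsilon$ for $L=L(M,\varepsilon)$, uniformly in $\mathbf{s},\mathbf{u}$.

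\textbf{Finite part.} It remains to show that $\int_0^L[\cdots]\,dv$ tends to $0$ in operator norm as $\mathbf{u}\to\mathbf{s}$. Quasi-periodicity does not give continuity in $t$ directly, because $t'\to t$ on the torus allows $t'-t$ to be large. I will therefore work in the torus parametrization throughout. Write $\tilde V(\mathbf{s}+v\mathbf{1})$ for $V(t+v)$, with $\mathbf{1}=(1,\dots,1)$. I will argue uniform continuity in $\mathbf{s}$ of $\tilde V(\mathbf{s})\langle x\rangle^{\sigma'}$ for some $\sigma'<\sigma$, as an $L^\infty$ multiplier. This requires some regularity of each periodic $V_j$ in $t$. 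If only boundedness is available, I will instead use the compactness of $\langle x\rangle^{-\delta}e^{-ivH_0}F_M$ combined with strong continuity of the multipliers.

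For the propagators, I will use the Dyson expansion already used in Lemma \ref{Lemma2.9}, whose terms are built from $\mathcal K(\tilde V(\mathbf{s}+u\mathbf{1}))$, together with the bound $\|\tilde V\|_\infty$ on $[0,L]$. This shows that $\tilde U_{\mathbf{s}}(v,0)^{*}$ depends strongly continuously on $\mathbf{s}$, uniformly for $v\in[0,L]$. To upgrade strong continuity to norm continuity, I need the right-hand factor $\langle x\rangle^{-\delta}e^{-ivH_0}F_M$ to be compact: $F_M$ contains $F_{\leq M}(|x|)$, and after the free flow the weight $\langle x\rangle^{-\delta}$ localizes in $x$. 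Compactness holds in the form $\langle x\rangle^{-\delta}e^{-ivH_0}F_{\leq M}(|x|)F_{\geq 1/M}(|p|)$ provided a high-frequency cutoff is available. High frequencies are handled by splitting off $F_{\geq K}(|p|)$ and using smoothing, or by noting that $F_{\le M}(|x|)$ against $\langle x\rangle^{-\delta}$ gives a Hilbert--Schmidt kernel after a frequency cutoff. A family that converges strongly, uniformly in $v\in[0,L]$, and is composed with a fixed compact operator that is continuous in $v$, converges in norm. Integrating over $[0,L]$ then finishes the argument.

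\textbf{Main obstacle.} The hard step is this finite-time continuity in $\mathbf{s}$. It requires care with the precise regularity of $V$ in the torus variables, and with passing from strong to norm convergence through a compact factor that is uniform in $v\in[0,L]$; the high-frequency part of $F_M$ is the delicate piece of the latter. If direct continuity of $\tilde V$ fails, I would try to deduce it from $\dim\tilde P_b<\infty$ (Lemma \ref{pbfin}) together with the strong continuity of the Dyson terms.
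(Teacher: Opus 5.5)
\textbf{Verdict: the architecture is right, but the key finite-time step is not proved.} Your plan is the natural one: Duhamel expansion, a tail bound uniform in $\mathbf{s},\mathbf{u}$ from Lemma~\ref{FMest}, then norm continuity on a finite window by letting strongly convergent propagators act on a compact factor. It is consistent with the argument the paper omits by reference to Soffer--Wu. The gap is the step you yourself call the main obstacle, and the mechanisms you offer for it fail under Assumptions~\ref{asp:1}--\ref{asp:2}, which give no regularity in $t$.

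\begin{itemize}
\item Uniform continuity of $\mathbf{s}\mapsto\tilde V(\mathbf{s})\langle x\rangle^{\sigma'}$ is an extra hypothesis.
\item The fallback, ``strong continuity of the multipliers'', is false in general. Take $V_1(x,t)=W(x)\chi(t)$, with $\chi$ the $T_1$-periodic indicator of $[0,T_1/2)$. If $\mathbf{u},\mathbf{s}$ differ only in the first coordinate and $u_1,s_1$ straddle $0$, then $\|(\tilde V(\mathbf{u})-\tilde V(\mathbf{s}))\phi\|=\|W\phi\|$.
\item For the same reason, the bound $\|\tilde V\|_\infty$ makes the Dyson series converge but gives no continuity of its terms. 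So strong continuity of $\tilde U_{\mathbf{s}}$ is not established either.
\item Extracting continuity from $\dim\tilde P_b<\infty$ (Lemma~\ref{pbfin}) cannot work: that is a statement about the dynamics, not about the data $V$.
\end{itemize}

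\textbf{The missing idea: only time integrals enter, and translations are continuous in $L^1$.}
\begin{itemize}
\item For fixed $\phi\in L^2_x$, the map $w\mapsto V_j(\cdot,w)\langle x\rangle^{\delta}\phi$ is a bounded, strongly measurable ($L^2_x$ is separable), $T_j$-periodic $L^2_x$-valued function. Hence $\int_0^L\|(\tilde V(\mathbf{u}+w\mathbf{1})-\tilde V(\mathbf{s}+w\mathbf{1}))\langle x\rangle^{\delta}\phi\|_{L^2_x}\,dw\to0$ as $\mathbf{u}\to\mathbf{s}$.
\item Insert this into $\tilde U_{\mathbf{u}}(v,0)f-\tilde U_{\mathbf{s}}(v,0)f=-i\int_0^v\tilde U_{\mathbf{u}}(v,w)[\tilde V(\mathbf{u}+w\mathbf{1})-\tilde V(\mathbf{s}+w\mathbf{1})]\tilde U_{\mathbf{s}}(w,0)f\,dw$, approximating the continuous path $w\mapsto\tilde U_{\mathbf{s}}(w,0)f$ by step functions. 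This gives strong continuity of the propagators and of their inverses.
\item Compactness is easy away from $v=0$. The kernel of $e^{-ivH_0}$ is $O(|v|^{-n/4})$, so $\langle x\rangle^{-\delta}e^{-ivH_0}F_{\leq M}(|x|)$ is Hilbert--Schmidt once $\delta>n/2$ (allowed, since $\sigma>n/2+12$). Hence $K_v=\langle x\rangle^{-\delta}e^{-ivH_0}F_M$ is compact and norm-continuous for $v\in[v_0,L]$, for either ordering of the factors of $F_M$.
\item The window $[0,v_0]$ costs at most $2v_0\|V\|_{L^\infty}$, so no high-frequency analysis is needed.
\item On $[v_0,L]$, split the integrand into a propagator-difference term acting on $\tilde V(\mathbf{s}+v\mathbf{1})\langle x\rangle^{\delta}K_v$, and a potential-difference term.
\item The first term tends to $0$ for each fixed $v$; conclude by dominated convergence in $v$. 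That compact factor is not continuous in $v$, so uniformity in $v$ is the wrong tool.
\item For the second term, approximate the norm-compact family $\{K_v\}$ by a fixed finite-rank projection. Then apply the $L^1$ translation statement to its finitely many basis vectors.
\end{itemize}

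\textbf{A smaller point: the ordering of $F_M$.} Lemma~\ref{FMest} in adjoint form controls $\langle x\rangle^{-\delta}e^{-ivH_0}F_M(x,p)^*$, where $F_M(x,p)^*=F_{\geq 1/M}(|p|)F_{\leq M}(|x|)$. It does not control $\langle x\rangle^{-\delta}e^{-ivH_0}F_M(x,p)$, which decays only like $\langle v\rangle^{-n/4}$ because the spatial cutoff regenerates frequencies near $0$. So your Duhamel computation proves the second limit, in adjoint form. The first limit then follows in either of two ways:
\begin{itemize}
\item from that integrable rate, using the Hilbert--Schmidt bound with $\delta>n/2$;
\item from your compact-commutator remark. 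This additionally needs strong continuity of $\tilde\Omega_+(\cdot)$, which you get by letting $M\to\infty$ in the second limit.
\end{itemize}
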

This lemma can be proved similarly as the argument in \cite[Lemma4.3]{sw20225} and we omit the details. Then, we prove Proposition \ref{pbcon} as follows.

\begin{proof}[\textbf{Proof of Proposition \ref{pbcon}}]
    We first claim that for all $\mathbf{s},\mathbf{u}\in \mathbb{T}_1\times\cdots\times\mathbb{T}_N$,
    \begin{align}\label{p1}
         s\text{-}\lim_{\mathbf{u}\to \mathbf{s}} \tilde{P}_c(\mathbf{u})\tilde{P}_c(\mathbf{s})=\tilde{P}_c(\mathbf{s})
    \end{align}
on $L^2_x$, namely, $\tilde{P}_c(\mathbf{s})$ is continuous in $\mathbb{T}_1\times\cdots\times\mathbb{T}_N$. Claim  \eqref{p1} implies
\begin{align*}
     s\text{-}\lim_{\mathbf{u}\to \mathbf{s}} \tilde{P}_b(\mathbf{u})\tilde{P}_c(\mathbf{s})= s\text{-}\lim_{\mathbf{u}\to \mathbf{s}} (1-\tilde{P}_c(\mathbf{u}))\tilde{P}_c(\mathbf{s})=0
\end{align*}
on $L^2_x$, which in turn implies
\begin{align*}
     s\text{-}\lim_{\mathbf{u}\to \mathbf{s}} (\tilde{P}_b(\mathbf{u})-\tilde{P}_b(\mathbf{u})\tilde{P}_b(\mathbf{s}))=0
\end{align*}
on $L^2_x$. By Lemma \ref{pbfin}, we obtain
\begin{align}\label{3.36}
     s\text{-}\lim_{\mathbf{u}\to \mathbf{s}} (\tilde{P}_b(\mathbf{u})-\tilde{P}_b(\mathbf{s}))=0
\end{align}
on $L^2_x$.

 Now we prove Claim  \eqref{p1}. For any $f\in L^2_x$, we get
\begin{align*}
    (\tilde{P}_c(\mathbf{u})-1)\tilde{P}_c(\mathbf{s})f=\mathcal{A}_1(\mathbf{u},\mathbf{s})f+\mathcal{A}_2(\mathbf{u},\mathbf{s})f,
\end{align*}
where operators $\mathcal{A}_1(\mathbf{u},\mathbf{s})$ and $\mathcal{A}_2(\mathbf{u},\mathbf{s})$ are given by
\begin{align*}
    \mathcal{A}_1(\mathbf{u},\mathbf{s})f\coloneqq(\tilde{P}_c(\mathbf{u})-1) \tilde{\Omega}_{+}(\mathbf{s})F_M(x,p)\tilde{\Omega}_{+}^*(\mathbf{s})f
\end{align*}
and
\begin{align*}
\mathcal{A}_2(\mathbf{u},\mathbf{s})f\coloneqq(\tilde{P}_c(\mathbf{u})-1) \tilde{\Omega}_{+}(\mathbf{s})(1-F_M(x,p))\tilde{\Omega}_{+}^*(\mathbf{s})f.
\end{align*}
First,   considering $\mathcal{A}_1(\mathbf{u},\mathbf{s})f$,  using the equation
\begin{align*}
(\tilde{P}_c(\mathbf{u})-1)\tilde{\Omega}_{+}(\mathbf{u})=0,
\end{align*}
we rewrite $\mathcal{A}_1(\mathbf{u},\mathbf{s})f$ as
\begin{align*}
    \mathcal{A}_1(\mathbf{u},\mathbf{s})f=(\tilde{P}_c(\mathbf{u})-1)(\tilde{\Omega}_{+}(\mathbf{s})-\tilde{\Omega}_{+}(\mathbf{u}))F_M(x,p)\tilde{\Omega}_{+}^*(\mathbf{s})f.
\end{align*}
By  Lemma \ref{Lemma3.7}, we obtain
\begin{align}\label{p4}
    \lim_{\mathbf{u}\to \mathbf{s}}\|\mathcal{A}_1(\mathbf{u},\mathbf{s})f\|_{L_x^2}=0.
\end{align}
Now we consider $\mathcal{A}_2(\mathbf{u},\mathbf{s})f$. For any $\epsilon>0, $ by the definition of the cutoff function, there exists $M=M_1(\epsilon)>0$ such that
\begin{align}\label{p2}
     \|(1-F_M(x,p))\tilde{\Omega}_{+}^*(\mathbf{s})f\|_{L_x^2}<\frac{\epsilon}{4}.
\end{align}
Combining \eqref{p2}, using  estimates $\|\tilde{P}_b(\mathbf{u})\|\leq1$ and $\|\tilde{\Omega}_{+}(\mathbf{u})\|\leq 1$, we have
\begin{align*}
    \|\mathcal{A}_2(\mathbf{u},\mathbf{s})f\|_{L_x^2}<\frac{\epsilon}{2}.
\end{align*}
This, together with \eqref{p4}, yields \eqref{p1}.
\end{proof}

\begin{proof}[\textbf{\textit{Proof of Proposition \ref{smallforC_r}}}]
It suffices to prove that for each $f\in L^2_x(\mathbb{R}^n),$
\begin{align*}
       \lim_{M\to \infty} \sup_{t\in \mathbb{R}}\|(1-F_M(x,p))\Omega^*_{+}(t)f\|_{L^2_x}=0.
    \end{align*}
By Lemma \ref{Lemma2.9}, this is equivalent to
\begin{align}\label{3.48}
     \lim_{M\to \infty} \sup_{\mathbf{s}\in \mathbb{T}_1\times\cdots\times\mathbb{T}_N}\|(1-F_M(x,p))\tilde{\Omega}^*_{+}(\mathbf{s})f\|_{L^2_x}=0.
\end{align}
We note that for each $\mathbf{s}\in \mathbb{T}_1\times\cdots\times\mathbb{T}_N$,
\begin{align}\label{limit1}
    \lim_{M\to \infty} \|(1-F_M(x,p))\tilde{\Omega}^*_{+}(\mathbf{s})f\|_{L^2_x}=0.
\end{align}
By Lemma \ref{Lemma3.7} and $\tilde{\Omega}^*_{+}(\mathbf{s})\tilde{P}_c(\mathbf{s})=\tilde{\Omega}^*_{+}(\mathbf{s})$, we obtain
\begin{align*}
     \lim_{M\to \infty}\lim_{\mathbf{u}\to \mathbf{s}}\|F_M(x,p)\tilde{\Omega}^*_{+}(\mathbf{u})\tilde{P}_c(\mathbf{s})f\|_{L^2_x}=\lim_{M\to \infty}\|F_M(x,p)\tilde{\Omega}^*_{+}(\mathbf{s})f\|_{L^2_x}=\|\tilde{\Omega}^*_{+}(\mathbf{s})f\|_{L^2_x}.
\end{align*}
By direct computation, we have
\begin{equation*}
    \begin{aligned}
     &\|(1-F_M(x,p))\tilde{\Omega}^*_{+}(\mathbf{u})\tilde{P}_c(\mathbf{s})f\|^2_{L^2_x}\\
     =&\|\tilde{\Omega}^*_{+}(\mathbf{u})\tilde{P}_c(\mathbf{s})f\|_{L^2_x}^2+\|F_M(x,p)\tilde{\Omega}^*_{+}(\mathbf{u})\tilde{P}_c(\mathbf{s})f\|_{L^2_x}^2\\
     &-2\langle\tilde{\Omega}^*_{+}(\mathbf{u})\tilde{P}_c(\mathbf{s})f,F_M(x,p)\tilde{\Omega}^*_{+}(\mathbf{u})\tilde{P}_c(\mathbf{s})f\rangle\\
     \coloneqq&I_1+I_2,
\end{aligned}
\end{equation*}where $$I_1=\|\tilde{\Omega}^*_{+}(\mathbf{u})\tilde{P}_c(\mathbf{s})f\|_{L^2_x}^2+\|F_M(x,p)\tilde{\Omega}^*_{+}(\mathbf{u})\tilde{P}_c(\mathbf{s})f\|_{L^2_x}^2$$ and $$I_2=-2\langle\tilde{\Omega}^*_{+}(\mathbf{u})\tilde{P}_c(\mathbf{s})f,F_M(x,p)\tilde{\Omega}^*_{+}(\mathbf{u})\tilde{P}_c(\mathbf{s})f\rangle.$$Combining with Corollary \ref{Omeu}, we get$$\lim_{M\to \infty}\lim_{\mathbf{u}\to \mathbf{s}}I_1=2\|\tilde{P}_c(\mathbf{s})f\|_{L^2_x}^2.$$Using  Lemma \ref{Lemma3.7}, we have
\begin{equation*}
\begin{aligned}
        \lim_{M\to \infty}\lim_{\mathbf{u}\to \mathbf{s}}I_2=&-2\lim_{M\to \infty}\lim_{\mathbf{u}\to \mathbf{s}}\langle\tilde{P}_c(\mathbf{s})f,\tilde{\Omega}_{+}(\mathbf{u})F_M(x,p)\tilde{\Omega}^*_{+}(\mathbf{s})\tilde{P}_c(\mathbf{s})f\rangle\\
    =&-2\lim_{M\to \infty}\lim_{\mathbf{u}\to \mathbf{s}}\langle\tilde{P}_c(\mathbf{s})f,\tilde{\Omega}_{+}(\mathbf{s})F_M(x,p)\tilde{\Omega}^*_{+}(\mathbf{s})\tilde{P}_c(\mathbf{s})f\rangle\\
    =&-2\lim_{M\to \infty}\lim_{\mathbf{u}\to \mathbf{s}}\langle\tilde{\Omega}^*_{+}(\mathbf{s})\tilde{P}_c(\mathbf{s})f,F_M(x,p)\tilde{\Omega}^*_{+}(\mathbf{s})\tilde{P}_c(\mathbf{s})f\rangle\\
    =&-2\|\tilde{\Omega}^*_{+}(\mathbf{s})\tilde{P}_c(\mathbf{s})f\|_{L^2_x}^2=-2\|\tilde{P}_c(\mathbf{s})f\|_{L^2_x}^2.
\end{aligned}
\end{equation*}
Thus we have proved that $$\lim_{M\to \infty}\lim_{\mathbf{u}\to \mathbf{s}}\|(1-F_M(x,p))\tilde{\Omega}^*_{+}(\mathbf{u})\tilde{P}_c(\mathbf{s})f\|^2_{L^2_x}=0.$$
This, together with \eqref{limit1}, yields
\begin{align*}
     \lim_{M\to \infty}\lim_{\mathbf{u}\to \mathbf{s}}\|(1-F_M(x,p))(\tilde{\Omega}^*_{+}(\mathbf{u})-\tilde{\Omega}^*_{+}(\mathbf{s}))\tilde{P}_c(\mathbf{s})f\|_{L^2_x}=0,
\end{align*}
where we use $\tilde{\Omega}^*_{+}(\mathbf{s})\tilde{P}_c(\mathbf{s})=\tilde{\Omega}^*_{+}(\mathbf{s})$. This, combined with (\ref{p1}) and Proposition \ref{pbcon}, yields
\begin{equation}
    \begin{aligned}\label{3.531}
    &\lim_{M\to \infty}\lim_{\mathbf{u}\to \mathbf{s}}\|(1-F_M(x,p))\tilde{\Omega}^*_{+}(\mathbf{u})f-(1-F_M(x,p))\tilde{\Omega}^*_{+}(\mathbf{s})f\|_{L^2_x} \\
    \leq&\lim_{M\to \infty}\lim_{\mathbf{u}\to \mathbf{s}}\|(1-F_M(x,p))(\tilde{\Omega}^*_{+}(\mathbf{u})-\tilde{\Omega}^*_{+}(\mathbf{s}))\tilde{P}_c(\mathbf{s})f\|_{L^2_x} \\
    &+\lim_{M\to \infty}\lim_{\mathbf{u}\to \mathbf{s}}\|(1-F_M(x,p))\tilde{\Omega}^*_{+}(\mathbf{u})(\tilde{P}_c(\mathbf{u})-\tilde{P}_c(\mathbf{s}))f\|_{L^2_x}=0.
\end{aligned}
\end{equation}
For any $\mathbf{s}\in \mathbb{T}_1\times\cdots\times\mathbb{T}_N$ and $\varepsilon>0$, from (\ref{limit1}) and (\ref{3.531}), there exist constants $M_{1\mathbf{s}},M_{2\mathbf{s}},\delta_{\mathbf{s}}$ such that
\begin{equation}
    \begin{aligned}\label{3.53}
    &\|(1-F_M(x,p))\tilde{\Omega}^*_{+}(\mathbf{s})f\|_{L^2_x}< \varepsilon,\ \forall M\geq M_{1\mathbf{s}},\\
    &\lim_{\mathbf{u}\to \mathbf{s}}\|(1-F_M(x,p))(\tilde{\Omega}^*_{+}(\mathbf{u})-\tilde{\Omega}^*_{+}(\mathbf{s}))f\|_{L^2_x}< \varepsilon, \ \forall M\geq M_{1\mathbf{s}},\\
    &\|(1-F_M(x,p))(\tilde{\Omega}^*_{+}(\mathbf{u})-\tilde{\Omega}^*_{+}(\mathbf{s}))f\|_{L^2_x}< 2\varepsilon,\  \forall M\geq M_{1\mathbf{s}},\ \forall|\mathbf{u}-\mathbf{s}|< \delta_{\mathbf{s}}.
\end{aligned}
\end{equation}Since $\mathbb{T}_1\times\cdots\times\mathbb{T}_N$ is compact and the set family $\{\mathbf{B}(\mathbf{s},\frac{\delta_{\mathbf{s}}}{2})\}$ covers the space, we have $$\mathbb{T}_1\times\cdots\times\mathbb{T}_N\subset \bigcup_{j=1}^{m}\mathbf{B}(\mathbf{s}_j,\frac{\delta_{\mathbf{s}_j}}{2}).$$
Let $M_0=\max\limits_{j=1,2\cdots m}\{M_{1\mathbf{s}_j}\vee M_{2\mathbf{s}_j}\}$. From \eqref{3.53}, for any $\varepsilon>0$,  $M>M_0$ and $\mathbf{u}\in \mathbb{T}_1\times\cdots\times\mathbb{T}_N$, we have $$\|(1-F_M(x,p))\tilde{\Omega}^*_{+}(\mathbf{u})f\|_{L^2_x}< 3\varepsilon.$$
Thus we get \eqref{3.48} and finish the proof.
\end{proof}

\section{Elaborate estimate for each part of the operator}

\subsection{Momentum-Position Weight Absorption Mechanism of   the operator \texorpdfstring{$C_M(t)$}{(C M(t))}}
Recall that \begin{align*}
   C_M(t)\coloneqq C(t)\Omega_{+}(t)F_{M}(x,p) \Omega^*_{+}(t).
\end{align*}
We will use two distinct Duhamel expansion with respect to $\Omega^*_{+}(t)$. The first is given by
\begin{equation}\label{D1}
    \begin{aligned}
    \Omega^*_{+}(t)&=\Omega^*_{+}(t)\Omega_{+}(t)+\Omega^*_{+}(t)(1-\Omega_{+}(t))\\
    &=1+(-i)\int_0^\infty e^{iuH_0}\Omega^*_{+}(t+u)V(x,t+u)e^{-iuH_0}du,
\end{aligned}
\end{equation}
using the $\Omega^*_+(t)U(t,t+u)=e^{iuH_0}\Omega^*_+(t+u)$ (see Lemma \ref{LemmaA.1} in Appendix) and the Duhamel principle for the $\Omega_{+}(t)$, while the other is
\begin{equation}\label{D2}
    \begin{aligned}
    \Omega^*_{+}(t)=1+(-i)\int_0^\infty e^{iuH_0}V(x,t+u)U(t+u,t)du,
\end{aligned}
\end{equation}which is directly follow from the Duhamel principle and the definition of the $\Omega^*_{+}(t)$.

 Through careful observation, the operator
\begin{align*}
    O(u)= \langle x\rangle^{-\frac{5}{2}}e^{-iuH_0}|p|^{-\frac32} \langle x\rangle^{\frac{1}{2}+\epsilon}
\end{align*}
 enjoys a time-weighted $L^2_x$ decay property, as formalized in the following lemma.
\begin{lemma}\label{Lemma4.1}
For all $\epsilon\in (0,\tfrac14),n\geq 5$, $O(t)$ satisfies
\begin{align*}
    \int_{0}^{\infty}\langle t\rangle^{-2}\|O(t)f\|_{L^2_x}^2dt\lesssim_{n,\epsilon}\|f\|_{L^2_x}^2.
\end{align*}
\end{lemma}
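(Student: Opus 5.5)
The plan is to reduce the estimate to a pointwise-in-time operator bound plus the free local decay \eqref{localdecay}, using the conjugation identity $e^{itH_0}xe^{-itH_0}=x+4t|p|^2p$ already used in the proof of Lemma~\ref{FMest}. First insert $e^{itH_0}e^{-itH_0}$ to move the weight $\langle x\rangle^{\frac12+\epsilon}$ through the free flow:
\begin{align*}
O(t)=\langle x\rangle^{-\frac52}\,\langle x-4t|p|^2p\rangle^{\frac12+\epsilon}\,e^{-itH_0}|p|^{-\frac32}.
\end{align*}
Note that $|p|^{-\frac32}$ commutes with $e^{-itH_0}$. By subadditivity of $\langle\cdot\rangle^{\frac12+\epsilon}$, made rigorous through a commutator or functional-calculus argument for the sum of the commuting-in-principal-symbol pieces $x$ and $4t|p|^2p$, the conjugated weight is dominated by
\begin{align*}
\langle x\rangle^{\frac12+\epsilon}+t^{\frac12+\epsilon}|p|^{\frac32+3\epsilon}.
\end{align*}
This gives two terms,
\begin{align*}
O_1(t)=\langle x\rangle^{-2+\epsilon}e^{-itH_0}|p|^{-\frac32}\langle x\rangle^{\frac12+\epsilon}\cdots,\qquad O_2(t)=t^{\frac12+\epsilon}\langle x\rangle^{-\frac52}e^{-itH_0}|p|^{3\epsilon}.
\end{align*}

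\textbf{Term $O_1$.} Since $\langle x\rangle^{-2+\epsilon}\le |x|^{-\frac32}$, Lemma~\ref{HLSvar} with $\alpha=\frac32<\frac n2$ (valid for $n\ge5$) makes the composite $\langle x\rangle^{-2+\epsilon}|p|^{-\frac32}$ bounded on $L^2$. Here I will track the commutation of the weight past $e^{-itH_0}$ exactly as above, so that the $\langle x\rangle$ factors sit adjacent to $|p|^{-\frac32}$. Hence $\|O_1(t)\|_{2\to2}\lesssim1$, and $\int\langle t\rangle^{-2}\,dt<\infty$ handles this piece.

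\textbf{Term $O_2$.} The crude bound $t^{\frac12+\epsilon}$ is not enough by itself, since $\int\langle t\rangle^{-2}t^{1+2\epsilon}\,dt$ diverges. The $L^2_t$ gain therefore has to come from free local decay. Split $|p|^{3\epsilon}=F_{\le1}(|p|)|p|^{3\epsilon}+F_{>1}(|p|)|p|^{3\epsilon}$ and treat the two pieces separately.

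\emph{High frequency.} Write
\begin{align*}
\langle x\rangle^{-\frac52}e^{-itH_0}F_{>1}(|p|)|p|^{3\epsilon}f=\langle x\rangle^{-\frac52}e^{-itH_0}g,\qquad g=F_{>1}(|p|)|p|^{3\epsilon}f.
\end{align*}
Since $t^{1+2\epsilon}\langle t\rangle^{-2}\le1$, it suffices to bound $\int\|\langle x\rangle^{-\frac52}e^{-itH_0}g\|^2\,dt$. By \eqref{localdecay} with $\delta=\frac52>2$, this is at most $\|g\|^2$. To absorb the $3\epsilon$ derivatives I would instead apply \eqref{localsmooth}, which gains $|p|^{\frac32}$ at weight $\langle x\rangle^{-\frac12-\epsilon}\ge\langle x\rangle^{-\frac52}$. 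This controls $\|g\|$-type quantities by $\|f\|$, because $3\epsilon<\frac32$.

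\emph{Low frequency.} Here $|p|^{3\epsilon}F_{\le1}$ is bounded, and \eqref{localdecay} again gives an $L^2_t$ bound, uniform after dropping $t^{1+2\epsilon}\langle t\rangle^{-2}\le1$.

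\textbf{Main obstacle.} The hardest step is justifying the weight domination for the operator-valued conjugated weight $\langle x-4t|p|^2p\rangle^{\frac12+\epsilon}$, together with keeping $|p|^{-\frac32}$ positioned so that Lemma~\ref{HLSvar} applies. I would first try the exact identity for the weight $|x|^2$,
\begin{align*}
e^{itH_0}|x|^2e^{-itH_0}=|x+4t|p|^2p|^2,
\end{align*}
and then interpolate (Stein--Hadamard three lines) in the exponent between $0$ and $1$ to reach the power $\frac{1+2\epsilon}{4}$ of $|x|^2$. If that fails, the fallback is dyadic decomposition in $|x|$ combined with the commutator bounds used in Corollary~\ref{boundP2}.
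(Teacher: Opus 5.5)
Your handling of the two model terms is essentially fine; in particular, absorbing $|p|^{3\epsilon}$ at high frequency through \eqref{localsmooth} works. The gap is the step that is supposed to produce those terms, and that step is the whole difficulty.

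First, $\langle x\rangle^{\frac12+\epsilon}$ does not commute with $|p|^{-\frac32}$, so your first display is not an identity. The correct conjugated form is $O(t)=\langle x\rangle^{-\frac52}|p|^{-\frac32}W(t)e^{-itH_0}$ with $W(t)=\langle x-4t|p|^2p\rangle^{\frac12+\epsilon}$.

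More seriously, $W(t)$ sits between $\langle x\rangle^{-\frac52}|p|^{-\frac32}$ and $e^{-itH_0}f$, and neither commutes with it. So no ``domination'' of $W(t)$ can produce the additive splitting $O=O_1+O_2$ that the rest of your argument relies on. A form bound $W^2\le Y$ only gives $\|Wg\|\le\|Y^{1/2}g\|$; it says nothing about $\|\langle x\rangle^{-\frac52}|p|^{-\frac32}Wg\|$. Subadditivity does not justify the bound either: L\"owner--Heinz gives monotonicity of $X\mapsto X^{s}$ for $0<s\le1$, but $(A+B)^{s}\le C(A^{s}+B^{s})$ fails for non-commuting positive $A,B$ (take rank-one projections onto lines at a small angle, $s=\frac12$).

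Your fallback fails as well. Interpolating between the weights $|x|^{0}$ and $|x|^{2}$ needs the endpoint estimate with weight $|x|^{2}$, and that estimate is false. Conjugation produces $16t^2|p|^6$, i.e. $t^2|p|^{\frac92}$ after $|p|^{-\frac32}$. Neither $\langle t\rangle^{-2}$ nor the $\frac32$-derivative gain of \eqref{localsmooth} can absorb this. Concretely, a unit wave packet of frequency $N$ and width $\sqrt{R/N}$, aimed at the origin from distance $R\gg N^3$, makes that left-hand side of order $N^2$. With a single power of $x$ the growth is only $t\,|p|^{\frac32}$, which is exactly what $\langle t\rangle^{-2}$ and \eqref{localsmooth} can pay for.

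The missing idea is to conjugate only a \emph{linear} factor, whose commutation with $e^{-itH_0}$ is an exact algebraic identity, and leave the fractional part as a bounded multiplier outside the sandwich.
On $\{|x|\le1\}$ the weight is bounded, and Lemma~\ref{HLSvar} gives a uniform bound.
On $\{|x|>1\}$, localize to cones $\{|x_j|\ge|x|/n\}$ and write $\langle x\rangle^{\frac12+\epsilon}F_jF_{>1}(|x|)=x_j\cdot\big(\langle x\rangle^{\frac12+\epsilon}F_jF_{>1}(|x|)/x_j\big)$. The second factor is $O(\langle x\rangle^{-\frac12+\epsilon})$. Then
\begin{align*}
e^{-itH_0}|p|^{-\frac32}x_j=x_j|p|^{-\frac32}e^{-itH_0}+[|p|^{-\frac32},x_j]e^{-itH_0}-4t\,p_j|p|^{\frac12}e^{-itH_0}
\end{align*}
is an exact three-term sum.
\begin{itemize}
\item The first two terms are uniformly bounded by Lemma~\ref{HLSvar}. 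For the commutator, which is of order $|p|^{-\frac52}$, you use the decay $\langle x\rangle^{-\frac12+\epsilon}$ of the leftover factor.
\item For the third term, $t^2\langle t\rangle^{-2}\le1$ and $p_j|p|^{\frac12}=(p_j/|p|)|p|^{\frac32}$ put you exactly in the setting of \eqref{localsmooth}.
\end{itemize}
This is the paper's argument.
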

\begin{proof}For the $O(t)F_{\leq1}(|x|)$, using Lemma \ref{HLSvar}, the following is obviously
    \begin{equation}\label{eq:43}
        \begin{aligned}
        &\|O(t)F_{\leq1}(|x|)\|_{2\to2}\\
        \leq &\|\langle x\rangle^{-\frac{5}{2}}|x|^{\frac{3}{2}}\|_{L_x^\infty}\||x|^{-\frac{3}{2}}|p|^{-\frac32}\|_{2\to2}\|e^{-itH_0}\|_{2\to2}\|\langle x\rangle^{\frac{1}{2}+\epsilon}F_{\leq 1}(|x|)\|_{L_x^\infty}\\
        \lesssim&1,
    \end{aligned}
    \end{equation}where $n\geq4$.
  Then, we estimate $O(t)F_{>1}(|x|)$.  We decompose  $O(t)F_{>1}(|x|)$ into $n$ pieces
  \begin{align}\label{eq:4.4}
       O(t)F_{>1}(|x|)=\sum_{j=1}^n O_j(t),
  \end{align}
  where operators $O_j(t)$, $j=1,\cdots,n$ are defined as
  \begin{align*}
      O_j(t)\coloneqq O(t)F_{>1}(|x|)F_j(x),
  \end{align*}
with $\{F_j(x)\}_{j=1}^n$ a smooth partition of unity $\{F_j\}_{j=1}^{j=n}$ such that
\begin{align*}
    |x_j|\geq \frac{|x|}{n},\quad \mbox{for all } x\in \supp(F_j).
\end{align*}
We establish estimates solely for $O_1(t)$, as the analogous bounds for $O_j(t)$, $j=2,\cdots,n$ follow through identical arguments.
We now use the identity
\begin{align*}
    e^{-itH_0}x_1=(x_1-4tp_1|p|^2)e^{-itH_0},
\end{align*}
which follows from the elementary commutator identities calculations, to decompose $O_1$  into three parts
\begin{align*}
    O_1(t)=\sum_{j=1}^3 O_{1j}(t),
\end{align*}
where   operators  $O_{1j}(t)$, $j=1,2,3,$ are given by
\begin{align*}
    O_{11}(t)\coloneqq\langle x\rangle^{-\frac{5}{2}}x_1e^{-itH_0}|p|^{-\frac32} \langle x\rangle^{\frac{1}{2}+\epsilon}\frac{F_1(x)}{x_1}F_{>1}(|x|),
\end{align*}
\begin{align*}
    O_{12}(t)\coloneqq-4t\langle x\rangle^{-\frac{5}{2}}p_1|p|^{\frac12}e^{-itH_0}  \langle x\rangle^{\frac{1}{2}+\epsilon}\frac{F_1(x)}{x_1}F_{>1}(|x|)
\end{align*}
and
\begin{align*}
    O_{13}(t)\coloneqq\langle x\rangle^{-\frac{5}{2}}[|p|^{-\frac32},x_1]e^{-itH_0}  \langle x\rangle^{\frac{1}{2}+\epsilon}\frac{F_1(x)}{x_1}F_{>1}(|x|)
\end{align*}
respectively. Using Lemma \ref{HLSvar}, the unitarity of $e^{-itH_0}$ and the calculation of the commutators, we obtain
\begin{equation}\label{4.15}
    \begin{aligned}
        \|O_{11}(t)\|_{2\to2}&\leq \|\langle x\rangle^{-\frac{5}{2}}x_1|x|^{\frac32}\|_{L_x^\infty}\||x|^{-\frac32}|p|^{-\frac32}\|_{2\to2}\|e^{-itH_0}\|_{2\to2}\|\langle x\rangle^{\frac{1}{2}+\epsilon}\frac{F_1(x)}{x_1}F_{>1}(|x|)\|_{L_x^\infty}\\
        &\lesssim_n1,
    \end{aligned}
\end{equation}
\begin{equation}
    \begin{aligned}
        \|O_{13}(t)\|_{2\to2}&\leq \|\langle x\rangle^{-\frac{5}{2}}|x|^{2+\epsilon}\|_{L_x^\infty}\||x|^{-2-\epsilon}|p|^{-2-\epsilon}\|_{2\to2}\||p|^{2+\epsilon}[|p|^{-\frac32},x_1]|p|^{\frac12-\epsilon}\|_{2\to2}\\
        &\qquad\times \|e^{-itH_0}\|_{2\to2}\||p|^{-\frac12+\epsilon}|x|^{-\frac12+\epsilon}\|_{2\to2}\|\langle x\rangle\frac{F_1(x)}{x_1}F_{>1}(|x|)\|_{L_x^\infty}\\
        &\lesssim_{n,\epsilon}1,
    \end{aligned}
\end{equation}where $n\geq 5$.
By $L^2$ local smoothing estimate \eqref{localsmooth}, $O_{12}(t)$
satisfies
\begin{equation}\label{4.17}
    \begin{aligned}
      \int_{0}^\infty \langle t\rangle^{-2}  \|O_{12}(t)f\|_{L_x^2}^2dt&\lesssim\int_{0}^\infty\|\langle x\rangle^{-\frac{5}{2}}p_1|p|^{\frac12}e^{-itH_0}  \langle x\rangle^{\frac{1}{2}+\epsilon}\frac{F_1(x)}{x_1}F_{>1}(|x|)f\|_{L_x^2}^2dt\\
      &\lesssim \|\frac{p_1}{|p|}\langle x\rangle^{\frac{1}{2}+\epsilon}\frac{F_1(x)}{x_1}F_{>1}(|x|)f\|_{L^2_x}^2\\
      &\lesssim_{n,\epsilon}\|f\|_{L_x^2}^2
    \end{aligned}
\end{equation}
for all $f\in L^2_x(\mathbb{R}^n)$. From \eqref{4.15}-\eqref{4.17}, we get \begin{equation}\label{O1}
    \begin{aligned}
        \int_{0}^\infty \langle t\rangle^{-2}  \|O_{1}(t)f\|_{L^2_x}^2dt\lesssim_{n,\epsilon}\|f\|_{L^2_x}^2.
    \end{aligned}
\end{equation}Similarly, we can also prove that \eqref{O1} is valid with $O_1(t)$ replaced by $O_j(t)$, $j=2,\cdots,n$. Following from \eqref{eq:4.4}, we have
\begin{equation*}
    \begin{aligned}
        \int_{0}^\infty \langle t\rangle^{-2}  \|O(t)F_{>1}(|x|)f\|_{L_x^2}^2dt\lesssim_{n,\epsilon}\|f\|_{L_x^2}^2.
    \end{aligned}
\end{equation*}
Combining with \eqref{eq:43}, we have finished the proof of this lemma.
\end{proof}

On the other hand, we can prove the following lemma.
\begin{lemma}\label{Lemma4.2}
  Under the condition $\langle x\rangle^{\alpha}V\in L^{\infty}_{t,x}(\mathbb{R}^{n+1})$ with $0<\alpha<\frac{n}{2}$, $n\geq 6$, we have  \begin{equation*}
    \begin{aligned}
        \|(U(t+s,t)-e^{-isH_0})\frac{\langle p\rangle^{\alpha}}{|p|^\alpha}\|_{2\to2}\leq s \|\langle x\rangle^{\alpha}V(x,t)\|_{L^\infty_{t,x}}.
    \end{aligned}
\end{equation*}
\end{lemma}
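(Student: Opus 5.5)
The plan is to use Duhamel's formula to write the difference of the two propagators as a time integral of the potential, and then absorb the low-frequency singularity of $\frac{\langle p\rangle^{\alpha}}{|p|^\alpha}$ using the spatial decay of $V$ together with Lemma \ref{HLSvar}. Concretely, since $U(t+s,t)$ solves $i\partial_s U(t+s,t)=(H_0+V(x,t+s))U(t+s,t)$ with $U(t,t)=1$, we have
\begin{align*}
U(t+s,t)-e^{-isH_0}=-i\int_0^s U(t+s,t+u)V(x,t+u)e^{-iuH_0}\,du .
\end{align*}
This is the form with the free flow acting first on the right. That ordering is what we want, because $e^{-iuH_0}$ commutes with $\frac{\langle p\rangle^{\alpha}}{|p|^\alpha}$. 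Multiplying on the right by $\frac{\langle p\rangle^{\alpha}}{|p|^\alpha}$ and using the unitarity of $U$ and of $e^{-iuH_0}$, we get
\begin{align*}
\Big\|(U(t+s,t)-e^{-isH_0})\frac{\langle p\rangle^{\alpha}}{|p|^\alpha}\Big\|_{2\to2}\le\int_0^s\Big\|V(x,t+u)\frac{\langle p\rangle^{\alpha}}{|p|^\alpha}\Big\|_{2\to2}\,du .
\end{align*}
It therefore suffices to bound $\|V(x,\tau)\frac{\langle p\rangle^{\alpha}}{|p|^\alpha}\|_{2\to2}$ uniformly in $\tau$ by (a constant times) $\|\langle x\rangle^{\alpha}V\|_{L^\infty_{t,x}}$.

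For this we factor
\begin{align*}
V\frac{\langle p\rangle^{\alpha}}{|p|^\alpha}=\big(\langle x\rangle^{\alpha}V\big)\,\langle x\rangle^{-\alpha}|x|^{\alpha}\,\big(|x|^{-\alpha}|p|^{-\alpha}\big)\,\langle p\rangle^{\alpha}.
\end{align*}
The factor $\langle x\rangle^{-\alpha}|x|^\alpha$ is bounded by $1$. The operator $|x|^{-\alpha}|p|^{-\alpha}$ is bounded on $L^2_x$ by Lemma \ref{HLSvar}, which needs $0<\alpha<\frac n2$; this is exactly the hypothesis.

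The remaining factor $\langle p\rangle^{\alpha}$ is unbounded, and this is the main obstacle. The way around it is to split by frequency. On low frequencies, $F_{\le1}(|p|)\frac{\langle p\rangle^\alpha}{|p|^\alpha}$ behaves like $|p|^{-\alpha}$ times a bounded multiplier, and the factorization above works as written. On high frequencies, $F_{>1}(|p|)\frac{\langle p\rangle^\alpha}{|p|^\alpha}$ is itself a bounded Fourier multiplier, so there we only need $\|V\|_{L^\infty}\le\|\langle x\rangle^\alpha V\|_{L^\infty}$. Adding the two pieces gives the uniform bound with a constant $C_{n,\alpha}$. Integrating over $u\in[0,s]$ then yields $s\,C_{n,\alpha}\|\langle x\rangle^{\alpha}V\|_{L^\infty_{t,x}}$, which is the stated estimate up to the implicit constant. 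The statement's ``$\le$'' should be read as ``$\lesssim_{n,\alpha}$'', since the constant from Lemma \ref{HLSvar} is not $1$.
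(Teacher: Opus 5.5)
Your proof is correct and follows the paper's argument: the Duhamel formula \eqref{D3} with the free flow on the right, unitarity of $U$ and $e^{-iuH_0}$, and Lemma \ref{HLSvar} to bound $\langle x\rangle^{-\alpha}\frac{\langle p\rangle^{\alpha}}{|p|^{\alpha}}$ on $L^2_x$. Your explicit low/high frequency split makes precise a step the paper's appeal to Lemma \ref{HLSvar} leaves implicit, and you are right that the estimate holds with $\lesssim_{n,\alpha}$ rather than $\le$ (the paper's own chain also ends with $\lesssim$).
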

\begin{proof}
By using the Duhamel formula
\begin{align}\label{D3}
    U(t+s,t)=e^{-isH_0}+(-i)\int_0^sU(t+s,t+u)V(x,t+u)e^{-iuH_0}du
\end{align}
and the unitarity of
$U(t+s,t+u)$, we obtain
    \begin{equation*}
    \begin{aligned}
        \|(U(t+s,t)-e^{-isH_0})&\frac{\langle p\rangle^{\alpha}}{|p|^\alpha}\|_{2\to2}\leq \int_0^s\|U(t+s,t+u)V(x,t+u)e^{-iuH_0}\frac{\langle p\rangle^{\alpha}}{|p|^\alpha}\|_{2\to2}du\\
        \lesssim& s \sup_{u}\|U(t+s,t+u)\|_{2\rightarrow2}\|\langle x\rangle^{\alpha}V(x,t)\|_{L^\infty_{t,x}}\|\langle x\rangle^{-\alpha}\frac{\langle p\rangle^{\alpha}}{|p|^\alpha}\|_{2\to 2}\|e^{-iuH_0}\|_{2\rightarrow2} \\
        \lesssim & s \|\langle x\rangle^{\alpha}V(x,t)\|_{L^\infty_{t,x}},
    \end{aligned}
\end{equation*}
where we use Lemma \ref{HLSvar} to prove $\|\langle x\rangle^{-\alpha}\frac{\langle p\rangle^{\alpha}}{|p|^\alpha}\|_{2\to 2}\lesssim1$, $0<\al< \frac{n}{2}, \ n\geq6$.
This completes the proof.
\end{proof}

Next, based on Lemmas \ref{Lemma4.1} and \ref{Lemma4.2}, we have the following  Momentum-Position weight absorption mechanism for $C_M(t)$.
\begin{lemma}\label{Lemma4.3}
  Under the condition $\langle x\rangle^{\frac{9}{2}}V\in L^{\infty}_{t,x}(\mathbb{R}^{n+1})$,  we have
    \begin{equation}\label{C_M1}
        \begin{aligned}
            \sup_{t\in\mathbb{R}}\|C_M(t)|p|^{-\frac{3}{2}}\langle x\rangle^{\frac12+\epsilon}\|_{2\to2}\lesssim_{M,\epsilon}1
        \end{aligned}
    \end{equation}
    and
\begin{equation}\label{C_M2}
    \begin{aligned}
        \sup_{t\in\mathbb{R}}\|C_M(t) \frac{\langle p\rangle^{\alpha}}{|p|^\alpha}\|_{2\to2}\lesssim_{M,\alpha}1,
    \end{aligned}
\end{equation}
for all $M\geq 1$, $\epsilon\in (0,1/4)$ and $0<\alpha<\frac{n}{2}$ when $n\geq 6$.
\end{lemma}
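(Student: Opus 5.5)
We will prove both bounds by writing $C_M(t)=C(t)\Omega_{+}(t)F_M(x,p)\Omega^*_{+}(t)$ and moving the unbounded weight on the right through $\Omega^*_{+}(t)$. Since $\|C(t)\|_{2\to2}\le 4$ and $\|\Omega_+(t)\|_{2\to2}\le1$, it suffices to bound
$$\sup_t\|F_M(x,p)\Omega^*_{+}(t)W\|_{2\to2},\qquad W\in\Big\{|p|^{-\frac32}\langle x\rangle^{\frac12+\epsilon},\ \tfrac{\langle p\rangle^{\alpha}}{|p|^{\alpha}}\Big\}.$$

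For \eqref{C_M1} we insert the Duhamel expansion \eqref{D1} for $\Omega^*_+(t)$. The leading term is $F_M(x,p)|p|^{-\frac32}\langle x\rangle^{\frac12+\epsilon}$. It is bounded because $F_{\geq 1/M}(|p|)|p|^{-3/2}$ is a bounded multiplier, and the commutator of $\langle x\rangle^{\frac12+\epsilon}$ with a smooth symbol in $p$ is of lower order. We therefore write $F_{\le M}(|x|)F_{\ge 1/M}(|p|)|p|^{-3/2}\langle x\rangle^{1/2+\epsilon}$ as $F_{\le M}(|x|)\langle x\rangle^{1/2+\epsilon}\cdot(\text{bounded})$ plus a commutator; both pieces are $O_M(1)$. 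The Duhamel integral term is
$$\int_0^\infty F_M(x,p)e^{iuH_0}\Omega^*_+(t+u)V(t+u)e^{-iuH_0}|p|^{-\frac32}\langle x\rangle^{\frac12+\epsilon}\,du .$$
Here we split $V=\langle x\rangle^{-\eta}\cdot\langle x\rangle^{\eta}V\langle x\rangle^{5/2}\cdot\langle x\rangle^{-5/2}$, so the rightmost factor becomes $O(u)=\langle x\rangle^{-5/2}e^{-iuH_0}|p|^{-3/2}\langle x\rangle^{1/2+\epsilon}$. By Lemma \ref{Lemma4.1}, $\int\langle u\rangle^{-2}\|O(u)f\|^2du\lesssim\|f\|^2$. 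We want the remaining left factor to decay, but $\Omega^*_+(t+u)$ blocks the use of Lemma \ref{FMest}. We therefore switch to \eqref{D2}, which is the reason the paper introduces two expansions. In \eqref{D2} the propagator $U(t+u,t)$ stands to the right of $V$. Taking adjoints and testing against $g$, the quantity to control is
$$\Big|\int_0^\infty\langle \langle x\rangle^{-\eta}e^{-iuH_0}F_M g,\ \langle x\rangle^{\eta}V\,(\cdots)f\rangle\,du\Big|.$$
Lemma \ref{FMest} gives $\|\langle x\rangle^{-\eta}e^{-iuH_0}F_M\|\lesssim_M\langle u\rangle^{-\eta}$. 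Combining this with Cauchy–Schwarz in $u$ against $\langle u\rangle^{-1}\|O(u)f\|$ closes the estimate once $\eta-1>1/2$. The decay budget $\eta+5/2\le 9/2$ explains the hypothesis $\langle x\rangle^{9/2}V\in L^\infty$ (take $\eta=2$).

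The main obstacle is making this ordering work: the weight $|p|^{-3/2}\langle x\rangle^{1/2+\epsilon}$ must sit adjacent to a free evolution $e^{-iuH_0}$, which requires \eqref{D1}, while the $F_M$ decay requires \eqref{D2}. Our first attempt will be to iterate once, applying \eqref{D1} to $\Omega^*_+(t)$ and then using the intertwining Lemma \ref{LemmaA.1} to rewrite $F_Me^{iuH_0}\Omega^*_+(t+u)=F_M\Omega^*_+(t)U(t,t+u)$. The uniform boundedness of $\Omega^*_+$ then gives only an $L^2_u$ bound for $\langle x\rangle^{-5/2}e^{-iuH_0}W$ against $V$, with no time decay on the left factor. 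To supply the decay, we expand $F_M\Omega^*_+(t)$ once more by \eqref{D2} on the left and use Lemma \ref{FMest} there, with the $u$-integrals handled by Schur/Cauchy–Schwarz using the weights $\langle u\rangle^{-\eta}$ and $\langle u\rangle^{-1}$.

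For \eqref{C_M2} the same scheme applies with $O(u)$ replaced by $\langle x\rangle^{-\alpha}e^{-iuH_0}\langle p\rangle^{\alpha}|p|^{-\alpha}$. This operator is uniformly bounded by Lemma \ref{HLSvar}, as in the proof of Lemma \ref{Lemma4.2}. Equivalently, we can write $\Omega^*_+(t)$ through $U(t+s,t)$ and use Lemma \ref{Lemma4.2} for bounded $s$, together with Lemma \ref{FMest} for the $\langle s\rangle^{-\eta}$ decay. The leading term $F_M\langle p\rangle^\alpha|p|^{-\alpha}$ is bounded since $|p|\ge 1/(2M)$ on the support of $F_M$.
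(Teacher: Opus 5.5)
\textbf{There is a genuine gap in the proof of \eqref{C_M1}.} Your overall architecture matches the paper: \eqref{D1} outside, $O(u)$ with Lemma \ref{Lemma4.1} on the right, and Cauchy--Schwarz in $u$. The step that is supposed to produce time decay of the left factor, however, does not work as described. After \eqref{D1} you need $\|F_M(x,p)e^{iuH_0}\Omega^*_+(t+u)\langle x\rangle^{-2}\|_{2\to2}$ to decay in $u$ faster than $\langle u\rangle^{-3/2}$, because Lemma \ref{Lemma4.1} only controls $\int_0^\infty\langle u\rangle^{-2}\|O(u)f\|^2du$.

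Your final route first intertwines, $F_Me^{iuH_0}\Omega^*_+(t+u)=F_M\Omega^*_+(t)U(t,t+u)$, and then expands $\Omega^*_+(t)$ by \eqref{D2}. That produces
\[
F_M U(t,t+u)\langle x\rangle^{-2}-i\int_0^\infty F_M e^{isH_0}V(t+s)U(t+s,t+u)\langle x\rangle^{-2}\,ds .
\]
The first term contains the perturbed propagator, so Lemma \ref{FMest} gives no $u$-decay for it. The second term decays only like $\langle s\rangle^{-\eta}$ in the inner variable $s$, not in $u$. After Schur or Cauchy--Schwarz you are therefore left with $\int_0^\infty\|O(u)f\|\,du$, which Lemma \ref{Lemma4.1} does not bound. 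Your earlier option of using \eqref{D2} for $\Omega^*_+(t)$ from the start fails for the reason you noted yourself: the right factor becomes $\langle x\rangle^{-5/2}U(t+u,t)|p|^{-3/2}\langle x\rangle^{1/2+\epsilon}$ rather than $O(u)$.

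\textbf{The missing step.} Do not intertwine. Instead, expand $\Omega^*_+(t+u)$ itself by \eqref{D2} with base time $t+u$, keeping $e^{iuH_0}$ on the left:
\[
F_Me^{iuH_0}\Omega^*_+(t+u)\langle x\rangle^{-2}=F_Me^{iuH_0}\langle x\rangle^{-2}-i\int_0^\infty F_Me^{i(u+s)H_0}V(t+u+s)U(t+u+s,t+u)\langle x\rangle^{-2}\,ds .
\]
Now every free propagator carries time at least $u$. Lemma \ref{FMest} then gives
\[
\langle u\rangle^{-2}+\|\langle x\rangle^{3}V\|_{L^\infty_{t,x}}\int_0^\infty\langle u+s\rangle^{-3}ds\lesssim_M\langle u\rangle^{-2},
\]
which is the paper's claim \eqref{F1}. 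In your framework this amounts to recombining $U(t,t+u)$ with the $s\in[0,u]$ part of the integral via Duhamel to recover $e^{iuH_0}$, so that only the tail $s\ge u$ remains. The estimate then closes:
\[
\|C_{M,2}(t)f\|\lesssim\int_0^\infty\langle u\rangle^{-2}\|O(u)f\|\,du\le\Big(\int_0^\infty\langle u\rangle^{-2}du\Big)^{1/2}\Big(\int_0^\infty\langle u\rangle^{-2}\|O(u)f\|^2du\Big)^{1/2}\lesssim\|f\|.
\]

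\textbf{On \eqref{C_M2}.} Your alternative is the paper's argument: expand by \eqref{D2}, split $U(t+s,t)=e^{-isH_0}+(U(t+s,t)-e^{-isH_0})$, and apply Lemmas \ref{Lemma4.2} and \ref{FMest}. Lemma \ref{Lemma4.2} must be used for all $s$, not only bounded $s$; its linear growth is absorbed by the decay $\langle s\rangle^{-9/4}$. Both this route and your main scheme need $\langle x\rangle^{\alpha+5/4}V$ (respectively $\langle x\rangle^{\alpha+2}V$) to be bounded. For large $\alpha$ this exceeds the stated $\langle x\rangle^{9/2}V$ hypothesis, a dependence the paper's proof shares.
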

\begin{proof}
We estimate \eqref{C_M1} first.
Revisit that
\begin{align*}
   C_M(t)\coloneqq C(t)\Omega_{+}(t)F_{M}(x,p) \Omega^*_{+}(t).
\end{align*}
By using Duhamel principle \eqref{D1}, $ C_M(t)$ reads as
\begin{align}\label{F4}
     C_M(t)|p|^{-\frac{3}{2}}\langle x\rangle^{\frac12+\epsilon}= C_{M,1}(t)+C_{M,2}(t),
\end{align}
where
\begin{align*}
    C_{M,1}(t)\coloneqq C(t)\Omega_{+}(t)F_{M}(x,p)|p|^{-\frac{3}{2}}\langle x\rangle^{\frac12+\epsilon}
\end{align*}
and
\begin{align*}
    C_{M,2}(t)\coloneqq(-i)\int_0^\infty C(t)\Omega_{+}(t)F_{M}(x,p)e^{iuH_0}\Omega_{+}^*(t+u)V(x,t+u)e^{-iuH_0}|p|^{-\frac{3}{2}}\langle x\rangle^{\frac12+\epsilon}du.
\end{align*}
By the definition of $F_M(x,p)$ \eqref{FM} and the boundedness of $C(t)$ and $\Omega_+(t)$ in Lemma \ref{Lemma2.9}, $C_{M,1}(t)$ satisfies
\begin{align}\label{F3}
    \|C_{M,1}(t)\|_{2\to2}\lesssim\|F_{M}(x,p)|p|^{-\frac{3}{2}}\langle x\rangle^{\frac12+\epsilon}\|_{2\to2}\lesssim_{M,\epsilon}1.
\end{align}
Then, we estimate $C_{M,2}(t)$. We claim that
\begin{align}\label{F1}
    \|F_M(x,p)e^{iuH_0}\Omega_{+}^*(t+u)\langle x\rangle^{-2}\|_{2\to2}\lesssim_{M}\frac{1}{\langle u\rangle^2}.
\end{align}
Using Lemma \ref{FMest} and the unitary of $U(t,0)$, we obtain
\begin{equation*}
    \begin{aligned}
       &\| \int_0^\infty F_M(x,p)e^{i(u+s)H_0}V(x,t+u+s)U(t+u+s,t+u)ds\|_{2\to2}\\
       &\lesssim\int_0^\infty \frac{1}{\langle u+s\rangle^3}\|\langle x\rangle^3 V\|_{L^\infty_{t,x}} \|U(t+u+s,t+u)\|_{2\rightarrow2}ds\\
       &\lesssim_M\frac{1}{\langle u\rangle^2}\|\langle x\rangle^3 V\|_{L^\infty_{t,x}}.
    \end{aligned}
\end{equation*}
This, together with
\begin{align*}
    \|F_M(x,p)e^{iuH_0} \langle x\rangle^{-2}\|_{2\to2}\lesssim_{M}\frac{1}{\langle u\rangle^2}
\end{align*}
in Lemma \ref{FMest} and Duhamel principle \eqref{D2} to expand $\Omega_{+}^*(t+u)$, yields \eqref{F1}.
By using the boundedness of $C(t)$ and $\Omega_{+}(t)$ in Lemma \ref{Lemma2.9} and \eqref{F1}, we get
\begin{equation}\label{F2}
    \begin{aligned}
       \|C_{M,2}(t)\|_{2\to2}&\leq\int_0^\infty \|F_M(x,p)e^{iuH_0}\Omega_{+}^*(t+u)\langle x\rangle^{-2}\|_{2\to2} \|\langle x\rangle^{\frac92} V\|_{L^\infty_{u,x}}\|O(u)\|_{2\to2}du\\
       &\lesssim_M \int_0^\infty\frac{1}{\langle u\rangle^2}\|\langle x\rangle^{\frac92} V\|_{L^\infty_{u,x}}\|O(u)\|_{2\to2}du.
    \end{aligned}
\end{equation}
Applying Lemma \ref{Lemma4.1} and H\"{o}lder inequality to estimate \eqref{F2}, we have
\begin{equation*}
    \begin{aligned}
    \|C_{M,2}(t)\|_{2\to2}&\lesssim_{M,\epsilon}\left(\int_0^\infty\frac{1}{\langle u\rangle^2}du\right)^{\frac12} \left(\int_0^\infty\frac{1}{\langle u\rangle^2}\|\langle x\rangle^{\frac92} V\|_{L^\infty_{u,x}}^2\|O(u)\|_{2\to2}^2du\right)^{\frac12}\\
    &\lesssim_{M,\epsilon}\|\langle x\rangle^{\frac92} V\|_{L^\infty_{t,x}}.
\end{aligned}
\end{equation*}
Combining the result with estimate \eqref{F3} and \eqref{F4} yields
\begin{align*}
    \sup_{t\in\mathbb{R}}\|C_M(t)|p|^{-\frac{3}{2}}\langle x\rangle^{\frac12+\epsilon}\|_{2\to2}\lesssim_{M,\epsilon}1.
\end{align*}

To derive \eqref{C_M2},
we invoke the Duhamel principle \eqref{D2}, yielding the following:
\begin{equation*}
    \begin{aligned}
        C_M(t) \frac{\langle p\rangle^{\alpha}}{|p|^\alpha}=C(t)\Omega_{+}(t)F_{M}(x,p) \Omega^*_{+}(t)\frac{\langle p\rangle^{\alpha}}{|p|^\alpha}=C_{M,3}(t)+C_{M,4}(t),
    \end{aligned}
\end{equation*}
where the operators $C_{M,3}(t)$ and $C_{M,4}(t)$ are defined as
\begin{align*}
   C_{M,3}(t)\coloneqq C(t)\Omega_{+}(t)F_{M}(x,p)  \frac{\langle p\rangle^{\alpha}}{|p|^\alpha}
\end{align*}
and
\begin{align}\label{C_M_2}
    C_{M,4}(t)=(-i)\int_0^\infty C(t)\Omega_{+}(t)F_{M}(x,p)e^{isH_0}V(x,t+s)U(t+s,t)\frac{\langle p\rangle^{\alpha}}{|p|^\alpha}ds.
\end{align}
We observe that
\begin{align}\label{F5}
    \sup_{t\in\mathbb{R}}\|C_{M,3}(t)\|_{2\to2}\lesssim_{M,\alpha}1,
\end{align}
which follows from the fact that $\frac{\langle p\rangle^{\alpha}}{|p|^\alpha}$ is uniformly bounded on the support of $F_{M}(x,p)$. Actually, we can provide a more precise estimate for $\frac{\langle p\rangle^{\alpha}}{|p|^\alpha}$. Specifically,
\begin{align*}
    \frac{\langle p\rangle^{\alpha}}{|p|^\alpha}\lesssim_{C_0}\begin{cases}
        1, & \mbox{if} \quad|p|>1, \\
        \frac{1}{|p|^{\alpha}}, & \mbox{if}\quad |p|\leq 1.
    \end{cases}
\end{align*}

To estimate $C_{M,4}(t)$, applying Lemma \ref{Lemma4.2} in conjunction with the equation \eqref{C_M_2}, Lemma \ref{FMest} and Lemma \ref{HLSvar} yields
\begin{equation}\label{F6}
    \begin{aligned}
        \|C_{M,4}(t)\|_{2\to2}
        \leq&\int_0^\infty \|C(t)\Omega_{+}(t)\|_{2\to2} \|F_{M}(x,p)e^{isH_0}\langle x\rangle^{-\frac{5}{4}}\|_{2\to2}\\
        &\times\|\langle x\rangle^{\alpha+\frac{5}{4}}V(x,t+s)\|_{L^\infty_{t,x}}\|\langle x\rangle^{-\alpha}e^{-isH_0}\frac{\langle p\rangle^{\alpha}}{|p|^\alpha}\|_{2\to2}ds\\
        &+\int_0^\infty\|C(t)\Omega_{+}(t)\|_{2\to2} \|F_{M}(x,p)e^{isH_0}\langle x\rangle^{-\frac{9}{4}}\|_{2\to2}\\
        &\times\|\langle x\rangle^{\frac{9}{4}}V(x,t+s)\|_{L^\infty_{t,x}}\|(U(t+s,t)-e^{-isH_0})\frac{\langle p\rangle^{\alpha}}{|p|^\alpha}\|_{2\to2}ds\\
        \lesssim&\int_0^\infty \frac{1}{\langle s\rangle^{\frac{5}{4}}}\|\langle x\rangle^{\alpha+\frac{5}{4}}V(x,t+s)\|_{L^\infty_{t,x}}ds \\
        &+\int_0^\infty \frac{s}{\langle s\rangle^{\frac{9}{4}}}\|\langle x\rangle^{\frac{9}{4}}V(x,t+s)\|_{L^\infty_{t,x}}\|\langle x\rangle^{\alpha}V(x,t+s)\|_{L^\infty_{t,x}}ds\\
        \lesssim& (\|\langle x\rangle^{\alpha+\frac{5}{4}}V(x,t+s)\|^2_{L^\infty_{t,x}}+1)^2
    \end{aligned}
\end{equation}
for all $0<\alpha<\frac{n}{2}.$ Combining \eqref{F5} and \eqref{F6}, we obtain $$ \sup_{t\in\mathbb{R}}\|C_M(t) \frac{\langle p\rangle^{\alpha}}{|p|^\alpha}\|_{2\to2}\lesssim_{M,\alpha}1.$$
\end{proof}

\subsection{Weight-Reduction Mechanism of the operator \texorpdfstring{$C^{\pm}(t)$}{(C±(t))}}
Recall that
\begin{align*}
    C^\pm(t)=i\int_0^{\infty}P^\pm e^{\pm isH_0}V(x,t\pm s)U(t\pm s,t)ds.
\end{align*}
Since the sufficient spatial decay of the potential $V(x,t)$, it is expected that the weighted evolution operator $U(t\pm s,t)\frac{\langle p\rangle^{\alpha}}{|p|^\alpha}$ admits some weight absorption. This gain, nevertheless, comes at the cost of a $\frac{\alpha}{4}$-order (linear) time growth.
\begin{lemma}\label{Lemma4.4}
    For all $0<\alpha<\frac{n}{2},$ and $n\geq 9$,
    \begin{align}\label{B}
        \|\langle x\rangle^{-\alpha} U(t+s,t)\frac{\langle p\rangle^{\alpha}}{|p|^\alpha}\|_{2\to2}\lesssim_{n} \langle s\rangle^{\alpha/4}
    \end{align}
    holds true.
\end{lemma}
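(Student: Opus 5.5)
The plan is to reduce \eqref{B} to a low-frequency problem, split at the frequency scale $|p|\sim\langle s\rangle^{-1/4}$, and treat the potential by one Duhamel expansion.

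\textbf{Reductions.} For $s\le 1$ the estimate follows from Lemma \ref{Lemma4.2} together with $\|\langle x\rangle^{-\alpha}e^{-isH_0}\frac{\langle p\rangle^\alpha}{|p|^\alpha}\|_{2\to2}\lesssim 1$. The latter is obtained as in the low-frequency argument below with $s$ replaced by $1$, plus the trivial high-frequency part. So assume $s\ge1$.

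Split $\frac{\langle p\rangle^\alpha}{|p|^\alpha}=\frac{\langle p\rangle^\alpha}{|p|^\alpha}F_{\geq s^{-1/4}}(|p|)+\frac{\langle p\rangle^\alpha}{|p|^\alpha}F_{< s^{-1/4}}(|p|)$. The first multiplier has norm $\lesssim s^{\alpha/4}$. Since $\langle x\rangle^{-\alpha}U(t+s,t)$ is a contraction, this piece is already bounded by $s^{\alpha/4}$. On the support of the second piece $\langle p\rangle^\alpha\lesssim1$, so it remains to bound $\langle x\rangle^{-\alpha}U(t+s,t)F_{<s^{-1/4}}(|p|)|p|^{-\alpha}$.

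\textbf{Free low-frequency part.} By the Duhamel formula \eqref{D3},
\begin{align*}
U(t+s,t)=e^{-isH_0}-i\int_0^sU(t+s,t+u)V(x,t+u)e^{-iuH_0}\,du .
\end{align*}
The key auxiliary bound I would prove is: for every $0<\beta<\frac n2$ and all $0\le u\le s$,
\begin{align*}
\big\||x|^{-\beta}e^{-iuH_0}F_{<s^{-1/4}}(|p|)|p|^{-\beta}\big\|_{2\to2}\lesssim_{n,\beta}1 .
\end{align*}
Both $|x|^{-\beta}$ and $|p|^{-\beta}$ scale oppositely under the $L^2$-unitary dilation $x\mapsto s^{1/4}x$, so this reduces to $\||x|^{-\beta}m_\tau(p)|p|^{-\beta}\|_{2\to2}$ with $m_\tau(\xi)=e^{-i\tau|\xi|^4}F_{<1}(|\xi|)$ and $\tau=u/s\in[0,1]$. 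Each $m_\tau(p)$ is convolution with a kernel that is Schwartz uniformly in $\tau$. Such a kernel is pointwise dominated by the Hardy--Littlewood maximal function, hence $m_\tau(p)$ is bounded on $L^2(|x|^{-2\beta}dx)$, since $|x|^{-2\beta}\in A_2$ for $\beta<\frac n2$. Composing with $\||x|^{-\beta}|p|^{-\beta}\|_{2\to2}\lesssim1$ from Lemma \ref{HLSvar} gives the claim. Taking $\beta=\alpha$ and $u=s$ and using $\langle x\rangle^{-\alpha}\le|x|^{-\alpha}$ shows the free term is $O(1)$.

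\textbf{Interaction term (main obstacle).} Here the naive bound via Lemma \ref{Lemma4.2} gives growth $s$, which is too weak when $\alpha<4$. The plan is to spend spatial decay of $V$ to buy extra low-frequency gain. Fix $\beta$ with $\max\{\alpha,4\}\le\beta<\frac n2$; this is possible exactly because $n\ge9$ gives $\frac n2>4$. Write $|p|^{-\alpha}=|p|^{-\beta}|p|^{\beta-\alpha}$ and note $\||p|^{\beta-\alpha}F_{<s^{-1/4}}(|p|)\|\lesssim s^{-(\beta-\alpha)/4}$. Then, using unitarity of $U(t+s,t+u)$, $|\langle x\rangle^{\beta}V|\lesssim\langle x\rangle^{\sigma}|V|$ (Assumption \ref{asp:1}) and the auxiliary bound with $\beta$, we get
\begin{align*}
\int_0^s\big\|\langle x\rangle^{-\alpha}U(t+s,t+u)V e^{-iuH_0}F_{<s^{-1/4}}(|p|)|p|^{-\alpha}\big\|_{2\to2}du
\lesssim \|\langle x\rangle^{\beta}V\|_{L^\infty_{t,x}}\;s\cdot s^{-(\beta-\alpha)/4}.
\end{align*}
A small detail is that the cutoff must be split as $F_{<s^{-1/4}}=F_{<s^{-1/4}}\tilde F_{<2s^{-1/4}}$ so that one factor can be commuted onto $|p|^{\beta-\alpha}$; all these multipliers commute with $e^{-iuH_0}$. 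Since $\beta\ge4$, the exponent satisfies $1-\frac{\beta-\alpha}{4}\le\frac\alpha4$, so this term is $\lesssim s^{\alpha/4}$. Summing the three contributions yields \eqref{B}.

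The delicate point is the uniform-in-$\tau$ weighted bound for $m_\tau(p)$ with the singular weight $|x|^{-2\beta}$, and checking that the choice $\beta\ge4$ is compatible with $\beta<\frac n2$. This compatibility is where the restriction $n\ge9$ is used.
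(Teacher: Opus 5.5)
Your argument is correct and has the same skeleton as the paper's proof. Both split $\frac{\langle p\rangle^{\alpha}}{|p|^{\alpha}}$ at $|p|\sim\langle s\rangle^{-1/4}$, bound the high-frequency piece trivially by $\langle s\rangle^{\alpha/4}$, and handle the low-frequency piece with the Duhamel formula \eqref{D3}. There, spatial decay of $V$ pays for a $|p|^{-\beta}$ weight via Lemma \ref{HLSvar}, and the cutoff gains $\langle s\rangle^{-(\beta-\alpha)/4}$.

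The one substantive difference is your choice of $\beta$. The paper fixes $\beta=4$ and bounds the interaction term by $\|\langle x\rangle^{4}V\|\,\||x|^{-4}|p|^{-4}\|\,\|\langle p\rangle^{\alpha}|p|^{4-\alpha}F_{\leq 1/\langle s\rangle^{1/4}}(|p|)\|$. That only closes for $\alpha\le 4$: for $4<\alpha<\frac n2$, a nonempty range when $n\ge 9$, the factor $|p|^{4-\alpha}$ is unbounded at $p=0$. Your choice $\max\{\alpha,4\}\le\beta<\frac n2$ repairs this and gives the lemma on the full stated range. The price is using $\langle x\rangle^{\beta}V\in L^\infty_{t,x}$ with $\beta$ up to nearly $\frac n2$, which Assumption \ref{asp:1} ($\sigma>12+\frac n2$) supplies.

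On the other hand, the step you flag as delicate is not. The operators $e^{-iuH_0}$, $F_{<s^{-1/4}}(|p|)$ and $|p|^{-\beta}$ are all Fourier multipliers and commute, so
\[
\big\||x|^{-\beta}e^{-iuH_0}F_{<s^{-1/4}}(|p|)|p|^{-\beta}\big\|_{2\to2}\le\big\||x|^{-\beta}|p|^{-\beta}\big\|_{2\to2}\,\big\|e^{-iuH_0}F_{<s^{-1/4}}(|p|)\big\|_{2\to2}\lesssim 1
\]
follows directly from Lemma \ref{HLSvar}. This is how the paper treats the free term. Your dilation and $A_2$/maximal-function argument is valid but can be dropped. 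Likewise, the separate case $s\le 1$ disappears if you split at $\langle s\rangle^{-1/4}$ as the paper does.
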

\begin{proof}
Let $B(t,s)=\langle x\rangle^{-\alpha} U(t+s,t)\frac{\langle p\rangle^{\alpha}}{|p|^\alpha}$. We only consider the case $s\geq 0$.
While the case $s < 0$ can be treated analogously. We decompose $B(t,s)$ into two parts:
\begin{align*}
    B(t,s)=B_1(t,s)+B_2(t,s),
\end{align*}
where the operators $B_j(t,s)$, $j=1,2,$ are defined as
\begin{align*}
    B_1(t,s) &= B(t,s)F_{> 1/\langle s\rangle^{1/4}}\left(|p| \right), \\
    B_2(t,s) &= B(t,s)F_{\leq1/\langle s\rangle^{1/4}}\left(|p| \right).
\end{align*}
For $B_1(t,s)$, using the estimate
\begin{align*}
    \|\frac{\langle p\rangle^{\alpha}}{|p|^\alpha}F_{>1/\langle s\rangle^{1/4}}\left(|p|\right)\|_{2\to 2}\lesssim\langle s\rangle^{\alpha/4},
\end{align*}
together with the unitarity of $U(t+s,t)$, yields
\begin{align}\label{B_1}
    \|B_1(t,s)\|_{2\to 2}\lesssim\langle s\rangle^{\alpha/4}.
\end{align}
Next, we estimate $B_2(t,s)$. For $\alpha<\frac{n}{2}$, application of Lemma \ref{HLSvar} and the unitary of $e^{-iuH_0}$ yields
\begin{align*}
    \|\langle x\rangle^{-\alpha} \frac{\langle p\rangle^{\alpha}}{|p|^\alpha}e^{-iuH_0}F_{\leq1/\langle s\rangle^{1/4}}\left(|p|\right)\|_{2\to 2}\lesssim1.
\end{align*}
Then, by the Duhamel formula \eqref{D3}, the unitarity of  $U(t +s, t +u)$ and $e^{-iuH_0}$ and Lemma \ref{HLSvar}, we deduce that
\begin{equation} \label{B_2}
    \begin{aligned}
        \|B_2(t,s)\|_{2\to 2}
        &\leq \left\| \langle x\rangle^{-\alpha} \frac{\langle p\rangle^{\alpha}}{|p|^\alpha} e^{-iuH_0} F_{\leq 1/\langle s\rangle^{1/4}}\left(|p|  \right) \right\|_{2\to 2} \\
        &\quad + \int_0^s \left\| \langle x\rangle^{-\alpha} U(t+s, t+u) V(x, t+u) e^{-iuH_0} \frac{\langle p\rangle^{\alpha}}{|p|^\alpha} F_{\leq 1/\langle s\rangle^{1/4}}\left(|p|  \right) \right\|_{2\to 2} \, du \\
        &\lesssim 1 + \int_0^s \|\langle x\rangle^4 V(x,t)\|_{L^\infty_{t,x}} \cdot \| |x|^{-4} |p|^{-4} \|_{2\to 2} \cdot \left\| \langle p\rangle^{\alpha} |p|^{4 - \alpha} F_{ \leq 1/\langle s\rangle^{1/4}}\left(|p| \right) \right\|_{2\to 2} \, du \\
        &\lesssim \langle s\rangle^{\alpha/4}.
    \end{aligned}
\end{equation}
Combining \eqref{B_1} and \eqref{B_2}, we obtain the desired estimate \eqref{B}.\end{proof}
The following proposition establishes a `Weight-Reduction Mechanism', which will be essential for implementing the Neumann series expansion in a finite summation.
\begin{proposition} \label{Prop4.5}Under the condition $\langle x\rangle^{2\al+\frac{9}{2}}V(x)\in L^\infty_{t,x}(\mathbb{R}^{n+1})$, for all $\beta\in (0,\frac{1}{16}]$, $0<\alpha<\frac{n}{2},$ $n\geq 9$,  we have
    \begin{align}\label{eq:424}
        \left\|\frac{|p|^{(\alpha-\beta)_+}}{\langle p\rangle^{(\alpha-\beta)_+}}C^{\pm}(t)\frac{\langle p\rangle^{\alpha}}{|p|^\alpha}\right\|_{2\to2}\lesssim_{n,\epsilon} 1,
    \end{align}
    where $(\alpha-\beta)_+\coloneqq\max\{\alpha-\beta,0\}$.
\end{proposition}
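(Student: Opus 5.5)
We use the representation \eqref{A11}, which gives
$$\frac{|p|^{\gamma}}{\langle p\rangle^{\gamma}}C^{\pm}(t)\frac{\langle p\rangle^{\alpha}}{|p|^\alpha}=i\int_0^\infty \frac{|p|^{\gamma}}{\langle p\rangle^{\gamma}}P^\pm e^{\pm isH_0}\langle x\rangle^{-\delta}\,\bigl(\langle x\rangle^{\delta+\alpha}V(x,t\pm s)\bigr)\,\langle x\rangle^{-\alpha}U(t\pm s,t)\frac{\langle p\rangle^{\alpha}}{|p|^\alpha}\,ds,$$
where $\gamma=(\alpha-\beta)_+$ and $\delta>0$ is a spatial weight to be chosen. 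The plan is to insert $\langle x\rangle^{-\delta}\langle x\rangle^{\delta+\alpha}\langle x\rangle^{-\alpha}$ around $V$, as displayed, and to estimate the three resulting factors separately.

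\textbf{Step 1 (right factor).} The factor $\langle x\rangle^{-\alpha}U(t\pm s,t)\frac{\langle p\rangle^{\alpha}}{|p|^\alpha}$ is handled by Lemma~\ref{Lemma4.4}. It costs a growth $\langle s\rangle^{\alpha/4}$.

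\textbf{Step 2 (left factor).} The factor $\frac{|p|^{\gamma}}{\langle p\rangle^{\gamma}}P^\pm e^{\pm isH_0}\langle x\rangle^{-\delta}$ is handled by the near threshold estimate \eqref{NTE} with exponent $\gamma$ in place of $\alpha$. It gives decay $\langle s\rangle^{-m}$ with
$$m=\min\Bigl\{\tfrac{\delta}{4}+\epsilon,\ (\tfrac14-\epsilon)\bigl(\gamma+\min\{\tfrac n2,\delta\}\bigr)\Bigr\}.$$

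\textbf{Step 3 (middle factor and integrability).} The middle factor is bounded by $\|\langle x\rangle^{\delta+\alpha}V\|_{L^\infty_{t,x}}$. The $s$-integral converges provided $m-\alpha/4>1$. If we take $\delta=\alpha+\frac{9}{2}$, consistent with the hypothesis $\langle x\rangle^{2\alpha+9/2}V\in L^\infty$, the first entry of the minimum is $\frac{\alpha}{4}+\frac98+\epsilon$, which exceeds $\frac{\alpha}{4}+1$.

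The second entry is the main obstacle. Suppose $\min\{\frac n2,\delta\}=\delta$. Then it equals $(\frac14-\epsilon)(2\alpha-\beta+\frac92)\approx\frac{\alpha}{2}-\frac{\beta}{4}+\frac98$, up to a loss of order $\epsilon(2\alpha+\frac92)$. Subtracting $\frac{\alpha}{4}$ leaves $\frac{\alpha}{4}-\frac{\beta}{4}+\frac98$ minus that loss. This exceeds $1$ once $\epsilon$ is taken small relative to $\beta\le\frac1{16}$ and $n$; recall $\alpha<\frac n2$, so the loss is bounded in terms of $n$.

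When instead $\delta>\frac n2$, the minimum is capped at $\frac n2$. Since $n\ge 9$, the exponent $(\frac14-\epsilon)(\gamma+\frac n2)-\frac{\alpha}{4}\approx\frac n8-\frac{\beta}{4}>1$ still, so the argument goes through. The role of $\beta>0$ is only in the degenerate regime $\alpha\le\beta$, where $\gamma=0$ and the bound from \eqref{NTE} uses only the spatial weight. There the growth $\langle s\rangle^{\alpha/4}\le\langle s\rangle^{1/64}$ from Step~1 is harmless.

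\textbf{Step 4 (refinement at low frequency, if needed).} If the crude bound of Lemma~\ref{Lemma4.4} turns out too lossy in some parameter range, split $U(t\pm s,t)=e^{\mp isH_0}+(U-e^{\mp isH_0})$. The free part is bounded directly, using Lemma~\ref{HLSvar} together with commutation of $\frac{\langle p\rangle^{\alpha}}{|p|^\alpha}$ with $e^{\mp isH_0}$. The Duhamel remainder is handled via Lemma~\ref{Lemma4.2}, and Lemma~\ref{Lemma4.4} is used only on the low-frequency piece $F_{\le \langle s\rangle^{-1/4}}(|p|)$. The key point throughout is the exponent bookkeeping in Step~3: the $\langle s\rangle^{\alpha/4}$ growth must be beaten by the extra $\frac{\gamma}{4}$ decay that comes from the low-frequency factor $\frac{|p|^\gamma}{\langle p\rangle^\gamma}$ in \eqref{NTE}.
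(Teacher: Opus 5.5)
Your Steps 1--3 are essentially the paper's argument. The paper factors $V$ as $\langle x\rangle^{-(\alpha+\frac92)}\cdot\langle x\rangle^{2\alpha+\frac92}V\cdot\langle x\rangle^{-\alpha}$, applies \eqref{NTE} with exponent $(\alpha-\beta)_+$ on the left and Lemma~\ref{Lemma4.4} on the right, and checks that the net exponent exceeds $1$. Instead of splitting into cases, it bounds the minimum in \eqref{NTE} below by $(\frac14-\epsilon)((\alpha-\beta)_++\frac92)$, using $n\ge 9$. Your Step~4 is not needed, and your observation that $\epsilon$ must be chosen small depending on $n$ is more careful than the paper's stated range $\epsilon\in(0,\frac{1}{100})$.
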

\begin{remark}
This proposition   indeed embodies a bootstrap-like or iterative property: it allows one to reduce the exponent of the $|p|$- or $\langle p\rangle$-
weight by a fixed amount $\beta$
in each application, ultimately reaching a manageable (low or zero) weight regime in finitely many steps. This mechanism is fundamental for closing weighted operator estimates in time-dependent spectral and dispersive analyses.
\end{remark}
\begin{proof} By Lemma \ref{Lemma4.4} and the estimate \eqref{NTE}, we obtain
 \begin{eqnarray*}
        \left\|\frac{|p|^{(\alpha-\beta)_+}}{\langle p\rangle^{(\alpha-\beta)_+}}\, C^{\pm}(t)\, \frac{\langle p\rangle^{\alpha}}{|p|^\alpha}\right\|_{2\to2}
       & \lesssim & \int_0^{\infty} \left\| \frac{|p|^{(\alpha-\beta)_+}}{\langle p\rangle^{(\alpha-\beta)_+}}\, P^+ e^{isH_0} V(x,t+s)\, U(t+s,t)\, \frac{\langle p\rangle^{\alpha}}{|p|^\alpha} \right\|_{2\to2}  \, ds \\
       & \lesssim & \int_0^{\infty} \left\| \frac{|p|^{(\alpha-\beta)_+}}{\langle p\rangle^{(\alpha-\beta)_+}}\, P^+ e^{isH_0} \langle x\rangle^{-(\alpha + \tfrac{9}{2})} \right\|_{2\to2} \\
       & &   \times \left\| \langle x\rangle^{2\alpha + \tfrac{9}{2}} V(x,t) \right\|_{L^{\infty}_{t,x}}
        \left\| \langle x\rangle^{-\alpha}\, U(t+s,t)\, \frac{\langle p\rangle^{\alpha}}{|p|^\alpha} \right\|_{2\to2}  \, ds \\
       & \lesssim & \int_0^{\infty} \frac{1}{\langle s\rangle^{(\frac{1}{4}-\epsilon)((\alpha-\beta)_+ + \frac{9}{2})}}\, \langle s\rangle^{\frac{\alpha}{4}}\,
        \left\| \langle x\rangle^{2\alpha + \tfrac{9}{2}} V(x,t) \right\|_{L^{\infty}_{t,x}} \, ds \\
       & \lesssim & \left\| \langle x\rangle^{2\alpha + \tfrac{9}{2}} V(x,t) \right\|_{L^{\infty}_{t,x}}
        \int_0^{\infty} \frac{1}{\langle s\rangle^{\frac{9}{8} + \frac{(\alpha-\beta)_+}{4} - \frac{\alpha}{4} - \epsilon((\alpha-\beta)_+ + \frac{9}{2})}} \, ds.
\end{eqnarray*}
    When  $\beta\in(0,\frac{1}{16})$ and $\epsilon\in (0,\frac{1}{100})$, it follows that
    \begin{equation*}
        \begin{aligned}
            \frac{9}{8}+\frac{(\alpha-\beta)_+}{4}-\frac{\alpha}{4}-\epsilon((\alpha-\beta)_+ + \frac{9}{2}) > \frac{9}{8}-\frac{1}{64}-\epsilon(\frac{n}{2}+\frac{9}{2})>1.
        \end{aligned}
    \end{equation*}
    Hence, for all   $\epsilon\in(0,\frac{1}{100})$, we can conclude that
   \begin{align*}
        \left\|\frac{|p|^{(\alpha-\beta)_+}}{\langle p\rangle^{(\alpha-\beta)_+}}C^{\pm}(t)\frac{\langle p\rangle^{\alpha}}{|p|^\alpha}\right\|_{2\to2}\lesssim_{n,\epsilon} 1.
    \end{align*} This completes the proof of \eqref{eq:424}.
\end{proof}

\begin{lemma}\label{Lemma4.7}
     For all $n\geq 9$, we have
      \begin{align}\label{op1}
          \|\langle x\rangle^{-4} |p|^{-\frac{3}{2}}\langle x \rangle^{\frac{5}{2}}\|_{2\to2}\lesssim_n1.
      \end{align}
\end{lemma}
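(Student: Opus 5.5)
We need to bound $\langle x\rangle^{-4}|p|^{-3/2}\langle x\rangle^{5/2}$ on $L^2$ for $n\ge 9$. We split the right weight with a spatial cutoff, $\langle x\rangle^{5/2}=\langle x\rangle^{5/2}F_{\leq1}(|x|)+\langle x\rangle^{5/2}F_{>1}(|x|)$, and treat the two pieces separately.

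\textbf{The local piece.} This piece is handled exactly as in \eqref{eq:43}. We write
$$\langle x\rangle^{-4}|p|^{-3/2}\langle x\rangle^{5/2}F_{\leq1}=\big(\langle x\rangle^{-4}|x|^{3/2}\big)\big(|x|^{-3/2}|p|^{-3/2}\big)\big(\langle x\rangle^{5/2}F_{\leq1}\big).$$
The outer factors are bounded multipliers. The middle factor is bounded by Lemma \ref{HLSvar}, since $3/2<n/2$.

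\textbf{The far piece.} Here the weight $\langle x\rangle^{5/2}$ must be commuted through $|p|^{-3/2}$. The plan is to imitate the proof of Lemma \ref{Lemma4.1}, using commutators in place of the free evolution. We insert the partition $\{F_j\}$ with $|x_j|\ge |x|/n$ on $\operatorname{supp}F_j$. We then write $\langle x\rangle^{5/2}F_{>1}F_j=x_j^{2}\cdot g_j(x)$, where $g_j=\langle x\rangle^{5/2}F_{>1}F_j/x_j^2$ satisfies $|g_j|\lesssim \langle x\rangle^{1/2}$, and commute $x_j^2$ to the left of $|p|^{-3/2}$:
$$|p|^{-3/2}x_j^2=x_j^2|p|^{-3/2}+2x_j[|p|^{-3/2},x_j]+[[|p|^{-3/2},x_j],x_j].$$
The commutators are Fourier multipliers. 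Up to constants, $[|p|^{-3/2},x_j]\sim p_j|p|^{-7/2}$, which is homogeneous of degree $-5/2$, and the double commutator is homogeneous of degree $-7/2$. Each resulting term is a product of the form
$$\big(\langle x\rangle^{-4}|x|^{a}\big)\cdot\big(|x|^{-b}m(p)\big)\cdot\big(\text{bounded multiplier in }x\big),$$
where $m$ is homogeneous of degree $-b'$ with $b'\le b$. This is controlled by Lemma \ref{HLSvar}, or by Pitt's inequality for the multiplier directly, provided $b<n/2$.

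\textbf{The remaining weight.} After this step the leftover $\langle x\rangle^{1/2}$ on the right still has to be absorbed. The route is to use $\langle x\rangle^{1/2}\lesssim 1+|x|^{1/2}$ and then commute once more, so that total $x$-degree on the right is exchanged for extra negative $p$-degree. Degrees remain balanced because the left weight $\langle x\rangle^{-4}$ gives $4\ge 3/2+5/2$.

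\textbf{Main obstacle.} The difficult step is making these fractional commutator manipulations rigorous. The cleanest approach avoids commutator expansions entirely and proves the bound by interpolation, factoring the operator as
$$\langle x\rangle^{-4}|p|^{-3/2}\langle x\rangle^{5/2}=\big(\langle x\rangle^{-4}|x|^{4}\big)\,|x|^{-4}|p|^{-3/2}|x|^{5/2}\,\big(|x|^{-5/2}\langle x\rangle^{5/2}\big)\quad\text{on }|x|>1.$$
The operator $|x|^{-4}|p|^{-3/2}|x|^{5/2}$ is a Stein–Weiss weighted Riesz potential $I_{3/2}$ from $L^2(|x|^{5})$ to $L^2(|x|^{-8})$. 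The Stein–Weiss inequality applies here because the balance condition $-4+5/2=-3/2$ matches the order $3/2$ of the potential. Its remaining requirements are $4<n/2$ and $5/2<n/2$, and these hold exactly because $n\ge 9$. This explains the dimension restriction. The final factor $|x|^{-5/2}\langle x\rangle^{5/2}$ is bounded on $|x|>1$, while on the local region the direct argument of the local piece applies.
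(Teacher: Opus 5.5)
Your proof is correct. For the far region it takes a genuinely different route from the paper.

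The paper does not localize in space. It writes $\langle x\rangle^{5/2}=\langle x\rangle^{4}\langle x\rangle^{-3/2}$, so that
\[
\langle x\rangle^{-4}|p|^{-\frac32}\langle x\rangle^{\frac52}=|p|^{-\frac32}\langle x\rangle^{-\frac32}+\langle x\rangle^{-4}\big[|p|^{-\frac32},\langle x\rangle^{4}\big]\langle x\rangle^{-\frac32}.
\]
Since $\langle x\rangle^{4}=(1+|x|^2)^2$ is a polynomial, the commutator is computed exactly through $[\langle x\rangle^2,|p|^{-3/2}]$ and one double commutator. This yields finitely many explicit terms, such as $\langle x\rangle^{-2}x_jp_j|p|^{-7/2}\langle x\rangle^{-3/2}$, $\langle x\rangle^{-4}|x|^2|p|^{-7/2}\langle x\rangle^{-3/2}$ and $\langle x\rangle^{-4}|p|^{-11/2}\langle x\rangle^{-3/2}$. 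Each is bounded using only the balanced estimate $\||x|^{-a}|p|^{-a}\|_{2\to2}<\infty$, $a<\frac n2$, of Lemma \ref{HLSvar}. The dimension restriction enters through the most singular term, which needs $a=4$.

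You replace all of this algebra by the off-diagonal Stein--Weiss inequality for the dilation-invariant operator $|x|^{-4}|p|^{-3/2}|x|^{5/2}$. This makes it transparent that the only binding constraint is the left exponent $4<\frac n2$. The cost is importing Stein--Weiss with a negative exponent on one side, which is allowed since only the sum $4-\frac52\ge 0$ is constrained. You could avoid even that import. The kernel $|x|^{-4}|x-y|^{-(n-3/2)}|y|^{5/2}$ is positive and homogeneous of degree $-n$, so a Schur test with weight $|x|^{-n/2}$ proves the bound directly. The Schur integrals converge exactly when $4<\frac n2$, the other conditions being automatic.

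There are two minor slips. With $h\in L^2$ and $g=|x|^{5/2}h$, the statement you need is $I_{3/2}:L^2(|x|^{-5}dx)\to L^2(|x|^{-8}dx)$, not the version starting from $L^2(|x|^{5}dx)$, which is not even scale-invariant. Also, the condition on the right weight is $-\frac52<\frac n2$, which is vacuous, rather than $\frac52<\frac n2$. Neither slip affects the conclusion.

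Your first commutator sketch for the far piece does not close as written. Factoring $\langle x\rangle^{5/2}=x_j^2g_j$ leaves a growing factor $\langle x\rangle^{1/2}$, which is a problem of the same type as the original. ``Commuting once more'' with $|x|^{1/2}$ is not an exact identity. The paper's commutator proof closes precisely because it commutes the full polynomial $\langle x\rangle^4$ and leaves the \emph{decaying} factor $\langle x\rangle^{-3/2}$ to be absorbed by $|p|^{-3/2}$. Since your Stein--Weiss step supersedes that sketch, simply drop it.
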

\begin{proof}
Notice that
    \begin{align*}
        \|\langle x\rangle^{-4} |p|^{-\frac{3}{2}}\langle x \rangle^{\frac{5}{2}}\|_{2\to2}\leq \|\langle x\rangle^{-4}[|p|^{-\frac32},\langle x\rangle^{4}]\langle x\rangle^{-\frac32}\|_{2\to2}+\||p|^{-\frac32}\langle x\rangle^{-\frac32}\|_{2\to2}.
    \end{align*}
By Hardy–Littlewood–Sobolev inequality, we have
\begin{align*}
    \||p|^{-\frac32}\langle x\rangle^{-\frac32}\|_{2\to2}\leq \||p|^{-\frac32}|x|^{-\frac32}\|_{2\to2}\||x|^{\frac32}\langle x\rangle^{-\frac32}\|_{L^\infty_{t,x}}\lesssim1,
\end{align*}
for all $n\geq 4.$ To estimate $\|\langle x\rangle^{-4}[|p|^{-\frac32},\langle x\rangle^{4}]\langle x\rangle^{-\frac32}\|_{2\to2}$,
we decompose the commutator as follows:
   \begin{equation*}
    \begin{aligned}
        [\langle x\rangle^{4},|p|^{-\frac32}]&=\langle x\rangle^{2}[\langle x\rangle^{2},|p|^{-\frac32}]+[\langle x\rangle^{2},|p|^{-\frac32}]\langle x\rangle^{2}\\
        &= 2\langle x\rangle^{2}[\langle x\rangle^{2},|p|^{-\frac32}]-[\langle x\rangle^{2},[\langle x\rangle^{2},|p|^{-\frac32}]].
    \end{aligned}
\end{equation*}
So we only need to compute the commutators $[\langle x\rangle^{2},|p|^{-\frac32}]$  and   $[\langle x\rangle^{2},[\langle x\rangle^{2},|p|^{-\frac32}]]$. For the first one, we write
    \begin{align*}
        [\langle x\rangle^{2},|p|^{-\frac32}]&=x[ x,|p|^{-\frac32}]+[ x,|p|^{-\frac32}]x\nonumber\\
        &=-\tfrac{3i}{2}x\cdot p|p|^{-\frac{7}{2}}-\tfrac{3i}{2}|p|^{-\frac{7}{2}}p\cdot x\nonumber\\
        &=-3ix\cdot p |p|^{-\frac{7}{2}}+\sum_{j=1}^n\tfrac{3i}{2}[x_j,p_j|p|^{-\frac{7}{2}}]\nonumber\\
        &=-3ix\cdot p |p|^{-\frac{7}{2}}- \sum_{j=1}^n\tfrac{3}{2}\left( |p|^{-\frac{7}{2}} - \tfrac{7}{2}  |p|^{-\frac{11}{2}} p_j^2 \right)\\
        &=\sum_{j=1}^n\left( -3ix_j p_j |p|^{-\frac{7}{2}}-\tfrac{3}{2}|p|^{-\frac{7}{2}} +\tfrac{21}{4}  |p|^{-\frac{11}{2}} p_j^2 \right)\nonumber\\
        &=-3ix\cdot p|p|^{-\frac{7}{2}}-\frac{3}{2}(n-\frac{7}{2})|p|^{-\frac{7}{2}} \nonumber\\
        &\coloneqq A^0\nonumber.
    \end{align*}
For the second commutator, we compute
\begin{equation*}
    \begin{aligned}
     [\langle x\rangle^{2},[\langle x\rangle^{2},|p|^{-\frac32}]]&=\sum_{j=1}^n\left([x^2,-3ix_j p_j |p|^{-\frac{7}{2}}]\right)+[x^2,-\tfrac{3}{2}(n-\frac{7}{2})|p|^{-\frac{7}{2}}] \\
     &=\sum_{j=1}^n\sum_{k=1}^n\left([x_k^2,-3ix_j p_j |p|^{-\frac{7}{2}}]\right)+[x^2,-\tfrac{3}{2}(n-\frac{7}{2})|p|^{-\frac{7}{2}}]   \\
     &\coloneqq\sum_{j=1}^n\sum_{k=1}^n (A_{jk}^1)+A^2 \\
     &=A^1+A^2.
    \end{aligned}
\end{equation*}
$A_{jk}^1$ is computed as
\begin{equation*}
    \begin{aligned}
        A_{jk}^1&=x_jx_k[x_k,-3ip_j|p|^{-\frac{7}{2}}]+x_j[x_k,-3ip_j|p|^{-\frac{7}{2}}]x_k\\
        &=2x_jx_k[x_k,-3ip_j|p|^{-\frac{7}{2}}]-x_j[x_k, [x_k,-3ip_j|p|^{-\frac{7}{2}}]].
    \end{aligned}
\end{equation*}
Then we compute $[x_k,-3ip_j|p|^{-\frac{7}{2}}]$ and $x_j[x_k, [x_k,-3ip_j|p|^{-\frac{7}{2}}]]$ as
\begin{equation*}
    \begin{aligned}
       [x_k,-3ip_j|p|^{-\frac{7}{2}}]= 3\delta_{kj}|p|^{-\frac{7}{2}}-\tfrac{21}{2}p_jp_k|p|^{-\frac{11}{2}}
    \end{aligned}
\end{equation*}
and
\begin{equation*}
    \begin{aligned}
        \sum_{k=1}^n[x_k, [x_k,-3ip_j|p|^{-\frac{7}{2}}]]&=\sum_{k=1}^n\left([x_k,3\delta_{kj}|p|^{-\frac{7}{2}}]-[x_k,\tfrac{21}{2}p_jp_k|p|^{-\frac{11}{2}}]\right)\\
        &=-\tfrac{21i}{2}p_j|p|^{-\frac{11}{2}}-\frac{21}{2}i (1+n) p_j |p|^{-\frac{11}{2}} + \tfrac{231i}{4} p_j|p|^{-\frac{11}{2}}, \\
        &=\frac{21}{2}i(\frac{17-2n}{2})p_j|p|^{-\frac{11}{2}}.
    \end{aligned}
\end{equation*}
Thus, we have
\begin{equation*}
    \begin{aligned}
        \sum_{j=1}^n\sum_{k=1}^nA_{jk}^1=&6|x|^2|p|^{-\frac{7}{2}}-\sum_{j=1}^n\sum_{k=1}^n(21x_jx_kp_jp_k|p|^{-\frac{11}{2}}) \\
        &-\frac{21(17-2n)}{4}ix\cdot p|p|^{-\frac{11}{2}}.
    \end{aligned}
\end{equation*}
The computation of $A^2$ proceeds analogously to the preceding derivation:
\begin{equation*}
    \begin{aligned}
        A^2&=\tfrac{21i}{4}(n-\frac{7}{2})\sum_{k=1}^n \left( x_k p_k|p|^{-\frac{11}{2}}  + |p|^{-\frac{11}{2}} p_k x_k \right)\\
        &=\tfrac{21i}{4}(n-\frac{7}{2})\sum_{k=1}^n  \left( 2x_k p_k|p|^{-\frac{11}{2}}  - [x_k,|p|^{-\frac{11}{2}} p_k ] \right)\\
        &= \tfrac{21i}{4}(n-\frac{7}{2})\sum_{k=1}^n  \left( 2x_k p_k|p|^{-\frac{11}{2}} +\tfrac{11i}{2} |p|^{-\frac{15}{2}}p_k^2 - i |p|^{-\frac{11}{2}} \right) \\
        &=\frac{21}{4}i(n-\frac{7}{2})\left(2x\cdot p |p|^{-\frac{11}{2}}+(\frac{11}{2}-n)i|p|^{-\frac{11}{2}}\right)
    \end{aligned}
\end{equation*}
Therefore, we obtain the estimate
\begin{equation*}
    \begin{aligned}
        \|\langle x\rangle^{-4}[|p|^{-\frac32},\langle x\rangle^{4}]\langle x\rangle^{-\frac32}\|_{2\to 2}&\lesssim\|\langle x\rangle^{-2}A^0\langle x\rangle^{-\frac32}\|_{2\to 2} + \|\langle x\rangle^{-4}(A^1+A^2)\langle x\rangle^{-\frac32}\|_{2\to 2}.
    \end{aligned}
\end{equation*}
To bound these terms-exemplified by the estimate
\begin{equation*}
    \begin{aligned}
        \|\langle x\rangle^{-2}A^0\langle x\rangle^{-\frac32}\|_{2\to 2}\lesssim1,
    \end{aligned}
\end{equation*}
invoking the definition of $A^0$, the uniform bounds $\|\frac{p_j^{r}}{|p|^r}\|_{2\to 2}\leq1$ and $\|\frac{|x|^{r}}{\langle x\rangle^r}\|_{2\to 2}\leq1$ for all $r>0 $ and the Hardy–Littlewood–Sobolev inequality in Lemma \ref{HLSvar}, we establish
\begin{equation*}
    \begin{aligned}
        &\|\langle x\rangle^{-2}x_j p_j |p|^{-\frac{7}{2}}\langle x\rangle^{-\frac32}\|_{2\to 2} \\
        \leq  &\|\langle x\rangle^{-2}x_j |x|\|_{L^\infty_x}\||x|^{-1}|p|^{-1}\|_{2\to 2} \|\frac{p_j}{|p|}\|_{2\to 2}\||p|^{-\frac32}|x|^{-\frac{3}{2}}\|_{2\to 2}\||x|^{\frac{3}{2}}\langle x\rangle^{-\frac32}\|_{L^\infty_x}\\
        \lesssim&1
    \end{aligned}
\end{equation*}
and
\begin{equation*}
    \begin{aligned}
        \|\langle x\rangle^{-2}|p|^{-\frac{7}{2}}\langle x\rangle^{-\frac32}\|_{2\to 2}\leq\|\langle x\rangle^{-2}|p|^{-2}\|_{2\to 2}\||p|^{-\frac32}\langle x\rangle^{-\frac32}\|_{2\to 2}\lesssim1.
    \end{aligned}
\end{equation*}
Similarly, we can estimate the remaining terms $\|\langle x\rangle^{-4}A^1\langle x\rangle^{-\frac32}\|$ and $\|\langle x\rangle^{-4}A^2\langle x\rangle^{-\frac32}\|$ via identical techniques under the dimension constraint $n \geq 9$. This completes the proof of \eqref{op1}.
\end{proof}

\begin{lemma}\label{Lemma4.8}
     For all $n\geq 6$, we have
     \begin{align}
       \|\langle x\rangle^{-\frac{5}{2}} |p|^{-\frac{3}{2}} x_j  \|_{2\to2}\lesssim_n 1,\ j=1,\cdots,n.
     \end{align}
\end{lemma}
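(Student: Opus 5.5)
We prove the bound by commuting $x_j$ to the left of $|p|^{-\frac32}$, which splits the operator into a main term and a commutator term:
\begin{align*}
\langle x\rangle^{-\frac52}|p|^{-\frac32}x_j=\langle x\rangle^{-\frac52}x_j|p|^{-\frac32}+\langle x\rangle^{-\frac52}[|p|^{-\frac32},x_j].
\end{align*}
As in the proof of Lemma \ref{Lemma4.7}, we have $[x_j,|p|^{-\frac32}]=-\tfrac{3i}{2}p_j|p|^{-\frac72}$, and hence the commutator equals $\tfrac{3i}{2}p_j|p|^{-\frac72}$. We then bound the two terms separately, using only Lemma \ref{HLSvar}, the boundedness of the Riesz-type multipliers $p_j/|p|$, and bounded multiplication operators.

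\textbf{Commutator term.} We write
\begin{align*}
\langle x\rangle^{-\frac52}p_j|p|^{-\frac72}=\frac{|x|^{\frac52}}{\langle x\rangle^{\frac52}}\,|x|^{-\frac52}|p|^{-\frac52}\,\frac{p_j}{|p|}.
\end{align*}
The first factor is bounded in $L^\infty_x$. The middle factor is bounded on $L^2$ by Lemma \ref{HLSvar}, since $\frac52<\frac n2$ holds for $n\ge6$. The last factor has norm at most one. So this term is bounded.

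\textbf{Main term.} The term $\langle x\rangle^{-\frac52}x_j|p|^{-\frac32}$ cannot be treated the same way, because $|x|^{-\frac32}|p|^{-\frac32}$ is bounded only after we pay $|x|^{\frac32}$, and $\langle x\rangle^{-\frac52}x_j|x|^{\frac32}$ grows like $|x|^{0}\cdot$ const, which is in fact bounded. Indeed, $\langle x\rangle^{-\frac52}|x_j||x|^{\frac32}\le \langle x\rangle^{-\frac52}|x|^{\frac52}\le1$. Therefore
\begin{align*}
\|\langle x\rangle^{-\frac52}x_j|p|^{-\frac32}\|_{2\to2}\le\|\langle x\rangle^{-\frac52}x_j|x|^{\frac32}\|_{L^\infty_x}\,\||x|^{-\frac32}|p|^{-\frac32}\|_{2\to2}\lesssim1,
\end{align*}
again by Lemma \ref{HLSvar}, which applies here as soon as $n\ge4$. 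Adding the two bounds gives the lemma.

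The main obstacle is justifying the commutator identity rigorously. The operators involved are unbounded and singular at $p=0$, so we would verify the identity on Schwartz functions whose Fourier transforms vanish near the origin, a dense class. There the commutator is computed in Fourier variables as $i\partial_{\xi_j}$ acting on $|\xi|^{-\frac32}$. The bounded extension then follows from the estimates above. The dimensional restriction $n\ge6$ enters only through the $\frac52<\frac n2$ requirement in the commutator term.
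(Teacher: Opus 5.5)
Your proposal is correct and follows the paper's proof. You split off $\langle x\rangle^{-\frac52}x_j|p|^{-\frac32}$ and the commutator $\tfrac{3i}{2}\langle x\rangle^{-\frac52}p_j|p|^{-\frac72}$, then bound them with Lemma~\ref{HLSvar} at exponents $\tfrac32$ and $\tfrac52$ (the latter is where $n\ge 6$ enters), using $\|p_j/|p|\|_{2\to2}\le 1$. You should delete the self-contradicting opening sentence of your ``Main term'' paragraph (that it ``cannot be treated the same way''), since the estimate you then give is exactly the right one.
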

\begin{proof}
    We observe that
    \begin{equation*}
        \begin{aligned}
          \|\langle x\rangle^{-\frac{5}{2}} |p|^{-\frac{3}{2}} x_j  \|_{2\to2} \leq&  \|x_j \langle x\rangle^{-\frac{5}{2}} |p|^{-\frac{3}{2}}  \|_{2\to2}+ \|\langle x\rangle^{-\frac{5}{2}}  [x_j,|p|^{-\frac{3}{2}}] \|_{2\to2}.
        \end{aligned}
    \end{equation*}
Using the identity $[x_j,|p|^{-\frac{3}{2}}] =-\frac{3i}{2}\frac{p_j}{|p|^{\frac{7}{2}}}$, we apply Lemma \ref{HLSvar} and the bounds $\|\frac{p_j}{|p|}\|_{2\to2}\leq 1$ to obtain
\begin{equation*}
    \begin{aligned}
        \|\langle x\rangle^{-\frac{5}{2}} |p|^{-\frac{3}{2}} x_j  \|_{2\to2} \leq&\|x_j \langle x\rangle^{-\frac{5}{2}}|x|^{\frac{3}{2}}\|_{2\to2}\||x|^{-\frac{3}{2}}|p|^{-\frac{3}{2}}\|_{2\to2}\\
        +&\|\langle x\rangle^{-\frac{5}{2}} |x|^{\frac{5}{2}}\|_{2\to2}\||x|^{-\frac{5}{2}}|p|^{-\frac{5}{2}}\|_{2\to2}\|p_j/|p|\|_{2\to2}\\
        \lesssim&_n 1.
    \end{aligned}
\end{equation*}
\end{proof}

\begin{lemma}\label{Lemma4.9}
     For all $n\geq 9$ and $\epsilon\in (0,\frac14)$, we have
     \begin{align}
         \int_0^{\infty}\langle s\rangle^{-1-2\epsilon}\|\langle x\rangle^{-4}e^{-isH_0}|p|^{-\frac{3}{2}}\langle x\rangle^{\frac{1}{2}+\epsilon}\|_{2\to 2}ds\lesssim_{n,\epsilon}1.
     \end{align}
\end{lemma}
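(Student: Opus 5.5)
The plan is to obtain a pointwise-in-time bound of the form
\begin{align*}
\|\langle x\rangle^{-4}e^{-isH_0}|p|^{-\frac32}\langle x\rangle^{\frac12+\epsilon}\|_{2\to2}\lesssim_{n,\epsilon}\langle s\rangle^{\frac12+\epsilon'}
\end{align*}
for some $0\le\epsilon'<\epsilon$. Multiplying by $\langle s\rangle^{-1-2\epsilon}$ then gives an integrable function of $s$, and the lemma follows. The model for this is the proof of Lemma \ref{Lemma4.1}, with the weight $\langle x\rangle^{-\frac52}$ replaced by the stronger weight $\langle x\rangle^{-4}$. The extra $\langle x\rangle^{-\frac32}$ is what will let us close with an operator-norm bound rather than only a time-averaged one.

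\textbf{Step 1: the region $|x|\le1$.} Split the right weight as $\langle x\rangle^{\frac12+\epsilon}=\langle x\rangle^{\frac12+\epsilon}F_{\le1}(|x|)+\langle x\rangle^{\frac12+\epsilon}F_{>1}(|x|)$. For the first piece, argue as in \eqref{eq:43}, using Lemma \ref{HLSvar} with $\||x|^{-\frac32}|p|^{-\frac32}\|_{2\to2}\lesssim1$ and the unitarity of $e^{-isH_0}$. This yields a uniform $O(1)$ bound.

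\textbf{Step 2: the region $|x|>1$.} Use the partition of unity $F_j$ with $|x_j|\ge|x|/n$, and commute one factor $x_j$ through $e^{-isH_0}$ via
\begin{align*}
e^{-isH_0}x_j=(x_j-4sp_j|p|^2)e^{-isH_0}.
\end{align*}
This produces three terms, exactly as $O_{11},O_{12},O_{13}$ in Lemma \ref{Lemma4.1}.
\begin{itemize}
\item The $x_j$-term is bounded uniformly in $s$ by Lemma \ref{HLSvar}, since $\langle x\rangle^{-4}x_j|x|^{\frac32}$ is bounded.
\item The commutator term $[|p|^{-\frac32},x_j]=\tfrac{3i}{2}p_j|p|^{-\frac72}$ is handled as $O_{13}$, or via Lemma \ref{Lemma4.8}. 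This requires HLS with exponents up to about $\frac52$ and $2+\epsilon$, which is fine for $n\ge9$.
\item The key term is $-4s\langle x\rangle^{-4}p_j|p|^{\frac12}e^{-isH_0}\langle x\rangle^{\frac12+\epsilon}\frac{F_j(x)}{x_j}F_{>1}(|x|)$. Since $\langle x\rangle^{\frac12+\epsilon}/|x_j|\lesssim\langle x\rangle^{-\frac12+\epsilon}$ on the support, it suffices to show
\begin{align*}
\|\langle x\rangle^{-4}|p|^{\frac32}e^{-isH_0}\langle x\rangle^{-\frac12+\epsilon}\|_{2\to2}\lesssim\langle s\rangle^{-\frac12+\epsilon'}.
\end{align*}
\end{itemize}

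\textbf{Step 3: the main obstacle, decay of the key term.} Here the local smoothing estimate \eqref{localsmooth} only gives an $L^2_s$ bound, not a pointwise one. One option is to keep the integral in $L^2_s$: by Cauchy--Schwarz,
\begin{align*}
\int\langle s\rangle^{-1-2\epsilon}\cdot s\,\|\cdots\|\,ds\le\Big(\int\langle s\rangle^{-2-4\epsilon}s^2\Big)^{1/2}\Big(\int\|\cdots\|^2\Big)^{1/2},
\end{align*}
but the first factor diverges. So a genuine pointwise decay of the key operator is needed, and I would get it by splitting in frequency with a cutoff at $|p|\sim1$.

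For the high-frequency part $F_{\ge1}(|p|)$, split by $P^\pm$. Then apply \eqref{PSE} or \eqref{HEE} to the incoming/outgoing pieces, combined with the dual estimates, which give arbitrary decay from the weights $\langle x\rangle^{-4}$ and $\langle x\rangle^{-\frac12+\epsilon}$ shared between the two sides.

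For the low-frequency part, use \eqref{NTE} with $\alpha=\frac32$ and $\delta=4$, possibly after conjugating or dualizing so the weight $\langle x\rangle^{-4}$ is the decaying one. This gives a rate $\langle s\rangle^{-\min\{1+\epsilon,(\frac14-\epsilon)(\frac32+4)\}}\approx\langle s\rangle^{-1.37}$, better than needed. The positive-power weight $\langle x\rangle^{-\frac12+\epsilon}$ on the right must be split into $F_{\le s^{1/4}}$ and its complement and treated by interpolation. This is where I expect the real difficulty: matching $P^\pm$ on the correct side and absorbing the mild right weight.

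If this fails, the fallback is to prove the pointwise bound with exponent $\frac12+\epsilon'$ by interpolating the trivial bound $\lesssim s$ from Step 2 against \eqref{NTE}.
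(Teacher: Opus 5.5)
Your Step~1 and the first two terms of Step~2 are fine, since they are bounded uniformly in $s$ and hence integrable against $\langle s\rangle^{-1-2\epsilon}$. The problem is that the target you set for the remaining term is too weak. A pointwise bound $\langle s\rangle^{\frac12+\epsilon'}$ does not suffice: $\langle s\rangle^{-1-2\epsilon}\langle s\rangle^{\frac12+\epsilon'}=\langle s\rangle^{-\frac12-2\epsilon+\epsilon'}$, and because $\epsilon<\frac14$ this exponent exceeds $-1$ for every $\epsilon'\ge0$, so the integral diverges. You need growth $O(\langle s\rangle^{\gamma})$ with $\gamma<2\epsilon<\frac12$. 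After your commutator step this means $K(s)=\langle x\rangle^{-4}|p|^{\frac32}e^{-isH_0}\langle x\rangle^{-\frac12+\epsilon}$ must satisfy $\|K(s)\|_{2\to2}\lesssim\langle s\rangle^{-b}$ with $b>1-2\epsilon>\frac12$ for large $s$, not $\langle s\rangle^{-\frac12+\epsilon'}$. You also need $s\|K(s)\|_{2\to2}$ integrable near $s=0$; note that $K(0)$ is unbounded because of $|p|^{\frac32}$, so your stated bound cannot hold for small $s$ anyway. The fallback of interpolating to an exponent $\frac12+\epsilon'$ fails for the same reason.

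The second problem is that the estimates you plan to use do not by themselves produce the required decay, and this is the real missing idea. Lemma~\ref{allkindsestimates} is one-sided throughout. It bounds $P^{\pm}e^{\pm isH_0}\langle x\rangle^{-\delta}$, with $P^{\pm}$ to the left of the propagator and decay governed only by the weight on its right.

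Suppose you dualize $K(s)$ and insert $P^{+}+P^{-}$.
\begin{itemize}
\item The piece $\langle x\rangle^{-\frac12+\epsilon}|p|^{\frac32}P^{+}e^{isH_0}\langle x\rangle^{-4}$ does decay. By \eqref{NTE} with $\delta=4$, $\alpha=\frac32$, the rate is $\min\{1+\epsilon,\frac{11}{8}-\frac{11}{2}\epsilon\}\le1+\epsilon$, not $1.37$, though that is still enough for this piece.
\item The complementary piece, with $P^{-}e^{isH_0}\langle x\rangle^{-4}$, gets nothing from $\langle x\rangle^{-4}$. Its adjoint $\langle x\rangle^{-4}e^{-isH_0}(P^{-})^{*}|p|^{\frac32}\langle x\rangle^{-\frac12+\epsilon}$ has the projection on the wrong side of the propagator, so these lemmas cannot exploit $\langle x\rangle^{-\frac12+\epsilon}$ either.
\end{itemize}
If you split on the other side instead, the only usable weight is $\langle x\rangle^{-\frac12+\epsilon}$. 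For that weight, \eqref{HEE} gives at best $\langle s\rangle^{-\frac12+\epsilon}$ and \eqref{NTE} about $\langle s\rangle^{-\frac18}$. \eqref{PSE} with $l=\frac32$ is unavailable, since it needs $l<3\delta=\frac32-3\epsilon$.

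So the decay ``shared between the two sides'' has to come from a genuinely two-sided weighted dispersive estimate for $K(s)$, which you would have to prove separately. Scaling heuristics suggest a rate of about $\langle s\rangle^{-(6-\epsilon)/4}$ when $4<\frac n2$, obtainable for instance from the $|s|^{-n/4}$ dispersive bound and interpolation.

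For comparison, the paper cuts at $|x|\sim\langle s\rangle$ rather than $|x|\sim1$. On the far region, $\langle x\rangle^{\frac12+\epsilon}/x_j$ then contributes an extra $\langle s\rangle^{-\frac12+\epsilon}$. The paper uses endpoint Strichartz together with Lemma~\ref{Lemma4.7} on the near region, and local smoothing for the $s$-term. But those are $L^2_s$ bounds for fixed data. Applying them to $\|\cdot\|_{2\to2}$ inside the $s$-integral is exactly the step you correctly ruled out; for instance $\|\langle x\rangle^{-5/2}e^{-isH_0}\|_{2\to2}=1$. So the paper's route does not supply the missing pointwise estimate either.
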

\begin{proof}
Application of Lemma \ref{Lemma4.7}, the endpoint Strichartz estimates for the bi-Laplacian free Schr\"odinger propagator in \cite[Theorem 1.4]{MY202410} and H\"older's inequality yields
\begin{equation*}
    \begin{aligned}
       & \int_0^{\infty}\langle s\rangle^{-1-2\epsilon}\|\langle x\rangle^{-4}e^{-isH_0}|p|^{-\frac32}\langle x\rangle^{\frac12+\epsilon}F_{\leq \langle s\rangle}(|x|)\|_{2\to 2}ds\\
       \lesssim&\int_0^{\infty} \langle s\rangle^{-1-2\epsilon}\|\langle x\rangle^{-4}|p|^{-\frac32} \langle x\rangle^{\frac{5}{2}}\|_{2\to 2}\|\langle x\rangle^{-\frac{5}{2}}e^{-isH_0}\|_{2\to 2}\langle s \rangle^{\frac12+\epsilon} ds\\
       \lesssim&\left(\int_0^{\infty} \langle s\rangle^{-1-2\epsilon}ds\right)^{1/2}\|e^{-isH_0}\|_{L^2_x\to L^2_sL_x^{\frac{2n}{n-4}}}\\
       \lesssim&_{n,\epsilon}1,
    \end{aligned}
\end{equation*}
for $n\geq 9$ and $\epsilon\in(0,1/4)$. Next, we consider the term
\begin{equation*}
    \begin{aligned}
        Q\coloneqq\int_0^\infty \langle s\rangle^{-1-2\epsilon}\|\langle x\rangle^{-4}e^{-isH_0}|p|^{-\frac32}\langle x\rangle^{\frac12+\epsilon}F_{> \langle s\rangle}(|x|)\|_{2\to 2}ds.
    \end{aligned}
\end{equation*}
To estimate $Q,$  we introduce a smooth partition of unity $\{F_j\}_{j=1}^{j=n}$ such that
\begin{align*}
    |x_j|\geq \frac{|x|}{n},\quad \mbox{for all } x_j\in \supp\{F_j\}.
\end{align*}
This implies that
\begin{align*}
    F_jF_{> \langle s\rangle}(|x|)=F_jF_{> \langle s\rangle}(|x|)F_{>|x|/n}(|x_j|)
\end{align*}
and
\begin{align}\label{Fj1}
    \|F_j\langle x_j\rangle^{-1/2-\epsilon}\langle x\rangle^{1/2+\epsilon}F_{>1}(|x|)\|_{2\to 2}\lesssim_{n}1.
\end{align}
Using the triangle inequality and \eqref{Fj1}, we reduce $Q$ into the following finite sum
\begin{align*}
    Q\lesssim_n\sum_{j=1}^n Q_j,
\end{align*}
where each $Q_j,$ is defined as
\begin{align*}
    Q_j\coloneqq\int_0^\infty \langle s\rangle^{-1-2\epsilon}\|\langle x\rangle^{-4}e^{-isH_0}|p|^{-3/2}\langle x\rangle^{1/2+\epsilon}F_{>\frac{|x|}{n}}(|x_j|)F_{>\langle s\rangle}(|x|)\|_{2\to 2}ds.
\end{align*}
We now use the identity
\begin{align*}
    e^{-iH_0}x_j=(x_j-4sp_j|p|^2)e^{-isH_0}
\end{align*}
to decompose $Q_j$ as
\begin{align*}
    Q_j\leq Q_{j1}+Q_{j2}+Q_{j3},
\end{align*}
where the individual terms are given by
\begin{align*}
    Q_{j1}\coloneqq \int_0^\infty \langle s\rangle^{-1-2\epsilon}\|\langle x\rangle^{-4}x_j|p|^{-\frac32}e^{-isH_0}\frac{\langle x\rangle^{\frac12+\epsilon}}{x_j}F_{>\frac{|x|}{n}}(|x_j|)F_{>\langle s\rangle}(|x|)\|_{2\to 2}ds,
\end{align*}
\begin{align*}
    Q_{j2}\coloneqq 4\int_0^\infty s\langle s\rangle^{-1-2\epsilon}\|\langle x\rangle^{-4}e^{-isH_0}|p|^{-\frac32}p_j|p|^2\frac{\langle x\rangle^{\frac{1}{2}+\epsilon}}{x_j}F_{>\frac{|x|}{n}}(|x_j|)F_{>\langle s\rangle}(|x|)\|_{2\to 2}ds
\end{align*}
and
\begin{align*}
    Q_{j3}\coloneqq \int_0^\infty \langle s\rangle^{-1-2\epsilon}\|\langle x\rangle^{-4}[|p|^{-\frac32},x_j]e^{-isH_0}\frac{\langle x\rangle^{\frac12+\epsilon}}{x_j}F_{>\frac{|x|}{n}}(|x_j|)F_{>\langle s\rangle}(|x|)\|_{2\to 2}ds.
\end{align*}
For $Q_{j1}$, we have
\begin{equation*}
    \begin{aligned}
       Q_{j1}\leq&  \int_0^\infty \langle s\rangle^{-1-2\epsilon}\|\langle x\rangle^{-4}x_j|p|^{-\frac32}\|_{2\to2}\|\frac{\langle x\rangle^{\frac12+\epsilon}}{x_j}F_{>\frac{|x|}{n}}(|x_j|)F_{>\langle s\rangle}(|x|)\|_{2\to 2}ds \\
       \lesssim& \int_0^\infty \langle s\rangle^{-1-2\epsilon}\frac{1}{\langle s\rangle^{\frac12-\epsilon}}ds\lesssim_{n,\epsilon}1.
    \end{aligned}
\end{equation*}
For $Q_{j2}$, using  the fact $\|\frac{p_j}{|p|}\|_{2\to2}\leq 1$, H\"older's inequality and the $L^2$ local smoothing estimate \eqref{localsmooth}, we get
\begin{eqnarray*}
       Q_{j2}&\leq & 4\int_0^\infty s\langle s\rangle^{-1-2\epsilon}\|\langle x\rangle^{-4}e^{-isH_0}|p|^{\frac32}\|_{2\to 2}\|\frac{p_j|p|^2}{|p|^3}\|_{2\to 2}\\
       &&\times\|\frac{\langle x\rangle^{\frac12+\epsilon}}{x_j}F_{>\frac{|x|}{n}}(|x_j|)F_{>\langle s\rangle}(|x|)\|_{2\to 2}ds  \\
      &\lesssim&\int_0^\infty \langle s\rangle^{-\frac12-\epsilon}\|\langle x\rangle^{-4}e^{-isH_0}|p|^{\frac32}\|_{2\to 2}ds\\
       &\lesssim&\left(\int_0^{\infty} \langle s\rangle^{-1-2\epsilon}ds\right)^{\frac12} \left(\int_0^{\infty} \|\langle x\rangle^{-4}|p|^{\frac32}e^{-isH_0}\|^2_{2\to 2} ds\right)^{\frac12}\\
      & \lesssim&_{n,\epsilon}1.
\end{eqnarray*}
For $Q_{j3},$ using the commutator identity $[x_j,|p|^{-\frac32}] =-\frac{3i}{2}\frac{p_j}{|p|^{\frac72}}$ and the uniform bounds $\|\frac{p_j}{|p|}\|_{2\to2}\leq 1$, we establish
\begin{equation*}
    \begin{aligned}
         Q_{j3}\lesssim&
         \int_0^\infty \langle s\rangle^{-1-2\epsilon}\|\langle x\rangle^{-4}e^{-isH_0}p_j|p|^{-\frac72}\frac{\langle x\rangle^{\frac12+\epsilon}}{x_j}F_{>\frac{|x|}{n}}(|x_j|)F_{>\langle s\rangle}(|x|)\|_{2\to 2}ds\\
         \lesssim&\int_0^\infty \langle s\rangle^{-1-2\epsilon}\|\langle x\rangle^{-4}|p|^{-2-\epsilon}\|_{2\to 2}\|\frac{p_j}{|p|}\|_{2\to 2}\||p|^{-\frac12+\epsilon}\langle x\rangle^{-\frac12+\epsilon}\|_{2\to 2}ds\\
         \lesssim&_{n,\epsilon}1.
    \end{aligned}
\end{equation*}
Synthesizing the preceding estimates, we have shown that for each $j$, $Q_j\lesssim_{n,\epsilon}1$ holds and whence $Q\lesssim_{n,\epsilon}1$. Together with the established bound for the spatial region $F_{\leq \langle s\rangle}(|x|)$, we conclude that
 \begin{align*}
         \int_0^{\infty}\langle s\rangle^{-1-2\epsilon}\|\langle x\rangle^{-4}e^{-isH_0}|p|^{-\frac32}\langle x\rangle^{\frac12+\epsilon}\|_{2\to 2}ds\lesssim_{n,\epsilon}1,
     \end{align*}
as desired.\end{proof}

\begin{lemma}\label{Lemma4.10}
 If $\langle x\rangle^{\sigma}V(x,t)\in L^\infty_{x,t}(\mathbb{R}^{n+1})$ for some $\sigma \geq 4,$ then for all $n\geq 9,$ and $\epsilon\in (0,\frac13)$,
 \begin{align*}
         \int_0^{\infty}\langle s\rangle^{-2-3\epsilon}\|\langle x\rangle^{-4}U(t+s,t)|p|^{-\frac32}\langle x \rangle^{\frac12+\epsilon}\|_{2\to2}ds\lesssim_{n,\epsilon}\|\langle x\rangle^{4}V(x,t)\|_{L^{\infty}_{x,t}}+1.
     \end{align*}
\end{lemma}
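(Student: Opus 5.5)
We plan to reduce Lemma \ref{Lemma4.10} to the free estimate of Lemma \ref{Lemma4.9} by a single Duhamel expansion. We use \eqref{D3} in the form
\begin{align*}
U(t+s,t)=e^{-isH_0}+(-i)\int_0^s U(t+s,t+u)V(x,t+u)e^{-iuH_0}\,du .
\end{align*}
This splits the integrand into a free part and a Duhamel part:
\begin{align*}
\|\langle x\rangle^{-4}U(t+s,t)|p|^{-\frac32}\langle x\rangle^{\frac12+\epsilon}\|_{2\to2}\le \|\langle x\rangle^{-4}e^{-isH_0}|p|^{-\frac32}\langle x\rangle^{\frac12+\epsilon}\|_{2\to2}+\int_0^s\|V(x,t+u)e^{-iuH_0}|p|^{-\frac32}\langle x\rangle^{\frac12+\epsilon}\|_{2\to2}\,du .
\end{align*}
In the second term we have dropped $\langle x\rangle^{-4}$ and used the unitarity of $U(t+s,t+u)$.

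For the free part we note that $\langle s\rangle^{-2-3\epsilon}\le\langle s\rangle^{-1-2\epsilon}$. Its contribution, after integrating against $\langle s\rangle^{-2-3\epsilon}$, is therefore bounded by Lemma \ref{Lemma4.9}, which is valid for $n\ge 9$ and $\epsilon<\frac14$. If one wants the full range $\epsilon<\frac13$, apply Lemma \ref{Lemma4.9} with $\epsilon'=\min\{\epsilon,\frac15\}$ and absorb the extra weight $\langle x\rangle^{\epsilon-\epsilon'}$. This absorption is the one delicate point. Since the decay in $s$ is stronger here, we would instead rerun the proof of Lemma \ref{Lemma4.9} with $F_{\le\langle s\rangle}$. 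On that region $\langle x\rangle^{\frac12+\epsilon}\lesssim\langle s\rangle^{\frac12+\epsilon}$, so the loss is compensated by $\langle s\rangle^{-2-3\epsilon}$. The region $|x|>\langle s\rangle$ is handled exactly as for $Q_{j1}$–$Q_{j3}$.

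For the Duhamel part we bound the integrand by
\begin{align*}
\|\langle x\rangle^{4}V\|_{L^\infty_{t,x}}\,\|\langle x\rangle^{-4}e^{-iuH_0}|p|^{-\frac32}\langle x\rangle^{\frac12+\epsilon}\|_{2\to2}.
\end{align*}
Lemma \ref{Lemma4.9} gives only a weighted time-integrated bound for this quantity, not a pointwise one, so we would use Fubini:
\begin{align*}
\int_0^\infty\langle s\rangle^{-2-3\epsilon}\int_0^s g(u)\,du\,ds=\int_0^\infty g(u)\int_u^\infty\langle s\rangle^{-2-3\epsilon}\,ds\,du\lesssim\int_0^\infty\langle u\rangle^{-1-3\epsilon}g(u)\,du .
\end{align*}
Here $g(u)=\|\langle x\rangle^{-4}e^{-iuH_0}|p|^{-\frac32}\langle x\rangle^{\frac12+\epsilon}\|_{2\to2}$. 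Since $\langle u\rangle^{-1-3\epsilon}\le\langle u\rangle^{-1-2\epsilon}$, Lemma \ref{Lemma4.9} again bounds this by a constant. The result is the stated bound $\|\langle x\rangle^4V\|_{L^\infty}+1$.

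The main obstacle is the $\epsilon$-range mismatch between Lemma \ref{Lemma4.9} ($\epsilon<\frac14$) and the present statement ($\epsilon<\frac13$). Apart from that, the argument is just Duhamel plus Fubini. We expect to handle the mismatch by the spatial splitting at $|x|\sim\langle s\rangle$ described above, using the surplus time decay $\langle s\rangle^{-1-\epsilon}$.
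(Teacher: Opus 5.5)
Your reduction is the paper's own argument. You use Duhamel via \eqref{D3}, dominate the free term by Lemma~\ref{Lemma4.9} because $\langle s\rangle^{-2-3\epsilon}\le\langle s\rangle^{-1-2\epsilon}$, and feed the Duhamel term back into Lemma~\ref{Lemma4.9}. The paper inserts $\langle s\rangle^{1+2\epsilon}\langle u\rangle^{-1-2\epsilon}\ge1$ for $u\le s$ where you swap the integrals by Fubini; either is fine. You are also right that the paper silently ignores the mismatch $\epsilon<\frac14$ versus $\epsilon<\frac13$.

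The genuine gap is the input. The integrand in Lemma~\ref{Lemma4.9} is $+\infty$ for every $s>0$, so that lemma, and the statement you are proving, cannot hold as written. Take $V\equiv0$, which the hypothesis allows. The left-hand side becomes $\int_0^\infty\langle s\rangle^{-2-3\epsilon}\|\langle x\rangle^{-4}e^{-isH_0}|p|^{-\frac32}\langle x\rangle^{\frac12+\epsilon}\|_{2\to2}\,ds$, while the right-hand side is $1$. Since $e^{-isH_0}\langle x\rangle^{\frac12+\epsilon}e^{isH_0}=\langle x-4s|p|^2p\rangle^{\frac12+\epsilon}$, this operator equals $\langle x\rangle^{-4}|p|^{-\frac32}\langle x-4s|p|^2p\rangle^{\frac12+\epsilon}e^{-isH_0}$. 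Its symbol at $x=0$, $|\xi|=N$ has size $s^{\frac12+\epsilon}N^{3\epsilon}$.

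Concretely, let $\hat\phi\in C_c^\infty(B(0,1))$ and $h_N=\phi\,e^{iNx_1}$. The adjoint maps $\langle x\rangle^{4}h_N$, whose norm is independent of $N$, to $\langle x\rangle^{\frac12+\epsilon}|p|^{-\frac32}e^{isH_0}h_N$. The function $|p|^{-\frac32}e^{isH_0}h_N$ has norm $\approx N^{-\frac32}$. By non-stationary phase it is concentrated where $|x+4sN^3e_1|\lesssim sN^2$, so the weighted norm is $\gtrsim s^{\frac12+\epsilon}N^{3\epsilon}\to\infty$. The same packets make your Duhamel integrand $\|V(x,t+u)e^{-iuH_0}|p|^{-\frac32}\langle x\rangle^{\frac12+\epsilon}\|_{2\to2}$ infinite too, e.g. whenever $V(\cdot,t+u)$ is continuous and not identically zero. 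So the Fubini step cannot rescue the argument.

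The failure sits in the far region $|x|>\langle s\rangle$, which you propose to handle ``exactly as for $Q_{j1}$--$Q_{j3}$''. In $Q_{j2}$ the paper bounds $\int_0^\infty\langle s\rangle^{-\frac12-\epsilon}\|\langle x\rangle^{-4}e^{-isH_0}|p|^{\frac32}\|_{2\to2}\,ds$ by Cauchy--Schwarz and \eqref{localsmooth}. But \eqref{localsmooth} controls $\int_0^\infty\|\langle x\rangle^{-\frac12-\epsilon}|p|^{\frac32}e^{-isH_0}f\|_{L^2_x}^2\,ds$ for each fixed $f$, not $\int_0^\infty\|\cdot\|_{2\to2}^2\,ds$. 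In fact $\|\langle x\rangle^{-4}e^{-isH_0}|p|^{\frac32}\|_{2\to2}=\|\langle x\rangle^{-4}|p|^{\frac32}\|_{2\to2}=\infty$. Local smoothing only records that a frequency-$N$ packet spends time $\sim N^{-3}$ in a bounded region. That is an averaged-in-time fact with no pointwise-in-$s$ operator-norm counterpart, and it is exactly what the example above exploits.

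The near-region step of Lemma~\ref{Lemma4.9} makes the same conflation, since $\|\langle x\rangle^{-\frac52}e^{-isH_0}\|_{2\to2}=1$ for all $s$. Your own near-region bound $\lesssim\langle s\rangle^{\frac12+\epsilon}$ (via Lemma~\ref{Lemma4.7}), integrated against $\langle s\rangle^{-2-3\epsilon}$, is legitimate for the free term. The $\epsilon$-range issue you flagged is secondary.

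A correct version must keep the operators applied to a fixed $f$ and use \eqref{localsmooth} in its $L^2_s$ (Kato smoothing/duality) form, not integrate operator norms in $s$. That means reformulating Lemma~\ref{Lemma4.10} itself and its use in Proposition~\ref{Prop4.11}.
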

\begin{proof}
Let
\begin{align*}
    L\coloneqq\int_0^{\infty}\langle s\rangle^{-2-3\epsilon}\|\langle x\rangle^{-4}U(t+s,t)|p|^{-\frac32}\langle x \rangle^{\frac12+\epsilon}\|_{2\to2}ds.
\end{align*}
By using the Duhamel formula \eqref{D3}, together with Lemma \ref{Lemma4.9}, we have
    \begin{equation*}
        \begin{aligned}
        &\| (U(t+s,t)-e^{-isH_0})|p|^{-3/2}\langle x \rangle^{1/2+\epsilon}\|_{2\to2}\\\leq&  \int_0^s  \|U(t+s,t+u)V(x,t+u)e^{-iuH_0}|p|^{-3/2}\langle x \rangle^{1/2+\epsilon}\| _{2\to2}du\\
        \leq&\|\langle x \rangle^{4} V(x,t)\|_{L^\infty_{t,x}}\langle s\rangle^{1+2\epsilon}\int_0^s\langle u\rangle^{-1-2\epsilon}\|\langle x\rangle^{-4}e^{-iuH_0}|p|^{-3/2}\langle x \rangle^{1/2+\epsilon}\| _{2\to2}du\\
        \lesssim&_{n,\epsilon} \|\langle x \rangle^{4} V(x,t)\|_{L^\infty_{t,x}}\langle s\rangle^{1+2\epsilon}.
        \end{aligned}
    \end{equation*}
    Hence, we conclude that
\begin{equation*}
    \begin{aligned}
  \|L\|_{2\to2}\lesssim &\int_0^{\infty}\langle s\rangle^{-2-3\epsilon}\|\langle x\rangle^{-4}e^{-isH_0}|p|^{-\frac32}\langle x \rangle^{\frac12+\epsilon}\|_{2\to2}ds \\
  &+\int_0^{\infty}\langle s\rangle^{-2-3\epsilon}\|\langle x\rangle^{-4}(U(t+s,t)-e^{-isH_0})|p|^{-\frac32}\langle x \rangle^{\frac12+\epsilon}\|_{2\to2}ds. \\
  \lesssim&_{n,\epsilon}\int_0^{\infty}\langle s\rangle^{-1-2\epsilon}\|\langle x\rangle^{-4}e^{-isH_0}|p|^{-\frac32}\langle x \rangle^{\frac12+\epsilon}\|_{2\to2}ds\\
  &+\int_0^\infty\langle s\rangle^{-1-\epsilon}ds\|\langle x \rangle^{4} V(x,t)\|_{L^\infty_{t,x}} \\
  \lesssim&_{n,\epsilon} 1+\|\langle x \rangle^{4} V(x,t)\|_{L^\infty_{t,x}}.
    \end{aligned}
\end{equation*}
\end{proof}

\begin{proposition}\label{Prop4.11}
    Let $n\geq9$ and $\frac{n}{2}>\alpha >\frac72$  be two fixed numbers.  If  $ \langle x\rangle^{\tfrac{17}{2}+\alpha}V(x,t)\in L^{\infty}_{t,x}(\mathbb{R}^{n+1})$, then for all  $\epsilon\in(0,\min\{-\tfrac78+\frac{\alpha}{4},\tfrac12\})$,
    \begin{align}\label{C_pm1}
        \left\|\frac{|p|^\alpha}{\langle p\rangle^{\alpha}} C^{\pm}(t) |p|^{-\frac32}\langle x \rangle^{\frac12+\epsilon}\right\|_{2\to2}\lesssim_{n,\epsilon} \max_{j=1,2}\|\langle x\rangle^{\alpha +\frac{19}{2}}V(x,t)\|^j_{L^{\infty}_{x,t}}.
    \end{align}
\end{proposition}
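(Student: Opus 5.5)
Starting from the representation \eqref{A11},
\begin{align*}
\frac{|p|^\alpha}{\langle p\rangle^{\alpha}} C^{\pm}(t) |p|^{-\frac32}\langle x \rangle^{\frac12+\epsilon}
= i\int_0^\infty \frac{|p|^\alpha}{\langle p\rangle^{\alpha}}P^\pm e^{\pm isH_0}V(x,t\pm s)U(t\pm s,t)|p|^{-\frac32}\langle x\rangle^{\frac12+\epsilon}\,ds,
\end{align*}
we bound the integrand in operator norm and show the resulting function of $s$ is integrable on $(0,\infty)$. We split the potential as $V=\langle x\rangle^{-(\alpha+\frac{11}{2})}\cdot\langle x\rangle^{\alpha+\frac{19}{2}}V\cdot\langle x\rangle^{-4}$, so that
\begin{align*}
\|\text{integrand}\|_{2\to2}\le \Big\|\frac{|p|^\alpha}{\langle p\rangle^{\alpha}}P^\pm e^{\pm isH_0}\langle x\rangle^{-(\alpha+\frac{11}{2})}\Big\|_{2\to2}\,\|\langle x\rangle^{\alpha+\frac{19}{2}}V\|_{L^\infty_{t,x}}\,\|\langle x\rangle^{-4}U(t\pm s,t)|p|^{-\frac32}\langle x\rangle^{\frac12+\epsilon}\|_{2\to2}.
\end{align*}

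The first factor is controlled by the near threshold estimate \eqref{NTE} with $\delta=\alpha+\frac{11}{2}$. This gives decay $\langle s\rangle^{-\gamma}$ with $\gamma=\min\{\frac{\alpha}{4}+\frac{11}{8}+\epsilon',(\frac14-\epsilon')(\alpha+\min\{\frac n2,\alpha+\frac{11}{2}\})\}$. Since $\alpha>\frac72$, taking $\epsilon'$ small makes $\gamma$ strictly larger than $2+3\epsilon$. The restriction $\epsilon<-\frac78+\frac{\alpha}{4}$ is exactly what is needed here, because $\frac{\alpha}{4}+\frac{11}{8}>2+3\epsilon$ is roughly this condition, up to the $\epsilon'$ losses, which one tunes.

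The third factor is where Lemma \ref{Lemma4.10} enters, but only in an averaged form. So we do not bound it pointwise in $s$. Instead we write $\langle s\rangle^{-\gamma}\le\langle s\rangle^{-2-3\epsilon}$ and integrate. Lemma \ref{Lemma4.10} then bounds the whole $s$-integral by $\|\langle x\rangle^4V\|+1$. Multiplying by $\|\langle x\rangle^{\alpha+\frac{19}{2}}V\|$ gives a bound of the form $\|\cdot\|^2+\|\cdot\|$, which is the claimed $\max_{j=1,2}$ after using $\|\langle x\rangle^4V\|\le\|\langle x\rangle^{\alpha+\frac{19}{2}}V\|$. For negative times ($C^-$), the same argument applies with $U(t-s,t)$. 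Lemma \ref{Lemma4.10} extends to $s<0$ by the symmetric Duhamel argument, and Lemma \ref{Lemma4.9} holds for $e^{+isH_0}$ by complex conjugation.

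The main obstacle is matching exponents. The decay rate from \eqref{NTE} must beat $2+3\epsilon$ uniformly, and the weight $\alpha+\frac{11}{2}$ is capped in the second branch of the min by $\frac n2$. If $\frac n2<\alpha+\frac{11}{2}$, the second branch is $(\frac14-\epsilon')(\alpha+\frac n2)$. Since $\alpha>\frac72$ and $n\ge9$, this is greater than $(\frac14-\epsilon')\cdot 8=2-8\epsilon'$. That is only barely larger than 2, so some care is needed: one may have to tighten $\epsilon$ relative to $\alpha-\frac72$ and $n$. If this fails, the fallback is to move part of the spatial weight, say $\langle x\rangle^{-1}$, from the first factor into the third. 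In that case Lemma \ref{Lemma4.10} must be rerun with $\langle x\rangle^{-5}$ instead of $\langle x\rangle^{-4}$; this is harmless because larger decay only helps in Lemmas \ref{Lemma4.7} and \ref{Lemma4.9}.
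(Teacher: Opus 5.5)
Your argument is the paper's proof. Both use the same three-factor splitting of the integrand in \eqref{A11}, apply \eqref{NTE} to the free outgoing factor, dominate its decay by $\langle s\rangle^{-2-3\epsilon}$, and then invoke Lemma \ref{Lemma4.10}. The only difference is that the paper puts $\langle x\rangle^{-(\frac92+\alpha)}$ on the first factor (rate $\frac98+\frac\alpha4-\epsilon_1$) where you put $\langle x\rangle^{-(\frac{11}{2}+\alpha)}$; for $n=9$ this gives the same effective rate.

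One correction, which applies equally to the paper's argument: the decay $\gamma$ does not beat $2+3\epsilon$ on the whole stated range of $\epsilon$, and the statement that $\epsilon<\frac\alpha4-\frac78$ is ``exactly what is needed'' is off by a factor of $3$.
\begin{itemize}
\item For $n=9$ the binding branch of \eqref{NTE} is $(\frac14-\epsilon')(\alpha+\frac n2)$. You therefore need $3\epsilon<\frac\alpha4-\frac78$ (up to $\epsilon'$), not $\epsilon<\frac\alpha4-\frac78$.
\item Concretely, $\alpha=4$, $\epsilon=0.1$ is admissible in the statement, but it gives $\gamma\approx 2.125<2.3$.
\item Lemma \ref{Lemma4.10} is also only stated for $\epsilon<\frac13$.
\end{itemize}
The paper's proof implicitly makes the same restriction by fixing $\epsilon=\frac13(-\frac78+\frac\alpha4-\epsilon_1)$. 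Smaller $\epsilon$ then follow, because $\langle x\rangle^{\epsilon-\epsilon''}$ is bounded for $\epsilon<\epsilon''$. So ``tightening $\epsilon$'' is the right fix.

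Your fallback of moving $\langle x\rangle^{-1}$ into the third factor does not help, for two reasons:
\begin{itemize}
\item It cannot raise the \eqref{NTE} rate; it lowers the first branch and leaves the $\frac n2$-capped branch unchanged.
\item The time weight $\langle s\rangle^{-2-3\epsilon}$ in Lemma \ref{Lemma4.10} comes from the Duhamel growth $\langle s\rangle^{1+2\epsilon}$ (i.e.\ from the time weight in Lemma \ref{Lemma4.9}), not from the spatial weight.
\end{itemize}
Covering the full stated range would require a sharpened Lemma \ref{Lemma4.10} with time weight $\langle s\rangle^{-2-\epsilon-\eta}$, $\eta>0$.
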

\begin{proof}
We only estimate $$\left\|\frac{|p|^\alpha}{\langle p\rangle^{\alpha}} C^{+}(t) |p|^{-\frac32}\langle x \rangle^{\frac12+\epsilon}\right\|_{2\to2}$$ and  note that the term related to $ C^{-}(t)$ can be treated analogously.  By applying estimate \eqref{NTE}, we obtain the following bound, for all $\epsilon_1>0$ satisfying   $\frac{9}{8}+\frac{\alpha}{4}-\epsilon_1>2$:
\begin{equation*}
    \begin{aligned}
        &\left\|\frac{|p|^\alpha}{\langle p\rangle^{\alpha}} C^{+}(t) |p|^{-\frac32}\langle x \rangle^{\frac12+\epsilon}\right\|_{2\to2}\\
        \leq&\int_0^\infty\left\|\frac{|p|^\alpha}{\langle p\rangle^{\alpha}} P^{+} e^{isH_0} \langle x\rangle^{-(\frac{9}{2}+\alpha)}
 \right\|_{2\to2}\\
 &\times\|\langle x\rangle^{\frac{17}{2}+\alpha}V(x,u)\|_{L^\infty_{t,x}}\|\langle x\rangle^{-4}U(t+s,t)|p|^{-\frac32}\langle x \rangle^{\frac12+\epsilon} \|_{2\to2} ds\\
 \lesssim&\int_0^\infty \frac{1}{\langle s \rangle^{\frac{9}{8}+\frac{\alpha}{4}-\epsilon_1}}\|\langle x\rangle^{-4}U(t+s,t)|p|^{-\frac32}\langle x \rangle^{\frac12+\epsilon} \|_{2\to2}ds\|\langle x\rangle^{\frac{19}{2}+\alpha}V(x,u)\|_{L^\infty_{t,x}}.
    \end{aligned}
\end{equation*}
Combining this with Lemma \ref{Lemma4.10} (by choosing $\epsilon=\frac{1}{3}(-\frac78+\frac{\alpha}{4}-\epsilon_1)>0$), we obtain the desired estimate  \eqref{C_pm1}.\end{proof}

\subsection{Weight Absorption Property of \texorpdfstring{$P_b(t)$}{(P b(t))}}
The eigenfunctions corresponding to bound states exhibit sufficient spatial decay. So that, after weighting by $\langle x\rangle^{\delta}$, they remain in $L^2_x$
with uniformly bounded norm.

\begin{proposition}\label{Prop4.12}
    If Assumptions \eqref{asp:1} and \eqref{asp:2} are satisfied, then for all $\delta\in [0,\min\{\frac{n}{2}-4,4\}]$, $n\geq 9$,
    \begin{align}\label{P_b1}
        \sup_{t\in\mathbb{R}}\|P_b(t)\langle x\rangle^{\delta}\|_{2\to2}\lesssim_{\delta,n}1.
    \end{align}
\end{proposition}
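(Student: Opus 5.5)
We want $\sup_t\|P_b(t)\langle x\rangle^{\delta}\|_{2\to2}\lesssim 1$. Since $P_b(t)$ is an orthogonal projection, this is equivalent to $\sup_t\|\langle x\rangle^{\delta}P_b(t)\|_{2\to2}\lesssim1$ (after taking adjoints, on a dense set). By Lemma \ref{pbfin}, $\operatorname{Ran}P_b(t)$ is finite dimensional, so the real content is a uniform-in-$t$ weighted bound for unit vectors $f\in\operatorname{Ran}P_b(t)$. The plan is to exploit that such $f$ are ``pure error terms'' in the incoming/outgoing decomposition. Indeed, since $\Omega_+^*(t)P_b(t)=0$, the computation \eqref{App1} gives, with $t$ replaced by a base time $t$ and evolution length $s$ (and similarly backward using $\Omega_-^*$),
\begin{align*}
f=P^+(1-\Omega_+^*(t))f+P^-(1-\Omega_-^*(t))f=C(t)f,\qquad f\in \operatorname{Ran}P_b(t).
\end{align*}
This holds because $\Omega^*_\pm(t)P_b(t)=0$; for $\Omega_-^*$ I will use the analogous identity, or otherwise replace the $P^-$ piece by $P^-(1-e^{isH_0}U(t-s,t))$ as in Corollary \ref{comp2} and let $s\to\infty$. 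So $P_b(t)=C(t)P_b(t)$, hence $\langle x\rangle^\delta P_b(t)=\langle x\rangle^\delta C(t)P_b(t)$, and it suffices to show $\sup_t\|\langle x\rangle^{\delta}C^\pm(t)\,g\|\lesssim \|g\|$ for $g$ in a range where an a priori bound holds.

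First step: estimate $\langle x\rangle^{\delta}C^\pm(t)$ via \eqref{A11},
\begin{align*}
\langle x\rangle^{\delta}C^\pm(t)=i\int_0^\infty \langle x\rangle^{\delta}P^\pm e^{\pm isH_0}\langle x\rangle^{-\sigma}\,\langle x\rangle^{\sigma}V(x,t\pm s)\,U(t\pm s,t)\,ds.
\end{align*}
One commutes $\langle x\rangle^\delta$ through $P^\pm$ using Lemma \ref{BoundP-Lem} (for $\delta<n/2$), and then needs weighted decay of $\langle x\rangle^{\delta}P^\pm e^{\pm isH_0}\langle x\rangle^{-\sigma}$. For high frequencies this follows from \eqref{HEE}/\eqref{PSE}, using $x(s)=x\pm4s|p|^2p$ and the outgoing geometry so that $|x|\lesssim \langle y\rangle$ on the relevant region. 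The low-frequency part is the problem: for $|p|$ small, the flow moves slowly and the weight grows like $s^{\delta/4}$, so the integral need not converge uniformly.

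Second step, the main obstacle: handle the low-frequency part. Here I would use \eqref{PLSE}, which is tailored exactly to $\langle x\rangle^{-(4-\delta)}P^\pm e^{\pm isH_0}p_j^l\langle x\rangle^{-(n/2+4+l-\delta)}$ with decay $\langle s\rangle^{-(n/8+(4+l-\delta)/4-\epsilon)}$. This decay is integrable since $n\ge9$, which explains the restrictions $\delta\le4$ and $\delta\le n/2-4$. To reach that form, I will write $\langle x\rangle^\delta=\langle x\rangle^{4}\cdot\langle x\rangle^{-(4-\delta)}$ and absorb the extra $\langle x\rangle^4$ by applying the identity $f=C(t)f$ twice, i.e.\ $P_b=C P_b$ iterated. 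Alternatively, I will use an elliptic-type step: $f\in\operatorname{Ran}P_b(t)$ gives $\langle x\rangle^{4}f$ controlled through $x_j e^{\pm isH_0}=e^{\pm isH_0}(x_j\mp4sp_j|p|^2)$. This produces the factors $p_j^l$, $l\le12$, with extra weights $\langle x\rangle^{-l}$ that are paid for by the decay of $V$. The condition $\sigma>\max\{12+n/2,20\}$ in Assumption \ref{asp:1} should match the largest weight $n/2+4+l-\delta$ with $l$ up to 12 plus the $\langle x\rangle^{4}$ lost. I expect this commutator bookkeeping, keeping every term's time integral convergent, to be the hardest part.

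Third step: uniformity in $t$. All constants depend only on $\|\langle x\rangle^\sigma V\|_{L^\infty_{t,x}}$ and on $\|U\|=1$, so the bound is uniform in $t$. Alternatively, one can use quasi-periodicity (Lemma \ref{Lemma2.9}), the strong continuity of Proposition \ref{pbcon}, and constancy of rank from Lemma \ref{pbfin}, together with compactness of the torus, to pass from pointwise finiteness to a uniform bound.
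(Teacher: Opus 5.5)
Your ingredients match the paper's, read in adjoint form. The paper bounds $P_b(t)x_j^4\langle x\rangle^{-(4-\delta)}$ by inserting $1=P^++P^-$ between $x_j^4$ and $\langle x\rangle^{-(4-\delta)}$, replacing $P_b(t)$ by $P_b(t)(1-\Omega_\pm(t))$, Duhamel-expanding, commuting $x_1^4$ through the free flow, and applying \eqref{PLSE}. However, your plan does not go through as written, because you route everything through $P_b(t)=C(t)P_b(t)$.

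\textbf{Where the assembly fails.} The identity $P_b=C(t)P_b$ puts the \emph{whole} weight to the left of the propagation cutoff. You therefore need decay of $\langle x\rangle^{\delta}P^\pm e^{\pm isH_0}\langle x\rangle^{-\sigma}$, and nothing in the paper controls a positive weight outside $P^\pm$.
\begin{itemize}
\item Lemma \ref{BoundP-Lem} cannot move the weight past $P^\pm$. It is a norm inequality, so using it just deletes $P^\pm$. Then $\|\langle x\rangle^{\delta}e^{\pm isH_0}\langle x\rangle^{-\sigma}\|_{2\to2}$ grows like $\langle s\rangle^{\delta}$, since unit frequencies travel a distance $\sim s$, and the $s$-integral diverges.
\item \eqref{PLSE} tolerates only the decaying factor $\langle x\rangle^{-(4-\delta)}$ to the left of $P^\pm$. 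The growing factor $x_j^4$ must sit to the right of $P^\pm$, adjacent to $e^{\pm isH_0}$, where it can be traded for $s^kp^{3k}$.
\item Starting from $\langle x\rangle^{-(4-\delta)}x_j^4P^\pm(1-\Omega^*_\pm(t))$, you would first have to commute $x_j^4$ through $P^\pm$. That produces symbols homogeneous of negative degree in $p$, singular at $p=0$, which your plan does not address.
\item Applying $f=C(t)f$ twice absorbs no weight at all.
\end{itemize}

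\textbf{The fix.} Drop $C(t)$ and split the weight first. Up to a bounded multiplier, $\langle x\rangle^{\delta}P_b(t)$ reduces to the operators $\langle x\rangle^{-(4-\delta)}x_j^4P_b(t)=\sum_\pm\langle x\rangle^{-(4-\delta)}P^\pm x_j^4(1-\Omega^*_\pm(t))P_b(t)$. Expand $1-\Omega^*_\pm(t)$ by Duhamel as in \eqref{A11} and move $x_j^4$ through $e^{\pm isH_0}$. This yields, up to lower-order terms, $s^k\langle x\rangle^{-(4-\delta)}P^\pm e^{\pm isH_0}p_j^k|p|^{2k}x_j^{4-k}V(x,t\pm s)U(t\pm s,t)$, which has the shape of \eqref{PLSE} with $l=3k$. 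This is exactly the adjoint of the paper's formula.

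\textbf{Integrability and uniformity.} Integrability is not automatic from $n\ge 9$ once you track the prefactors $s^k$. The $k=4$ term gives $\int^\infty s^4\langle s\rangle^{-(\frac n2+16-\delta)/4+\epsilon}\,ds$, which forces $\delta<\frac n2-4$ and a weight of order $\frac n2+16-\delta$ on $V$. The restriction $\delta\le 4$ comes from needing $4-\delta\ge 0$. Your compactness alternative for uniformity in $t$ also fails. Strong continuity of $\tilde P_b(\mathbf s)$ only makes $\mathbf s\mapsto\|\langle x\rangle^{\delta}\tilde P_b(\mathbf s)\|$ lower semicontinuous, which gives no bound. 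Uniformity has to come, as in the paper, from constants depending only on $\|\langle x\rangle^{\sigma}V\|_{L^\infty_{t,x}}$.

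\textbf{On $\Omega_-^*(t)P_b(t)=0$.} Your worry is fair, and the paper's argument needs the same fact, $P_b(t)\Omega_-(t)=0$, for one of its two cutoffs. It does hold:
\begin{itemize}
\item By Lemma \ref{pbfin} and Proposition \ref{pbcon}, $\mathbf s\mapsto\tilde P_b(\mathbf s)$ is a strongly continuous family of projections of constant finite rank on a compact torus, hence norm continuous.
\item So for a unit vector $f\in\operatorname{Ran}P_b(t)$, the orbit $\{U(t+v,t)f\}_{v\in\mathbb R}$ lies in the set of unit vectors of $\bigcup_{\mathbf s}\operatorname{Ran}\tilde P_b(\mathbf s)$, which is relatively compact.
\item The weak convergence $e^{-ivH_0}\phi\rightharpoonup 0$ is uniform on compact sets, so $\langle f,\Omega_-(t)\phi\rangle=\lim_{v\to-\infty}\langle U(t+v,t)f,e^{-ivH_0}\phi\rangle=0$.
\end{itemize}
Your finite-time substitute needs exactly this compactness to dispose of its leftover term, so it is not an independent route.
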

\begin{proof}
    It suffices to fix some $\delta\in [\max\{0,\frac{n}{2}+16-\sigma\},\min\{\frac{n}{2}-4,4\}]$, where the set is nonempty under Assumption \eqref{asp:1} and the restraint $n\geq 9$. To show \eqref{P_b1} is equal to show for $j=1,\cdots,n$
    \begin{align}\label{x4}
        \sup_{t\in\mathbb{R}}\|P_b(t)\frac{x_j^4}{\langle x\rangle^{4-\delta}}\|_{2\to2}\lesssim_{\delta,n}1
    \end{align}
    and
    \begin{align}\label{x2}
        \sup_{t\in\mathbb{R}}\|P_b(t)\frac{x_j^2}{\langle x\rangle^{4-\delta}}\|_{2\to2}\lesssim_{\delta,n}1.
    \end{align}
We estimate the case when $j=1$ and the case $j=2,\cdots,n$ can be treated in same way. We then decompose $P_b(t)\frac{x_1^4}{\langle x\rangle^{4-\delta}}$  into two parts via the incoming/outgoing wave decompositions
\begin{equation*}
    \begin{aligned}
        P_b(t)\frac{x_1^4}{\langle x\rangle^{4-\delta}}=P_b(t)x_1^4P^+\frac{1}{\langle x\rangle^{4-\delta}}+P_b(t)x_1^4P^-\frac{1}{\langle x\rangle^{4-\delta}}.
    \end{aligned}
\end{equation*}
Notice that $P_b(t)\Omega_{+}(t)=0$, by Duhamel principle \eqref{OmeD}, the operator $P_b(t)x_1^4P^-\frac{1}{\langle x\rangle^{4-\delta}}$ can be written as
\begin{equation*}
    \begin{aligned}
        P_b(t)x_1^4P^-\frac{1}{\langle x\rangle^{4-\delta}}&=P_b(t)(1-\Omega_{+}(t))x_1^4P^-\frac{1}{\langle x\rangle^{4-\delta}}\\
        &=(-i)\int_0^\infty P_b(t)U(t,t+s)V(x,t+s)e^{-isH_0}x_1^4 P^-\frac{1}{\langle x\rangle^{4-\delta}}ds.
    \end{aligned}
\end{equation*}
In view of
\begin{equation}\label{decompofx4}
    \begin{aligned}
        e^{-isH_0}x_1^4 =&(x_1^4+s(-16x_1^3p_1|p|^2+R_1(x,p))+s^2(96x_1^2p_1^2|p|^4+R_2(x,p)) \\
        &+s^3(-256x_1p_1^3|p|^6+R_3(x,p))+256s^4p_1^4|p|^8)e^{-isH_0}, \\
    \end{aligned}
\end{equation}where, respectively, each $R_j(x)$ is the error term with the lower order relative to the main term, we obtain
    \begin{align}
         P_b(t)x_1^4P^-\frac{1}{\langle x\rangle^{3-\delta}}=&(-i)\int_0^\infty P_b(t)U(t,t+s)V(x,t+s)x_1^4e^{-isH_0} P^-\frac{1}{\langle x\rangle^{4-\delta}}ds\nonumber\\
      &+16i\int_0^\infty sP_b(t)U(t,t+s)V(x,t+s)x_1^3p_1|p|^2e^{-isH_0} P^-\frac{1}{\langle x\rangle^{4-\delta}}ds\nonumber\\
         &-96i\int_0^\infty s^2P_b(t)U(t,t+s)V(x,t+s)x_1^2 p_1^2|p|^4e^{-isH_0} P^-\frac{1}{\langle x\rangle^{4-\delta}}ds\nonumber\\
         &+256i\int_0^\infty s^3 P_b(t)U(t,t+s)V(x,t+s)  x_1p_1^3|p|^6e^{-isH_0} P^-\frac{1}{\langle x\rangle^{4-\delta}}ds\\
         &-256i\int_0^\infty s^4 P_b(t)U(t,t+s)V(x,t+s)  p_1^4|p|^8e^{-isH_0} P^-\frac{1}{\langle x\rangle^{4-\delta}}ds\nonumber\\
         &-i\sum_{j=1}^3\int_0^\infty  s^jP_b(t)U(t,t+s)V(x,t+s)  R_j(x,p)e^{-isH_0} P^-\frac{1}{\langle x\rangle^{4-\delta}}ds\nonumber\\
           \coloneqq&\sum_{k=1}^{6} I_k(t,x)\nonumber.
    \end{align}
For $I_1$, we have
\begin{equation}\label{estI1}
    \begin{aligned}
        \|I_1(t,x)\|_{2\to2}\lesssim&\int_0^\infty \|\langle x\rangle^9 V(x,t)\|_{L^\infty_{t,x}}\|\langle x\rangle^{-5}e^{-isH_0}P^-\|_{2\to 2}ds\\
        \lesssim& \int_0^\infty \frac{1}{\langle s\rangle^{\frac{9}{8}-\epsilon}}ds\|\langle x\rangle^9 V(x,t)\|_{L^\infty_{t,x}}.
    \end{aligned}
\end{equation}
 For $I_2(t,x)$, applying the fact $\|P_b(t)\|_{2\to2}\leq 1$,  \eqref{TSE}, \eqref{PLSE} and the unitarity of $U(t,t+s)$ yields
\begin{equation}\label{estI2}
    \begin{aligned}
        \|I_2(t,x)\|_{2\to2}\lesssim&\int_0^\infty s\|\langle x\rangle^{\frac{n}{2}+4+6-\delta} V(x,t)\|_{L^\infty_{t,x}}\|\langle x\rangle^{-(\frac{n}{2}+4+6-\delta)}x_1^3p_1|p|^2e^{-isH_0}P^-\frac{1}{\langle x\rangle^{4-\delta}}\|_{2\to 2}ds\\
        \lesssim& \int_0^\infty \frac{s}{\langle s\rangle^{\frac{\frac{n}{2}+4+3-\delta}{4}-\epsilon}} ds \lesssim_{\delta,n}1.
    \end{aligned}
\end{equation}
Thus, for $\delta\in [\max\{0,\frac{n}{2}+16-\sigma\},\min\{\frac{n}{2}-4,4\})$, we can similarly get the following estimates for $I_3(t,x)$, $I_4(t,x)$ and $I_5(t,x)$
\begin{equation}\label{estI3}
    \begin{aligned}
        \|I_3(t,x)\|_{2\to2}\lesssim&\int_0^\infty s^2\|\langle x\rangle^{\frac{n}{2}+4+8-\delta} V(x,t)\|_{L^\infty_{t,x}}\|\langle x\rangle^{-(\frac{n}{2}+4+8-\delta)}x_1^2p_1^2|p|^4e^{-isH_0}P^-\frac{1}{\langle x\rangle^{4-\delta}}\|_{2\to 2}ds \\
        \lesssim&\int_0^\infty \frac{s^2}{\langle s\rangle^{\frac{\frac{n}{2}+4+6-\delta}{4}-\epsilon}} ds\lesssim\int_0^\infty \frac{1}{\langle s\rangle^{\frac{\frac{n}{2}+2-\delta}{4}-\epsilon}} ds\lesssim1,
    \end{aligned}
\end{equation}
\begin{equation}\label{estI4}
    \begin{aligned}
        \|I_4(t,x)\|_{2\to2}\lesssim&\int_0^\infty s^3\|\langle x\rangle^{\frac{n}{2}+4+10-\delta} V(x,t)\|_{L^\infty_{t,x}}\|\langle x\rangle^{-(\frac{n}{2}+4+10-\delta)}x_1p_1^3|p|^6e^{-isH_0}P^-\frac{1}{\langle x\rangle^{4-\delta}}\|_{2\to 2}ds \\
        \lesssim&\int_0^\infty \frac{s^3}{\langle s\rangle^{\frac{\frac{n}{2}+4+9-\delta}{4}-\epsilon}} ds\lesssim\int_0^\infty \frac{1}{\langle s\rangle^{\frac{\frac{n}{2}+1-\delta}{4}-\epsilon}} ds\lesssim1
    \end{aligned}
\end{equation}
and
\begin{equation}\label{estI5}
    \begin{aligned}
        \|I_5(t,x)\|_{2\to2}\lesssim&\int_0^\infty s^4\|\langle x\rangle^{\frac{n}{2}+4+12-\delta} V(x,t)\|_{L^\infty_{t,x}}\|\langle x\rangle^{-(\frac{n}{2}+4+12-\delta)}p_1^4|p|^8e^{-isH_0}P^-\frac{1}{\langle x\rangle^{4-\delta}}\|_{2\to 2}ds \\
        \lesssim&\int_0^\infty \frac{s^4}{\langle s\rangle^{\frac{\frac{n}{2}+4+12-\delta}{4}-\epsilon}} ds\lesssim\int_0^\infty \frac{1}{\langle s\rangle^{\frac{\frac{n}{2}-\delta}{4}-\epsilon}} ds\lesssim1.
    \end{aligned}
\end{equation}

Combining \eqref{estI1}-\eqref{estI5}, we get
\begin{align}\label{P_b+}
    \sup_{t\in \mathbb{R}}\| P_b(t)x_1^4P^-\frac{1}{\langle x\rangle^{4-\delta}}\|_{2\to2}\lesssim_{\delta,n}\|\langle x\rangle^{\frac{n}{2}+16-\delta} V(x,t)\|_{L^\infty_{t,x}}.
\end{align}
Hence, similarly, we have
\begin{align}\label{P_b-}
    \sup_{t\in \mathbb{R}}\| P_b(t)x_1^4P^+\frac{1}{\langle x\rangle^{4-\delta}}\|_{2\to2}\lesssim_{\delta,n}\|\langle x\rangle^{\frac{n}{2}+16-\delta} V(x,t)\|_{L^\infty_{t,x}}.
\end{align}
Combining \eqref{P_b+} with \eqref{P_b-} yields
\begin{align*}
    \sup_{t\in \mathbb{R}}\| P_b(t)\frac{x_1^4 }{\langle x\rangle^{4-\delta}}\|_{2\to2}\lesssim_{\delta,n}\|\langle x\rangle^{\frac{n}{2}+16-\delta} V(x,t)\|_{L^\infty_{t,x}}.
\end{align*}
By the same way, we have
\begin{align*}
     \sup_{t\in \mathbb{R}}\| P_b(t)\frac{x_j^4 }{\langle x\rangle^{4-\delta}}\|_{2\to2}\lesssim_{\delta,n}\|\langle x\rangle^{\frac{n}{2}+16-\delta} V(x,t)\|_{L^\infty_{t,x}},\quad j=2,\cdots,n,
\end{align*} and \eqref{x4} holds. And for \eqref{x2}, similarly, we have \begin{equation}\label{decompx2}
    \begin{aligned}
        e^{-isH_0}x_1^2=\left(x_1^2-8sx_1p_1|p|^2+16s^2p_1^2|p|^4+4is(|p|^2+2p_1^2)\right)e^{-isH_0}.
    \end{aligned}
\end{equation}Repeating the above argument and using \eqref{decompx2} instead of \eqref{decompofx4}, we can get the same conclusion for \eqref{x2}.
Therefore, we conclude \eqref{P_b1}.
\end{proof}

\section{Proof of Theorems \ref{Thm1.1} and \ref{Thm1.3}}

\subsection{Proof of Theorems \ref{Thm1.1} and \ref{Thm1.3}}
Synthesizing the preceding lemmas and propositions, we establish the following proposition.
\begin{proposition}\label{Prop4.16}
Under Assumptions \ref{asp:1} and \ref{asp:2}, we have
    \begin{align}
        \int_0^\infty \|\langle x\rangle^{-\eta}(1-C_r(t))^{-1}P^{+}e^{-itH_0}\Omega_{+}^{*}f\|^2_{L_x^2}dt\lesssim_{n,\eta}\|f\|_{L_x^2}^2
    \end{align}
    and
      \begin{align}
        \int_0^\infty \|\langle x\rangle^{-\eta}(1-C_r(t))^{-1}P^{-}e^{-itH_0}\Omega_{-}^{*}f\|_{L_x^2}^2dt\lesssim_{n,\eta}\|f\|_{L_x^2}^2
    \end{align}
    for all $\eta>\frac52$, $n\geq 14$ and $f\in L^2_x(\mathbb{R}^n)$.
\end{proposition}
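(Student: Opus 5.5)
The plan is to expand $(1-C_r(t))^{-1}$ as a Neumann series, since $\sup_t\|C_r(t)\|_{2\to2}<\tfrac12$ by Proposition \ref{smallforC_r}. Writing $C_r=C(t)-C_M(t)-C(t)P_b(t)$ and $C(t)=C^+(t)+C^-(t)$, each term of the series is a word in $C^\pm$, $C_M$ and $P_b$. We do not expand to infinity. Instead we write
\[
(1-C_r)^{-1}=\sum_{k=0}^{K-1}C_r^k+C_r^K(1-C_r)^{-1},
\]
where $K$ is a fixed number chosen so that enough ``weight reduction'' has accumulated. The difficulty is that $\langle x\rangle^{-\eta}$ sits on the far left while the only decaying object, $P^{\pm}e^{-itH_0}\Omega_\pm^* f$, sits on the far right. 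So we must transport decay through products of $C_r$. We do this by inserting the factor $\frac{\langle p\rangle^{\alpha}}{|p|^\alpha}\frac{|p|^{\alpha}}{\langle p\rangle^\alpha}$ (or $|p|^{-3/2}\langle x\rangle^{1/2+\epsilon}\cdot\langle x\rangle^{-1/2-\epsilon}|p|^{3/2}$) immediately to the left of the free wave.

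First, the zeroth-order term. Since $\|\langle x\rangle^{-\eta}\|\le 1$ and $P^\pm$ is bounded on the weighted spaces (Lemma \ref{BoundP-Lem}, Corollary \ref{boundP2}), the terms $\langle x\rangle^{-\eta}P^{\pm}e^{-itH_0}\Omega^*_\pm f$ are handled directly by the free local decay \eqref{localdecay} with $\eta>2$, together with $\|\Omega_\pm^*\|\le1$. For the remaining terms, we handle the rightmost operator in each word as follows.

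\begin{itemize}
\item If it is $C_M(t)$, we use Lemma \ref{Lemma4.3}, \eqref{C_M1}: $C_M(t)|p|^{-3/2}\langle x\rangle^{1/2+\epsilon}$ is uniformly bounded. What remains is $\int\|\langle x\rangle^{-1/2-\epsilon}|p|^{3/2}P^\pm e^{-itH_0}\Omega^*f\|^2dt$. By Corollary \ref{boundP2} and the local smoothing estimate \eqref{localsmooth}, this is $\lesssim\|f\|^2$.
\item If it is $C(t)P_b(t)$, the term vanishes modulo errors, or we absorb $\langle x\rangle^{\delta}$ via Proposition \ref{Prop4.12} with $\delta>2$ and apply \eqref{localdecay}. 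The condition $\delta\le \min\{\tfrac n2-4,4\}$ with $\delta>\tfrac52$ requires $n\ge 13$.
\item If it is $C^{\pm}(t)$, Proposition \ref{Prop4.11} gives $\frac{|p|^\alpha}{\langle p\rangle^\alpha}C^\pm|p|^{-3/2}\langle x\rangle^{1/2+\epsilon}$ bounded, which leaves a weight $\frac{\langle p\rangle^\alpha}{|p|^\alpha}$ to move further left. Proposition \ref{Prop4.5} then lets each additional $C^\pm$ factor to the left reduce the exponent $\alpha$ by $\beta=\tfrac1{16}$. Factors $C_M$ absorb $\frac{\langle p\rangle^\alpha}{|p|^\alpha}$ outright by \eqref{C_M2}, and factors $P_b$ are handled as in the previous case.
\end{itemize}

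After $K\approx 16\alpha$ steps the weight is gone and only bounded operators remain, so each finite word contributes $\lesssim\|f\|^2$. For the tail $C_r^K(1-C_r)^{-1}$, we commute as follows: the weight can be carried through $(1-C_r)^{-1}$ only if we work in a weighted space. Therefore we run the absorption on the \emph{right} of the word. We write $(1-C_r)^{-1}=\sum_{k<K}C_r^k+(1-C_r)^{-1}C_r^K$ and use the fact that $C_r^K$ has already absorbed the full weight $|p|^{-3/2}\langle x\rangle^{1/2+\epsilon}$ into a bounded operator, while $(1-C_r)^{-1}$ is uniformly bounded on $L^2$.

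I expect the main obstacle to be bookkeeping the dimension and decay constraints. Proposition \ref{Prop4.11} requires $\alpha>\tfrac72$ and $\alpha<\tfrac n2$. Combining this with the requirement that $\langle x\rangle^{-\eta}$, $\eta>\tfrac52$, control the leftover $|p|$-weights through Lemma \ref{HLSvar} and \eqref{NTE}, and with Proposition \ref{Prop4.12} needing $\tfrac52<\min\{\tfrac n2-4,4\}$, forces $n\ge 14$. This is where I would check the exponents carefully first.
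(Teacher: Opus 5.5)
Your plan is essentially the paper's proof. The paper also splits off the free term and handles it by local decay. It writes the remainder as $(1-C_r)^{-1}C_r$ applied to the free wave, with $\frac{\langle p\rangle^{\alpha}}{|p|^{\alpha}}\frac{|p|^{\alpha}}{\langle p\rangle^{\alpha}}$ and $|p|^{-3/2}\langle x\rangle^{1/2+\epsilon}\langle x\rangle^{-1/2-\epsilon}|p|^{3/2}$ inserted. The rightmost $C_r$ is controlled by Proposition~\ref{Prop4.11}, \eqref{C_M1} and Proposition~\ref{Prop4.12}. The factor $\langle x\rangle^{-\eta}(1-C_r)^{-1}\frac{\langle p\rangle^{\alpha}}{|p|^{\alpha}}$ is bounded by a finite Neumann series with the resolvent on the left, using the weight-reduction bound \eqref{C_r3} (from Proposition~\ref{Prop4.5}, \eqref{C_M2}, Proposition~\ref{Prop4.12} and Lemma~\ref{HLSvar}), and the argument finishes with local smoothing.

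The differences are only packaging: the paper proves block estimates for $C_r$ rather than expanding into words in $C^{\pm}$, $C_M$, $CP_b$, and it treats the $CP_b$ piece via $P_b\langle x\rangle^{5/2}$, Lemma~\ref{Lemma4.8} and local smoothing instead of local decay. The exponent check you defer is not resolved in the paper either. Proposition~\ref{Prop4.11} needs $\alpha>\tfrac72$, while the same $\alpha$ must satisfy $\alpha\le\eta$ for Lemma~\ref{HLSvar} and $\alpha\le\min\{\tfrac n2-4,4\}$ for Proposition~\ref{Prop4.12} in \eqref{C_r3}. So, as written, neither argument covers $\eta\le\tfrac72$, and both would need $n\ge16$ rather than $n\ge14$.
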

\begin{proof}
    Let $f\in L^2_x(\mathbb{R}^n)$. We estimate $\langle x\rangle^{-\eta}(1-C_r(t))^{-1}P^{+}e^{-itH_0}\Omega_{+}^{*}f$, while
    $\langle x\rangle^{-\eta}(1-C_r(t))^{-1}P^{-}e^{-itH_0}\Omega_{-}^{*}f$ can be treated similarly. We write $\langle x\rangle^{-\eta}(1-C_r(t))^{-1}P^{+}e^{-itH_0}\Omega_{+}^{*}f$  as
\begin{align*}
     (1-C_r(t))^{-1}P^{+}e^{-itH_0}\Omega_{+}^{*}f =f_1(t)+f_2(t),
\end{align*}
where $f_j(t)$, $j=1,2,$ are given by
\begin{align*}
    f_1(t)\coloneqq P^{+}e^{-itH_0}\Omega_{+}^{*}f
\end{align*}
and
\begin{align*}
    f_2(t)\coloneqq(1-C_r(t))^{-1}C_r(t)P^{+}e^{-itH_0}\Omega_{+}^{*}f.
\end{align*}
Invoking Lemma \ref{BoundP-Lem}, Corollary \ref{boundP2}, the $L^2$ local decay of the free flow \eqref{localdecay} and the unitary for $\Omega_{+}^{*}$ yields
\begin{equation*}
    \begin{aligned}
        \left(\int_0^\infty \|\langle x\rangle^{-\eta}f_1(t)\|_{L_x^2}^2dt\right)^{1/2}\leq&   \left(\int_0^\infty \|\langle x\rangle^{-\eta}P^{+}\langle x\rangle^{5/2}\|_{2\to 2}^2\|\langle x\rangle^{-5/2}e^{-itH_0}\Omega_{+}^{*}f\|_{L_x^2}^2dt\right)^{1/2} \\
        \lesssim& \|f\|_{L_x^2}^2
    \end{aligned}
\end{equation*}
for all $\eta>\frac{5}{2}.$ For $f_2(t),$
we write
\begin{equation*}
    \begin{aligned}
        f_2(t)&=(1-C_r(t))^{-1}\frac{\langle p\rangle^{\alpha}}{|p|^\alpha}\times\frac{|p|^\alpha}{\langle p\rangle^{\alpha}}C_r(t)|p|^{-3/2}\langle x\rangle^{1/2+\epsilon}\\
        &\qquad\qquad\qquad\qquad\qquad
        \times\langle x\rangle^{-1/2-\epsilon}|p|^{3/2} P^{+}e^{-itH_0}\Omega_{+}^{*}f.
    \end{aligned}
\end{equation*}
So proving
\begin{equation*}
    \begin{aligned}
        \left(\int_0^\infty \|\langle x\rangle^{-\eta}f_2(t)\|_{L_x^2}^2dt\right)^{1/2}\leq   \|f\|_{L_x^2}
    \end{aligned}
\end{equation*}
is equal to show
\begin{align}\label{C_r1}
    \|\langle x\rangle^{-\eta}(1-C_r(t))^{-1}\frac{\langle p\rangle^{\alpha}}{|p|^\alpha} \|_{2\to 2}\lesssim_{n,M,\eta,\alpha}1,
\end{align}
\begin{equation}\label{C_r2}
    \begin{aligned}
      \sup_{t\in\mathbb{R}} \left\|\frac{|p|^\alpha}{\langle p\rangle^{\alpha}} C_r(t) |p|^{-3/2}\langle x \rangle^{1/2+\epsilon}\right\|_{2\to2}
      \lesssim_{n,\epsilon,M}1
    \end{aligned}
\end{equation}
and
\begin{equation}\label{Localsmooth}
    \left(\int_0^\infty\|\langle x\rangle^{-1/2-\epsilon}|p|^{3/2}P^+e^{-itH_0}\Omega_{+}^{*}f\|_{L_x^2}^2dt\right)^{1/2}\lesssim\|f\|_{L_x^2}.
\end{equation}

The inequality \eqref{Localsmooth} can be obtained by
using Corollary \ref{boundP2}, the $L^2$ local smoothing estimate \eqref{localsmooth} and the unitary of $\Omega_{+}^{*}$.

For \eqref{C_r1}, by Proposition \ref{Prop4.5}, for all $\alpha\in (0,\frac{n}{2})$ and
$n\geq 9, $ with $(\alpha-\frac{1}{16})_+=\max\{0,\alpha-\frac{1}{16}\}$, we have
\begin{align}
    \left\|\frac{|p|^{(\alpha-\frac{1}{16})_+}}{\langle p\rangle^{(\alpha-\frac{1}{16})_+}}C(t)\frac{\langle p\rangle^{\alpha}}{|p|^\alpha}\right\|_{2\to2}\lesssim1.
\end{align}
This together with Proposition \ref{Prop4.12}, estimate \eqref{C_M2}
and the bounds $\|\frac{|p|^\alpha}{\langle p\rangle^{\alpha}}\|\leq 1,$ implies
\begin{equation}\label{C_r3}
    \begin{aligned}
        &\left\|\frac{|p|^{(\alpha-\frac{1}{16})_+}}{\langle p\rangle^{(\alpha-\frac{1}{16})_+}}C_r(t)\frac{\langle p\rangle^{\alpha}}{|p|^\alpha}\right\|_{2\to 2} \\ \leq   & \left\|\frac{|p|^{(\alpha-\frac{1}{16})_+}}{\langle p\rangle^{(\alpha-\frac{1}{16})_+}}C(t)\frac{\langle p\rangle^{\alpha}}{|p|^\alpha}\right\|_{2\to 2}+   \left\|\frac{|p|^{(\alpha-\frac{1}{16})_+}}{\langle p\rangle^{(\alpha-\frac{1}{16})_+}}C_M(t)\frac{\langle p\rangle^{\alpha}}{|p|^\alpha}\right\|_{2\to 2} + \left\|\frac{|p|^{(\alpha-\frac{1}{16})_+}}{\langle p\rangle^{(\alpha-\frac{1}{16})_+}}C(t)P_b(t)\frac{\langle p\rangle^{\alpha}}{|p|^\alpha}\right\|_{2\to 2}\\
        \lesssim& 1 +\left\|C_M(t)\frac{\langle p\rangle^{\alpha}}{|p|^\alpha}\right\|_{2\to 2}+\|P_b(t)\langle x\rangle^{\alpha}\|_{2\to 2} \left\|\langle x\rangle^{-\alpha}\frac{\langle p\rangle^{\alpha}}{|p|^\alpha}\right\|_{2\to 2}\lesssim_{n,M,\alpha}1,
    \end{aligned}
\end{equation}
where we use Hardy-Littlewood-Sobolev inequality in Lemma \ref{HLSvar}.
This result, together with
Hardy–Littlewood–Sobolev inequality in Lemma \ref{HLSvar} again and Neumann series, establishes \eqref{C_r1}, for all $\eta>\frac{5}{2}$ and $\alpha\in(\frac52,\min\{\eta, \frac{n}{2}\}).$ We fix $\alpha$ satisfying the condition. Then we take a fixed $n\in \N$ such that $0<\frac{\alpha}{n}<\frac{1}{16}$. Thus we have the following finite Neumann series
    \begin{align*}
        \langle x\rangle^{-\eta}(1-C_r(t))^{-1}\frac{\langle p\rangle^{\al}}{|p|^\al} &=\langle x\rangle^{-\eta}\frac{\langle p\rangle^{\al}}{|p|^\al}+\langle x\rangle^{-\eta}C_r(t)\frac{\langle p\rangle^{\al}}{|p|^\al} \\
        &\quad\cdots+ \langle x\rangle^{-\eta}(C_r(t))^{n-1}\frac{\langle p\rangle^{\al}}{|p|^\al}\\
       &\quad +\langle x\rangle^{-\eta}(1-C_r(t))^{-1}(C_r(t))^n\frac{\langle p\rangle^{\al}}{|p|^\al}.
    \end{align*}

We use the Hardy-Littlewood-Sobolev inequality in Lemma \ref{HLSvar} and \eqref{C_r3} to prove the \eqref{C_r1} by the following
\begin{align*}
    &\|\langle x\rangle^{-\eta}(1-C_r(t))^{-1}\frac{\langle p\rangle^{\al}}{|p|^\al}\|_{2\to 2}  \\
        \leq&\|\langle x\rangle^{-\eta}\frac{\langle p\rangle^{\al}}{|p|^\al}\|_{2\to 2}+\|\langle x\rangle^{-\eta}C_r(t)\frac{\langle p\rangle^{\al}}{|p|^\al}\|_{2\to 2} \\
        &\cdots+\|\langle x\rangle^{-\eta}(C_r(t))^{n-1}\frac{\langle p\rangle^{\al}}{|p|^\al}\|_{2\to 2} \\
        &+ \|\langle x\rangle^{-\eta}(1-C_r(t))^{-1}(C_r(t))^n\frac{\langle p\rangle^{\al}}{|p|^\al}\|_{2\to 2}. \\
        \lesssim&1+\|\langle x\rangle^{-\eta}\frac{\langle p\rangle^{\al-\frac{\al}{n}}}{|p|^{\al-\frac{\al}{n}}}\|_{2\to 2}\|\frac{|p|^{\al-\frac{\al}{n}}}{\langle p\rangle^{\al-\frac{\al}{n}}}C_r(t)\frac{\langle p\rangle^{\al}} {|p|^\al}\|_{2\to 2} \\
        & \cdots+\|\langle x\rangle^{-\eta}\frac{\langle p\rangle^{\frac{\al}{n}}}{|p|^{\frac{\al}{n}}}\|_{2\to 2}\|\frac{|p|^{\frac{\al}{n}}}{\langle p\rangle^{\frac{\al}{n}}}C_r(t)\frac{\langle p\rangle^{\frac{2\al}{n}}}{|p|^{\frac{2\al}{n}}}\|_{2\to 2}\cdots\|\frac{|p|^{\al-\frac{\al}{n}}}{\langle p\rangle^{\al-\frac{\al}{n}}}C_r(t)\frac{\langle p\rangle^{\al}} {|p|^\al}\|_{2\to 2} \\
        &+\|(1-C_r(t))^{-1}\|_{2\to2}\|\frac{|p|^{0}}{\langle p\rangle^{0}}C_r(t)\frac{\langle p\rangle^{\frac{\al}{n}}}{|p|^{\frac{\al}{n}}}\|_{2\to 2} \\
        &\times\|\frac{|p|^{\frac{\al}{n}}}{\langle p\rangle^{\frac{\al}{n}}}C_r(t)\frac{\langle p\rangle^{\frac{2\al}{n}}}{|p|^{\frac{2\al}{n}}}\|_{2\to 2}\cdots\|\frac{|p|^{\al-\frac{\al}{n}}}{\langle p\rangle^{\al-\frac{\al}{n}}}C_r(t)\frac{\langle p\rangle^{\al}} {|p|^\al}\|_{2\to 2} \lesssim 1.
\end{align*}

For \eqref{C_r2}, by applying  Proposition \ref{Prop4.11},
\begin{align*}
   \sup_{t\in\mathbb{R}} \left\|\frac{|p|^\alpha}{\langle p\rangle^{\alpha}} C^{\pm}(t) |p|^{-\frac32}\langle x \rangle^{\frac12+\epsilon}\right\|_{2\to2}\lesssim_{n,\epsilon} 1
\end{align*}
holds true for all $n\geq 9$, $\epsilon\in(0,\frac{1}{4})$ and $\alpha\in(\frac{5}{2},\frac{n}{2}).$ This together with \eqref{C_M1}
and estimates $\|\frac{|p|^\alpha}{\langle p\rangle^{\alpha}}\|\leq 1,$ yields
    \begin{align*}
        &\sup_{t\in\mathbb{R}} \left\|\frac{|p|^\alpha}{\langle p\rangle^{\alpha}} C_r(t) |p|^{-\frac32}\langle x \rangle^{\frac12+\epsilon}\right\|_{2\to2} \\
        \leq&\sup_{t\in\mathbb{R}} \left\|\frac{|p|^\alpha}{\langle p\rangle^{\alpha}} C(t) |p|^{-\frac32}\langle x \rangle^{\frac12+\epsilon}\right\|_{2\to2}+\sup_{t\in\mathbb{R}} \left\|\frac{|p|^\alpha}{\langle p\rangle^{\alpha}} C_M(t) |p|^{-\frac32}\langle x \rangle^{\frac12+\epsilon}\right\|_{2\to2}\\
        &+\sup_{t\in\mathbb{R}} \left\|\frac{|p|^\alpha}{\langle p\rangle^{\alpha}} C(t)P_b(t) |p|^{-\frac32}\langle x \rangle^{\frac12+\epsilon}\right\|_{2\to2}\\
        \lesssim&_{n,\epsilon} 1+\sup_{t\in\mathbb{R}} \left\|C_M(t) |p|^{-\frac32}\langle x \rangle^{\frac12+\epsilon}\right\|_{2\to2}+\sup_{t\in\mathbb{R}} \left\| P_b(t) |p|^{-\frac32}\langle x \rangle^{\frac12+\epsilon}\right\|_{2\to2}\\
        \lesssim&_{n,\epsilon,M}1+ \sup_{t\in\mathbb{R}}\left\| P_b(t) |p|^{-\frac32}\langle x \rangle^{1/2+\epsilon}\right\|_{2\to2}.
    \end{align*}

This, together with Proposition \ref{Prop4.12} and Lemma \ref{Lemma4.8}, yields, for $n\geq 14$,
\begin{equation}\label{5.17}
    \begin{aligned}
      \sup_{t\in\mathbb{R}} \left\|\frac{|p|^\alpha}{\langle p\rangle^{\alpha}} C_r(t) |p|^{-\frac32}\langle x \rangle^{\frac12+\epsilon}\right\|_{2\to2}   &\lesssim_{n,\epsilon,M}1+ \sup_{t\in\mathbb{R}}\| P_b(t)\langle x\rangle^{\frac52}\|\|\langle x\rangle^{-\frac52} |p|^{-\frac32}\langle x \rangle^{\frac12+\epsilon}\|_{2\to2}\\
      &\lesssim_{n,\epsilon,M}1.
    \end{aligned}
\end{equation}
We fix $M = M_0$. Combined with estimates \eqref{C_r1}, the result establishes
\begin{equation}
    \sup_{t\in\mathbb{R}}\|\langle x\rangle^{-\eta}(1-C_r(t))^{-1}C_r(t)|p|^{-\frac32}\langle x \rangle^{\frac12+\epsilon}\| \lesssim_{n,\eta}1
\end{equation}
for all $\epsilon\in (0,\frac14)$ and $\eta>\frac52$. Therefore, by \eqref{Localsmooth}, we conclude
\begin{align*}
     \int_0^\infty \|\langle x\rangle^{-\eta}(1-C_r(t))^{-1}P^{+}e^{-itH_0}\Omega_{+}^{*}f\|^2_2dt\lesssim_{n,\eta}\|f\|^2_{L^2_x}.
\end{align*}
Similarly, we have
 \begin{align*}
        \int_0^\infty \|\langle x\rangle^{-\eta}(1-C_r(t))^{-1}P^{-}e^{-itH_0}\Omega_{-}^{*}f\|_{L^2_x}^2dt\lesssim_{n,\eta}\|f\|_{L^2_x}^2.
    \end{align*}
\end{proof}

With the above proposition in hand, we can immediately proceed to prove Theorem \ref{Thm1.1}.

\begin{proof} [Proof of Theorem \ref{Thm1.1}.]
By \eqref{decomp} and Proposition \ref{smallforC_r}, we obtain that there exists $ M_0 > 0$
such that, whenever $M \geq M_0$,
\begin{equation*}
   \begin{aligned}
     U(t,0)P_c\psi_0&=(1-C_r(t))^{-1}P^{+}e^{-itH_0}\Omega_{+}^{*}\psi_0+(1-C_r(t))^{-1}P^{-}e^{-itH_0}\Omega_{-}^{*}\psi_0\\
     &\quad+(1-C_r(t))^{-1}C_M(t)U(t,0)P_c\psi_0
\end{aligned}
\end{equation*}
for $\psi_0\in L^2_x(\mathbb{R}^n)$ and $\|(1-C_r(t))^{-1}\|_{2\to2}\leq2$ for all $t\in \mathbb{R}$. This, together with Proposition \ref{Prop4.16}, \eqref{C_M3}, the unitary for $\Omega_{+}^{*}$ and
\begin{equation*}
    \begin{aligned}
        \left(\int_0^\infty \|F_M(x,p)e^{-itH_0}\psi_0\|_{L^2_x(\mathbb{R}^n)}^2dt\right)^{\frac12}\lesssim_M \|\psi_0\|_{L^2_x(\mathbb{R}^n)},\quad \forall \psi_0\in L^2_x,
        \end{aligned}
\end{equation*}
yields
\begin{equation*}
    \begin{aligned}
      \left(\int_0^\infty \|\langle x\rangle^{-\eta}U(t,0)P_c\psi_0\|_{L^2_x}^2dt\right)^{\frac12}
      \lesssim& \|\psi_0\|_{L^2_x(\mathbb{R}^n)}
       +\left(\int_0^\infty \|F_M(x,p)  e^{-itH_0}P_c\psi_0\|_{L^2_x}^2dt\right)^{\frac12} \\
       \lesssim&_{\eta,n,M} \|\psi_0\|_{L^2_x(\mathbb{R}^n)}
    \end{aligned}
\end{equation*}
for all $\eta>\frac52$ and $\psi_0\in L^2_x(\mathbb{R}^n)$ and $n\geq 14.$
\end{proof}

Invoking Assumption \ref{asp:3} reduces the spatial dimension requirement to 9, improving Theorem \ref{Thm1.1}. We first establish Theorem \ref{Prop4.16}'.
\begin{theoremprime}{5.1}\label{Prop4.16'}
    Under Assumptions \ref{asp:1}, \ref{asp:2} and \ref{asp:3}, we have
    \begin{align*}
        \int_0^\infty \|\langle x\rangle^{-\eta}(1-C_r(t))^{-1}P^{+}e^{-itH_0}\Omega_{+}^{*}f\|^2_{L_x^2}dt\lesssim_{n,\eta}\|f\|_{L_x^2}^2
    \end{align*}
    and
      \begin{align*}
        \int_0^\infty \|\langle x\rangle^{-\eta}(1-C_r(t))^{-1}P^{-}e^{-itH_0}\Omega_{-}^{*}f\|_{L_x^2}^2dt\lesssim_{n,\eta}\|f\|_{L^2_x}^2
    \end{align*}
    for all $\eta>\frac52$, $n\geq 9$ and $f\in L^2_x(\mathbb{R}^n)$.
\end{theoremprime}
\begin{proof}
    We prove the theorem using an argument identical to the proof of Theorem \ref{Prop4.16}. The only difference is that we use Assumption \ref{asp:3} to prove \eqref{5.17} instead of Proposition \ref{Prop4.12}. Therefore, we can obtain the same results and reduce the constraint from $n\geq 14$ to $n\geq 9$.
\end{proof}

We now establish Theorem \ref{Thm1.3}.

\begin{proof}[Proof of Theorem \ref{Thm1.3}.]
Following the same argument as in Theorem~\ref{Thm1.1}, the proof proceeds identically, except that Theorem~\ref{Prop4.16} is replaced by Theorem~\ref{Prop4.16'}.
\end{proof}

\subsection{Applications}

First, we recall the Strichartz estimates for the free flow, established in \cite{PB2007},\cite{KPV1991},\cite{MY202410}. For the admissible pair $(q,r,n), 2\leq q,r\leq \infty$ satisfy \begin{align}\label{admis}
    \frac{4}{q}+\frac{n}{r}=\frac{n}{2},
\end{align}we have \begin{itemize}
    \item \textbf{Homogeneous Strichartz Estimates}\begin{align}\label{homo}
        \|e^{-itH_0}P_c\psi_0\|_{L_t^qL_x^r}\lesssim\|\psi_0\|_{L^2_x},
    \end{align}
    \item  \textbf{Inhomogeneous Strichartz Estimates}\begin{align}\label{inhomo}
        \|\int_0^te^{-i(t-s)H_0}fds\|_{L_t^qL_x^r}\lesssim\|f\|_{L_t^{q'}L_x^{r'}}.
    \end{align}
\end{itemize}
In particular, the endpoint Strichartz estimates holds, that is, \eqref{homo} and \eqref{inhomo} remain valid as $(q,r)=(2,\frac{2n}{n-4})$. The endpoint Strichartz estimates for \eqref{inhomo} will be utilized in the subsequent proof.

\renewcommand{\proofname}{Proof of the Theorem \ref{Thm1.5}}
\begin{proof}
It suffices to establish the endpoint Strichartz estimates corresponding to $(q,r)=(2,\frac{2n}{n-4})$. By the Duhamel's principle, we have
    \begin{equation*}
        \begin{aligned}
        U(t,0)P_c(0)f&=e^{-itH_0}f+(-i)\int_0^tdse^{-i(t-s)H_0}V(x,s)U(s,0)P_c(0)f \\
        &\coloneqq\psi_1+\psi_2.
    \end{aligned}
    \end{equation*}
The first term   $\psi_1$ represents the free flow and clearly satisfies the endpoint Strichartz estimates. For the second term $\psi_2$, we apply the inhomogeneous Strichartz estimates for the free propagator together with \eqref{dis: eq}, obtaining
    \begin{equation*}
        \begin{aligned}
        \|\psi_2\|_{L^2_t L^{\frac{2n}{n-4}}_x }&\lesssim\|V(x,t)U(t,0)P_c(0)f\|_{L^{2}_t L^{\frac{2n}{n+4}}_x } \\
        &\lesssim\|V(x,t)\langle x\rangle^3\|_{L_t^\infty L_x^{n/2}}\|\langle x\rangle^{-3}U(t,0)P_c(0)f\|_{L_{t,x}^2} \\
        &\lesssim\|f\|_{L^2_x(\mathbb{R}^n)}.
    \end{aligned}
    \end{equation*}
 Combining the above bound with that of $\psi_1$, we conclude the desired endpoint Strichartz estimates, thereby completing the proof of Theorem \ref{Thm1.5}.
\end{proof}
\renewcommand{\proofname}{Proof}
\begin{corollary}
    By   Theorem \ref{Thm1.5}, we also can prove the inhomogeneous Strichartz estimate for the bi-Laplacian Schr\"odinger operator with quasi-periodic potential
    \begin{equation}\label{inhom}
    \|\int_0^\infty U(t,s)P_c(s)fds\|_{L_t^qL_x^r}\lesssim\|f\|_{L_t^{\rho'}L_x^{\gamma'}}
    \end{equation}where $(q,r), (\rho,\gamma)$ are both admissible pair satisfy \eqref{admis}.
\end{corollary}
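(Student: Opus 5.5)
The plan is to reduce the inhomogeneous bound \eqref{inhom} to the homogeneous endpoint/non-endpoint estimates of Theorem~\ref{Thm1.5} by a $TT^*$-type argument, using the propagator structure $U(t,s)=U(t,0)U(0,s)$ and the intertwining $P_c(t)U(t,s)=U(t,s)P_c(s)$ (which follows from $P_c(t)=U(t,0)P_c(0)U(0,t)$, as in the proof of Lemma~\ref{pbfin}). First I would write
\begin{align*}
\int U(t,s)P_c(s)f(s)\,ds = U(t,0)P_c(0)\int U(0,s)P_c(s)f(s)\,ds = U(t,0)P_c(0)\,F,
\end{align*}
with $F\coloneqq\int P_c(0)U(0,s)f(s)\,ds$. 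By Theorem~\ref{Thm1.5} applied to $F$, the left side of \eqref{inhom} is bounded by $\|F\|_{L^2_x}$.

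Next I would bound $\|F\|_{L^2_x}$ by duality. For $g\in L^2_x$,
\begin{align*}
\langle F,g\rangle=\int \langle f(s), U(s,0)P_c(0)g\rangle\,ds \le \|f\|_{L^{\rho'}_tL^{\gamma'}_x}\,\|U(s,0)P_c(0)g\|_{L^{\rho}_tL^{\gamma}_x},
\end{align*}
and the last factor is $\lesssim\|g\|_{L^2_x}$ again by Theorem~\ref{Thm1.5} with the admissible pair $(\rho,\gamma)$. Here the adjoint of $U(0,s)P_c(s)$ is $P_c(s)U(s,0)=U(s,0)P_c(0)$ since $P_c(s)$ is an orthogonal projection (as $P_c=\Omega_+\Omega_+^*$). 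Combining gives \eqref{inhom} for the untruncated integral over $s\in(0,\infty)$.

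The main obstacle is the time range and the retarded version. As stated, the integral is over all $s$, which the above argument covers directly, but Theorem~\ref{Thm1.5} only controls $t\ge0$, so I would restrict $t,s\in(0,\infty)$ throughout. If the truncated (Duhamel) form $\int_0^t$ is intended, I would invoke the Christ--Kiselev lemma, valid when $q>\rho'$; this excludes only the double endpoint $q=\rho=2$. For that case I would instead argue as in the proof of Theorem~\ref{Thm1.5}: expand $U(t,s)$ via Duhamel around $e^{-i(t-s)H_0}$, use the free inhomogeneous endpoint estimate \eqref{inhomo} for the main term, and handle the potential term with $\|\langle x\rangle^{3}V\|_{L^\infty_tL^{n/2}_x}$ and the local decay estimate \eqref{dis: eq}, applied in its dual form to the retarded integral.
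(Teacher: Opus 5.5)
Your proof of the stated (untruncated) estimate is correct, and it shares the paper's key step: the dual bound $\|\int_0^\infty U(0,s)P_c(s)f(s)\,ds\|_{L^2_x}\lesssim\|f\|_{L^{\rho'}_tL^{\gamma'}_x}$, obtained by pairing against $U(s,0)P_c(0)g$ and applying Theorem~\ref{Thm1.5}. The two arguments finish differently.

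\begin{itemize}
\item \textbf{Your route.} You factor the operator as $U(t,0)P_c(0)F$ and apply Theorem~\ref{Thm1.5} a second time. This is the $TT^*$ composition $\|AA^*\|\le\|A\|\,\|A^*\|$ with $A=U(t,0)P_c(0)$.
\item \textbf{The paper's route.} It proves two separate bounds. The first is $L^1_tL^2_x\to L^q_tL^r_x$, from Minkowski plus Theorem~\ref{Thm1.5}. The second is $L^{\rho'}_tL^{\gamma'}_x\to L^\infty_tL^2_x$, from unitarity of $U(t,0)$ plus the dual bound. It then interpolates between them and cites Christ--Kiselev for the time ordering.
\end{itemize}

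Your route is the more efficient one and in fact the more complete one. It gives every pair of admissible exponents at once, including $q=\rho=2$. Interpolating between the paper's two families can only produce time exponents $(\frac1\rho,\frac1q)=(\frac{\theta}{\rho_1},\frac{1-\theta}{q_0})$ with $\rho_1,q_0\ge2$. Hence it only reaches pairs with $\frac1q+\frac1\rho\le\frac12$.

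Your closing discussion of the retarded integral $\int_0^t$ goes beyond the statement. Its double-endpoint part would not close as sketched, for two reasons.
\begin{itemize}
\item After Duhamel, the potential term requires the retarded weighted bound $\|\langle x\rangle^{-3}\int_0^t U(t,s)P_c(s)f(s)\,ds\|_{L^2_{t,x}}\lesssim\|f\|_{L^2_tL^{2n/(n+4)}_x}$. This is itself an $L^2_t\to L^2_t$ retarded estimate. Neither the local decay of Theorem~\ref{Thm1.1} nor its dual form supplies it, and Christ--Kiselev fails precisely at $L^2_t\to L^2_t$.
\item The free term needs $P_c(s)$ to be bounded on $L^{2n/(n+4)}_x$, which is not established.
\end{itemize}
Since the corollary concerns $\int_0^\infty$, this does not affect your proof.
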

\begin{proof}
    The argument reduces to establishing the endpoint Strichartz estimates. Given the validity of standard Strichartz estimates, we claim that their dual counterparts remain valid under quasi-periodic conditions: \begin{equation}\label{dualest}
        \|\int_0^\infty U(0,t)P_c(t)f(t)dt\|_{L^2}\lesssim\|f\|_{L_t^{q'}L_x^{r'}}.
    \end{equation}

    To prove the claim, we define the operator $A(t)=U(t,0)P_c(0)$, then we have $$\|A(t)\psi_0\|_{L_t^q L_x^r }\leq C\|\psi_0\|_{L^2_x(\mathbb{R}^n)}.$$Hence, the dual Strichartz estimates follows from duality in the sense of the space-time inner product. For $f\in L_t^{q'} L_x^{r'} , \psi_0\in L_x^2$, we obtain
    \begin{equation*}
        \begin{aligned}
            \langle A(t)\psi_0,f\rangle_{L_t^q L_x^r ,L_t^{q'} L_x^{r'} } =&\langle \psi_0,A^*f\rangle_{L_x^2,L_x^2},
        \end{aligned}
    \end{equation*}
where $A^*f=\int_0^\infty P_c(0)U(0,t)f(t)dt=\int_0^\infty U(0,t)P_c(t)f(t)dt$. By the adjoint relation and standard functional analytic arguments, it follows that
$$\|A^*\|_{L_t^{q'}L_x^{r'}\to L^2}\leq C, $$ which establishes the same estimate as for $A$.

Thus,
\begin{equation*}
    \begin{aligned}
     \|\int_0^\infty U(t,s)P_c(s)fds\|_{L_t^qL_x^r}&\lesssim\int_0^\infty(\int_0^\infty \|U(t,s)P_c(s)f\|^q_{L_x^r}dt)^{\frac{1}{q}} ds \\
    &\lesssim\int_0^\infty\|U(t,s)P_c(s)f\|_{L_t^qL_x^r} ds \\
    &\lesssim\int_0^\infty\|f\|_{L^2_x}ds\lesssim\|f\|_{L_t^1L_x^2}.
    \end{aligned}
\end{equation*}
Moreover, we prove another endpoint Strichartz estimates. By the \eqref{dualest}, we have
\begin{equation*}
    \begin{aligned}
      \|\int_0^\infty U(t,s)P_c(s)fds\|_{L^2_x}&=\|U(t,0)\int_0^\infty U(0,s)P_c(s)fds\|_{L^2_x} \\
    &\lesssim\|\int_0^\infty U(0,s)P_c(s)fds\|_{L^2_x} \\
&\lesssim\|f\|_{L_t^{q'}L_x^{r'}}.
    \end{aligned}
\end{equation*}
Therefore, we have
\begin{align*}
    \|\int_0^\infty U(t,s)P_c(s)fds\|_{L_t^\infty L_x^2}=\sup\limits_{t}\|\int_0^\infty U(t,s)P_c(s)fds\|\lesssim\|f\|_{L_t^{q'}L_x^{r'}},
\end{align*}
where we apply the Christ–Kiselev lemma to justify the time-ordering of the integral and ensure the rigor of the argument.

Finally, by interpolating between the two endpoint Strichartz estimates, we complete the proof of \eqref{inhom}.
\end{proof}

\appendix

\section[Appendix~\thesection: Proof of the estimates for free flow]{Proof of the estimates for free flow}\label{appendixA}

\subsection{Intertwining property of time-dependent wave operators }
\begin{lemma}\label{LemmaA.1}
The wave operator $\Omega^*_+(t)$ satisfies the following time-dependent intertwining property:
    \begin{align}
        \Omega^*_+(t)U(t,0)=e^{-itH_0}\Omega^*_+(0)
    \end{align}
    and
    \begin{align}\label{eqpm}
    \Omega^*_{\pm}(t)U(t,0)=e^{-itH_0}\Omega_{\pm}^*(0)
\end{align}in the weak topology of $L^2_x(\mathbb{R}^n)$.
\end{lemma}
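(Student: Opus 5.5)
The plan is to work directly from the definition \eqref{WaveO-def} and the group property of the propagator, $U(t+v,t)U(t,0)=U(t+v,0)$. Fix $f,g\in L^2_x(\mathbb{R}^n)$; weak-topology equality means
\[
\langle \Omega^*_{\pm}(t)U(t,0)f,g\rangle=\langle e^{-itH_0}\Omega^*_{\pm}(0)f,g\rangle .
\]
Since $\Omega^*_\pm(t)$ is a strong limit, the left-hand side equals
\[
\lim_{v\to\pm\infty}\Big\langle F_{\leq1}\big(\tfrac{|x|}{v^\alpha}\big)e^{ivH_0}U(t+v,0)f,\;g\Big\rangle .
\]
I will substitute $w=t+v$, so that $e^{ivH_0}=e^{-itH_0}e^{iwH_0}$, and move $e^{-itH_0}$ past the spatial cutoff.

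The resulting error term is the commutator
\[
\Big[F_{\leq1}\big(\tfrac{|x|}{(w-t)^\alpha}\big),\,e^{-itH_0}\Big].
\]
For fixed $t$ it tends to $0$ strongly as $|w|\to\infty$. I will prove this by writing it as $e^{-itH_0}\big(e^{itH_0}F_{\leq1}(\cdot)e^{-itH_0}-F_{\leq1}(\cdot)\big)$ and using $e^{itH_0}xe^{-itH_0}=x+4t|p|^2p$. For $h$ Schwartz, $F_{\leq1}\big(|x+4t|p|^2p|/R\big)h-F_{\leq1}(|x|/R)h\to0$ as $R\to\infty$, because both cutoffs converge strongly to the identity. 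Density of Schwartz functions and uniform boundedness of the cutoffs then extend this to all of $L^2$.

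Next I replace $v^\alpha=(w-t)^\alpha$ by $w^\alpha$ inside the cutoff. The difference
\[
F_{\leq1}\big(\tfrac{|x|}{(w-t)^\alpha}\big)-F_{\leq1}\big(\tfrac{|x|}{w^\alpha}\big)
\]
is supported where $|x|\sim w^\alpha$ and has sup norm $O(t/w)$, so it tends to $0$ in operator norm. The left-hand side therefore becomes
\[
\lim_{w\to\pm\infty}\Big\langle e^{-itH_0}F_{\leq1}\big(\tfrac{|x|}{w^\alpha}\big)e^{iwH_0}U(w,0)f,\,g\Big\rangle=\big\langle e^{-itH_0}\Omega^*_\pm(0)f,g\big\rangle .
\]
This step uses the existence of $\Omega^*_\pm(0)$ as a strong limit (cited from \cite{SWWZ}) together with boundedness of $e^{-itH_0}$. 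Taking the $+$ sign gives the first identity.

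The delicate step is the commutator limit, because the cutoff scale grows while $t$ stays fixed. I expect it to go through, since the conjugated cutoff differs from the original only through the bounded shift $4t|p|^2p$ on frequency-localized data. This also explains why the statement is made only weakly. An alternative route, if one wants to avoid the commutator, is the $\alpha$-independence from (1.34) of \cite{SWWZ}. That independence should make the existing limit insensitive to shifting the cutoff scale and to conjugating by $e^{-itH_0}$, modulo a vanishing term.
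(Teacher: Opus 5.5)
Your outline is sound, but one step needs repair, and your route differs from the paper's. The paper never moves $e^{-itH_0}$ through the cutoff. It pairs with a fixed $g$ and observes that $\bigl((1-F_{\leq1}(|x|/s^\alpha))g,\,e^{isH_0}U(t+s,0)f\bigr)\to0$: the test vector $(1-F_{\leq1}(|x|/s^\alpha))g$ tends to $0$ in $L^2_x$, while $e^{isH_0}U(t+s,0)f$ stays bounded. So in the weak topology the cutoff can simply be dropped. The cutoff-free identity $e^{isH_0}U(t+s,0)=e^{-itH_0}e^{i(t+s)H_0}U(t+s,0)$, which is what \cite[Proposition 4.6]{SWWZ} supplies, then finishes the proof. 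You instead keep the cutoff, commute $e^{-itH_0}$ past it, and rescale $(w-t)^\alpha\to w^\alpha$. Your $O(t/w)$ norm bound for the rescaling is correct.

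The step to fix is the commutator. The operator $[F_{\leq1}(|x|/R),e^{-itH_0}]$ tends to $0$ only strongly, not in norm. The shift $4t|p|^2p$ is bounded only on frequency-localized data, and a packet at frequency $\Lambda$ moves a distance of order $t\Lambda^3$. You apply this operator to $\phi_w=e^{iwH_0}U(w,0)f$, which varies with $w$ and need not converge strongly; it does not if $P_b(0)f\neq0$. Strong convergence is not uniform on bounded sets, so $[F,e^{-itH_0}]\phi_w\to0$ does not follow as written.

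Since you are pairing with a fixed $g$, the repair is to move the operator onto $g$:
$\bigl([F_{\leq1}(|x|/(w-t)^\alpha),e^{-itH_0}]\phi_w,g\bigr)=\bigl(\phi_w,\,(e^{itH_0}F_{\leq1}(|x|/(w-t)^\alpha)-F_{\leq1}(|x|/(w-t)^\alpha)e^{itH_0})g\bigr)$.
The right-hand test vector tends to $0$ because the cutoff tends to $1$ strongly. Neither the formula $x+4t|p|^2p$ nor a density argument is needed.

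Once written this way, your argument collapses to the paper's. Compare $\bigl(\phi_w,e^{itH_0}F_{\leq1}(|x|/(w-t)^\alpha)g\bigr)$ directly with $\bigl(\phi_w,F_{\leq1}(|x|/w^\alpha)e^{itH_0}g\bigr)$: both test vectors converge to $e^{itH_0}g$. So neither the commutator nor the rescaling estimate is actually required.

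Also, $\Omega^*_\pm(t)$ and $\Omega^*_\pm(0)$ are given as strong limits. Weak equality of these bounded operators is therefore already operator equality; the weak formulation is a convenience of the proof, not a weakness of the result.
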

\begin{proof}
To prove this lemma, we first introduce the operator $$\Omega^*(t)\psi(t)=w\text{-}\lim\limits_{t\to \infty}e^{isH_0}\psi(t+s).$$ We have \begin{equation}
        \begin{aligned}
            \Omega^*(t)\psi(t)=e^{-itH_0}\Omega^*(0)\psi(0),
        \end{aligned}
    \end{equation}
which have proved in \cite[Proposition 4.6]{SWWZ}. Since $$s\text{-}\lim\limits_{t\to \infty}\left(1-F_{\leq 1}(\frac{|x|}{t^\alpha})\right)f=0, \ \forall f\in  L^2_x(\mathbb{R}^n)$$for all $\alpha\in (0,1)$, for fixed $g\in L^2_x(\mathbb{R}^n)$, we have \begin{equation}
    \begin{aligned}
            0=&\limsup\limits_{s\to \infty}\left|\left((1-F_{\leq 1}(\frac{|x|}{s^\alpha}))g,e^{isH_0}\psi(t+s)\right)\right| \\
            =&\limsup\limits_{s\to \infty}\left|\left(g,e^{isH_0}\psi(t+s)\right)-\left(g,F_{\leq 1}(\frac{|x|}{s^\al})e^{isH_0}\psi(t+s)\right)\right|.
        \end{aligned}
    \end{equation}Thus, we have that $$\Omega^*(t)\psi(t)=\Omega^*_+(t)\psi(t)$$ holds in the weak topology in $L^2_x(\mathbb{R}^n)$. Furthermore, $$\Omega^*_+(t)U(t,0)\psi(0)=\Omega^*(t)U(t,0)\psi(0)=e^{-itH_0}\Omega^*(0)\psi(0)=e^{-itH_0}\Omega^*_+(0)\psi(0)$$holds in the weak topology of $L^2_x(\mathbb{R}^n)$. As for the \eqref{eqpm}, we can prove it similarly. This completes the proof.
\end{proof}

\subsection{Proof of Lemma \ref{mourre}}\label{Lemma2.4}

\begin{proof}Since it is easy to prove as time is short, we can assume that $t>1$. We only consider $P^+$, and the operator $P^-$ can be treated similarly. We have
\begin{equation}
    \begin{aligned}
        \|P^+F_{\geq 1}(|p|)F_{\leq 2}(|p|)e^{it(-\Delta)^2}\langle x\rangle^{-\delta}f\|_{L^2_x}\leq I_{near} +I_{far},
    \end{aligned}
\end{equation}
where
\begin{align}
    I_{near}=\|P^+F_{\geq 1}(|p|)F_{\leq 2}(|p|)e^{it(-\Delta)^2}F_{\leq t/10}(|x|)\langle x\rangle^{-\delta}f\|_{L^2_x}
\end{align}
and
\begin{align}
    I_{far}=\|P^+F_{\geq 1}(|p|)F_{\leq 2}(|p|)e^{it(-\Delta)^2}F_{\geq t/10}(|x|)\langle x\rangle^{-\delta}f\|_{L^2_x}.
\end{align}
First, it is straightforward to get
\begin{align}
     I_{far}\lesssim\langle t\rangle^{-\delta}\|f\|_{L^2_x}.
\end{align}
Then we only deal with $I_{near}$,

\begin{equation}
    \begin{aligned}
        I_{near}\leq \sum_{b=1}^N\|F^{\hat{h}_b}(\hat{x})\tilde{F}^{\hat{h}_b}(\hat{p})F_{\geq 1}(|p|)F_{\leq 2}(|p|)e^{it(-\Delta)^2}F_{\leq t/10}(|x|)\langle x\rangle^{-\delta}f\|_{L^2_x}.
    \end{aligned}
\end{equation}
Using the method of non-stationary phase, we obtain
\begin{equation}
    \begin{aligned}
        &\|F^{\hat{h}_b}(\hat{x})\tilde{F}^{\hat{h}_b}(\hat{p})F_{\geq 1}(|p|)F_{\leq 2}(|p|)e^{it(-\Delta)^2}\langle x\rangle^{-\delta}F_{\leq t/10}(|x|)f\|_{L^2_x} \\
        =&\|\iint e^{i\Phi_{t}(x,y,\xi)}F^{\hat{h}_b}(\hat{x})\tilde{F}^{\hat{h}_b}(\hat{\xi})F_{\geq 1}(|\xi|)F_{\leq 2}(|\xi|)\langle y\rangle^{-\delta}F_{\leq t/10}(|y|)f(y)dyd\xi\|_{L^2_x} \\
        =&\|\iint e^{i\Phi_{t}(x,y,\xi)}F^{\hat{h}_b}(\hat{x})(L^*)^M[\tilde{F}^{\hat{h}_b}(\hat{\xi})F_{\geq 1}(|\xi|)F_{\leq 2}(|\xi|)]\langle y\rangle^{-\delta}F_{\leq t/10}(|x|)f(y)dyd\xi\|_{L^2_x} \\
        \lesssim&\|\frac{1}{(|x|+t)^M}\int_{|y|\leq t/10}\langle y\rangle^{-\delta}f(y)dy\|_{L^2_x} \\
        \lesssim &\langle t\rangle^{-\delta}\|f\|_{L^2_x},
    \end{aligned}
\end{equation}where $\Phi_{t}(x,y,\xi)=(x-y)\cdot\xi+t|\xi|^4$, $|\nabla_{\xi}\Phi_{t}(x,y,\xi)|=|(x-y) +4t|\xi|^2\xi|\gtrsim |x+t\xi|$ and $e^{i\Phi_{t}(x,y,\xi)}=\frac{-i\nabla_{\xi}\Phi_{t}(x,y,\xi)}{|\nabla_{\xi}\Phi_{t}(x,y,\xi)|^2}\cdot\nabla_{\xi}e^{i\Phi_{t}(x,y,\xi)}\coloneqq L(e^{i\Phi_{t}(x,y,\xi)}).$

Combining the above process, we complete the proof.
\end{proof}

\subsection{Proof of Lemma \ref{allkindsestimates}}\label{Lemma2.5}

\begin{proof}[\bfseries\itshape Proof of  \eqref{HEE}:]
   When $t\in(0,1),$ by the definition of $\langle t\rangle$, we know that it is sufficient to prove the left of \eqref{HEE} is bounded. As the boundedness is immediate, we restrict to $t\geq 1$ and employ a circular decomposition:
   \begin{align}
   P^{\pm} F_{>2}(|p|)e^{\pm itH_0} \langle x\rangle^{-\delta}=\sum_{j=1}^{\infty}f_{\pm,j}(t)
\end{align}
where $f_{\pm,j}(t)= P^{\pm}F_{2^j}(|p|)  e^{\pm itH_0} \langle x\rangle^{-\delta}$
and
    \begin{align}
        F_{2^j}(|p|)\coloneqq  F_{\geq 2^j}(|p|)F_{<2^{j+1}}(|p|),\ j=0,1,2,\cdots,
    \end{align}
besides $F_{2^j}(|p|)$ satisfies
\begin{align}
    \sum_{j=1}^{\infty}F_{2^j}(|p|)=F_{>2}(|p|).
\end{align}

  Using dilation transformation and \eqref{fund_Estimate}, one has
\begin{equation}\label{circular_decomp}
    \begin{aligned}
    \|P^{\pm}F_{2^j}(|p|) & e^{\pm itH_0} \langle x\rangle^{-\delta}\|_{2\rightarrow2}=\|P^{\pm}F_{1}(|p|)  e^{\pm i2^{4j}tH_0}\langle x/2^j\rangle^{-\delta}\|_{2\rightarrow2}\nonumber\\
    &\leq  \|P^{\pm}F_{1}(|P|)  e^{\pm i2^{4j}tH_0}\langle x\rangle^{-\delta}\|_{2\rightarrow2} \|\langle x \rangle^{\delta} \langle x/2^j\rangle^{-\delta}\|_{2\rightarrow2}\nonumber\\
    &\lesssim_n   \frac{1}{\langle 2^{4j}t\rangle^\delta}2^{j\delta}\lesssim_n \frac{1}{2^{3j\delta }}\frac{1}{\langle t\rangle^\delta},
\end{aligned}
\end{equation}
then summing it we get
\begin{align}
  \|P^{\pm} F_{>2}(|p|)e^{\pm itH_0} \langle x\rangle^{-\delta}\|_{2\rightarrow2}&\leq \sum_{j=1}^{\infty}\|P^{\pm}F_{2^j}(|p|)  e^{\pm itH_0}\langle x\rangle^{-\delta}\|_{2\to2}\nonumber\\
  &\lesssim\sum_{j=1}^{\infty} \frac{1}{2^{3j\delta }}\frac{1}{\langle t\rangle^\delta}\lesssim_{n,l,\delta}\frac{1}{\langle t\rangle^\delta}.
\end{align}
This finishes the proof for $c=2$. The case $c\not=2, $ follows in a similar way by using dilation transformation.

\noindent\textbf{\textit{Proof of \eqref{PSE}}}: Similar to the proof of \eqref{circular_decomp}, we get
\begin{equation}
    \begin{aligned}
         &\|P^{\pm}F_{\geq M}(|p|)e^{\pm it|p|^4)} |p|^l\langle x\rangle^{-\delta}\|_{2\rightarrow2}\\
         =&M^l \|P^{\pm}F_{\geq 1}(|p|)e^{\pm itM^4|p|^4)} |p|^l\langle x/M\rangle^{-\delta}\|_{2\rightarrow2}\\
         \leq&\sum_{j=0}^{\infty}M^l \|P^{\pm}F_{2^j}(|p| )e^{\pm itM^4|p|^4)} |p|^l\langle x/M\rangle^{-\delta}\|_{2\rightarrow2}\\
         \lesssim&\sum_{j=0}^{\infty}M^l 2^{jl}\frac{1}{2^{3j\delta}}\frac{1}{\langle M^4 t\rangle^\delta} M^\delta
         \lesssim_{n,3\delta-l}\frac{1}{ M^{3\delta -l}t^{\delta}}.
    \end{aligned}
\end{equation}

\noindent\textbf{\textit{{Proof of \eqref{TSE}}}}: Here, we only prove the case $l=3$; results for $l=1,2$ follow via identical methodology.
\begin{equation}
    \begin{aligned}
        &\int_0^1 t^3\|P^{\pm}F_{\geq 1}(|p|)e^{\pm i(-\Delta)^2t}|p|^{9}\langle x\rangle^{-\delta}\|_{2\rightarrow2}dt\\
        \leq& \sum_{j=0}^{\infty}\int_0^1 t^3\|P^{\pm}F_{2^j}(|p|)e^{\pm i|p|^4t}|p|^{9}\langle x\rangle^{-\delta}\|_{2\rightarrow2}dt\\
        = &\sum_{j=0}^{\infty}\int_0^{2^{-\frac{5}{2}j}} t^3\|P^{\pm}F_{2^j}(|p|)e^{\pm i|p|^4t}|p|^{9}\langle x\rangle^{-\delta}\|_{2\rightarrow2}dt\\
        &+\sum_{j=0}^{\infty}\int_{2^{-\frac{5}{2}j}}^{1} t^3\|P^{\pm}F_{2^j}(|p|)e^{\pm i|p|^4t}|p|^{9}\langle x\rangle^{-\delta}\|_{2\rightarrow2}dt \\
        \coloneqq&\sum_{j=0}^{\infty}(A_{j,1}+A_{j,2}).
    \end{aligned}
\end{equation}
Similar to \eqref{circular_decomp}, we have
\begin{equation}
     \begin{aligned}
           A_{j,1}\lesssim&\int_0^{2^{-\frac{5}{2}j}} t^3 2^{9j}\|P^{\pm}F_{2^j}(|p|)e^{\pm i|p|^4t}\langle x\rangle^{-\delta}\|dt \\
           \lesssim&\int_0^{2^{-\frac{5}{2}j}} t^3 2^{9j}dt   \lesssim2^{-j}
     \end{aligned}
    \end{equation}
and
\begin{equation}
    \begin{aligned}
        A_{j,2}\lesssim \int_{2^{-\frac{5}{2}j}}^{1} t^3 2^{9j} \frac{1}{\langle2^{4j}t\rangle^\delta}2^{j\delta}dt\lesssim \frac{1}{2^{3j(\delta-3)}}.
    \end{aligned}
\end{equation}
Thus we have
\begin{align}
    \int_0^1 t^3\|P^{\pm}F_{\geq 1}(|p|)e^{\pm i(-\Delta)^2}|p|^{9}\langle x\rangle^{-\delta}\|_{2\rightarrow2}dt\lesssim \sum_{j=0}^{\infty} (2^{-j}+2^{-3j(\delta-3)})\lesssim_{n,\delta}1.
\end{align}

\noindent\textbf{\textit{Proof of \eqref{NTE}}}:
The same as the statement for \eqref{HEE}, it suffices to estimate \eqref{NTE} when $t\geq1$.
By estimate \eqref{PSE} with $l=0$ and  $M=\frac{1}{\langle t\rangle^{1/4-\epsilon}}$, we have
\begin{equation}\label{NTE1}
    \begin{aligned}
         \|\frac{|p|^{\alpha}}{\langle p\rangle^{\alpha}}P^{\pm} F_{>1/\langle t\rangle^{1/4-\epsilon}}(|p|)e^{\pm i(-\Delta)^2} \langle x\rangle^{-\delta}\|_{2\rightarrow2}&\lesssim \frac{1}{M^{3\delta}\langle t\rangle^{\delta}}\lesssim_{ n,\alpha,\epsilon,\delta} \frac{1}{\langle t\rangle^{(\frac{1}{4}+3\epsilon)\delta}} \\
         & \lesssim_{ n,\alpha,\epsilon,\delta}\frac{1}{\langle t\rangle^{\frac{\delta}{4}+\epsilon}}.
    \end{aligned}
\end{equation}
On the other hand, by Lemma \ref{BoundP-Lem}, we have
\begin{equation}\label{NTE2}
    \begin{aligned}
         &\|\frac{|p|^{\alpha}}{\langle p\rangle^{\alpha}}P^{\pm} F_{\leq 1/\langle t\rangle^{1/4-\epsilon}}(|p|)e^{\pm i(-\Delta)^2} \langle x\rangle^{-\delta}\|_{2\rightarrow2}\\
         \lesssim & \||p|^{\alpha}F_{\leq 1/\langle t\rangle^{1/4-\epsilon}}(|p|)e^{\pm i(-\Delta)^2} \langle x\rangle^{-\delta}\|_{2\rightarrow2}\\
         \lesssim& \||p|^{\alpha+\min\{\frac{n}{2},\delta\}}F_{\leq 1/\langle t\rangle^{1/4-\epsilon}}(|p|)e^{\pm i(-\Delta)^2} |p|^{-\min\{\frac{n}{2},\delta\}}\langle x\rangle^{-\delta}\|_{2\rightarrow2}\\
         \lesssim&\frac{1}{\langle t\rangle^{(1/4-\epsilon)(\alpha+\min\{\frac{n}{2},\delta\})}}\|F_{\leq 1/\langle t\rangle^{1/4-\epsilon}}(|p|) e^{\pm i(-\Delta)^2}|p|^{-\min\{\frac{n}{2},\delta\}}\langle x\rangle^{-\delta}\|_{2\rightarrow2} \\
         \lesssim& \frac{1}{\langle t\rangle^{(1/4-\epsilon)(\alpha+\min\{\frac{n}{2},\delta\})}}.
    \end{aligned}
\end{equation}
Combining \eqref{NTE1} with \eqref{NTE2}, we get \eqref{NTE}.

\noindent\textbf{\textit{{Proof of  \eqref{PLSE}}}}:
For $\epsilon\in(0,1/4),$ we decompose this as \eqref{NTE}.
Due to \eqref{PSE}, we have
\begin{equation}\label{PLSE1}
    \begin{aligned}
         &\|\frac{1}{\langle x\rangle^{4-\delta}}P^{\pm} F_{>1/\langle t\rangle^{1/4-\epsilon}}(|p|)e^{\pm i(-\Delta)^2} p_j^{l}\langle x\rangle^{-(\frac{n}{2}+4+l-\delta)}\|_{2\rightarrow2} \\
         \lesssim&\|P^{\pm} F_{>1/\langle t\rangle^{1/4-\epsilon}}(|p|)e^{\pm i(-\Delta)^2} |p|^{l}\langle x\rangle^{-(\frac{n}{2}+4+l-\delta)}\|_{2\rightarrow2} \\
         \lesssim&_{n,l,\delta}\frac{1}{\langle t\rangle^{ \frac{1}{4}(\frac{n}{2}+4+l-\delta)+\frac{l}{4}+\epsilon(3(\frac{n}{2}+4+l-\delta))-l)}}\\
         \lesssim&_{ n, l,\delta }  \frac{1}{\langle t\rangle^{ \frac{n}{8}+\frac{4+l-\delta}{4}+\epsilon }}.
    \end{aligned}
\end{equation}
On the other hand, by the Hardy-Littlewood-Sobolev inequality in Lemma \ref{HLSvar}, we get
 \begin{align}\label{PLSE2}
         &\|\frac{1}{\langle x\rangle^{4-\delta}}P^{\pm} F_{\leq 1/\langle t\rangle^{1/4-\epsilon}}(|p|)e^{\pm i(-\Delta)^2} p_j^{l}\langle x\rangle^{-(\frac{n}{2}+4+l-\delta))}\|_{2\rightarrow2}\\
         \lesssim&_{ n, \epsilon }
         \|\frac{1}{\langle x\rangle^{4-\delta}} F_{\leq 1/\langle t\rangle^{1/4-\epsilon}}(|p|)e^{\pm i(-\Delta)^2} p_j^{l}\langle x\rangle^{-(\frac{n}{2}+4+l-\delta))}\|_{2\rightarrow2}\nonumber\\
         \lesssim& \|\frac{1}{\langle x\rangle^{4-\delta}}|p|^{-4+\delta}\|_{2\to 2}\| F_{\leq 1/\langle t\rangle^{1/4-\epsilon}}(|p|)e^{\pm i(-\Delta)^2} p_j^{l}|p|^{\frac{n}{2}+4-\delta-\epsilon}\|_{2\to 2}\||p|^{-(\frac{n}{2}-\epsilon)}\langle x\rangle^{-(\frac{n}{2}+4+l-\delta))}\|_{2\rightarrow2}\nonumber\\
         \lesssim& \frac{1}{\langle t\rangle^{(\frac{n}{2}+4+l-\delta-\epsilon)(1/4-\epsilon)}}\lesssim\frac{1}{\langle t\rangle^{\frac{n}{8}+\frac{4+l-\delta}{4}-\epsilon}}.\nonumber
    \end{align}
Combining \eqref{PLSE1} with \eqref{PLSE2}, this yields \eqref{PLSE}.
\end{proof}

\section[Appendix~\thesection: Introduction of Floquet system]{Introduction of Floquet system}\label{appendixB}

For $\vec{s}\in \mathbb{T}_1\times\cdots\times\mathbb{T}_N$, where the definition is defined as before, let $U_{\vec{s}}(t,0)$ denote the solution operator to system
\begin{align}\label{A1}
    i\partial_t U_{\vec{s}}(t,0)=((-\Delta)^2+V_0(x)+\sum_{j=1}^N V_j(x,t+s_j))U_{\vec{s}}(t,0).
\end{align}

We have the following lemma.
\begin{lemma}
Let $U_{\vec{s}}(t,0)$ be the solution operator to \eqref{A1}, $K$ and $K_0$  are the perturbed Floquet operator and the free Floquet operator defined by
\begin{align}
    K\coloneqq K_0+V_{F}(x,\vec{s}), \ \ K_0\coloneqq H_0+\sum_{j=1}^N p_{s_j}, \ \ p_{s_j}=-i\partial_{s_j}
\end{align}
where
\begin{align}
    V_F(x,\vec{s})\coloneqq V_0(x)+\sum_{j=1}^N V_j(x,s_j).
\end{align}
Then,  we have
\begin{align}
    e^{itH_0}U_{\vec{s}}(t,0)=e^{itK_0}e^{-itK}, \text{on } L^2_x(\mathbb{R}^n).
\end{align}
\end{lemma}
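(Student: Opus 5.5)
The plan is to show that both sides solve the same interaction-picture equation with the same initial datum, and then invoke uniqueness. The identity should be read on $\mathcal{H}_F$, with $\vec{s}\in\mathbb{T}_1\times\cdots\times\mathbb{T}_N$ treated as an additional variable; since the interaction generator acts fiberwise in $\vec{s}$, this is equivalent to the fiberwise statement on $L^2_x$.

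\textbf{Step 1: the free Floquet flow.} Since $H_0$ acts only in $x$ and the $p_{s_j}$ act only in $\vec{s}$, all of them commute. Hence
$$e^{-itK_0}=e^{-itH_0}\prod_{j=1}^N e^{-itp_{s_j}}, \qquad \bigl(e^{-itp_{s_j}}g\bigr)(s_j)=g(s_j-t) \pmod{T_j}.$$
From this I deduce the conjugation identity for the multiplication operator $V_F$:
$$e^{itK_0}V_F(x,\vec{s})e^{-itK_0}=e^{itH_0}\Bigl(V_0(x)+\sum_{j=1}^N V_j(x,s_j+t)\Bigr)e^{-itH_0}\eqqcolon \mathcal{V}(t).$$
The sign of the shift must be tracked carefully so that the argument is $s_j+t$, matching \eqref{A1}.

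\textbf{Step 2: a common Duhamel equation.} By Assumption \ref{asp:1}, $V_F$ is a bounded multiplication operator on $\mathcal{H}_F$. Therefore $K=K_0+V_F$ is self-adjoint on $D(K_0)$ by Kato--Rellich, and $e^{-itK}$ is well defined. Set $W(t)=e^{itK_0}e^{-itK}$. To avoid domain questions, I will work with the integrated Duhamel formula rather than differentiating:
$$e^{-itK}=e^{-itK_0}-i\int_0^t e^{-i(t-u)K_0}V_Fe^{-iuK}\,du .$$
Multiplying on the left by $e^{itK_0}$ gives
$$W(t)=1-i\int_0^t \mathcal{V}(u)W(u)\,du .$$
Similarly, set $\tilde U(t)=e^{itH_0}U_{\vec{s}}(t,0)$. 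The Duhamel formula for \eqref{A1}, analogous to \eqref{D3}, gives
$$\tilde U(t)=1-i\int_0^t e^{iuH_0}V(x,u+\vec{s})e^{-iuH_0}\tilde U(u)\,du=1-i\int_0^t\mathcal{V}(u)\tilde U(u)\,du,$$
where $V(x,u+\vec{s})$ is shorthand for $V_0(x)+\sum_j V_j(x,u+s_j)$.

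\textbf{Step 3: uniqueness.} The map $u\mapsto\mathcal{V}(u)$ is strongly measurable and uniformly bounded by $\|V\|_{L^\infty_{t,x}}$. Set $D(t)=W(t)-\tilde U(t)$. Then
$$\|D(t)\psi\|\le \|V\|_{L^\infty}\int_0^t\|D(u)\psi\|\,du ,$$
so Gronwall's inequality gives $D\equiv0$. Equivalently, both sides equal the same norm-convergent Dyson series. Since $\mathcal{V}(u)$ is a multiplication operator in $\vec{s}$, the integral equation decouples across fibers, and the identity holds on $L^2_x$ for almost every $\vec{s}$.

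\textbf{Main obstacle.} The key point is Step 1: correctly identifying the translation in $\vec{s}$ generated by $p_{s_j}$ on the tori, including its direction, and justifying that conjugating by $e^{itK_0}$ turns $V_F$ into the time-shifted potential. The operator-theoretic part is mild because $V$ is bounded; it reduces to Kato--Rellich together with working with integral rather than differential equations.
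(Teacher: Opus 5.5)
Your proposal is correct and is essentially the paper's argument: both rest on the $\vec s$-translation identity $e^{-itp_{s}}f(t+s)=f(s)\,e^{-itp_{s}}$ (equivalently, your formula for $e^{itK_0}V_Fe^{-itK_0}$), combined with uniqueness for the resulting evolution equation, with the identity read on $\mathcal{H}_F$. The paper differentiates $e^{-it\sum_j p_{s_j}}U_{\vec s}(t,0)$ to show that it solves $i\partial_t Y=KY$, whereas you carry out the same computation in the interaction picture via integral equations and Gronwall, which makes explicit both the domain issues and the uniqueness step that the paper leaves implicit.
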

\begin{remark}
    K is the Hamiltonian of the Floquet system
    \begin{align}
        i\partial_t \psi(x,\vec{s},t)=K \psi(x,\vec{s},t),\ (x,\vec{s},t)\in \mathbb{R}^n\times\mathbb{T}_1\times\cdots\times\mathbb{T}_N\times \mathbb{R}.
    \end{align}
\end{remark}

\begin{proof}
    We only should to prove the fact that \begin{align}
        e^{-it\sum\limits_{j=1}^N p_{s_j}}U_{\vec{s}}(t,0)=e^{-itK}.
    \end{align}
    Acting $i\partial_t$ on both sides, we get \begin{equation}
        \begin{aligned}
        (H_0+V_0(x)+\sum_{j=1}^N p_{s_j})e^{-it\sum\limits_{j=1}^N p_{s_j}}U_{\vec{s}}(t,0)+e^{-it\sum\limits_{j=1}^N p_{s_j}}(\sum_{j=1}^N V_j(x,t+s_j))U_{\vec{s}}(t,0)=Ke^{-itK}.
    \end{aligned}
    \end{equation}
    Now we only need to prove that \begin{align}\label{commu}
        e^{-it\sum\limits_{j=1}^N p_{s_j}}(\sum_{j=1}^N V_j(x,t+s_j))=(\sum_{j=1}^N V_j(x,s_j))e^{-it\sum\limits_{j=1}^N p_{s_j}}.
    \end{align}
    There is a well-known conclusion that \begin{align}
        e^{-itp_s}f(t+s)e^{itp_s}=f(s),
    \end{align}which is following by the Fourier transform and Fourier inversion formula, verified through elementary commutator calculus. (\ref{commu}) is a directly consequence. Thus, we have prove this lemma.
\end{proof}

One can see it from the expression of such a projection
\begin{align}
    P_c(t)=\Omega_{+}(t)\Omega^*_{+}(t),
\end{align}
with the time-dependence comes from the definition of wave operator and its adjoint
\begin{align}
    \Omega_{+}(t)=s\text{-}\lim_{u\rightarrow\infty}U(t,t+u)e^{-iuH_0} \ \mbox{on}\  L_x^2(\mathbb{R}^n),
\end{align}
\begin{align}
    \Omega^*_{\alpha,+}(t)=s\text{-}\lim_{u\rightarrow\infty}F_{\leq1}\left(\frac{|x|}{u^{\alpha}}\right)e^{iuH_0}U(t+u,t) \  \mbox{on}\  L_x^2(\mathbb{R}^n),
\end{align}
for $\alpha\in (0,\frac{1}{2}-\frac{2}{n}), n\geq9.$
\begin{lemma}
    For $t \equiv s_j \pmod {T_j}$, $j=1,\cdots,N,$
    \begin{align}
        \Omega^*_{K,+}(t)\coloneqq s\text{-}\lim_{u\rightarrow\infty}e^{iuH_0}U_{\vec{s}}(u,0)P_c(t), \text{ on } L^2_x(\mathbb{R}^n),
    \end{align}
    where
    \begin{align}
    i\partial_t U_{\vec{s}}(t,0)=\left((-\Delta)^2+V_0(x)+\sum_{j=1}^N V_j(x,t+s_j)\right)U_{\vec{s}}(t,0).
\end{align}
\end{lemma}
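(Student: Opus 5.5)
The plan is to reduce the statement to the time-dependent wave operators already constructed in \cite{SWWZ}, and then run a Cook-type argument on the free side, where dispersive decay is available.

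\textbf{Step 1: identify the Floquet flow.} For $t\equiv s_j \pmod{T_j}$, $j=1,\dots,N$, the periodicity in Assumption \ref{asp:2} gives
$$V_0(x)+\sum_j V_j(x,u+s_j)=V(x,t+u).$$
Hence the solution operator of \eqref{A1} satisfies $U_{\vec{s}}(u,0)=U(t+u,t)$. The operator under the limit is therefore
$$e^{iuH_0}U(t+u,t)P_c(t).$$
Since these operators are uniformly bounded (by $1$), it suffices to prove that the strong limit exists on a dense subset of $\mathrm{Ran}\,P_c(t)$.

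\textbf{Step 2: reduce to the free side.} Use $P_c(t)=\Omega_+(t)\Omega_+^*(t)$, so that $\mathrm{Ran}\,P_c(t)\subset\mathrm{Ran}\,\Omega_+(t)$. Combine this with the intertwining relation
$$U(t+u,t)\,\Omega_+(t)=\Omega_+(t+u)\,e^{-iuH_0}.$$
This relation follows directly from the definition $\Omega_+(\tau)=s\text{-}\lim_{v\to\infty}U(\tau,\tau+v)e^{-ivH_0}$ and the group law $U(t+u,t)U(t,t+v)=U(t+u,t+v)$, after substituting $v=u+v'$. For $g=\Omega_+(t)h$ this yields
$$e^{iuH_0}U(t+u,t)g-h=e^{iuH_0}\bigl(\Omega_+(t+u)-1\bigr)e^{-iuH_0}h.$$
So the task is to show that $\|(\Omega_+(t+u)-1)e^{-iuH_0}h\|_{L^2_x}\to0$ as $u\to\infty$, first for $h$ in a dense class and then for all $h\in L^2_x$ by the uniform bounds $\|\Omega_+\|\le1$, $\|e^{\pm iuH_0}\|=1$.

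\textbf{Step 3: Duhamel and dispersive decay.} By the Duhamel formula \eqref{OmeD},
$$\Omega_+(\tau)-1=i\int_0^\infty U(\tau,\tau+v)V(x,\tau+v)e^{-ivH_0}\,dv.$$
Taking $\tau=t+u$, applying it to $e^{-iuH_0}h$ and using unitarity of $U$, we get
$$\|(\Omega_+(t+u)-1)e^{-iuH_0}h\|_{L^2_x}\le\|\langle x\rangle^{\sigma}V\|_{L^\infty_{t,x}}\int_u^\infty\|\langle x\rangle^{-\sigma}e^{-iwH_0}h\|_{L^2_x}\,dw.$$
For $h\in\mathcal S$ with $\hat h$ compactly supported away from $0$, there are two ways to control the integrand.
\begin{itemize}
\item The $L^1\to L^\infty$ decay $|w|^{-n/4}$ of $e^{-iwH_0}$ \cite{MR1226935}, together with $\sigma>n/2$, gives $\|\langle x\rangle^{-\sigma}e^{-iwH_0}h\|_{L^2_x}\lesssim\langle w\rangle^{-n/4}$. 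This is integrable since $n\ge9$.
\item Alternatively, Lemma \ref{FMest}-type nonstationary phase gives $\langle w\rangle^{-N}$.
\end{itemize}
Either way the tail integral tends to $0$, so the strong limit exists on $\mathrm{Ran}\,P_c(t)$ and equals $\Omega_+^*(t)P_c(t)=\Omega_+^*(t)$. This is consistent with Lemma \ref{LemmaA.1}.

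\textbf{Main obstacle.} The delicate point is the density argument in Step 2. The set $\{\Omega_+(t)h: h\in\mathcal S\}$ must be dense in $\mathrm{Ran}\,P_c(t)$. This holds because $\Omega_+(t)$ is a partial isometry, since $\Omega_+^*(t)\Omega_+(t)=1$ by \cite{SWWZ}, so its range is closed and $\Omega_+(t)$ is continuous. I would also check that the convergence does not depend on the choice of representative $\vec s$ of $t$; this is automatic from Step 1.
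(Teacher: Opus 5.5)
The paper does not prove this lemma. It appears in Appendix~B without proof, and the surrounding text only suggests deriving it from the Floquet picture, i.e.\ from $e^{iuH_0}U_{\vec{s}}(u,0)=e^{iuK_0}e^{-iuK}$ and the Floquet wave operators on $\mathcal{H}_F$. Your argument takes a different, purely fiberwise route:
\begin{itemize}
\item periodicity gives $U_{\vec{s}}(u,0)=U(t+u,t)$;
\item the identity $P_c(t)=\Omega_+(t)\Omega_+^*(t)$ gives $\mathrm{Ran}\,P_c(t)\subset\mathrm{Ran}\,\Omega_+(t)$;
\item you reduce to showing $e^{iuH_0}U(t+u,t)\Omega_+(t)h\to h$.
\end{itemize}
This is correct. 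It also gives convergence for every fixed $\vec{s}$, whereas a strong limit on $\mathcal{H}_F$ would give this only in an averaged sense in $\vec{s}$.

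You are, however, working harder than necessary, because your Step 3 is superfluous. By unitarity of $e^{-iuH_0}$ and $U(t,t+u)$,
\[
\bigl\|e^{iuH_0}U(t+u,t)\Omega_+(t)h-h\bigr\|_{L^2_x}=\bigl\|\Omega_+(t)h-U(t,t+u)e^{-iuH_0}h\bigr\|_{L^2_x}\longrightarrow 0
\]
for every $h\in L^2_x$, by the very definition of $\Omega_+(t)$. Your quantity $\|(\Omega_+(t+u)-1)e^{-iuH_0}h\|_{L^2_x}$ equals exactly this, via your own intertwining relation. This is the general fact that if $W=s\text{-}\lim A_u^*B_u$ with $A_u,B_u$ unitary, then $B_u^*A_u\to W^*$ strongly on $\mathrm{Ran}\,W$. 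Here $A_u=U(t+u,t)$ and $B_u=e^{-iuH_0}$.

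Consequently none of the following are needed:
\begin{itemize}
\item the Duhamel expansion;
\item the dispersive decay estimate;
\item the spatial decay of $V$;
\item the density argument.
\end{itemize}
The ``main obstacle'' you flag is therefore not one. Isometry of $\Omega_+(t)$ is also automatic, since it is a strong limit of unitaries, so you need not cite \cite{SWWZ} for it.

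Your Step 3 is still valid as written, using $\sigma>n/2$ and the $|w|^{-n/4}$ dispersive bound. The one inaccuracy is the side remark that nonstationary phase gives $\langle w\rangle^{-N}$. With the weight $\langle x\rangle^{-\sigma}$ one only gets $\langle w\rangle^{-\sigma}$, as in Lemma~\ref{FMest}. This is harmless, since your first bullet already suffices.
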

Recall that $\mathcal{H}_F=L^{\infty}_{\vec{s}}L^2_x(\mathbb{R}^n\times \mathbb{T}_1\times\cdots\times\mathbb{T}_{N})$, the Floquet wave operators are defined by
\begin{align}
    \Omega_{K,+}^*\coloneqq s\text{-}\lim_{t\rightarrow\infty}e^{itK_0}e^{-itK}P_c(K) \text{ on } \mathcal{H}_F
\end{align}
and
\begin{align}
    \Omega_{K,+}\coloneqq s\text{-}\lim_{t\rightarrow\infty}e^{itK}e^{-itK_0} \text{ on } \mathcal{H}_F
\end{align}
with $P_c(K)$, the projection on the continuous spectrum of $K$ in $\mathcal{H}_F$ space. The existence of $\Omega_{K,+}$ follows from Cook's method.
\begin{lemma}
    If $V_{F}(x,\vec{s})\in L^{\infty}_{\vec{s}}L^2_x(\mathbb{R}^n\times \mathbb{T}_1\times\cdots\times\mathbb{T}_{N})$, for $\alpha\in(0,\frac{1}{2}-\frac{n}{2})$,
    \begin{align}
        \Omega_{K,\alpha,+}^*=s\text{-}\lim_{t\rightarrow\infty}F_{\leq 1}\left(\frac{|x|}{t^{\alpha}}\right)e^{itK_0}e^{-itK} \text{ on } \mathcal{H}_F
    \end{align}
    exists.
\end{lemma}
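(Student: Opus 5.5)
The plan is to lift the problem to the Floquet Hilbert space $\mathcal{H}_F$ and reduce the existence of $\Omega^*_{K,\alpha,+}$ to the already known existence of the time-dependent channel wave operators of \cite{SWWZ}, applied fiberwise in $\vec{s}$. By the previous lemma, $e^{itK_0}e^{-itK}$ acts on $\mathcal{H}_F$ as the family $\{e^{itH_0}U_{\vec{s}}(t,0)\}_{\vec{s}}$ composed with the translation $e^{-it\sum_j p_{s_j}}$ (which commutes with $F_{\leq1}(|x|/t^\alpha)$ and $e^{itH_0}$). For $\phi\in\mathcal{H}_F$ one writes, in each fiber,
\begin{align*}
\bigl(F_{\leq 1}(\tfrac{|x|}{t^{\alpha}})e^{itK_0}e^{-itK}\phi\bigr)(\cdot,\vec{s})=F_{\leq 1}(\tfrac{|x|}{t^{\alpha}})e^{itH_0}U_{\vec{s}-t}(t,0)\phi(\cdot,\vec{s}-t),
\end{align*}
with $\vec{s}-t=(s_1-t,\dots,s_N-t)$ on the torus. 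So it suffices to prove a Cauchy property in $t$ for this expression, uniformly in the fiber parameter, and then integrate in $\vec{s}$.

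The key steps, in order, are as follows. (i) Write the $t$-derivative of $F_{\leq1}(|x|/t^\alpha)e^{itK_0}e^{-itK}\phi$ as in \cite{SWWZ}: it consists of a term involving $\partial_t F_{\leq1}(|x|/t^\alpha)$ plus the Heisenberg commutator $i[H_0,F_{\leq1}(|x|/t^\alpha)]$, and a potential term $-iF_{\leq1}(|x|/t^\alpha)e^{itK_0}V_Fe^{-itK}$. (ii) Handle the potential term via the cutoff. The key observation is that $F_{\leq1}(|x|/t^\alpha)e^{itH_0}\langle x\rangle^{-?}$ is not needed. Instead, one uses the propagation estimate from \cite{SWWZ}, namely that $\|F_{\leq1}(|x|/t^\alpha)e^{itH_0}V\|$ is integrable in $t$ for $V\in L^2_x$ and $\alpha<\frac12-\frac2n$, which follows from the $L^1\to L^\infty$ decay $t^{-n/4}$ of $e^{itH_0}$ combined with the volume $t^{\alpha n/2}$ of the cutoff region. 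Since $V_F\in L^\infty_{\vec s}L^2_x$, the bound is uniform in $\vec{s}$, and $t^{-n/4+\alpha n/2}$ is integrable exactly when $\alpha<\frac12-\frac2n$. (iii) For the term with $\partial_t$ of the cutoff and the commutator, follow the positive-commutator (propagation observable) argument of \cite{SWWZ}: the symmetrized derivative of $F_{\leq1}$ is sign-definite up to integrable errors. This yields existence of the limit via a standard Cook/Kato argument of the form $\int^\infty\|\sqrt{-D_t F}\,\psi(t)\|^2\,dt<\infty$, applied fiberwise, with constants depending only on $\|V_F\|_{L^\infty_{\vec s}L^2_x}$. (iv) Integrate over $\vec{s}$ and use dominated convergence (all operators are contractions) to obtain strong convergence on $\mathcal{H}_F$.

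The main obstacle is step (iii). The commutator term does not decay by itself, and the fiber shift $\vec{s}-t$ means that the argument cannot simply quote pointwise-in-$\vec s$ limits. What I would try first is to note that the \cite{SWWZ} bounds for $\Omega^*_+(t)$ are uniform in the initial time $t$. Translating the fiber by $t$ then merely relabels the initial time of a quasi-periodic problem, as in Lemma \ref{Lemma2.9}, so the uniform estimates transfer directly to $\mathcal{H}_F$.
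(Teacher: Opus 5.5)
The paper states this lemma without proof. The printed range $(0,\frac12-\frac n2)$ is empty; your reading $\alpha<\frac12-\frac2n$ is the intended one. Your step (ii) is correct and is essentially the only analytic input needed. However, there is a genuine gap: two computational slips create the ``main obstacle'' you leave unresolved, and the fix you suggest for it cannot work.

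\textbf{There is no fiber shift.} From the preceding lemma, $(e^{-itK}\phi)(\cdot,\vec s)=U_{\vec s-t}(t,0)\phi(\cdot,\vec s-t)$. But $e^{itK_0}=e^{itH_0}e^{it\sum_j p_{s_j}}$ translates back by $+t$, so
\begin{align*}
\bigl(e^{itK_0}e^{-itK}\phi\bigr)(\cdot,\vec s)=e^{itH_0}U_{\vec s}(t,0)\phi(\cdot,\vec s).
\end{align*}
Your fiber expression is $e^{-it\sum_j p_{s_j}}$ applied to the correct one, and it generally has no strong limit. Already for $V_F=0$ it equals $F_{\le1}(|x|/t^\alpha)\phi(\cdot,\vec s-t)$, which oscillates, e.g.\ for $\phi=a(x)e^{2\pi i s_1/T_1}$. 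So the Cauchy property you propose to verify is false in general. Relabelling initial times via Lemma \ref{Lemma2.9}, or using bounds uniform in the initial time, cannot repair a non-convergent object.

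\textbf{There is no commutator term.} Set $F_t=F_{\le1}(|x|/t^\alpha)$ and $\Psi(t)=e^{itK_0}e^{-itK}\phi$. Then
$\frac{d}{dt}[F_t\Psi(t)]=(\partial_tF_t)\Psi(t)-iF_te^{itK_0}V_Fe^{-itK}\phi$, with no $i[H_0,F_t]$ term. The cutoff sits outside the free back-propagation; the commutator belongs to the other ordering $e^{itH_0}F_t\,U$.

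\textbf{Closing the argument.} Once these are corrected, step (iii) is elementary.
\begin{itemize}
\item Choose $F_{\le1}$ non-increasing. Then $\partial_tF_t=-\alpha t^{-1}\frac{|x|}{t^\alpha}F_{\le1}'(\frac{|x|}{t^\alpha})\ge0$.
\item The translation part of $e^{itK_0}$ commutes with $F_t$ and $e^{itH_0}$. So your bound (ii) gives $\|F_te^{itK_0}V_Fe^{-itK}\phi\|_{\mathcal H_F}\lesssim t^{\frac{\alpha n}{2}-\frac n4}\|V_F\|_{L^\infty_{\vec s}L^2_x}\|\phi\|_{\mathcal H_F}$, which is integrable on $[1,\infty)$.
\item Integrating $\frac{d}{dt}\langle\Psi,F_t\Psi\rangle$ then yields $\int_1^\infty\langle\Psi,(\partial_tF_t)\Psi\rangle\,dt<\infty$.
\item Take $\|g\|\le1$. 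Cauchy--Schwarz in $t$, together with $\int_{T_1}^{T_2}\partial_tF_t\,dt=F_{T_2}-F_{T_1}\in[0,1]$, gives $|\langle g,\int_{T_1}^{T_2}(\partial_tF_t)\Psi\,dt\rangle|\le(\int_{T_1}^\infty\langle\Psi,(\partial_tF_t)\Psi\rangle\,dt)^{1/2}\to0$.
\end{itemize}
Hence $F_t\Psi(t)$ is Cauchy in $\mathcal H_F$, directly and without any fiber decomposition.

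Alternatively, with the unshifted fiber formula your step (iv) works as intended. Apply the SWWZ existence result in each fiber to the potential $V_0+\sum_jV_j(x,t+s_j)$, whose $L^\infty_tL^2_x$ norm is bounded by $\|V_F\|_{L^\infty_{\vec s}L^2_x}$. Then conclude by dominated convergence in $\vec s$, since every fiber operator is a contraction.
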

This lemma can imply the existence of $\Omega_{K,+}^*$ via the construction of the projection on the space of corresponding scattering states.

\section*{Acknowledgments}
We would like to thank Prof. Avy Soffer and Dr. Xiaoxu Wu for profitable suggestions and encouragements on this topic. This work was partially supported by the Zhejiang Provincial Natural Science Foundation of China (Grant No. LZ25A010003).

\vspace{2mm}
	
\noindent \textbf{Conflict of interest.} The authors do not have any possible conflicts of interest.
	
\vspace{2mm}
	
\noindent \textbf{Data availability statement.} Data sharing is not applicable to this article as no data sets were generated or analyzed during the current study.


\end{document}